\def\input@path{{figures/}}
\title{Interval hypergraphic lattices}
\author[N.~Bergeron]{Nantel Bergeron} 
\address[N.~Bergeron]{Department of Mathematics and Statistics, York University, Toronto}
\email{bergeron@yorku.ca}
\urladdr{http://bergeron.mathstats.yorku.ca}
\author{Vincent Pilaud}
\address[V.~Pilaud]{Universitat de Barcelona \& Centre de Recerca Matemàtica, Barcelona}
\email{vincent.pilaud@ub.edu}
\urladdr{https://www.ub.edu/comb/vincentpilaud/}
\thanks{
NB was supported by NSERC and York Research Chair in Applied Algebra.
VP was supported by the Spanish project PID2022-137283NB-C21 of MCIN/AEI/10.13039/501100011033 / FEDER, UE, by the Spanish--German project COMPOTE (AEI PCI2024-155081-2 \& DFG 541393733), by the Severo Ochoa and María de Maeztu Program for Centers and Units of Excellence in R\&D (CEX2020-001084-M), by the Departament de Recerca i Universitats de la Generalitat de Catalunya (2021 SGR 00697), and by the French--Austrian project PAGCAP (ANR-21-CE48-0020 \& FWF I 5788).}
\newtheorem{theorem}{Theorem}[section]
\newtheorem{theoremA}{Theorem}
\crefname{theoremA}{Theorem}{Theorems}
\newtheorem{corollary}[theorem]{Corollary}
\newtheorem{proposition}[theorem]{Proposition}
\newtheorem{lemma}[theorem]{Lemma}
\crefname{conjecture}{Conjecture}{Conjectures}
\crefname{conjectureA}{Conjecture}{Conjectures}
\crefname{conjectureA}{Conjecture}{Conjectures}
\theoremstyle{definition}
\newtheorem{definition}[theorem]{Definition}
\newtheorem{example}[theorem]{Example}
\newtheorem{remark}[theorem]{Remark}
\newtheorem{notation}[theorem]{Notation}
\crefname{notation}{Notation}{Notations}
\newcommand{\R}{\mathbb{R}} 
\renewcommand{\b}[1]{\boldsymbol{#1}} 
\newcommand{\cal}[1]{\mathcal{#1}} 
\newcommand{\set}[2]{\left\{ #1 \;\middle|\; #2 \right\}} 
\newcommand{\ssm}{\smallsetminus} 
\newcommand{\eqdef}{\mbox{\,\raisebox{0.2ex}{\scriptsize\ensuremath{\mathrm:}}\ensuremath{=}\,}} 
\newcommand{\defeq}{\mbox{~\ensuremath{=}\raisebox{0.2ex}{\scriptsize\ensuremath{\mathrm:}} }} 
\newcommand{\simplex}{\triangle} 
\newcommand{\ie}{\textit{i.e.}~} 
\newcommand{\aka}{\textit{aka.}~} 
\newcommand{\para}[1]{\medskip\noindent\textbf{#1}} 
\definecolor{PineGreen}{RGB}{2,120,120} 
\definecolor{darkgreen}{RGB}{57,181,74} 
\newcommand{\blue}[1]{{\color{blue} #1}} 
\newcommand{\green}[1]{{\color{darkgreen} #1}} 
\newcommand{\defn}[1]{\textbf{\textsf{\color{PineGreen} #1}}} 
\newcommand{\fS}{\mathfrak{S}} 
\newcommand{\meet}{\wedge} 
\newcommand{\join}{\vee} 
\newcommand{\bigJoin}{\bigvee} 
\newcommand{\less}{\vartriangleleft} 
\newcommand{\more}{\vartriangleright} 
\newcommand{\projDown}{\pi^\downarrow} 
\newcommand{\projUp}{\pi^\uparrow} 
\newcommand{\Or}{\mathcal O}  
\newcommand{\HH}{\mathbb H}  
\newcommand{\II}{\mathbb I} 
\newcommand{\cJ}{\cal{J}} 
\newcommand{\cIJ}{\cal{IJ}} 
\newcommand{\intervalHypergraph}[2]{
	\begin{tikzpicture}[baseline=0]
		\foreach \x in {1,...,#1} {
			\node (\x) at (\x*.5,-.3) [inner sep = -1pt] {$\scriptstyle \x$};
		}
		\newcount{\y} \y=0
		\foreach \a/\b in {#2} {
			\draw [thick,{Bar[width=3pt]}-{Bar[width=3pt]}] (\a*.5,\y*.2)--(\b*.5,\y*.2);
			\global\advance\y by 1
		}
		\node at (.5,0) {\phantom{$\bullet$}};
		\node at (#1*.5,0) {\phantom{$\bullet$}};
	\end{tikzpicture}
}
\newcommand{\acyclicOrientation}[2]{
	\begin{tikzpicture}[baseline=0]
		\foreach \x in {1,...,#1} {
			\node (\x) at (\x*.5,-.3) [inner sep = -1pt] {$\scriptstyle \x$};
		}
		\newcount{\y} \y=0
		\foreach \a/\b/\c in {#2} {
			\draw [thick,{Bar[width=3pt]}-{Bar[width=3pt]}] (\a*.5,\y*.2)--(\b*.5,\y*.2); \node at (\c*.5,\y*.2) {$\bullet$};
			\global\advance\y by 1
		}
		\node at (.5,0) {\phantom{$\bullet$}};
		\node at (#1*.5,0) {\phantom{$\bullet$}};
	\end{tikzpicture}
}
\newcommand{\tree}[1]{
	\begin{tikzpicture}[level distance = 20pt]
	\Tree #1
	\end{tikzpicture}
}
\newcommand{\flip}[4]{\ensuremath{#1 \, \begin{tikzpicture}[scale=1,baseline=-.1cm] \path[->]  (0,0) edge node[fill=white,inner sep=2pt, font=\tiny ] {$#2#3$} (.7,0); \end{tikzpicture} \, #4}}
\begin{document}

\begin{abstract}
For a hypergraph~$\HH$ on~$[n]$, the hypergraphic poset~$P_\HH$ is the transitive closure of the oriented skeleton of the hypergraphic polytope~$\simplex_\HH$ (the Minkowski sum of the standard simplices~$\simplex_H$ for all~$H \in \HH$).
Hypergraphic posets include the weak order for the permutahedron (when~$\HH$ is the complete graph on~$[n]$) and the Tamari lattice for the associahedron (when~$\HH$ is the set of all intervals of~$[n]$), which motivates the study of lattice properties of hypergraphic posets.
In this paper, we focus on interval hypergraphs, where all hyperedges are intervals of~$[n]$.
We characterize the interval hypergraphs~$\II$ for which~$P_\II$ is a lattice, a distributive lattice, a semidistributive lattice, and a lattice quotient of the weak order.
\end{abstract}

\maketitle

\tableofcontents


\pagebreak

\section{Introduction}
\label{sec:introduction}

Fix an integer~$n \ge 1$ and denote by~$(\b{e}_i)_{i \in [n]}$ the standard basis of~$\R^n$.
The \defn{hypergraphic polytope} of a hypergraph~$\HH$ on~$[n]$ is the Minkowski sum
\(
\simplex_\HH \eqdef \sum_{H\in \HH} \simplex_H\,,
\)
where $\simplex_H$ is the simplex given by the convex hull of the points $\b{e}_h$ for~$h \in H$.
These polytopes (or some special cases) were studied in~\cite{FeichtnerSturmfels, Postnikov, PostnikovReinerWilliams, AgnarssonMorris, Agnarsson, AguiarArdila, BenedettiBergeronMachacek, PadrolPilaudPoullot-deformationConesHypergraphicPolytopes, Rehberg, CardinalSteiner, CardinalHoangMerinoMickaMutze} among others.
In particular, the face lattice of~$\simplex_\HH$ was described combinatorially in terms of acyclic orientations of~$\HH$ in~\cite{BenedettiBergeronMachacek}.
Note that the singletons of~$\HH$ are irrelevant for the combinatorics of~$\simplex_\HH$ as they just contribute to translations.
It is convenient for us to assume that~$\{i\} \in \HH$ for all~$i \in [n]$.

The \defn{hypergraphic poset}~$P_\HH$ is the transitive closure of the skeleton of~$\simplex_\HH$ oriented in the direction~$\b{\omega} \eqdef (n, n-1, \dots, 2, 1) - (1, 2, \dots, n-1, n) = (n-1, n-3, \dots, 3-n, 1-n)$.
For instance, 
\begin{itemize}
\item if~$\HH = \binom{[n]}{2}$ is the complete graph (or any hypergraph containing it), then~$\simplex_\HH$ is the \defn{permutahedron} and $P_\HH$ is the \defn{weak order on permutations},
\item if~$\HH = \set{[i,j]}{1 \le i \le j \le n}$ is the complete interval hypergraph, then~$\simplex_\HH$ is \mbox{J.-L.~Loday's} \defn{associahedron}~\cite{ShniderSternberg,Loday} and~$P_\HH$ is the \defn{Tamari lattice on binary trees}~\cite{Tamari}.
\end{itemize}

Further important examples of hypergraphic polytopes include graphical zonotopes, graph associahedra~\cite{CarrDevadoss}, nestohedra~\cite{FeichtnerSturmfels,Postnikov}, multiplihedra~\cite{Stasheff-HSpaces, SaneblidzeUmble-diagonals, Forcey-multiplihedra, ArdilaDoker, ChapotonPilaud}, and constrainahedra~\cite{BottmanPoliakova, ChapotonPilaud}, and other examples given below.
Hypergraphic polytopes belong to the more general class of \defn{deformed permutahedra} (\aka \defn{generalized permutahedra}~\cite{Postnikov}, or \defn{polymatroids}~\cite{Edmonds}).
The later are all deformations of the permutahedron, \ie all polytopes whose normal fan coarsen the braid arrangement, and are all obtained as Minkowski sums and differences of faces of the standard simplex.
Among those, the hypergraphic polytopes are those which can be constructed using only Minkowski sums of faces of the standard simplex.

In view of the two above examples, we would like to characterize the hypergraphs~$\HH$ for which~$P_\HH$ is a lattice, a distributive lattice, a semidistributive lattice, a congruence-uniform lattice, a \mbox{(semi-)lattice} quotient of the weak order on permutations, etc.
These questions were settled in~\cite{Pilaud-acyclicReorientationLattices} for graphical zonotopes and in~\cite{ChapotonPilaud} for multiplihedra and constrainahedra.
In contrast, they were only partially answered in~\cite{BarnardMcConville} for graph associahedra~\cite{CarrDevadoss}, and still remain largely open for nestohedra~\cite{FeichtnerSturmfels, Postnikov}.

In this paper, we study the case of \defn{interval hypergraphs}~$\II$, \ie when all hyperedges of~$\II$ are intervals of~$[n]$.
Note that the family of interval hypergraphic polytopes includes
\begin{itemize}
\item the classical associahedron of~\cite{ShniderSternberg,Loday} when~$\II$ contains all intervals of~$[n]$,
\item the Pitman--Stanley polytope~\cite{PitmanStanley} when~$\II$ is the set of all singletons~$\{i\}$ and all initial intervals~$[i]$~for~${i \in [n]}$,
\item the freehedron of~\cite{Saneblidze-freehedron} when~$\II$ is the set of all singletons~$\{i\}$, all initial intervals~$[i]$ for~${i \in [n]}$, and all final intervals~$[n] \ssm [i]$~for~${i \in [n-1]}$,
\item the fertilitopes of~\cite{Defant-fertilitopes} when any two intervals of~$\II$ are either nested or disjoint.
\end{itemize}
In fact, it follows from~\cite{BazierMatteChapelierLaguetDouvilleMousavandThomasYildirim,PadrolPaluPilaudPlamondon} that the interval hypergraphic polytopes are precisely the weak Minkowski summands of the classical associahedron (recall that a polytope~$P \subset \R^n$ is a \defn{weak Minkowski summand} of a polytope~$Q \subset \R^n$ if there exists a real~$\lambda \ge 0$ and a polytope~$R \subset \R^d$ such that~$\lambda Q = P + R$).
In other words, interval hypergraphic polytopes are all deformations of the classical associahedron, \ie all polytopes whose normal fan coarsens the sylvester fan.
In contrast, the permutahedron, graphical zonotopes, graph associahedra, nestohedra, multiplihedra and constrainahedra are not interval hypergraphic polytopes.

We obtain the following characterizations, where we assume that~$\{i\} \in \II$ for all~$i \in [n]$ as mentioned earlier.
See \cref{sec:LatticeI,sec:distributive,sec:semidistributive,sec:quotient} and \cref{fig:notLattices,fig:Tamari,fig:distributiveLattices,fig:semidistributiveLattices,fig:notSemidistributiveLattices} for illustrations.

\begin{theoremA}
\label{thm:latticeI}
For an interval hypergraph $\II$, the poset $P_\II$ is a lattice if and only if $\II$ is closed under intersection (\ie $I, J \in \II$ and~$I \cap J \ne \varnothing$ implies~$I \cap J \in \II$).
\end{theoremA}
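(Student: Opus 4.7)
The plan is to prove both directions using the description of the vertices of $\simplex_\II$ as acyclic orientations of $\II$ from~\cite{BenedettiBergeronMachacek}. For an interval hypergraph, such a vertex corresponds to a choice function $f \colon \II \to [n]$ with $f(K) \in K$ for each $K \in \II$, obtained as the index $k \in K$ maximizing $w_k$ for some generic linear functional $w \in \R^n$; cover relations of $P_\II$ correspond to flipping $f$ across a single wall of the normal fan.

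For the necessity direction, I argue contrapositively. Suppose $I, J \in \II$ have nonempty intersection $I \cap J \notin \II$. Since singletons lie in $\II$, this forces (after swapping $I, J$ if needed) $I = [a, c]$ and $J = [b, d]$ with $a < b < c < d$, so that $I \cap J = [b, c]$ has at least two elements. The guiding example is $n = 4$, $\II_0 = \{[1,3], [2,4]\}$ (plus singletons), where the vertices $\b{e}_1 + \b{e}_3$ and $2 \b{e}_2$ of $\simplex_{\II_0}$ admit in $P_{\II_0}$ exactly two incomparable maximal common lower bounds, namely $\b{e}_2 + \b{e}_4$ and $2 \b{e}_3$; the flip that would unite them is forbidden because the mediating interval $[2,3]$ is absent from $\II_0$. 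For general $\II$, I construct two vertices $x, y$ of $\simplex_\II$ whose choice functions restrict to $(a, c)$ and $(b, b)$ on $\{I, J\}$ (extending these to valid vertices via a generic choice of $w$). Their two maximal common lower bounds then restrict to $(b, d)$ and $(c, c)$ on $\{I, J\}$ and remain incomparable in $P_\II$ because no interval in $\II$ mediates the forbidden flip across $[b, c]$.

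For the sufficiency direction, I assume $\II$ is closed under intersection, so that $\II$ is a meet-semilattice under inclusion with $K_1 \wedge_\II K_2 = K_1 \cap K_2$ whenever nonempty, and I aim to construct explicit meet and join operations on $P_\II$. A naive pointwise rule on choice functions, such as $f_{x \wedge y}(K) = \max(f_x(K), f_y(K))$, already fails for the intersection-closure $\II = \{[1,3], [2,4], [2,3]\}$ (plus singletons), since it may produce cyclic (non-vertex) choice functions. The correct meet formula should resolve disagreements between $f_x$ and $f_y$ on $K$ by using how $K$ sits inside the larger intervals of $\II$, exploiting the semilattice structure. A complementary approach is to realize $P_\II$ as a lattice quotient of the Tamari lattice: the inclusion of $\II$ into the complete interval hypergraph induces a surjective poset map from the Tamari lattice onto $P_\II$ by forgetting the choices on non-$\II$ intervals, and intersection-closure should be exactly the condition ensuring that this map is a lattice congruence.

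The \textbf{main obstacle} is the sufficiency direction: both routes above require a delicate combinatorial lemma showing that intersection-closure is precisely the local obstruction to a lattice structure. The necessity direction identifies the crossing obstruction; sufficiency must then show that this is the only obstruction, \ie that intersection-closure of $\II$ makes the explicit meet/join formula (or equivalently the projection from Tamari) well-defined.
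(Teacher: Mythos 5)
Your proposal leaves the sufficiency direction as an acknowledged ``main obstacle,'' and unfortunately neither of the two routes you sketch for it works as stated, so this is a genuine gap rather than a different proof. The Tamari-quotient route is based on a false premise: intersection-closure is \emph{not} equivalent to the forgetting map from the Tamari lattice being a lattice congruence. For instance, $\II = \{1,2,3,123\}$ is closed under intersection and $P_\II$ is a $3$-chain, but in the Tamari pentagon on $\{1,2,3,12,23,123\}$ the fiber of the forgetting map containing the bottom element is $\{(1,2,1),(1,3,1)\}$ while $(2,2,2)$ is a singleton fiber; since $(1,2,1) \le (2,2,2)$ but $(1,3,1) \not\le (2,2,2)$, the upper projection of the fibers is not order-preserving, so the fibers are not a congruence even though $P_\II$ is a lattice. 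This is consistent with \cref{thm:quotientLatticeI}: morphism-type behaviour requires the strictly stronger closure under initial (resp.~final) subintervals, so no congruence-style argument can characterize lattice-ness at the level of intersection-closure. What the paper actually proves is weaker and is the real key lemma you are missing: the fibers of $\Or$ are weak order intervals $[\projDown_A,\projUp_A]$ (\cref{prop:preimageI}), and for intersection-closed $\II$ one has $A \le B$ in $P_\II$ if and only if $\projDown_A \le \projUp_B$ in the weak order (\cref{thm:propertiesI}); this ``quasi lattice map'' property yields $A \join B = \Or_{\projDown_A \join \projDown_B}$ and $A \meet B = \Or_{\projUp_A \meet \projUp_B}$ (\cref{prop:latticeBackward}). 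Alternatively, your first route (an explicit pointwise formula) is viable, but you would have to produce and verify the formula; the paper's version is $\big(\bigJoin_p A_p\big)(I) = \min\big(I \ssm \bigcup_p \bigcup_{J \in \II,\, A_p(J) \in I} {[\min(J), A_p(J)[}\big)$ (\cref{prop:joinLattice}), whose acyclicity and universal property both use intersection-closure.

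On the necessity direction you have identified the same four-orientation configuration as the paper (your restrictions $(b,b),(a,c)$ versus $(b,d),(c,c)$ on $\{I,J\}$ match the paper's $A,B,C,D$), but the two steps you assert are exactly where the work lies. First, one of the four comparisons (the analogue of $B \le C$ in the paper's notation) does \emph{not} follow from a weak order comparison of representative permutations; the paper proves it by exhibiting two permutations $\pi_E,\pi_F$ with $\Or_{\pi_E} = \Or_{\pi_F}$, which uses precisely the hypothesis $I \cap J \notin \II$. Second, ``remain incomparable because no interval mediates the forbidden flip'' is not an argument that no common bound exists strictly between the two pairs: the paper rules out any such $M$ by looking at the first letter of a representative of $M$ lying in $I \cup J$ and contradicting one of the four inequalities via the source characterization (\cref{prop:sourceOrderI}). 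Without these two steps, the necessity direction is also incomplete, though its skeleton is correct.
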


\begin{theoremA}
\label{thm:distributiveLatticeI}
For an interval hypergraph $\II$, the poset $P_\II$ is a distributive lattice if and only if for all~$I, J \in \II$ such that~$I \not\subseteq J$, $I \not\supseteq J$ and~$I \cap J \ne \varnothing$, the intersection~$I \cap J$ is in~$\II$ and is initial or final in any~$K \in \II$ with~$I \cap J \subseteq K$.
\end{theoremA}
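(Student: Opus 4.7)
The plan is to split the argument into two directions. Since a distributive lattice is in particular a lattice, \cref{thm:latticeI} lets us assume throughout that $\II$ is intersection-closed; the question then reduces to analyzing the additional condition that $I \cap J$ be initial or final in any enveloping interval $K$.

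For necessity, I will argue by contrapositive. Starting from a triple $I, J, K \in \II$ with $I, J$ incomparable, $I \cap J \neq \varnothing$, $I \cap J \subseteq K$, but with $\min K < \min(I \cap J)$ and $\max K > \max(I \cap J)$, my goal is to exhibit three acyclic orientations of $\II$ whose meets and joins violate distributivity. The strategy is to pick orientations that agree outside the interval $K$ but differ inside it precisely at the two ``loose ends'' of $K$ surrounding $I \cap J$: the two independent degrees of freedom inside $K$, together with the forced behaviour on $I$ and $J$ through their common intersection, should leave enough room to exhibit a sublattice isomorphic to the pentagon $N_5$, or equivalently three specific elements $x, y, z$ with $x \meet (y \join z) \neq (x \meet y) \join (x \meet z)$.

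For sufficiency, I will invoke Birkhoff's representation theorem and exhibit $P_\II$ as the lattice $J(Q)$ of order ideals of a poset~$Q$. Here $Q$ should be the poset of join-irreducibles of $P_\II$: these correspond to minimal flips from the $\b{\omega}$-minimum acyclic orientation, and can be indexed by certain pairs consisting of an interval $I \in \II$ together with a position inside~$I$. The condition that $I \cap J$ is initial or final in every $K \supseteq I \cap J$ is exactly what guarantees that, for each $K \in \II$, flips near one end of $K$ do not interact through the intersections $I \cap J$ with flips near the other end, so that the join-irreducibles decompose into chains that may be stacked independently. I will then verify that every acyclic orientation is recovered uniquely from the order ideal of join-irreducibles it dominates, and that meets and joins in $P_\II$ correspond to intersections and unions of such ideals.

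The main obstacle will be the sufficiency direction: although the join-irreducible indexing is natural, showing that the hypothesis of the theorem precisely captures the cover relation between join-irreducibles, and forbids the entangled configurations produced in the necessity argument, will require a careful analysis of how elementary flips propagate through nested and crossing intervals. Translating the combinatorial condition on $I$, $J$, $K$ into the algebraic statement that the join operation in $P_\II$ corresponds to a union of order ideals in $Q$ is where the core work of the proof lies.
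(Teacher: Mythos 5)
Your overall architecture does match the paper's: you reduce to intersection-closed $\II$ via \cref{thm:latticeI}, and for sufficiency you aim to realize $P_\II$ as the lattice of lower sets of its poset of join irreducibles, which under the hypothesis is a disjoint union of chains (the paper even concludes $P_\II$ is a product of chains). But as written the proposal defers precisely the steps that constitute the proof. For sufficiency one needs: (i) an explicit description of the join irreducibles --- in the paper these are the orientations $A_j = \Or_{(\mu_j,\dots,j)}$ indexed by single positions $j \in \cJ_\II$, not by pairs (an interval together with a position); the pair-indexed family $\cIJ_\II$ of \cref{subsec:joinIrreducibles} is what appears when the distributivity hypothesis is dropped; (ii) a proof that under the hypothesis these $A_j$ exhaust the join irreducibles, which is the core content (\cref{prop:irreduciblePosetIsomorphism}) and rests on a flip statement (\cref{prop:alwaysFlippableDistributive}) that genuinely uses the ``initial or final in every $K$'' condition; (iii) injectivity of the map sending a lower set to the join of its elements (\cref{prop:injectivityDistributive}, which uses the explicit join formula of \cref{prop:joinLattice}). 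Your sentence that the hypothesis ``is exactly what guarantees'' the chain decomposition and the non-interaction of flips is the statement to be proved, not an argument, and you acknowledge as much; so this direction is a plan, not a proof.

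For necessity, your plan is to exhibit an $N_5$ (or a triple violating the distributive identity) among orientations that agree outside $K$ and differ only at its two loose ends, but no elements are produced and no comparabilities are verified, and it is not clear that the obstruction localizes in that form. The paper proceeds differently and more concretely: using intersection-closedness it normalizes so that $\min(I)=\min(K)$ and $\max(J)=\max(K)$, sets $i \eqdef \min(I)$ and $j \eqdef \min(J \ssm I)$, and shows by a careful analysis of decreasing flips that the single orientation $A \eqdef \Or_{(i,\dots,j)}$ is join irreducible yet satisfies $A \not\le A_j$ for all $j \in \cJ_\II$ (\cref{lem:oneMoreIrreducible}); since $\max(P_\II) = \bigJoin_{j \in \cJ_\II} A_j$, this join irreducible fails to be join prime, contradicting distributivity. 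If you insist on an explicit three-element violation you would still have to carry out an analogous construction and check the required relations via \cref{prop:sourceOrderI}; nothing in the proposal does this work, so both directions currently contain genuine gaps.
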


\begin{theoremA}
\label{thm:semidistributiveLatticeI}
For an interval hypergraph $\II$, the poset $P_\II$ is a join semidistributive lattice if and only if~$\II$ is closed under intersection and for all~$[r,r'], [s,s'], [t,t'], [u,u'] \in \II$ such that ${r < s \le r' < s'}$, $r < t \le s' < t'$, $u < \min(s, t)$ and~$s' < u'$, there is~$[v,v'] \in \II$ such that~$v < s$ and~${s' < v' < t'}$.
A symmetric characterization holds for meet semidistributivity.
\end{theoremA}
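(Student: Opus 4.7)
The plan is to prove both implications by working through the explicit combinatorial model for $P_\II$ and its lattice operations developed in the earlier sections. Since join semidistributivity presupposes the lattice property, \cref{thm:latticeI} reduces the question to hypergraphs $\II$ closed under intersection, so I would fix this assumption from the start. All subsequent arguments should then take place inside the join and meet operations of~$P_\II$, described combinatorially as closures of the interval-indexed inversion data attached to each element, together with the local ``flip'' presentation of the covering relations (one flip per interval of~$\II$).

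For the necessity of the four-interval condition I would argue by contrapositive. Given $[r,r'], [s,s'], [t,t'], [u,u'] \in \II$ satisfying the listed inequalities but admitting no ${[v,v'] \in \II}$ with~$v < s$ and~$s' < v' < t'$, I would hand-craft three elements $x, y, z \in P_\II$ witnessing a semidistributivity failure ${x \vee y = x \vee z > x \vee (y \wedge z)}$. Heuristically, the ``outer'' interval $[u,u']$ forces $x$ high enough that its joins with $y$ and with $z$ collapse; the intervals $[s,s']$ and $[t,t']$ act as the respective flip intervals by which $y$ and $z$ ascend toward~$x \vee y$; and $[r,r']$ furnishes an obstruction in $x$ that would force the meet $y \wedge z$ to flip through an interval straddling~$[s,s']$ but ending strictly inside~$[t,t']$. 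The non-existence of such a~$[v,v']$ in~$\II$ is precisely what prevents~$y \wedge z$ from reaching~$x \vee y$ after being joined with~$x$.

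For the sufficiency I would assume a failure of join semidistributivity and read off the four obstructing intervals. Starting from a minimal triple $x, y, z$ with ${x \vee y = x \vee z > x \vee (y \wedge z)}$, I would look at a covering step inside $[x \vee (y \wedge z),\, x \vee y]$ that cannot be achieved by flipping upward from~$y \wedge z$ via an interval of~$\II$, and extract $[s,s']$ and $[t,t']$ as the flip intervals used to ascend from $y$ and $z$ past their meet, $[u,u']$ as an interval encoding an inversion of $x$ shared with the top, and $[r,r']$ as the interval responsible for the obstruction. The absence of $[v,v']$ in~$\II$ would follow from the very fact that the failure cannot be repaired inside~$\II$. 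The main obstacle throughout is matching the local combinatorics of flips in $P_\II$ to the precise chain of strict inequalities $r < s \le r' < s'$, $r < t \le s' < t'$, $u < \min(s,t)$, and $s' < u'$; all of these should fall out of careful bookkeeping of the inversion closures once the explicit flip description is in hand. The symmetric characterization for meet semidistributivity then follows from the reflection $i \mapsto n+1-i$, which reverses $\b{\omega}$ and hence the lattice $P_\II$.
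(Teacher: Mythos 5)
Your overall architecture (two contrapositive arguments, the reduction to intersection-closed~$\II$ via \cref{thm:latticeI}, and the reflection~$i \mapsto n+1-i$ of \cref{prop:antiIsomorphism} for the meet statement) agrees with the paper, but both halves stop exactly where the real work begins, so what you have is an outline rather than a proof. In the direction ``no~$[v,v']$ implies $P_\II$ not join semidistributive'', you say you would ``hand-craft'' three witnesses, but the crafting is the whole point: the paper's proof (\cref{prop:semidistributiveForward}) must first define an auxiliary index~$j$ as the largest element of~$[u,t'[$ not covered by any~$J \in \II$ with~$J \subseteq [u,t'[$ (removing its minimum), set~$i \eqdef \mu_{jt'}$, take~$A,B,C$ to be images under~$\Or$ of three explicit permutations, check that these permutations are the minima of their fibers so that joins can be computed in the weak order via \cref{prop:latticeBackward}, and then verify~$A \join B = A \join C$ (the only place where the nonexistence of~$[v,v']$ enters) and~$A \ge B \meet C$ via \cref{prop:sourceOrderI}. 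Your heuristic assignment of roles (``$[s,s']$ and~$[t,t']$ as the flip intervals by which $y$ and~$z$ ascend'', ``$[r,r']$ as an obstruction in~$x$'') does not correspond to a construction that is checked to work, and nothing in the proposal indicates how the equality of the two joins, or the strict inequality with~$x \join (y \meet z)$, would actually be established.

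The converse direction has a more serious missing idea. The paper's argument (\cref{prop:semidistributiveBackward}) rests on the complete classification of the join irreducible elements of~$P_\II$ as the orientations~$A_{ij}$ indexed by~$\cIJ_\II$ (\cref{prop:AijJoinIrreducible,prop:joinIrreducibles}), combined with the reduction of \cref{rem:semidistributiveCriterion}: a failure of join semidistributivity is witnessed at a cover relation~$A \lessdot B$ by two distinct \emph{minimal}, hence join irreducible, elements~$A_{ij}$ and~$A_{k\ell}$ with~$A \join A_{ij} = B = A \join A_{k\ell}$. It is only because these irreducibles are known explicitly, with their data~$\mu_{ij}$, $J_{ij}$ and the maximality property of~$i$ in \cref{not:cIJ}, that one can extract the intervals~$U$, $S$, $T$, $R$ with the exact inequalities of the statement, using \cref{prop:isFlipI,prop:isCoverI} together with carefully minimal choices of~$s$, $t'$ and~$r$ (one subcase even requires producing a smaller irreducible~$A_{kj}$ to contradict the minimality of~$A_{ij}$), before the hypothesis yields a~$V$ whose intersection~$T \cap V$ contradicts the minimality of~$t'$. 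Your sketch never mentions join irreducibles, and ``a covering step that cannot be achieved by flipping upward from~$y \meet z$'' does not by itself produce the chain~$r < s \le r' < s'$, $r < t \le s' < t'$, $u < \min(s,t)$, $s' < u'$, nor the final contradiction. Deferring all of this to ``careful bookkeeping of the inversion closures'' is precisely the gap: that bookkeeping \emph{is} the proof.
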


\begin{theoremA}
\label{thm:quotientLatticeI}
For an interval hypergraph $\II$, the poset morphism from the weak order to the poset~$P_\II$ is a meet (resp.~join) semilattice morphism if and only if~$\II$ is closed under initial (resp.~final) subintervals (\ie~$[i,k] \in \II$ implies~$[i,j] \in \II$ (resp.~$[j,k] \in \II$) for any~$1 \le i < j < k \le n$).
\end{theoremA}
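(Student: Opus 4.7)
The plan is to apply the standard criterion: the order-preserving surjection $\pi \colon \fS_n \to P_\II$ is a meet-semilattice morphism if and only if every fiber of $\pi$ is a weak-order interval and the ``minimum of fiber'' map $\pi^\downarrow \colon \fS_n \to \fS_n$ is order-preserving, with the symmetric criterion using $\pi^\uparrow$ characterizing join morphisms. First, I describe $\pi$ combinatorially: the vertex $\pi(\sigma)$ is the rooting $r_\sigma \colon \II \to [n]$ with $r_\sigma(I) \in I$ obtained from the braid cone of $\sigma$, concretely the element of $I$ appearing in the extremal position in the one-line notation of $\sigma$. Two permutations lie in the same fiber of $\pi$ if and only if they induce the same rooting on every $I \in \II$.

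For the $\Leftarrow$ direction in the meet case (the join case being symmetric), assume $\II$ is closed under initial subintervals. I verify the two criterion conditions by analyzing weak-order covers $\sigma \lessdot \sigma s_i$, which swap adjacent values $a < b$ at positions $i, i+1$: such a cover changes the rooting precisely when some $I \in \II$ contains $\{a, b\}$ and satisfies a specific positional condition on its remaining elements in $\sigma$. The key observation is that initial closure immediately truncates any such witness $[p, q]$ to the shorter interval $[p, b] \in \II$, which is also $r$-changing. Exploiting this reduction, I identify the fiber of $\pi(\sigma)$ as the set of linear extensions of the partial order on $[n]$ encoded by $r_\sigma$ (hence a single weak-order interval) and track how its minimum evolves along the cover to obtain order-preservation of $\pi^\downarrow$.

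For the $\Rightarrow$ direction, I argue by contrapositive: supposing $[i, k] \in \II$ but $[i, j] \notin \II$ for some $i < j < k$, I construct two permutations $\sigma, \tau$ sharing a common rooting on $[i, k]$ but differing on the internal arrangement within $[i, j]$ (a distinction $\II$ cannot detect). A direct inversion-set computation then shows that $\sigma \wedge \tau$ in the weak order collapses further than $\pi(\sigma) \wedge \pi(\tau)$ in $P_\II$, yielding $\pi(\sigma \wedge \tau) < \pi(\sigma) \wedge \pi(\tau)$ and hence the failure of the meet-morphism equation. The main obstacle is the forward direction, specifically the cover-by-cover bookkeeping: several intervals of $\II$ may have their roots change simultaneously at one cover, and initial closure is precisely the condition needed to synchronize these changes so that $\pi^\downarrow$ remains monotone; the backward construction is comparatively mechanical once the offending intervals $[i, k]$ and $[i, j]$ are identified.
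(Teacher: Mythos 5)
Your overall plan coincides with the paper's: use the criterion that a surjective poset morphism is a meet semilattice morphism when its fibers are intervals with unique minima (which for interval hypergraphs is Proposition~3.8, valid without any closure hypothesis) and the map~$\projDown$ is order preserving, and prove necessity by a contrapositive construction around a missing initial subinterval. The necessity sketch is essentially the paper's argument (the paper takes $k=j+1$, the four permutations $ijkX, jikX, kijX, kjiX$, and compares $\Or_\sigma\meet\Or_\tau$ with $\Or_{\sigma\meet\tau}$ using the meet formula $A\meet B=\Or_{\projUp_A\meet\projUp_B}$ of Proposition~4.10); to make yours complete you would also need either that formula or an explicit common lower bound witnessing $\pi(\sigma)\meet\pi(\tau)\ne\pi(\sigma\meet\tau)$, plus the observation that being a meet semilattice morphism already forces $P_\II$ to be a lattice (finite poset with a top), so closure under intersection may be assumed when manufacturing the witness intervals.

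The genuine gap is in the sufficiency direction. The entire difficulty of the theorem is the proof that $\sigma\le\tau$ implies $\projDown(\sigma)\le\projDown(\tau)$, and your proposal only asserts it (``track how its minimum evolves along the cover''). The ``key observation'' you offer --- that a witness interval $[p,q]$ containing the swapped values $a<b$ can be truncated to $[p,b]\in\II$, which is again a witness --- is true and is indeed the mechanism by which initial closure enters, but it does not by itself yield monotonicity. Concretely, by Proposition~3.7 the claim $\projDown(\Or_\sigma)\le\projDown(\Or_\tau)$ amounts to showing that $a'\more_{\Or_\sigma}b'$ implies $a'\more_{\Or_\tau}b'$ for all $a'<b'$, and when the flip changes the source of some $I$ from $a$ to $b$, the relations $a\less h$ ($h\in I$) used in a chain for $\Or_\sigma$ are not available for $\Or_\tau$ (one only gets $b\less_{\Or_\tau}h$ and $b\less_{\Or_\tau}a$, which point the wrong way); so preservation of inversions genuinely has to be argued. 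The paper handles this by an induction along the weak order: given a cover $\sigma\lessdot\tau$ with $\tau\ne\projDown(\tau)$, it picks $\tau'\lessdot\tau$ in the same fiber and performs a three-case analysis on the positions of the two descents ($|p-q|>1$, $p=q-1$, $p=q+1$), constructing intermediate permutations $\sigma',\sigma''$ and using initial closure exactly in the truncation step to show $\Or_\sigma=\Or_{\sigma'}=\Or_{\sigma''}$ before invoking the induction hypothesis. Some argument of this kind (or an equivalent direct proof that flips preserve the forced inversions when $\II$ is closed under initial subintervals) is missing from your proposal, so as written the forward implication is not established.
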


For instance, among the four above-mentioned families of interval hypergraphic polytopes, we recover that  the Pitman-Stanley polytope and all fertilitopes yield distributive lattices, the associahedron yields a semidistributive (but not distributive) lattice which is a quotient of the weak order, while the freehedron is not even a lattice (this was actually the motivation for~\cite{PilaudPoliakova} to construct alternative realizations of the skeleton of the freehedron).

Once \cref{thm:latticeI} is established, an important step for \cref{thm:distributiveLatticeI,thm:semidistributiveLatticeI} is to understand join irreducible elements of~$P_\II$.
\cref{subsec:joinIrreducibles} provides a combinatorial description of the join irreducible elements of the lattice~$P_\II$ for an arbitrary interval hypergraph~$\II$ closed under intersections.
To prepare this slightly technical description, we already describe some join irreducible elements of~$P_\II$ in \cref{subsec:someJoinIrreducibles}, which happen to be all join irreducible elements of~$P_\II$ under the condition of~\cref{thm:distributiveLatticeI}.

The paper is organized as follows.
In \cref{sec:HP}, we recall basic properties of hypergraphic polytopes, we define hypergraphic posets, and we recall the natural poset morphism from the weak order on permutations to the hypergraphic poset~$P_\II$.
In \cref{sec:IHP}, we develop specific properties of interval hypergraphic polytopes, in particular a simple characterization of their vertices and a global description of the relations in their hypergraphic posets.
In \cref{sec:LatticeI}, we characterize the interval hypergraphs~$\II$ for which the interval hypergraphic poset~$P_\II$ is a lattice, proving \cref{thm:latticeI}.
In \cref{sec:distributive}, we describe a family of join irreducible elements of~$P_\II$ and we characterize the interval hypergraphs~$\II$ for which~$P_\II$ is a distributive lattice, proving \cref{thm:distributiveLatticeI}.
In \cref{sec:semidistributive}, we describe all join irreducible elements of~$P_\II$ and we characterize the interval hypergraphs~$\II$ for which~$P_\II$ is a join (or meet) semidistributive lattice, proving \cref{thm:semidistributiveLatticeI}.
In \cref{sec:quotient}, we characterize the interval hypergraphs~$\II$ for which the poset morphism from the weak order on permutations to~$P_\II$ is a join (or meet) semilattice morphism, proving \cref{thm:quotientLatticeI}.

\setcounter{theoremA}{0}


\section{Hypergraphic posets}
\label{sec:HP}


\subsection{Hypergraphic polytopes}
\label{subsec:D_H}

A \defn{hypergraph} $\HH$ on $[n] \eqdef \{1, \dots, n\}$ is a collection of  subsets of~$[n]$.
By convention, we always assume that~$\HH$ contains all singletons~$\{i\}$ for~$i\in [n]$.
The \defn{hypergraphic polytope}~$\simplex_\HH$ is the Minkowski sum
\[
\simplex_\HH \eqdef \sum_{H\in \HH} \simplex_H\,,
\]
where $\simplex_H$ is the simplex given by the convex hull of the points $\b{e}_h \in \R^n$ for~$h \in H$.

\begin{example}
\label{exm:DH1}
For the hypergraph $\HH=\{ 1, 2, 3, 4, 123, 134 \}$,
we  have
\[
\begin{array}{ccc}
 \begin{tikzpicture}[scale=1,baseline=.5cm]
	\node (1) at (0.6,1.0) {$\scriptstyle e_1$};
	\node (2) at (-.2,-.2) {$\scriptstyle e_2$};
	\node (3) at (1.3,-.2) {$\scriptstyle e_3$};
	\draw [fill=blue!40] (0,0) -- (.6,.75) -- (1.2,0) --(0,0) ;   
\end{tikzpicture} \quad &
 \begin{tikzpicture}[scale=1,baseline=.5cm]
	\node (1) at (0.6,1.0) {$\scriptstyle e_1$};
	\node at (1.3,-.2) {$\scriptstyle e_3$};
	\node at (2.4,.5) {$\scriptstyle e_4$};
	\draw [fill=green!20] (1.2,0) -- (2.2,.5 )--(.6,.75)--(1.2,0); 
\end{tikzpicture} \quad &
\begin{tikzpicture}[scale=1,baseline=.5cm]
	\draw [color=gray!10,fill=blue!2] (1.6,1.25)--(1,.5)--(2.2,.5)-- (1.6,1.25); 
	\draw [color=gray!10,fill=green!5]  (0,0)--(1,.5)--(-.6,.75)--(0,0);
	\draw [dotted,color=green] (0,0)--(1,.5)--(-.6,.75);
	\draw [dotted,color=blue] (1.6,1.25)--(1,.5)--(2.2,.5);
	\draw (0,1.5)-- (1.6,1.25) -- (2.2,.5) -- (1.2,0) --(0,1.5) ; 
	\draw (0,0)--(-.6,.75)--(0,1.5)-- (1.2,0) --(0,0); 
\end{tikzpicture}\\
\blue{\simplex_{123}}& \green{\simplex_{134}} & \simplex_\HH = \simplex_1 + \simplex_2 + \simplex_3 + \simplex_4 + \blue{\simplex_{123}} + \green{\simplex_{134}}\\
\end{array}
\]
which is a 3-dimensional polytope sitting in $\R^4$.
Note that as~$n \le 9$ in all our examples, we simplify notations and write~$123$ for the set~$\{1,2,3\}$.
\end{example}

\begin{remark}
\label{rem:single}
\enlargethispage{.2cm}
Note that the singleton hyperedges are irrelevant for our purposes.
Namely, adding to~$\HH$ the hyperedge~$\{i\}$ for some~$i \in [n]$ just translates the polytope~$\simplex_\HH$ in the direction~$\b{e}_i$, which does not affect the face structure of the polytope.
For our conditions on hypergraphs of \cref{thm:latticeI,thm:distributiveLatticeI}, it is convenient for us to assume that $\{i\} \in \HH$ for all $i \in [n]$.
When quoting results from~\cite{BenedettiBergeronMachacek}, the reader will have to be mindful that they took the opposite convention.
\end{remark}


\subsection{Acyclic orientations, increasing flips, and hypergraphic posets} 
\label{subsec:P_H}

We now recall from \cite[Thm.~2.18]{BenedettiBergeronMachacek} a combinatorial model for the graph~$(V_\HH, E_\HH)$ of~$\simplex_\HH$.
As we are only interested in the $1$-skeleton of~$\simplex_\HH$, we simplify some of the general definitions of~\cite{BenedettiBergeronMachacek} which were designed to deal with all faces of~$\simplex_\HH$.

\begin{definition}
\label{def:acyclicOrientation}
An \defn{orientation} of~$\HH$ is a map~$O$ from~$\HH$ to~$[n]$ such that~$O(H) \in H$ for all~${H \in \HH}$.
Equivalently, we often represent the orientation~$O$ as the set of pairs~$\set{(O(H),H)}{H \in \HH}$.
The orientation~$O$ is \defn{acyclic} if there is no~$H_1, \dots, H_k$ with~$k \ge 2$ such that~$O(H_{i+1}) \in H_i \ssm \{O(H_i)\}$ for~$i \in [k-1]$ and~$O(H_1) \in H_k \ssm \{O(H_k)\}$.
\end{definition}

\begin{remark}
The singleton~$\{i\}$ of~$\HH$ is always oriented $O(\{i\})=i$, and plays no role in determining  the acyclicity of~$O$ since $\{i\} \ssm \{O(\{i\})\} = \varnothing$.
We will therefore omit the singletons when describing or drawing orientations (see also~\cref{exm:intervalHypergraph}).
If we list the non-singleton elements $(H_1,H_2,...,H_k)$ of $\HH$ in some fixed order, it is then convenient to describe an orientation~$O$ as the tuple $O=(O(H_1),O(H_2),\ldots,O(H_k))$. 
\end{remark}

\begin{example}
\label{exm:DH2}
Using $\HH=\{ 1, 2, 3, 4, 123, 134 \}$ as in~\cref{exm:DH1}, we order the two non-singleton $(123,134)$.
There are $9$ orientations of~$\HH$, $7$ of which are acyclic as displayed in~\cref{fig:Orientation1}. 
For instance, the orientation $(O(123), O(134)) = (1,3)$ is cyclic since $O(123) = 1 \in 134$ and ${O(134) = 3 \in 123}$ is a cycle with $k = 2$.
\begin{figure}
	\centerline{
	\begin{tabular}{c@{\qquad}c}
		\begin{tikzpicture}[scale=2,baseline=.5cm]
		\draw [color=gray!10,fill=blue!2] (1.6,1.25)--(1,.5)--(2.2,.5)-- (1.6,1.25); 
		\draw [color=gray!10,fill=green!5]  (0,0)--(1,.5)--(-.6,.75)--(0,0);
		\node (23) at (0,0) {$\scriptstyle (2,3)$}; 		
		\node (21) at (-.6,.75) {$\scriptstyle (2,1)$};	
		\node (24) at (1,.5) {$\scriptstyle (2,4)$}; 	
		\node (34) at (2.2,.5) {$\scriptstyle (3,4)$};
		\node (14) at (1.6,1.25) {$\scriptstyle (1,4)$};	
		\node (33) at (1.2,0) {$\scriptstyle (3,3)$};	
		\node (11) at (0,1.5) {$\scriptstyle (1,1)$};		
		\path[color=red, ->] (11) edge node[fill=white,inner sep=2pt, font=\tiny ] {$  12 $} (21);
		\path[color=red, ->] (11) edge node[fill=white,inner sep=2pt, font=\tiny ] {$  13 $} (33);
		\path[color=red, ->] (11) edge node[fill=white,inner sep=2pt, font=\tiny ] {$  14 $} (14);
		\path[color=red, ->] (14) edge node[fill=white,inner sep=2pt, font=\tiny ] {$  13 $} (34) ;
		\path[dotted,color=red, ->] (14) edge node[fill=white,inner sep=2pt, font=\tiny ] {$  12 $} (24)  ;
		\path[dotted,color=red, ->] (24) edge node[fill=white,inner sep=2pt, font=\tiny ] {$  23 $} (34)  ;
		\path[dotted,color=red, ->] (21) edge node[fill=white,inner sep=2pt, font=\tiny ] {$  14$} (24);
		\path[color=red, ->] (21) edge node[fill=white,inner sep=2pt, font=\tiny ] {$  13$} (23);
		\path[dotted,color=red, <-]  (24) edge node[fill=white,inner sep=2pt, font=\tiny ] {$  34$} (23);
		\path[color=red, ->]  (23) edge node[fill=white,inner sep=2pt, font=\tiny ] {$  23$} (33);
		\path[color=red, ->]  (33) edge node[fill=white,inner sep=2pt, font=\tiny ] {$  34$} (34);
		\end{tikzpicture}
	&
		\begin{tikzpicture}[scale=1,baseline=.5cm]
		\node (11) at (0,0) {$\scriptstyle (1,1)$};
		\node (21) at (-1,1) {$\scriptstyle (2,1)$};	
		\node (23) at (-1,2) {$\scriptstyle (2,3)$}; 		
		\node (14) at (1,1.5) {$\scriptstyle (1,4)$};	
		\node (24) at (0,3) {$\scriptstyle (2,4)$}; 	
		\node (33) at (-2,3) {$\scriptstyle (3,3)$};
		\node (34) at (-1,4) {$\scriptstyle (3,4)$};	
		\draw (11)--(21);
		\draw (11)--(14);
		\draw (21)--(23);
		\draw (23)--(24);
		\draw (14)--(24);
		\draw (23)--(33);
		\draw (33)--(34);
		\draw (24)--(34);
		\end{tikzpicture}
	\\
	$\Delta_\HH$ & $P_\HH$\\
	\end{tabular}
	}
	\caption{The polytope $\Delta_{\HH}$ for $\HH=\{ 1, 2, 3, 4, 123, 134 \}$ has seven vertices corresponding to the acyclic orientations of $\HH$ and eleven (oriented) edges corresponding to the increasing flips between these orientations.
	The poset $P_\HH$ is the transitive closure of the increasing flip graph.}
	\label{fig:Orientation1}
\end{figure}
\end{example}

\begin{definition}
\label{def:flip}
Two orientations~$O \ne O'$ of~$\HH$ are related by an \defn{increasing flip} if there exist~${1 \le i < j \le n}$ such that for all~$H \in \HH$, 
\begin{itemize}
\item if~$O(H) \ne O'(H)$, then~$O(H) = i$ and~$O'(H) = j$, and
\item if~$\{i,j\} \subseteq H$, then~$O(H) = i \iff O'(H) = j$.
\end{itemize}
We denote such a flip by~\flip{O}{i}{j}{O'}.
\end{definition}

\begin{example}
\label{exm:DH3}
\cref{fig:Orientation1} shows all the increasing flips between the acyclic orientations of the hypergraph $\HH=\{ 1, 2, 3, 4, 123, 134 \}$ of \cref{exm:DH1,exm:DH2}.
For instance, \flip{(2,3)}{2}{3}{(3,3)} indicates that there is an increasing flip from  $O=(2,3)$ to $O'=(3,3)$ with $i=2<3=j$.
\end{example}

The following correspondance was already observed in~\cite[Thm.~2.18]{BenedettiBergeronMachacek} (it even extends to all faces of~$\simplex_\HH$, but we do not need this level of generality in this paper).
We provide an alternative short proof for convenience.

\begin{proposition}[{\cite[Thm.~2.18]{BenedettiBergeronMachacek}}]
\label{prop:Hgraph}
The graph of the hypergraphic polytope~$\simplex_\HH$ oriented in the direction~$\b{\omega} \eqdef (n, n-1, \dots, 2, 1) - (1, 2, \dots, n-1, n) = (n-3, n-1, \dots, 1-n, 3-n)$ is isomorphic to the increasing flip graph on acyclic orientations of~$\HH$.
\end{proposition}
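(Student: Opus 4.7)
The plan is to analyze the normal fan of $\simplex_\HH$, using that it is the common refinement of the normal fans of its Minkowski summands~$\simplex_H$. For each orientation~$O$ of~$\HH$, set $\b{v}(O) \eqdef \sum_{H \in \HH} \b{e}_{O(H)}$ and $C_O \eqdef \bigcap_{H \in \HH} \set{\b{c} \in \R^n}{c_{O(H)} \ge c_h \text{ for all } h \in H}$. The standard description of Minkowski sums says that every vertex of~$\simplex_\HH$ is of the form~$\b{v}(O)$, with normal cone~$C_O$, and that $\b{v}(O)$ is in fact a vertex precisely when $C_O$ has non-empty interior.

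First I would establish the vertex-orientation bijection. Associate to~$O$ the directed graph~$D_O$ on~$[n]$ with arcs $O(H) \to h$ for each $H \in \HH$ and $h \in H \ssm \{O(H)\}$; unraveling \cref{def:acyclicOrientation}, a directed cycle in~$D_O$ is precisely a cycle certifying that~$O$ is cyclic. Hence $C_O$ has non-empty interior if and only if~$D_O$ is acyclic, since a topological order of~$D_O$ places a generic~$\b{c}$ in the interior of~$C_O$, while a directed cycle in~$D_O$ forces an impossible chain of strict inequalities $c_{v_1} > \dots > c_{v_k} > c_{v_1}$.

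Next I would characterize edges. Two vertices~$\b{v}(O)$ and~$\b{v}(O')$ are adjacent if and only if $C_O \cap C_{O'}$ is a codimension-one face of both cones. For any $\b{c} \in C_O \cap C_{O'}$ and any hyperedge~$H$ with $O(H) \ne O'(H)$, combining $c_{O(H)} \ge c_{O'(H)}$ (from~$C_O$) and $c_{O'(H)} \ge c_{O(H)}$ (from~$C_{O'}$) forces the equality $c_{O(H)} = c_{O'(H)}$. Codimension one then requires all such forced equalities to collapse to a single linear relation $c_i = c_j$, and the full-dimensionality of~$C_O$ forbids two differing hyperedges with opposite swap directions (otherwise $c_i = c_j$ would already hold inside~$C_O$). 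Taking $i < j$, this yields a non-empty set~$\cal{F}$ of hyperedges containing~$\{i, j\}$ on which $O(H) = i$ and $O'(H) = j$, with $O(H) = O'(H)$ otherwise. A short perturbation across the wall $c_i = c_j$ further rules out $O(H) = j$ or $O'(H) = i$ whenever $H \supseteq \{i,j\}$, yielding precisely the biconditional of \cref{def:flip}. Hence edges of~$\simplex_\HH$ correspond bijectively to increasing flips between acyclic orientations.

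Finally, from $\b{v}(O') - \b{v}(O) = |\cal{F}|(\b{e}_j - \b{e}_i)$ and the formula for~$\b{\omega}$, the sign of $\dotprod{\b{\omega}}{\b{v}(O') - \b{v}(O)}$ is determined by the order of~$i$ and~$j$, so orienting the polytope edges by~$\b{\omega}$ matches the direction of the increasing flip, completing the claimed isomorphism of oriented graphs. The main obstacle is the codimension-one analysis in the edge step: one must show that all the forced equalities collapse to the single hyperplane $c_i = c_j$ and then extract the precise condition ``$\{i,j\} \subseteq H \Rightarrow (O(H) = i \iff O'(H) = j)$'' of \cref{def:flip}. Everything else reduces to routine normal-fan bookkeeping.
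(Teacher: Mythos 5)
Your proposal is correct and follows essentially the same route as the paper's own (deliberately short) proof: both identify vertices of~$\simplex_\HH$ with acyclic orientations via the Minkowski-sum/normal-fan description and read off edges as flips in the root directions~$\pm(\b{e}_i - \b{e}_j)$, with your write-up merely filling in the normal-cone, acyclicity-via-digraph, and wall-crossing details that the paper leaves implicit. The one point neither argument spells out is the converse inclusion --- that every increasing flip between acyclic orientations is actually realized by an edge (a common facet of the two normal cones) --- but your sketch is at least as complete as the paper's on this, so it is not a gap relative to the paper's proof.
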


\begin{proof}
Recall that the face of a Minkowski sum~$\sum_i P_i$ minimizing a direction~$\b{v}$ is the Minkowski sum of the faces of the summands~$P_i$ minimizing~$\b{v}$.

The vertex of~$\simplex_H$ minimizing a generic direction~$\b{v}$ is~$\b{e}_i$ for~$i \in H$ such that~$\b{v}_i = \min\set{\b{v}_h\!}{\!h \!\in\! H}$.
An acyclic orientation of~$\HH$ corresponds to the choice of one vertex in each~$\simplex_H$, and the orientation is acyclic if and only if this choice corresponds to a generic orientation~$\b{v}$, hence to a vertex of~$\simplex_\HH$.

The edges of~$\simplex_H$ are oriented in the directions~$\b{e}_i-\b{e}_j$ for~$i,j \in H$.
The edges of~$\simplex_\HH$ are thus also oriented by $\b{e}_i-\b{e}_j$, and thus correspond to pairs of acyclic orientations which differ by~a~flip.
\end{proof}

Finally, the main objects of this paper are the following posets.

\begin{definition}
The \defn{hypergraphic poset}~$P_\HH$ is the transitive closure of the increasing flip graph on acyclic orientations of~$\HH$.
\end{definition}

\begin{example}
\label{exm:DH4}
For the hypergraph~$\HH=\{ 1, 2, 3, 4, 123, 134 \}$ of \cref{exm:DH1,exm:DH2,exm:DH3}, the poset  $P_\HH$ is represented in~\cref{fig:Orientation1}\,(right).
\end{example}

\begin{remark}
\label{rem:edgeNotCover}
As seen in~\cref{fig:Orientation1} the edges in $E_\HH$ are not necessarily cover relations in~$P_\HH$.
The simplest case is given by
$\HH=\{ 1, 2, 3, 123 \}$, 
whose hypergraphic polytope~is
\[
	\simplex_\HH = \simplex_1 + \simplex_2 + \simplex_3 + \simplex_{123} =
	\begin{tikzpicture}[scale=.7,baseline=.0cm]
		\draw [color=gray!10,fill=blue!2] (0,-1)--(0,1)--(1.632,0)--(0,-1) ; 
		\node (a) at (0,-1) {$\scriptstyle (1)$};
		\node (b) at (1.632,0) {$\scriptstyle (2)$};
		\node (c) at (0,1) {$\scriptstyle (3)$};
		\path[color=red, ->] (a) edge node[fill=white,inner sep=1pt, font=\tiny ] {$ \scriptscriptstyle 13 $} (c);
		\path[color=red, ->] (a) edge node[fill=white,inner sep=1pt, font=\tiny ] {$\scriptscriptstyle  12 $} (b);
		\path[color=red, ->] (b) edge node[fill=white,inner sep=1pt, font=\tiny ] {$ \scriptscriptstyle 23 $} (c);
	\end{tikzpicture}
\]
and the left edge~$\flip{(1)}{1}{3}{(3)}$ of $\simplex_\HH$ is not a cover relation of~$P_\HH$.
\end{remark}

We conclude with an elementary yet relevant symmetry.

\begin{proposition}
\label{prop:antiIsomorphism}
For~$x \in [n]$, define~$x^\leftrightarrow \eqdef n-x+1$.
For~$H \subseteq [n]$, define~$H^\leftrightarrow \eqdef \set{h^\leftrightarrow}{h \in H}$.
For an hypergraph~$\HH$, define~$\HH^\leftrightarrow \eqdef \set{H^\leftrightarrow}{H \in \HH}$.
For an orientation~$O$ of~$\HH$, define the orientation~$O^\leftrightarrow$ of~$\HH^\leftrightarrow$ by~$O^\leftrightarrow(H^\leftrightarrow) \eqdef O(H)^\leftrightarrow$.
Then the map~$A \mapsto A^\leftrightarrow$ is an anti-isomorphism from~$P_\HH$ to~$P_{\HH^\leftrightarrow}$.
\end{proposition}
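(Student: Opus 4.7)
The plan is to verify directly that $(\cdot)^\leftrightarrow$ is a bijection between acyclic orientations of $\HH$ and those of $\HH^\leftrightarrow$, and that it reverses the increasing flip graph. Since $P_\HH$ is by definition the transitive closure of the increasing flip graph, reversing edges in that graph yields the order-reversal on the transitive closure, hence an anti-isomorphism between $P_\HH$ and $P_{\HH^\leftrightarrow}$.

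First I would observe that $(\cdot)^\leftrightarrow$ is a self-inverse map at every level: on $[n]$ because $n-(n-x+1)+1 = x$, and then lifted set-wise to subsets, hypergraphs, and orientations. In particular, it induces a bijection between orientations of $\HH$ and orientations of $\HH^\leftrightarrow$. Next I would check preservation of acyclicity: a sequence $H_1,\dots,H_k$ witnesses a cycle of $O$ in $\HH$ (meaning $O(H_{t+1}) \in H_t \ssm \{O(H_t)\}$ for $t \in [k-1]$ and $O(H_1) \in H_k \ssm \{O(H_k)\}$) if and only if $H_1^\leftrightarrow,\dots,H_k^\leftrightarrow$ witnesses a cycle of $O^\leftrightarrow$ in $\HH^\leftrightarrow$, since membership and equality are preserved by $(\cdot)^\leftrightarrow$. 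So the bijection restricts to acyclic orientations.

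The core of the argument is the behavior on increasing flips. Suppose \flip{O}{i}{j}{O'} is an increasing flip in $\HH$ with $i < j$. Set $i' \eqdef j^\leftrightarrow$ and $j' \eqdef i^\leftrightarrow$; since $i < j$ we have $i' < j'$. I would then verify that $\flip{O'^\leftrightarrow}{i'}{j'}{O^\leftrightarrow}$ is an increasing flip in $\HH^\leftrightarrow$ by checking the two conditions of \cref{def:flip} on each $H^\leftrightarrow \in \HH^\leftrightarrow$. For the first, whenever $O'^\leftrightarrow(H^\leftrightarrow) \ne O^\leftrightarrow(H^\leftrightarrow)$ we have $O(H) \ne O'(H)$, so $O(H) = i$ and $O'(H) = j$, hence $O'^\leftrightarrow(H^\leftrightarrow) = i'$ and $O^\leftrightarrow(H^\leftrightarrow) = j'$. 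For the second, $\{i',j'\} \subseteq H^\leftrightarrow$ is equivalent to $\{i,j\} \subseteq H$, and the equivalence $O(H) = i \iff O'(H) = j$ translates directly into $O'^\leftrightarrow(H^\leftrightarrow) = i' \iff O^\leftrightarrow(H^\leftrightarrow) = j'$. Applying the same argument to $\HH^\leftrightarrow$ and using the involution shows the correspondence is a bijection between increasing flips of $\HH$ and reversed increasing flips of $\HH^\leftrightarrow$.

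Taking transitive closures then gives the anti-isomorphism. The only mildly delicate point is the index bookkeeping in the flip verification, where one must remember that the involution on $[n]$ reverses the linear order, so that the ``small index'' $i$ of the original flip becomes the ``large index'' $j'$ after applying $(\cdot)^\leftrightarrow$; once this is set up, every clause of \cref{def:flip} matches tautologically.
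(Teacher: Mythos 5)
Your proof is correct: the verification that $(\cdot)^\leftrightarrow$ is an involution, preserves acyclicity, and sends the increasing flip \flip{O}{i}{j}{O'} to the increasing flip \flip{O'^\leftrightarrow}{j^\leftrightarrow}{i^\leftrightarrow}{O^\leftrightarrow} (so that taking transitive closures reverses the order) is exactly the routine argument intended by the paper, which simply declares the proposition ``Straightforward.'' Nothing is missing.
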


\begin{proof}
Straightforward.
\end{proof}


\subsection{Surjection map} 
\label{subsec:surjection}

The hypergraphic polytope~$\simplex_\HH$ is a deformed permutahedron (\aka generalized permutahedron~\cite{Postnikov, PostnikovReinerWilliams}, \aka polymatroid~\cite{Edmonds}).
This means that the normal fan of~$\simplex$ coarsens the normal fan of the permutahedron.
Hence, there is a natural surjection from the faces of the permutahedron to the faces of~$\simplex_\HH$, which was described in detail in~\cite[Lem.~2.9]{BenedettiBergeronMachacek}.
Here, we focus on the surjection~$\Or$ from the permutations of $[n]$ to the acyclic orientations of~$\HH$.

\begin{definition}
\label{def:surjection}
For a permutation~$\pi$ of~$[n]$, the orientation~$\Or_\pi$ of~$\HH$ is defined for all~$H \in \HH$ by
\[
\Or_\pi(H) \eqdef  \pi\big(\min\set{j}{\pi(j)\in H}\big).
\]
\end{definition}

\begin{proposition}[{\cite[Lem.~2.9]{BenedettiBergeronMachacek}}]
For any hypergraph~$\HH$ on~$[n]$,
\begin{itemize}
\item the map~$\Or$ is a surjection from the permutations of~$[n]$ to the acyclic orientations of~$\HH$,
\item two acyclic orientations~$A,B$ of~$\HH$ are related by a flip if and only if there are permutations~$\pi_A, \pi_B$ of~$[n]$ which differ by a simple transposition such that~$\Or_{\pi_A} = A$ and~$\Or_{\pi_B} = B$.
\end{itemize}
In other words, the graph~$(V_\HH, E_\HH)$ of~$\simplex_\HH$ is isomorphic to the graph obtained by contracting the fibers of~$\Or$ in the graph of the permutahedron.
\end{proposition}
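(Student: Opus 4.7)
The plan is to pass to an auxiliary digraph on~$[n]$ associated to each orientation. Namely, given an acyclic orientation~$A$ of~$\HH$, define a relation~$\to_A$ on~$[n]$ by~$x \to_A y$ if there exists~$H \in \HH$ with~$A(H) = x$ and~$y \in H \ssm \{x\}$. Unpacking \cref{def:acyclicOrientation}, the orientation~$A$ is acyclic if and only if~$\to_A$ is acyclic as a digraph on~$[n]$, so~$\to_A$ admits topological sorts. For any linear extension~$\pi$ of~$\to_A$ and any~$H \in \HH$, the element~$A(H)$ is the unique source of~$H$ under~$\to_A$ restricted to~$H$ (since~$A(H) \to_A h$ for every~$h \in H \ssm \{A(H)\}$), hence~$A(H)$ is the first element of~$H$ to appear in~$\pi$, giving~$\Or_\pi(H) = A(H)$. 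This establishes the well-definedness and surjectivity of~$\Or$.

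For the second bullet, the easy direction is a direct computation: if~$\pi_B$ is obtained from~$\pi_A$ by swapping the values~$a < b$ at consecutive positions~$p, p+1$, then~$\Or_{\pi_A}$ and~$\Or_{\pi_B}$ agree on every hyperedge~$H$ except those with~$\{a, b\} \subseteq H$ and with no element of~$H$ appearing before position~$p$ in~$\pi_A$; on such exceptional~$H$ one has~$\Or_{\pi_A}(H) = a$ and~$\Or_{\pi_B}(H) = b$. This matches \cref{def:flip} with~$i = a$ and~$j = b$.

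For the converse direction, assume an increasing flip from~$A$ to~$B$ with values~$i < j$. The key claim is that no~$x \in [n] \ssm \{i, j\}$ satisfies~$i \to_A^* x \to_A^* j$. Suppose otherwise, and take a simple $\to_A$-path~$i = x_0 \to_A x_1 \to_A \cdots \to_A x_k = j$ with~$k \ge 2$, arising from hyperedges~$H_0, \dots, H_{k-1}$ with~$A(H_\ell) = x_\ell$ and~$x_{\ell+1} \in H_\ell$. Since~$x_\ell \ne i$ for~$\ell \ge 1$, \cref{def:flip} forces~$B(H_\ell) = x_\ell$ for all~$\ell \ge 1$, hence~$x_1 \to_B x_2 \to_B \cdots \to_B j$. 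A case split on~$B(H_0)$ now gives either~$i \to_B x_1 \to_B^* j$ (if~$B(H_0) = i$), which combined with~$j \to_B i$ coming from any hyperedge witnessing the flip yields a~$\to_B$-cycle, or~$j \to_B x_1 \to_B^* j$ (if~$B(H_0) = j$, using~$x_1 \ne j$ from path simplicity), another~$\to_B$-cycle; both contradict the acyclicity of~$B$. With the claim in hand, contracting~$\{i, j\}$ to a single vertex in~$\to_A$ leaves an acyclic DAG, whose topological sort expands to a linear extension~$\pi_A$ of~$\to_A$ with~$j$ immediately after~$i$. Setting~$\pi_B$ to be the swap of~$i$ and~$j$ in~$\pi_A$, the easy direction yields~$\Or_{\pi_B} = B$. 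The main obstacle is precisely this no-intermediate claim, the only place where local flip information must be combined with global acyclicity of both~$A$ and~$B$; the final ``in other words'' assertion is then a direct reformulation via \cref{prop:Hgraph}.
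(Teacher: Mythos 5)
Your proof takes a genuinely different route from the paper's: the paper does not prove this proposition at all (it is quoted from \cite[Lem.~2.9]{BenedettiBergeronMachacek}), and the closest argument it does give, the proof of \cref{prop:Hgraph}, is geometric, working with normal fans of the Minkowski summands rather than with permutations. Your auxiliary digraph $\to_A$ is precisely the set of cover relations generating the order $\less_A$ of \cref{subsec:surjection}, so your first bullet in effect proves \cref{lem:prepi} (which the paper records as straightforward); the equivalence ``$A$ acyclic $\iff$ $\to_A$ acyclic'' and the linear-extension argument are correct, and the payoff is a self-contained combinatorial proof that never touches the polytope. One small point there: surjectivity as stated also requires that $\Or_\pi$ be acyclic for an \emph{arbitrary} permutation $\pi$; you assert this as ``well-definedness'' without an argument, though in your framework it is immediate, since by \cref{def:surjection} every edge of $\to_{\Or_\pi}$ points forward in $\pi$, so $\to_{\Or_\pi}$ has no directed cycle. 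Your easy direction of the second bullet is a correct computation.

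The one real issue is in the converse direction, at the step ``contracting $\{i,j\}$ to a single vertex in $\to_A$ leaves an acyclic DAG, whose topological sort expands to a linear extension $\pi_A$ of $\to_A$ with $j$ immediately after $i$.'' Your key claim only excludes $\to_A$-paths from $i$ to $j$ through an intermediate vertex; a cycle through the contracted vertex, or an obstruction to placing $i$ immediately before $j$, could just as well come from a $\to_A$-path from $j$ to $i$ (including a single edge $j \to_A i$), and nothing in your argument addresses this. The patch is one line: any hyperedge $H$ witnessing the flip satisfies $A(H)=i$ and $B(H)=j\in H$, hence $i \to_A j$ is itself an edge of $\to_A$, so any $\to_A$-path from $j$ to $i$ would close a directed cycle and contradict the acyclicity of $A$ (this also tells you to discard the harmless self-loop that the edge $i \to_A j$ creates under contraction). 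Finally, the concluding sentence ``the easy direction yields $\Or_{\pi_B}=B$'' deserves a word: the easy direction shows that $\Or_{\pi_B}$ agrees with $A$ except that it takes the value $j$ exactly on those $H$ with $\{i,j\}\subseteq H$ and $A(H)=i$, and one should observe that \cref{def:flip} forces $B$ to be given by the same formula (the increasing flip out of $A$ with values $i<j$ is uniquely determined), so $\Or_{\pi_B}=B$. With these two additions the proof is complete.
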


\begin{corollary}
\label{coro:weakToP}
The map~$\Or$ defines a poset morphism from the weak order on permutations to the hypergraphic poset~$P_\HH$.
\end{corollary}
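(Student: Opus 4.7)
The plan is to reduce to cover relations in the weak order and then perform a direct case analysis on the definition of $\Or_\pi(H)$. Since the weak order is the transitive closure of its cover relations, and $P_\HH$ is a poset, it suffices to prove: if $\pi \lessdot \pi'$ in the weak order, then $\Or_\pi \le \Or_{\pi'}$ in $P_\HH$. A cover relation in the weak order has the form $\pi' = \pi \cdot s_k$ where $\pi(k) < \pi(k+1)$; write $i \eqdef \pi(k)$ and $j \eqdef \pi(k+1)$, so $i < j$ and $\pi'$ is obtained from $\pi$ by swapping the values at positions $k$ and $k+1$.

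The key step is to show that for every $H \in \HH$, either $\Or_\pi(H) = \Or_{\pi'}(H)$, or else $\{i,j\} \subseteq H$, $\Or_\pi(H) = i$ and $\Or_{\pi'}(H) = j$. Let $s_H \eqdef \min\set{s}{\pi(s) \in H}$. I will split into cases according to $H \cap \{i,j\}$. If $H \cap \{i,j\} = \varnothing$, then the values at positions $k, k{+}1$ are irrelevant for $\Or_\pi(H)$ and $\Or_{\pi'}(H)$, and these are equal because $\pi$ and $\pi'$ agree elsewhere. If $|H \cap \{i,j\}| = 1$, a short verification (depending on whether $s_H < k$, $s_H = k$, or $s_H = k+1$) shows that the unique element of $H \cap \{i,j\}$ is picked by both $\pi$ and $\pi'$, so $\Or_\pi(H) = \Or_{\pi'}(H)$ again. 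Finally, if $\{i,j\} \subseteq H$, then $s_H \le k$; if $s_H < k$ both $\Or_\pi(H)$ and $\Or_{\pi'}(H)$ equal $\pi(s_H) \notin \{i,j\}$, and if $s_H = k$ then $\Or_\pi(H) = i$ while $\Or_{\pi'}(H) = j$.

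The two conclusions of this case analysis exactly match the two conditions in \cref{def:flip}: in every case where $\Or_\pi(H) \ne \Or_{\pi'}(H)$, we have $\Or_\pi(H) = i$ and $\Or_{\pi'}(H) = j$; and whenever $\{i,j\} \subseteq H$, we have $\Or_\pi(H) = i \iff \Or_{\pi'}(H) = j$ (both true in the sub-subcase $s_H = k$, both false in the sub-subcase $s_H < k$). Hence either $\Or_\pi = \Or_{\pi'}$, or $\Or_\pi$ and $\Or_{\pi'}$ are related by an increasing flip with the chosen values $i < j$, so in either case $\Or_\pi \le \Or_{\pi'}$ in $P_\HH$.

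The only genuine content is the case analysis in the second paragraph, and the main point to keep track of is how the position $s_H$ is affected by the swap: the expected obstacle is in the mixed case $|H \cap \{i,j\}| = 1$, where one has to check that even though $s'_H$ may shift from $k$ to $k+1$ (when $i \in H$) the value $\pi'(s'_H)$ remains equal to $\Or_\pi(H)$. Once this bookkeeping is settled, the identification with \cref{def:flip} is immediate, and the corollary follows.
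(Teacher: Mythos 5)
Your proof is correct, and it takes a more self-contained route than the paper, which offers no written argument: there the statement is an immediate consequence of the proposition quoted from \cite[Lem.~2.9]{BenedettiBergeronMachacek} (permutations differing by a simple transposition map to orientations that are equal or related by a flip), combined with the orientation of the skeleton in \cref{prop:Hgraph}. You instead verify directly from \cref{def:surjection,def:flip} that a weak order cover $\pi \lessdot \pi s_k$ with $i = \pi(k) < \pi(k+1) = j$ maps either to an equality or to an \emph{increasing} flip; this has the merit of making the direction of the flip explicit, which the quoted proposition by itself does not address, so your case analysis is essentially the verification the paper leaves implicit rather than a shortcut around it. Two minor points to tidy up: in the mixed case $|H \cap \{i,j\}| = 1$ with $s_H < k$, the element picked by both permutations is $\pi(s_H) \notin \{i,j\}$, not ``the unique element of $H \cap \{i,j\}$'' as your sentence claims (the conclusion $\Or_\pi(H) = \Or_{\pi'}(H)$ is of course unaffected); and you implicitly use that $\Or_\pi$ and $\Or_{\pi'}$ are acyclic, hence vertices of $P_\HH$ --- this is supplied by the surjectivity part of the quoted proposition, or by the one-line observation that a cycle $H_1, \dots, H_k$ would force the positions in $\pi$ of the values $\Or_\pi(H_1), \Or_\pi(H_2), \dots$ to increase strictly around a cycle.
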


\pagebreak
Finally, we describe the fibers of the surjection~$\Or : \fS_n \to V_\HH$.
Given an acyclic orientation~$A$ of~$\HH$, define $\less_A$ as the order on $[n]$ obtained by the transitive closure of the union of the \linebreak orders $\set{A(H) < h}{h \in H \ssm \{A(H)\}}$ for each $H \in \HH$.
That is,
\[ 
	{\less_A} =  \text{Trans. cl. }
	\bigcup_{H \in \HH} 
	\begin{tikzpicture}[scale=1,baseline=.0cm]
		\node at (0,-.45) {$\scriptstyle A(H)$};
		\node at (0,.6) {$\scriptstyle h \in H \ssm \{A(H)\}$};
		\node at (.2,.35) {$\ldots$};
		\draw [thick] (0,-.3)--(-.5,.4); 
		\draw [thick] (0,-.3)--(-.3,.4); 
		\draw [thick] (0,-.3)--(-.1,.4); 
		\draw [thick] (0,-.3)--(.5,.4); 
	\end{tikzpicture} \,.
\]
This is a well defined partial order since $A$ is acyclic.
The following lemma is straightforward.

\begin{lemma}
\label{lem:prepi}
For any acyclic orientation $A$ of~$\HH$, the preimage $\Or^{-1}(A) \eqdef \set{\pi}{\Or_\pi=A}$ is the set of linear extensions of~$\less_A$.
\end{lemma}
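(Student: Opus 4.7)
The plan is to unfold the two definitions in turn and observe that they match exactly on the generating relations of $\less_A$. By \cref{def:surjection}, the condition $\Or_\pi = A$ means that for every $H \in \HH$, the element $A(H)$ is the first element of $H$ to appear in the one-line notation of $\pi$. Equivalently, for every $H \in \HH$ and every $h \in H \ssm \{A(H)\}$, the element $A(H)$ precedes $h$ in $\pi$.

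Next, I would compare this with the definition of $\less_A$. The generating relations of $\less_A$ are precisely $A(H) \less_A h$ for $H \in \HH$ and $h \in H \ssm \{A(H)\}$; these relations have a well-defined transitive closure giving a partial order because $A$ is acyclic (the only place where acyclicity is used). A permutation $\pi$ is a linear extension of $\less_A$ if and only if $A(H)$ precedes $h$ in $\pi$ for each such generating relation, since a linear order is already transitive so satisfying the generators automatically gives compatibility with their transitive closure. This is exactly the condition produced in the previous paragraph, concluding both implications.

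There is no real obstacle here: the two characterizations of $\Or^{-1}(A)$ are, after unwinding \cref{def:surjection} and the construction of $\less_A$, tautologically the same family of comparabilities. The only point worth making explicit is the remark that transitive closure adds no new constraints for a total order, and that $\less_A$ is indeed a partial order (which is guaranteed by the acyclicity of $A$).
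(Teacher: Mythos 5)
Your proof is correct: the paper offers no argument for this lemma (it is declared straightforward), and your unfolding of \cref{def:surjection} against the generating relations of $\less_A$, together with the remark that a total order satisfies the transitive closure as soon as it satisfies the generators, is exactly the intended justification.
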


\begin{example}
\label{exm:DH5}
Continuing with the hypergraph $\HH=\{ 1, 2, 3, 4, 123, 134 \}$ of \cref{exm:DH1,exm:DH2,exm:DH3,exm:DH4}, let $A$ be the acyclic orientation $(2,4)$.
The order~$\less_A$ is the following
\[
	{\less_A} = \text{Transitive closure }\Bigg(
	\begin{tikzpicture}[scale=1,baseline=.2cm]
		\node (2) at (0,0) {$\scriptstyle 2$};
		\node (1) at (-.4,.7) {$\scriptstyle 1$};
		\node (3) at (.4,.7) {$\scriptstyle 3$};
		\draw [thick] (2)--(1); 
		\draw [thick] (2)--(3); 
	\end{tikzpicture} 
	\cup
	\begin{tikzpicture}[scale=1,baseline=.2cm]
		\node (4) at (0,0) {$\scriptstyle 4$};
		\node (1) at (-.4,.7) {$\scriptstyle 1$};
		\node (3) at (.4,.7) {$\scriptstyle 3$};
		\draw [thick] (4)--(1); 
		\draw [thick] (4)--(3); 
	\end{tikzpicture} 
	\Bigg)
	=
	\begin{tikzpicture}[scale=1,baseline=.2cm]
		\node (4) at (.4,0) {$\scriptstyle 4$};
		\node (2) at (-.4,0) {$\scriptstyle 2$};
		\node (1) at (-.4,.7) {$\scriptstyle 1$};
		\node (3) at (.4,.7) {$\scriptstyle 3$};
		\draw [thick] (4)--(1); 
		\draw [thick] (4)--(3); 
		\draw [thick] (2)--(1); 
		\draw [thick] (2)--(3); 
	\end{tikzpicture} 	
\]
The linear extensions of this order are the permutations $2413$, $2431$, $4213$ and $4231$.
All other fibers are represented in \cref{fig:surjectionMap}.

\begin{figure}
	\centerline{
		\includegraphics[scale=.6]{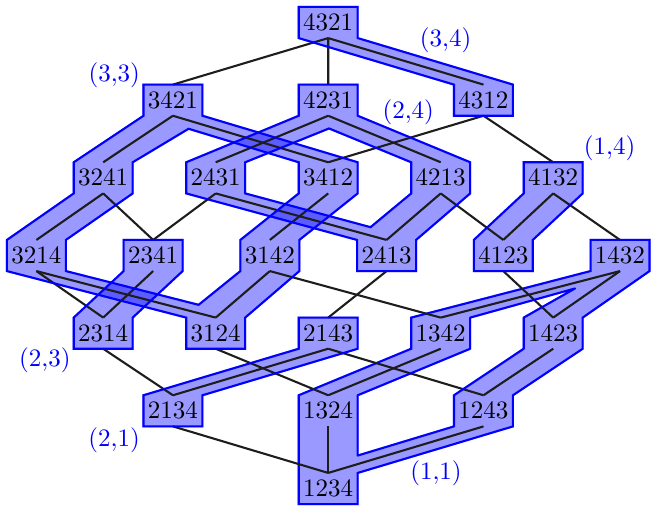}
		\qquad
		\raisebox{1cm}{
		\begin{tikzpicture}[scale=1,baseline=.5cm]
		\node (11) at (0,0) {$\scriptstyle (1,1)$};
		\node (21) at (-1,1) {$\scriptstyle (2,1)$};	
		\node (23) at (-1,2) {$\scriptstyle (2,3)$}; 		
		\node (14) at (1,1.5) {$\scriptstyle (1,4)$};	
		\node (24) at (0,3) {$\scriptstyle (2,4)$}; 	
		\node (33) at (-2,3) {$\scriptstyle (3,3)$};
		\node (34) at (-1,4) {$\scriptstyle (3,4)$};	
		\draw (11)--(21);
		\draw (11)--(14);
		\draw (21)--(23);
		\draw (23)--(24);
		\draw (14)--(24);
		\draw (23)--(33);
		\draw (33)--(34);
		\draw (24)--(34);
		\end{tikzpicture}
		}
	}
	\caption{The poset morphism~$\Or$ from the weak order (left) to the hypergraphical poset~$P_\HH$ (right), for $\HH=\{ 1, 2, 3, 4, 123, 134 \}$. The fibers of~$\Or$ appear as blue bubbles on the weak order, labeled by their acyclic orientation of~$\HH$ (left).}
	\label{fig:surjectionMap}
\end{figure}
\end{example}


\section{Interval hypergraphic posets}
\label{sec:IHP}

In this paper, we focus on the following family of hypergraphs on~$[n]$.

\begin{definition}
An \defn{interval hypergraph}~$\II$ is an hypergraph on~$[n]$ where each~$I \in \II$ is an interval of the form $I = [i,j] \eqdef \{i, i+1, i+2, \dots, j-1, j\}$.
\end{definition}

\begin{example}
Our running example~$\HH=\{ 1, 2, 3, 4, 123, 134 \}$  of \cref{exm:DH1,exm:DH2,exm:DH3,exm:DH4,exm:DH5}, is not an interval hypergraph, as $134$ is not an interval.
\end{example}

\begin{example}
\label{exm:intervalHypergraph}
The hypergraph
$\II = \{ 1, 2, 3, 4,123, 23, 234, 1234 \}$ 
is an interval hypergraph and
$\Or_{4132} = (1, 4, 3, 4)$ is an acyclic orientation of~$\II$ (for the lexicographic order $(123, 1234, 23, 234)$ of the non-singletons).
We will represent interval hypergraphs and their orientations graphically as follows:
\[
	\II =  \raisebox{-.2cm}{\intervalHypergraph{4}{1/3,1/4,2/3,2/4}}
	\qquad\qquad
	\Or_{4132}  =  \raisebox{-.2cm}{\acyclicOrientation{4}{1/3/1,1/4/4,2/3/3,2/4/4}}
\]
As before, we omit to draw the singletons $\{i\}$ for $i \in [n]$.
See also~\cref{fig:exmInterval}.
\begin{figure}
	\centerline{
		\begin{tabular}[t]{c}
			\begin{tikzpicture}%
	[x={(0.366215cm, 0.789554cm)},
	y={(-0.235950cm, 0.590693cm)},
	z={(-0.900119cm, 0.166391cm)},
	scale=1.300000,
	back/.style={very thin, opacity=0.5},
	edge/.style={color=blue, thick, decoration={markings, mark=at position 0.5 with {\arrow{>}}}},
	facet/.style={fill=blue,fill opacity=0},
	vertex/.style={}]

\coordinate (-1.33333, 0.00000, -1.88562) at (-1.33333, 0.00000, -1.88562);
\coordinate (-1.33333, 3.26599, -1.88562) at (-1.33333, 3.26599, -1.88562);
\coordinate (-1.33333, -1.63299, 0.94281) at (-1.33333, -1.63299, 0.94281);
\coordinate (-1.33333, 0.00000, 3.77124) at (-1.33333, 0.00000, 3.77124);
\coordinate (0.00000, -0.81650, -1.41421) at (0.00000, -0.81650, -1.41421);
\coordinate (0.00000, -1.63299, 0.00000) at (0.00000, -1.63299, 0.00000);
\coordinate (1.33333, 0.00000, -0.94281) at (1.33333, 0.00000, -0.94281);
\coordinate (1.33333, 1.63299, -0.94281) at (1.33333, 1.63299, -0.94281);
\coordinate (1.33333, -0.81650, 0.47140) at (1.33333, -0.81650, 0.47140);
\coordinate (1.33333, 0.00000, 1.88562) at (1.33333, 0.00000, 1.88562);
\draw[edge,postaction={decorate},back] (-1.33333, 0.00000, -1.88562) -- (-1.33333, 3.26599, -1.88562);
\draw[edge,postaction={decorate},back] (-1.33333, 3.26599, -1.88562) -- (-1.33333, 0.00000, 3.77124);
\draw[edge,postaction={decorate},back] (-1.33333, 3.26599, -1.88562) -- (1.33333, 1.63299, -0.94281);
\node[vertex] at (-1.33333, 3.26599, -1.88562)     {};
\fill[facet] (1.33333, 0.00000, 1.88562) -- (-1.33333, 0.00000, 3.77124) -- (-1.33333, -1.63299, 0.94281) -- (0.00000, -1.63299, 0.00000) -- (1.33333, -0.81650, 0.47140) -- cycle {};
\fill[facet] (0.00000, -1.63299, 0.00000) -- (-1.33333, -1.63299, 0.94281) -- (-1.33333, 0.00000, -1.88562) -- (0.00000, -0.81650, -1.41421) -- cycle {};
\fill[facet] (1.33333, -0.81650, 0.47140) -- (0.00000, -1.63299, 0.00000) -- (0.00000, -0.81650, -1.41421) -- (1.33333, 0.00000, -0.94281) -- cycle {};
\fill[facet] (1.33333, 0.00000, 1.88562) -- (1.33333, 1.63299, -0.94281) -- (1.33333, 0.00000, -0.94281) -- (1.33333, -0.81650, 0.47140) -- cycle {};
\draw[edge,postaction={decorate}] (-1.33333, 0.00000, -1.88562) -- (-1.33333, -1.63299, 0.94281);
\draw[edge,postaction={decorate}] (-1.33333, 0.00000, -1.88562) -- (0.00000, -0.81650, -1.41421);
\draw[edge,postaction={decorate}] (-1.33333, -1.63299, 0.94281) -- (-1.33333, 0.00000, 3.77124);
\draw[edge,postaction={decorate}] (-1.33333, -1.63299, 0.94281) -- (0.00000, -1.63299, 0.00000);
\draw[edge,postaction={decorate}] (-1.33333, 0.00000, 3.77124) -- (1.33333, 0.00000, 1.88562);
\draw[edge,postaction={decorate}] (0.00000, -0.81650, -1.41421) -- (0.00000, -1.63299, 0.00000);
\draw[edge,postaction={decorate}] (0.00000, -0.81650, -1.41421) -- (1.33333, 0.00000, -0.94281);
\draw[edge,postaction={decorate}] (0.00000, -1.63299, 0.00000) -- (1.33333, -0.81650, 0.47140);
\draw[edge,postaction={decorate}] (1.33333, 0.00000, -0.94281) -- (1.33333, 1.63299, -0.94281);
\draw[edge,postaction={decorate}] (1.33333, 0.00000, -0.94281) -- (1.33333, -0.81650, 0.47140);
\draw[edge,postaction={decorate}] (1.33333, 1.63299, -0.94281) -- (1.33333, 0.00000, 1.88562);
\draw[edge,postaction={decorate}] (1.33333, -0.81650, 0.47140) -- (1.33333, 0.00000, 1.88562);
\node[vertex] at (-1.33333, 0.00000, -1.88562)     {};
\node[vertex] at (-1.33333, -1.63299, 0.94281)     {};
\node[vertex] at (-1.33333, 0.00000, 3.77124)     {};
\node[vertex] at (0.00000, -0.81650, -1.41421)     {};
\node[vertex] at (0.00000, -1.63299, 0.00000)     {};
\node[vertex] at (1.33333, 0.00000, -0.94281)     {};
\node[vertex] at (1.33333, 1.63299, -0.94281)     {};
\node[vertex] at (1.33333, -0.81650, 0.47140)     {};
\node[vertex] at (1.33333, 0.00000, 1.88562)     {};
\end{tikzpicture} \\[.2cm]
			\includegraphics[scale=.7]{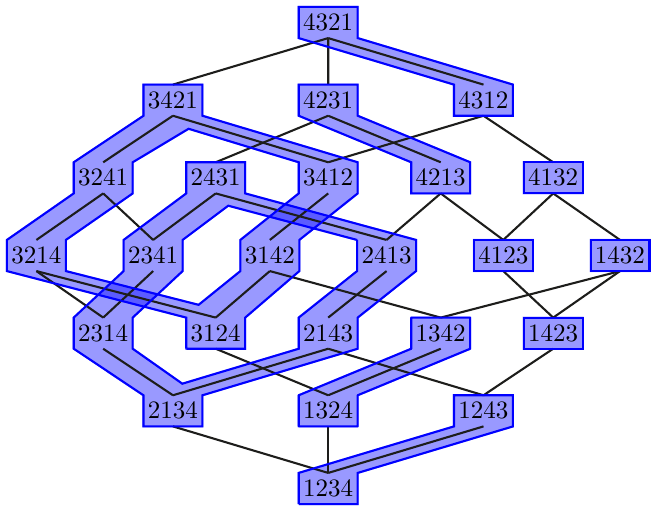}
		\end{tabular}
		\raisebox{-6.3cm}{
			\begin{tikzpicture}[scale=2.5]
				\node (a) at (3,0) {\acyclicOrientation{4}{1/3/1,1/4/1,2/3/2,2/4/2}};
				\node (b) at (1,1) {\acyclicOrientation{4}{1/3/2,1/4/2,2/3/2,2/4/2}};
				\node (c) at (3,1) {\acyclicOrientation{4}{1/3/1,1/4/1,2/3/3,2/4/3}};
				\node (d) at (4,1) {\acyclicOrientation{4}{1/3/1,1/4/1,2/3/2,2/4/4}};
				\node (e) at (1,2) {\acyclicOrientation{4}{1/3/3,1/4/3,2/3/3,2/4/3}};
				\node (f) at (3,2) {\acyclicOrientation{4}{1/3/1,1/4/4,2/3/2,2/4/4}};
				\node (g) at (4,2) {\acyclicOrientation{4}{1/3/1,1/4/1,2/3/3,2/4/4}};
				\node (h) at (2,3) {\acyclicOrientation{4}{1/3/2,1/4/4,2/3/2,2/4/4}};
				\node (i) at (3,3) {\acyclicOrientation{4}{1/3/1,1/4/4,2/3/3,2/4/4}};
				\node (j) at (2,4) {\acyclicOrientation{4}{1/3/3,1/4/4,2/3/3,2/4/4}};
				\draw (a)--(b);
				\draw (a)--(c);
				\draw (a)--(d);
				\draw (b)--(e);
				\draw (b)--(h);
				\draw (c)--(e);
				\draw (c)--(g);
				\draw (d)--(f);
				\draw (d)--(g);
				\draw (e)--(j);
				\draw (f)--(h);
				\draw (f)--(i);
				\draw (g)--(i);
				\draw (h)--(j);
				\draw (i)--(j);
			\end{tikzpicture}
		}
	}
	\vspace{-.5cm}
	\centerline{$\{ 1, 2, 3, 4,123, 23, 234, 1234 \}$}
	\caption{The interval hypergraphical polytope~$\simplex_\II$ (top left), the fibers of the corresponding map~$\Or$ (bottom left), and the interval hypergraphical poset~$P_\II$ (right), for~$\II = \{ 1, 2, 3, 4,123, 23, 234, 1234 \}$.}
	\label{fig:exmInterval}
\end{figure}
\end{example}

\begin{example}
We have represented further interval hypergraphic posets in \cref{fig:notLattices,fig:Tamari,fig:distributiveLattices,fig:semidistributiveLattices,fig:notSemidistributiveLattices}.
These figures illustrate in particular the following relevant families of interval hypergraphic posets mentioned in the introduction:
\begin{itemize}
\item if~$\II = \set{[i,j]}{1 \le i \le j \le n}$ is the set of all intervals of~$[n]$, then~$P_\II$ is the Tamari lattice corresponding to the classical associahedron of~\cite{ShniderSternberg,Loday}; see \cref{fig:Tamari};
\item if~$\II = \set{[1,i]}{i \in [n]}$ is the set of initial intervals of~$[n]$, then~$P_\II$ is the boolean lattice corresponding to the Pitman--Stanley polytope~\cite{PitmanStanley}; see \cref{fig:distributiveLattices}\,(bottom~right);
\item if~$\II = \set{[1,i]}{i \in [n]} \cup \set{[i,n]}{i \in [n]}$ is the set of all initial or final intervals of~$[n]$, then~$P_\II$ is a poset given by the freehedron of~\cite{Saneblidze-freehedron}, which is not even a lattice; see \cref{fig:notLattices}\,(right);
\item if any two intervals of~$\II$ are either nested or disjoint, then~$P_\II$ is a distributive lattice given by the fertilitope of~\cite{Defant-fertilitopes}; see \cref{fig:distributiveLattices}\,(top right and bottom) and \cref{subsec:SchroderHypergraphs}.
\end{itemize}
\end{example}

\begin{remark}
Recall that a polytope~$P \subset \R^n$ is a \defn{weak Minkowski summand} of a polytope~${Q \subset \R^n}$ if there exists a real~$\lambda \ge 0$ and a polytope~$R \subset \R^d$ such that~$\lambda Q = P + R$.
Equivalently, $P$ is a weak Minkowski summand of~$Q$ if the normal fan of~$Q$ refines the normal fan of~$P$.
The weak Minkowski summands of~$Q$ form a cone under Minkowski sum and dilation~\cite{McMullen-typeCone}, called the \defn{deformation cone} of~$Q$.
For instance, the deformation cone of the permutahedron is the cone of generalized permutahedra~\cite{Postnikov, PostnikovReinerWilliams} (also known as submodular cone, or cone of \defn{polymatroids}~\cite{Edmonds}).
Note that all hypergraphic polytopes are generalized permutahedra.
It follows from~\cite{BazierMatteChapelierLaguetDouvilleMousavandThomasYildirim,PadrolPaluPilaudPlamondon} that the deformation cone of the associahedron of~\cite{ShniderSternberg,Loday} is a simplicial cone generated by the faces of the standard simplex corresponding to intervals of~$[n]$.
In particular, the interval hypergraphic polytopes are precisely the weak Minkowski summands of the associahedron~\cite{ShniderSternberg,Loday}.
\end{remark}


\subsection{Acyclic orientations for interval hypergraphs}
\label{subsec:acyclicI}

In this section we give a simple characterization of the acyclic orientations of an interval hypergraph~$\II$.

\begin{proposition}
\label{prop:acyclicI}
An orientation~$O$ of an interval hypergraph~$\II$ is acyclic if and only if there is no~$I,J \in \II$ such that~$O(I) \in J \ssm \{O(J)\}$ and~$O(J) \in I \ssm \{O(I)\}$.
Graphically, there is no pattern
 \[
	\begin{tikzpicture}[scale=1,baseline=.0cm]
	\node at (-.5,0.2) {$\scriptscriptstyle J$}; 
	\node at (-.1,0.2) {$\scriptstyle \cdot$};
	\node at (0,0.2) {$\scriptstyle \cdot$};
	\node at (.1,0.2) {$\scriptstyle \cdot$};
	\draw [thick] (.2,.2)--(1.8,.2);
	\node at (1.9,.2) {$\scriptstyle \cdot$};
	\node at (2,.2) {$\scriptstyle \cdot$};
	\node at (2.1,.2) {$\scriptstyle \cdot$};
	\node at (1.5,.2) {$\bullet$};
	\node at (-.5,0) {$\scriptscriptstyle I$};
	\node at (-.1,0) {$\scriptstyle \cdot$};
	\node at (0,0) {$\scriptstyle \cdot$};
	\node at (.1,0) {$\scriptstyle \cdot$};
	\draw [thick] (.2,0)--(1.8,0);
	\node at (1.9,0) {$\scriptstyle \cdot$};
	\node at (2,0) {$\scriptstyle \cdot$};
	\node at (2.1,0) {$\scriptstyle \cdot$};
	\node at (.5,0) {$\bullet$};
	\node at (.5,-.2) {$\scriptscriptstyle O(I)$};
	\node at (1.5,.4) {$\scriptscriptstyle O(J)$}; 
	\end{tikzpicture}
\]
\end{proposition}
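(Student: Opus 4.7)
The plan is to prove each direction separately. The forward direction (acyclic implies no such $(I,J)$) is immediate: any pair $(I,J)$ with $O(I) \in J \ssm \{O(J)\}$ and $O(J) \in I \ssm \{O(I)\}$ is literally a $k=2$ cycle in the sense of \cref{def:acyclicOrientation}, with $H_1 = I$ and $H_2 = J$.

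For the reverse direction, the plan is to prove the contrapositive: if $O$ admits some cycle $H_1, \dots, H_k$ of length $k \geq 2$, then it already admits a $2$-cycle of the forbidden form. I would pick a cycle of \emph{minimal} length and aim for a contradiction from $k \geq 3$. Writing $x_i \eqdef O(H_i)$ and reading all indices mod $k$, the key step is to choose an index $\ell$ with $x_\ell$ maximal among the $x_i$. The cycle conditions then force $x_{\ell-1} < x_\ell$ and $x_{\ell+1} < x_\ell$, since both are $\leq x_\ell$ by maximality and both are $\neq x_\ell$ by the cycle condition. Since $H_{\ell-1}$ is an interval containing the two points $x_{\ell-1} < x_\ell$, it must contain the whole segment $[x_{\ell-1}, x_\ell]$; similarly $H_\ell \supseteq [x_{\ell+1}, x_\ell]$.

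I would then split on the order of $x_{\ell-1}$ and $x_{\ell+1}$. If $x_{\ell-1} \geq x_{\ell+1}$, then $x_{\ell-1} \in [x_{\ell+1}, x_\ell] \subseteq H_\ell$, so $(H_{\ell-1}, H_\ell)$ is itself a $2$-cycle of the forbidden shape, contradicting minimality of $k$. If instead $x_{\ell-1} < x_{\ell+1}$, then $x_{\ell+1} \in [x_{\ell-1}, x_\ell] \subseteq H_{\ell-1}$ with $x_{\ell+1} \neq x_{\ell-1}$, so deleting $H_\ell$ produces the shorter sequence $H_1, \dots, H_{\ell-1}, H_{\ell+1}, \dots, H_k$, which is still a cycle of length $k-1 \geq 2$ (the only condition to recheck is $x_{\ell+1} \in H_{\ell-1} \ssm \{x_{\ell-1}\}$, which we have just verified), again contradicting minimality.

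I do not expect any serious obstacle: the argument is a short case analysis once one focuses on a maximal $x_\ell$, and the interval hypothesis enters only through the fact that an interval containing two points contains the whole segment between them. Minor points to handle carefully are the cyclic indexing (uniform if one reads everything mod $k$) and the degenerate subcase $x_{\ell-1} = x_{\ell+1}$, which falls cleanly into the first case because $[x_{\ell+1}, x_\ell]$ is a closed interval including its endpoints.
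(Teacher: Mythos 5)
Your proof is correct and follows essentially the same strategy as the paper: take a minimal-length cycle, use that an interval containing two points contains the whole segment between them, and either exhibit the forbidden $2$-cycle directly or shorten the cycle, contradicting minimality. The only (cosmetic) difference is that you pivot on a maximal source $x_\ell$ and its two neighbours, whereas the paper orders the sources and argues via the first descent (or the fully increasing case); both case analyses are equally valid.
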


\begin{proof}
If the orientation~$O$ contains this pattern, it is clearly cyclic.
Conversely, assume that~$O$ is cyclic.
Then we can find $I_1, \dots, I_k \in \II$ with~$k \ge 2$ such that~$O(I_{i+1}) \in I_i \ssm \{O(I_i)\}$ for~$i \in [k-1]$ and~$O(I_1) \in I_k \ssm \{O(I_k)\}$.
Graphically
\[
	\begin{tikzpicture}[scale=1,baseline=.0cm]
	\node at (-.5,1) {$\scriptscriptstyle I_k$};
	\draw [thick,{Bar[width=3pt]}-] (.3,1)--(1.2,1);
	\node at (1.3,1) {$\scriptstyle \cdot$};
	\node at (1.4,1) {$\scriptstyle \cdot$};
	\node at (1.5,1) {$\scriptstyle \cdot$};
	\node at (1,0.9) {$\scriptstyle \cdot$};
	\node at (1.1,0.8) {$\scriptstyle \cdot$};
	\node at (1.2,0.7) {$\scriptstyle \cdot$}; 
	\node at (-.5,0.4) {$\scriptscriptstyle I_3$};
	\draw [thick,{Bar[width=3pt]}-{Bar[width=3pt]}] (2.5,.4)--(4.5,.4);
	\node at (3.5,.4) {$\bullet$};
	\node at (-.5,0.2) {$\scriptscriptstyle I_2$};
	\draw [thick,{Bar[width=3pt]}-{Bar[width=3pt]}] (1,.2)--(4,.2);
	\node at (1.5,.2) {$\bullet$};
	\node at (-.5,0) {$\scriptscriptstyle I_1$};
	\draw [thick,{Bar[width=3pt]}-{Bar[width=3pt]}] (0,0)--(1.5,0);
	\node at (.5,0) {$\bullet$};
	\end{tikzpicture}
\]
Assume that~$k > 2$ and is minimal for this property.
Note that~$O(I_i) \ne O(I_{i+1})$ for all~$i$.
By symmetry, suppose~$O(I_1) < O(I_2)$.
If there is~$i \in [k-1]$ such that~$O(I_{i+1}) < O(I_i)$, then for the smallest such~$i$, we have
\begin{itemize}
\item either~$O(I_{i-1}) \in [O(I_{i+1}), O(I_i)] \subseteq I_i$, so that~$O(I_{i-1}) \in I_i$ and $O(I_i) \in I_{i-1}$,
\item or~$O(I_{i+1}) \in [O(I_{i-1}), O(I_i)] \subseteq I_{i-1}$, so that we can drop~$I_i$ from our sequence, contradicting the minimality of~$k$.
\end{itemize}
We thus obtain~$O(I_1) < \dots < O(I_k)$.
As~$O(I_1) \in I_k$, we get~$O(I_{k-1}) \in [O(I_1), O(I_k)] \subseteq I_k$, so that~$O(I_{k-1}) \in I_k$ and~$O(I_k) \in I_{k-1}$.
\end{proof}

\begin{remark}
\cref{prop:acyclicI} fails when~$\HH$ is not an interval hypergraph.
For instance, the orientation~$O$ of the hypergraph~$\HH \eqdef \{ 1, 2, 3, 12, 23, 13 \}$ defined by~$O(12) = 1$, $O(23) = 2$ and~$O(13) = 3$ has a $3$-cycle but no $2$-cycle.
\end{remark}


\subsection{Fibers of $\Or$ for interval hypergraphs} 
\label{subsec:preimageI}

One striking property for interval hypergraphs is that the fibers of the surjection~$\Or$ are intervals in the weak order.
To describe this we first need to recall the following classical result of A.~Bj\"orner and M.~Wachs~\cite[Thm.~6.8]{BjornerWachs}

\begin{proposition}[{\cite[Thm.~6.8]{BjornerWachs}}]
\label{prop:WOIP}
The set of linear extensions of a poset~$\less$ on~$[n]$ forms an interval~$[\sigma, \tau]$ of the weak order if and only if ${a \less c \implies a \less b \text{ or } b \less c}$, and~${a \more c \implies a \more b \text{ or } b \more c}$, for every~$1 \le a < b < c \le n$.
Moreover, the inversions of~$\sigma$ are the pairs~$(b,a)$ with~$a < b$ and~$a \more b$, and the non-inversions of~$\tau$ are the pairs~$(a,b)$ with~$a < b$ and~$a \less b$.
\end{proposition}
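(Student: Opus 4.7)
The plan is to argue entirely through inversion sets, using the Guilbaud--Rosenstiehl characterization that a subset $E$ of $\{(i,j) : 1 \le i < j \le n\}$ is the inversion set of some (necessarily unique) permutation of $[n]$ if and only if, for every $i < j < k$, one has $\{(i,j),(j,k)\} \subseteq E \Rightarrow (i,k) \in E$ and $(i,k) \in E \Rightarrow (i,j) \in E$ or $(j,k) \in E$. Combined with the well-known facts that the weak order is the lattice of inclusion of inversion sets and that every interval in a lattice is a sublattice, this should reduce the proposition to a short bookkeeping exercise.

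For sufficiency, I would attach to $\less$ the two candidate sets
\begin{align*}
	E^-(\less) &:= \{(i,j) : i < j \text{ and } j \less i\}, \\
	E^+(\less) &:= \{(i,j) : i < j \text{ and } i \not\less j\}.
\end{align*}
Transitivity of $\less$ gives the first Guilbaud--Rosenstiehl axiom for $E^-$ and the second for $E^+$, while the two stated conditions in the proposition supply the remaining axioms; hence $E^-(\less) = \Inv(\sigma)$ and $E^+(\less) = \Inv(\tau)$ for unique $\sigma, \tau \in \fS_n$. Unfolding the definition of a linear extension, a permutation $\pi$ extends $\less$ exactly when $E^-(\less) \subseteq \Inv(\pi) \subseteq E^+(\less)$, i.e. when $\pi \in [\sigma, \tau]$. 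The descriptions of inversions of $\sigma$ and non-inversions of $\tau$ are then immediate from the definitions of $E^\pm(\less)$.

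For necessity, assume $L(\less) = [\sigma, \tau]$ and suppose for contradiction that some $a < b < c$ satisfy $a \less c$ but neither $a \less b$ nor $b \less c$. Szpilrajn's extension lemma (applied to the relations obtained by adjoining $b < a$, respectively $c < b$, to $\less$) yields linear extensions $\pi_1, \pi_2 \in L(\less)$ with patterns $bac$ and $acb$ on $\{a,b,c\}$, respectively. Since $[\sigma, \tau]$ is a sublattice of the weak order, $\pi_1 \vee \pi_2$ still lies in $L(\less)$. But $\Inv(\pi_1 \vee \pi_2) \supseteq \{(a,b),(b,c)\}$, so by the first Guilbaud--Rosenstiehl axiom $(a,c) \in \Inv(\pi_1 \vee \pi_2)$; thus $c$ precedes $a$ in $\pi_1 \vee \pi_2$, contradicting $a \less c$. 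The symmetric condition follows by the dual argument, using $\pi_1 \wedge \pi_2$ and the second axiom.

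The main obstacle is bookkeeping: the proof juggles three incarnations of the same data (the relation $\less$ on $[n]$, containment of inversion sets in $\fS_n$, and the relative order of the triple $\{a,b,c\}$), and it is easy to lose track of the direction of an implication. Once the translations between these three viewpoints are in place and the Guilbaud--Rosenstiehl axioms together with the sublattice property of weak order intervals are invoked, both verifications become routine.
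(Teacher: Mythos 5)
Your argument is correct, but note that the paper does not prove this statement at all: it is quoted verbatim from Bj\"orner--Wachs \cite[Thm.~6.8]{BjornerWachs}, so there is no internal proof to compare against. Your self-contained route is sound: the two sets $E^-(\less)$ and $E^+(\less)$ do satisfy the two inversion-set axioms (transitivity of $\less$ gives one axiom for each, and the two hypotheses of the proposition give the other two), a permutation is a linear extension of $\less$ exactly when its inversion set is squeezed between $E^-(\less)$ and $E^+(\less)$, and since the weak order is inclusion of inversion sets this identifies the set of linear extensions with the interval $[\sigma,\tau]$ and yields the ``moreover'' clause; for necessity, Szpilrajn gives the two extensions with patterns $bac$ and $acb$, the interval (being a sublattice) contains their join, and transitive closure of inversions forces $c$ before $a$, contradicting $a\less c$, with the dual argument via the meet handling the second condition. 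The only step you leave tacit is that $E^-(\less)\subseteq E^+(\less)$ (immediate from antisymmetry of $\less$), which is what guarantees $\sigma\le\tau$ so that $[\sigma,\tau]$ is a genuine nonempty interval; worth one line in a full write-up. Compared with the original Bj\"orner--Wachs treatment, which exploits that the set of linear extensions is always order-convex in the weak order and characterizes when it has a unique minimum and maximum, your version trades that structural observation for the Guilbaud--Rosenstiehl axioms and a direct construction of the endpoints, which has the advantage of producing the descriptions of $\Inv(\sigma)$ and the non-inversions of $\tau$ (the part of the statement actually used later, e.g.\ in the proof of Theorem~\ref{thm:propertiesI}) with no extra work.
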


This statement was refined in~\cite{ChatelPilaudPons} to describe Tamari interval posets.

\begin{proposition}[{\cite[Coro.~2.24]{ChatelPilaudPons}}]
\label{prop:TOIP}
The set of linear extensions of a poset~$\less$ on~$[n]$ forms an interval~$[\sigma, \tau]$ of the weak order such that~$\sigma$ avoids the pattern $231$ and $\tau$ avoids the pattern~$213$ if and only if~${a \less c \implies a \less b}$ and~$a \more c \implies b \more c$ for every~$1 \le a < b < c \le n$.
\end{proposition}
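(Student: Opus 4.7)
The plan is to refine \cref{prop:WOIP} by analysing how each of the two strengthened hypotheses corresponds to a pattern avoidance on a specific endpoint of the interval. I would begin by noting that both implications on the right-hand side of the proposition trivially imply the corresponding Björner--Wachs disjunction, so under these hypotheses \cref{prop:WOIP} already applies and the linear extensions of $\less$ form a weak order interval $[\sigma, \tau]$ whose extremes are described explicitly: the inversions of $\sigma$ are $\{(b,a) : a<b,\ a \more b\}$, and the non-inversions of $\tau$ are $\{(a,b) : a<b,\ a \less b\}$. Conversely, if the linear extensions of $\less$ form a weak order interval at all, then the Björner--Wachs disjunctions already hold, and only the promotion of these disjunctions to single-disjunct implications needs to be extracted from the pattern avoidance.

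The core of the argument is the translation between pattern occurrences and order comparisons. A $231$-pattern in $\sigma$ is a triple of values $a<b<c$ appearing in positional order $b,c,a$; using the inversion description of $\sigma$, this is equivalent to the conjunction of $a \more b$, $a \more c$, and $\neg(b \more c)$. The disjunction $a \more c \Rightarrow a \more b$ or $b \more c$ from \cref{prop:WOIP} makes the first condition $a \more b$ automatic in the presence of the other two, so the existence of a $231$-pattern in $\sigma$ reduces to the existence of $a<b<c$ with $a \more c$ and $\neg(b \more c)$. Hence $\sigma$ avoids $231$ if and only if $a \more c \Rightarrow b \more c$ for all $a<b<c$. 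A symmetric analysis applies to $\tau$: a $213$-pattern is a triple $a<b<c$ in positional order $b,a,c$, which through the non-inversion description of $\tau$ translates to $a \less c$, $b \less c$, and $\neg(a \less b)$; the Björner--Wachs disjunction again makes $b \less c$ redundant, yielding that $\tau$ avoids $213$ if and only if $a \less c \Rightarrow a \less b$ for all $a<b<c$. Combining these two equivalences with the interval structure from the first step proves both directions of the proposition.

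The main obstacle will be the careful bookkeeping in these translations: matching the correct pattern to the correct positional order $b,c,a$ versus $b,a,c$, tracking whether each of the three resulting conditions involves $\less$ or $\more$, and checking that the Björner--Wachs disjunction is set up to eliminate precisely the redundant condition in each case. Once these correspondences are written down cleanly, the rest of the proof is an immediate assembly from \cref{prop:WOIP}.
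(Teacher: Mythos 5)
Your proposal is correct, but note that the paper does not prove this statement at all: it is imported verbatim from \cite[Coro.~2.24]{ChatelPilaudPons}, just as \cref{prop:WOIP} is imported from Bj\"orner--Wachs, so there is no in-paper argument to compare against. Your derivation of it from \cref{prop:WOIP} is sound and self-contained. The bookkeeping you flag as the main risk checks out: an occurrence of $231$ in $\sigma$ at values $a<b<c$ (positional order $b,c,a$) is, via the inversion description of $\sigma$, exactly $a \more b$, $a \more c$ and $\neg(b \more c)$, and the disjunction $a \more c \Rightarrow a \more b \text{ or } b \more c$ makes $a \more b$ automatic, so under the interval hypothesis $\sigma$ avoids $231$ iff $a \more c \Rightarrow b \more c$; dually, a $213$-occurrence in $\tau$ at values $a<b<c$ (positional order $b,a,c$) is $\neg(a \less b)$, $a \less c$ and $b \less c$, with $b \less c$ redundant by the other disjunction, giving $\tau$ avoids $213$ iff $a \less c \Rightarrow a \less b$. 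Both directions legitimately have the Bj\"orner--Wachs disjunctions available (in one direction because the strengthened implications imply them, in the other because the interval hypothesis is equivalent to them), so the assembly is complete. The only thing you gain over the paper is an explicit proof where the paper settles for a citation; the only thing to watch is that the ``moreover'' clause of \cref{prop:WOIP} describing the inversions of $\sigma$ and non-inversions of $\tau$ is what licenses your translations, and you do invoke it.
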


\begin{proposition}
\label{prop:preimageI}
The fiber~$\Or^{-1}(A)$ of any acyclic orientation~$A$ of an interval hypergraph~$\II$ is an interval of the weak order with minimum avoiding the pattern $231$ and maximum avoiding the pattern $213$ (in other words, a Tamari interval).
\end{proposition}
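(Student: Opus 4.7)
The plan is to combine \cref{lem:prepi} with the Chatel--Pilaud--Pons criterion \cref{prop:TOIP}. By \cref{lem:prepi}, the fiber $\Or^{-1}(A)$ equals the set of linear extensions of the partial order $\less_A$, so it suffices to verify the two implications of \cref{prop:TOIP}: for every $1 \le a < b < c \le n$, that $a \less_A c \implies a \less_A b$ and $a \more_A c \implies b \more_A c$. Both implications will be established in the same way, using only the fact that elements of $\II$ are intervals, so I will only carry out the first one and invoke the symmetry of \cref{prop:antiIsomorphism} (or simply mirror the argument) for the second.

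For the first implication, I would unfold $a \less_A c$ into a chain $a = x_0 \less_A x_1 \less_A \cdots \less_A x_k = c$, where each step $x_j \less_A x_{j+1}$ comes from a defining relation, i.e.\ there exists $I_j \in \II$ with $A(I_j) = x_j$ and $x_{j+1} \in I_j \ssm \{x_j\}$. Since $x_0 = a < b$ and $x_k = c > b$, there is a smallest index $i \ge 1$ with $x_i \ge b$. The key observation is that the interval $I_{i-1} \in \II$ contains both $x_{i-1} < b$ and $x_i \ge b$, hence contains $b$ as well (this is precisely where the interval hypothesis is used). Since $A(I_{i-1}) = x_{i-1} \ne b$, this yields $x_{i-1} \less_A b$, and combining with $a \lesseq_A x_{i-1}$ produces $a \less_A b$. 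The symmetric argument, taking the first step of the chain that drops from above $b$ down to at most $b$, handles the $\more$ implication.

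The main obstacle I anticipate is essentially notational rather than conceptual: one has to be careful that the ``smallest index'' argument does produce a strict relation (i.e.\ that $b$ is not equal to $A(I_{i-1})$), and that the resulting chain combining $a \lesseq_A x_{i-1}$ with $x_{i-1} \less_A b$ is not degenerate when $i = 1$. In that edge case one has $a = x_{i-1}$, and the conclusion $a \less_A b$ follows directly from the single defining relation $a \less_A b$ induced by $I_0$. No further machinery is required, and the proof is essentially a one-paragraph application of the interval property of $\II$ together with \cref{prop:TOIP}.
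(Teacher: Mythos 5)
Your proposal is correct and follows essentially the same route as the paper: reduce via \cref{lem:prepi} to linear extensions of~$\less_A$, then verify the criterion of \cref{prop:TOIP} by unfolding $a \less_A c$ into a chain of defining relations and using the interval property of~$\II$ to catch~$b$. The only cosmetic difference is that you locate the first step of the chain crossing~$b$, while the paper argues that the union of the intervals in the chain is an interval containing~$a$ and~$c$ (hence~$b$); these are the same idea.
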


\begin{proof}
From \cref{lem:prepi}, the fiber~$\Or^{-1}(A)$ is the set of linear extensions of $\less_A$, so that we use the characterization of \cref{prop:TOIP} to prove \cref{prop:preimageI}.
Let $1 \le a < b < c \le [n]$.
If we have $a \less_{A} c$, then, by definition of $\less_A$, there must be a sequence $I_1, \dots, I_k \in \II$ such that~$a = A(I_1)$, $A(I_{i+1}) \in I_i$ for all~$i \in [k-1]$, and~$c \in I_k$.
As $\bigcup_{i \in [k]} I_i$ is an interval containing~$a$ and~$c$ and~$a < b < c$, it contains also~$b$.
Hence, there is~$i \in [k]$ such that~$b \in I_i$, and the sequence~$I_1, \dots, I_i$ proves that~$a \less_{A} b$.
The case $a \more_{A} c$ is similar and  implies that $b \more_A c$.
\end{proof}

\begin{example}
For the hypergraph~$\II = \{ 1, 2, 3, 4,123, 23, 234, 1234 \}$ of \cref{exm:intervalHypergraph}, the fibers of the surjection map~$\Or$ are represented in \cref{fig:exmInterval}\,(bottom left).
\end{example}

\begin{remark}
\label{prop:surjectionFactorizeI}
It follows from \cref{prop:preimageI} that the surjective poset morphism~$\Or$ factorizes through the Tamari lattice.
\end{remark}

\begin{remark}
\cref{prop:preimageI} fails when~$\HH$ is not an interval hypergraph.
For~$\HH \eqdef \{1, 2, 3, 13\}$, there are two fibers~$\{123, 213, 132\}$ and~$\{231, 312, 321\}$ which are not intervals of the weak order.
\end{remark}


\subsection{Source characterization for interval hypergraphs}  
\label{subsec:sourceAcyclicI}

We now characterize the comparisons in the poset $P_\II$ in terms of the comparisons of the sources for each $I\in \II$.

\begin{proposition}
\label{prop:sourceOrderI}
For any acyclic orientations~$A$ and~$B$ of an interval hypergraph~$\II$,
\[
A \le B \text{ in } P_\II \quad \iff \quad A(I) \le B(I) \text{ for all } I \in \II .
\]
\end{proposition}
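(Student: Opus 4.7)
The forward direction is immediate. Proceed by induction on the length of a flip sequence witnessing $A \le B$ in $P_\II$: by \cref{def:flip}, each increasing flip at $(i, j)$ with $i < j$ only replaces sources $O(H) = i$ by $O'(H) = j$, hence is monotone in the pointwise order on orientations.

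For the converse, I induct on $\sum_{I \in \II}\bigl(B(I) - A(I)\bigr)$. The base case $A = B$ is trivial. Otherwise, the goal is to exhibit an increasing flip $A \to A'$ where $A'$ is acyclic and $A'(I) \le B(I)$ for all $I$; the inductive hypothesis then yields $A' \le B$, hence $A \le B$, in $P_\II$. To specify the flip, let $D = \{I \in \II : A(I) < B(I)\}$, set
\[
i := \max\{A(I) : I \in D\} \qquad \text{and} \qquad j := \max\{B(I) : I \in D,\ A(I) = i\},
\]
and pick a witness $I_0 \in D$ with $A(I_0) = i$ and $B(I_0) = j$; then $i < j$ and $I_0 \supseteq [i, j]$. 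Let $A'$ be the flip of $A$ at $(i, j)$, that is $A'(I) = j$ for every $I \in \II$ with $A(I) = i$ and $j \in I$, and $A'(I) = A(I)$ otherwise.

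Three verifications remain, each reducing to a short application of \cref{prop:acyclicI} combined with the maximality of $(i, j)$. First, no $H \supseteq \{i, j\}$ has $A(H) = j$, so the flip is well defined: a differing such $H$ would violate the maximality of $i$, while a non-differing one would form a 2-cycle with $I_0$ in $A$. Second, $A'(I) \le B(I)$ for every $I$: the only new constraint is for $I$ with $A(I) = i$ and $j \in I$, and if $B(I) < j$ (including $B(I) = i$) then $(I, I_0)$ would be a 2-cycle in $B$, so the maximality of $j$ forces $B(I) = j$. Third, and this is the technical heart, $A'$ is acyclic: if a 2-cycle $(I', J')$ arose in $A'$, the cases where both or neither of $I', J'$ is in the flip set collapse at once, so $I'$ is changed and $J'$ is not; reading the cycle pattern and using $A$-acyclicity gives $k := A(J')$ with $k > i$ and $i \notin J'$, whence by maximality $J'$ is not differing and $B(J') = k$; a double application of the $B$-acyclicity argument from the second verification to the pair $(I', I_0)$ then pins down $B(I') = j$, and then $(I', J')$ itself is a 2-cycle in $B$, contradicting the acyclicity of $B$.

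The main obstacle, and the reason one must take both $i$ and $j$ to be maxima rather than, say, minima, lies precisely in securing this last acyclicity step: a naive choice would leave room for an unchanged interval $J'$ whose $A$-source sits strictly between $i$ and $j$, creating a cycle in $A'$ that the above argument cannot unwind.
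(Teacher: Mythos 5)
Your proof is correct and follows essentially the same strategy as the paper's: the backward direction proceeds by induction, performing a single increasing flip out of $A$ chosen by an extremal rule, and checking via the two-cycle criterion of \cref{prop:acyclicI} that the flipped orientation is acyclic and still pointwise below $B$. The only difference is the selection rule --- you maximize the $A$-value $i$ first and the $B$-value $j$ as a tie-break, whereas the paper chooses $J$ maximizing $B(J)$ first and $A(J)$ second --- which alters the bookkeeping in the verifications but not the method.
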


\begin{proof}
The forward direction is immediate as it holds for increasing flip by \cref{def:flip} and any cover of~$P_\II$ is an increasing flip.
For the backward direction, assume that~$A(I) \le B(I)$ for all~$I \in \II$.
The proof works by induction on~$|\set{I \in \II}{A(I) < B(I)}|$.

Choose~$J \in \II$ such that~$A(J) < B(J)$ and for any~$I \in \II$ with~$A(I) < B(I)$, we have~$B(I) < B(J)$, or~$B(I) = B(J)$ and~$A(I) \le A(J)$.

Let~$O$ be the orientation of~$\II$ obtained from~$A$ by flipping~$A(J)$ to~$B(J)$ as described in \cref{def:flip}.
We claim that~$O$ is acyclic and that~$O(I) \le B(I)$ for all~$I \in \II$. 
We conclude by induction that~$O \le B$, and thus~$A \le O \le B$ as desired.

We first prove that~$O$ is acyclic.
Otherwise, we would have~$I,I' \in \II$ such that~$O(I) \in I' \ssm \{O(I')\}$ and~$O(I') \in I \ssm \{O(I)\}$.
As~$A$ is acyclic, we have~$A(I) \ne O(I)$ or~$A(I') \ne O(I')$, but not both since~$O(I) \ne O(I')$.
Hence, we can assume by symmetry that~$O(I) = A(I)$ while~$A(I') = A(J)$ and~$O(I') = B(J)$.
Up to updating~$J$ to~$I'$, we can thus also assume that~$I' = J$.
As~$B$ is acyclic, we have~$B(I) \ne O(I)$.
Since~$A(I) = O(I) \ne B(I)$, our choice of~$J$ ensures that
\begin{itemize}
\item either~$B(I) < B(J)$. 
We then obtain that~$A(I) \le B(I) < B(J)$.
As~$A(I) = O(I) \in I' = J$ and~$B(J) \in J$, we thus get that~$B(I) \in J$.
Moreover, $B(J) = O(I') \in I$.
As~$B(I) \ne B(J)$, we obtain a contradiction with the acyclicity of~$B$.
\item or~$B(I) = B(J)$ and~$A(I) \le A(J)$.
We then have~$A(I) \le A(J) \le B(J) = B(I)$ so that~$A(J) \in I$.
As~$A(I) = O(I) \in I' = J$ and~$A(I) \ne A(J)$, we obtain a contradiction with the acyclicity of~$A$.
\end{itemize}

We now prove that~$O(I) \le B(I)$ for all~$I \in \II$.
We thus consider~$I \in \II$ and distinguish two cases:
\begin{itemize}
\item Assume first that~$A(I) = A(J)$ and~$B(J) \in I$. By \cref{def:flip}, we then have~${O(I) = B(J)}$. Moreover, as~$I$ is an interval and contains~$A(J)$ and~$B(J)$, it contains~$[A(J), B(J)] \subseteq J$. As~$B$ is acyclic, this implies that~$B(I) \notin {[A(J), B(J)[}$. As~$A(J) = A(I) \le B(I)$, we thus obtain that~$O(I) = B(J) \le B(I)$.
\item Otherwise, we have~$O(I) = A(I) \le B(I)$.
\qedhere
\end{itemize}
\end{proof}

\begin{remark}
After the completion of this paper, \cref{prop:sourceOrderI} was extended to arbitrary hypergraphs by F.~Gélinas in~\cite{Gelinas}.
\end{remark}

\begin{remark}
Note that \cref{prop:sourceOrderI} implies that the hypergraphic poset~$P_\II$ has Dushnik-Miller dimension at most~$|\II|$.
\end{remark}


\subsection{Flips and cover relations for interval hypergraphs}  
\label{subsec:cover}

In this section, we exploit \cref{prop:acyclicI} to provide a simple description of the flips and cover relations of~$P_\II$ for an interval hypergraph~$\II$.

\begin{proposition}
\label{prop:isFlipI}
Consider an acyclic orientation~$A$ of an interval hypergraph~$\II$, an increasing flip~$\flip{A}{i}{j}{B}$ (in the sense of \cref{def:flip}), and let~$k \eqdef \max\set{\max(I)}{I \in \II \text{ and } A(I) = i}$.
Then~$B$ is acyclic if and only if there is no~$J \in \II$ with~$j \in J \ssm \{A(J)\}$ and~$A(J) \in {]i,k]}$.
A symmetric statement holds for decreasing flips.
\end{proposition}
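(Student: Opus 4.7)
The strategy is to combine \cref{prop:acyclicI} with a careful analysis of how a 2-cycle can appear in $B$. Since $A$ is acyclic, any 2-cycle of $B$ must involve at least one interval whose orientation was changed by the flip $\flip{A}{i}{j}{B}$. Two simultaneously flipped intervals both have $B$-source equal to $j$, so they cannot form a 2-cycle, in which the two sources are forced to differ. Hence in any such 2-cycle, exactly one of the two intervals is flipped; call it $J_0$, so that $A(J_0)=i$, $\{i,j\}\subseteq J_0$ and $B(J_0)=j$, while the other interval $J$ satisfies $B(J)=A(J)$.

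For the forward direction, the 2-cycle condition for $B$ applied to $(J_0,J)$ reads $j\in J\ssm\{A(J)\}$ and $A(J)\in J_0\ssm\{j\}$. To turn this into the statement's criterion, I would apply acyclicity of $A$ to the same pair: the only way to avoid an $A$-cycle between $J$ and $J_0$ is $A(J)=i$ or $i\notin J$. The first case cannot occur, since $A(J)=i$ together with $j\in J$ would make $J$ itself be flipped, contradicting our setup. Hence $i\notin J$, which together with $j\in J$ and $i<j$ forces $\min(J)>i$, and therefore $A(J)>i$. Combined with $A(J)\in J_0$ and $\max(J_0)\le k$ (by definition of $k$), this gives $A(J)\in{]i,k]}$, producing the witness required by the statement.

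For the converse, assume $J\in\II$ satisfies $j\in J\ssm\{A(J)\}$ and $a\eqdef A(J)\in{]i,k]}$. The very existence of the flip $\flip{A}{i}{j}{B}$ guarantees $k\ge j$: some $H\in\II$ with $A(H)=i$ must contain both $i$ and $j$, so $\max(H)\ge j$. Pick $K\in\II$ realizing the maximum in the definition of $k$, so $A(K)=i$ and $\max(K)=k$. Writing $K=[\min(K),k]$ with $\min(K)\le i<j\le k$ shows $\{i,j\}\subseteq K$, so $K$ is flipped and $B(K)=j$. Since $a\ne i$, the interval $J$ is not flipped, hence $B(J)=a$; and $a\in{]i,k]}$ together with $i,k\in K$ yields $a\in K$. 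The pair $(K,J)$ then exhibits the forbidden 2-cycle pattern of \cref{prop:acyclicI} in $B$, so $B$ is cyclic.

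The delicate step will be the case analysis opening the forward direction: ruling out $A(J)=i$ by the observation that it would make $J$ itself flipped is the key move, after which the interval property of $J$ is what actually forces $A(J)>i$. The symmetric assertion for decreasing flips is then obtained by invoking the anti-isomorphism of \cref{prop:antiIsomorphism}.
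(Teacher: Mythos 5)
Your proof is correct and follows essentially the same route as the paper: both directions hinge on the two-cycle characterization of \cref{prop:acyclicI}, with the converse constructing the explicit two-cycle between $J$ and the interval realizing $k$, and the forward direction using acyclicity of $A$ (plus the interval property) to force $A(J)\in{]i,k]}$. Your upfront observation that any $B$-two-cycle contains exactly one flipped interval is just a repackaging of the paper's "one of the two sources equals $i$, but not both" step, so there is nothing substantively different to flag.
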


\begin{proof}
We prove the result for increasing flips, the result for decreasing flips follows by the symmetry of~\cref{prop:antiIsomorphism}.

\pagebreak
Assume first that there is~$J \in \II$ with~$j \in J \ssm \{A(J)\}$ and~$A(J) \in {]i,k]}$.
Let~$I \in \II$ be such that~$A(I) = i$ and~$k = \max(I)$.
Then~$B(J) = A(J) \in {]i,k]} = {]A(I), \max(I)]} \subseteq I$ and~$B(I) = j \in J$ and~$B(I) = j \ne A(J) = B(J)$ implies that $B$ is cyclic.

Conversely, assume that $B$ is cyclic.
By \cref{prop:acyclicI}, there are~$I,J \in \II$ with~${B(I) \in J \ssm \{B(J)\}}$ and~$B(J) \in I \ssm \{B(I)\}$.
As~$A$ is acyclic, we have~$i = A(I) \ne B(I) = j$ or~$i = A(J) \ne B(J) = j$, but not both since~$B(I) \ne B(J)$.
By symmetry, we can assume that~$i = A(I) \ne B(I) = j$ and~$A(J) = B(J)$.
We obtain that~$j = B(I) \in J \ssm \{B(J)\} = J \ssm \{A(J)\}$.
Moreover, we have~$i < A(J)$ as otherwise~$A(J) = B(J) \in I$ and~$A(I) = i \in [A(J), j] = [A(J), B(I)] \subseteq J$ and~$A(I) \ne A(J)$ would contradict the acyclicity of~$A$.
As~$A(J) = B(J) \in I \ssm \{B(I)\}$, we obtain that~$i < A(J) < \max(I)$.
We conclude that~$A(J) \in {]i,k]}$ since~$\max(I) \le k$ as~$A(I) = i$.
\end{proof}

\begin{proposition}
\label{prop:alwaysFlippableI}
Consider an acyclic orientation~$A$ of an interval hypergraph~$\II$ and~$i \in [n]$ such that there is~$I \in \II$ with~$i = A(I) < \max(I)$.
Then there exists~$j > i$ such that the orientation of~$\II$ obtained by flipping $i$ to~$j$ is acyclic.
A symmetric statement holds with a decreasing flip if~$i = A(I) > \min(I)$.
\end{proposition}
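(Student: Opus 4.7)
The plan is to use Proposition~\ref{prop:isFlipI} as a black box to reduce the existence of an acyclic increasing flip to the existence of a suitable target $j \in {]i,k]}$. Fix $I^* \in \II$ with $A(I^*) = i$ and $\max(I^*) = k$ (such $I^*$ exists because the hypothesis guarantees that the set defining $k$ is nonempty, and we pick one realising the maximum). It suffices to find $j \in {]i,k]}$ such that no $J \in \II$ satisfies both $A(J) \in {]i,k]}$ and $j \in J \ssm \{A(J)\}$: Proposition~\ref{prop:isFlipI} then yields an acyclic orientation $B$ obtained by flipping $A$ at $(i,j)$, and $B \ne A$ because $\{i,j\} \subseteq [i,k] \subseteq I^*$ forces $B(I^*) = j \ne i = A(I^*)$.

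To produce such a $j$, I introduce the set of ``dangerous'' hyperedges
\[
	\mathcal{B} \eqdef \set{J \in \II}{A(J) \in {]i,k]}}.
\]
If $\mathcal{B} = \varnothing$, the forbidden condition is vacuous and $j = i+1$ works. Otherwise, I define a directed graph on $\mathcal{B}$ with an arc $J \to J'$ whenever $A(J') \in J \ssm \{A(J)\}$. Any directed cycle $J_1 \to \cdots \to J_r \to J_1$ in this graph would be precisely a forbidden cyclic sequence in the sense of Definition~\ref{def:acyclicOrientation}, contradicting the acyclicity of $A$; hence this graph is acyclic and admits a source $J^*$. I then set $j \eqdef A(J^*) \in {]i,k]}$. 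If some $J \in \II$ violated the requirement, then $J \in \mathcal{B}$ and $j = A(J^*) \in J \ssm \{A(J)\}$ would provide an arc $J \to J^*$, contradicting the fact that $J^*$ is a source. The symmetric statement for decreasing flips then follows from the anti-isomorphism of Proposition~\ref{prop:antiIsomorphism}.

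The only subtle point in the proof is the correct choice of $j$: neither $\min \set{A(J)}{J \in \mathcal{B}}$ nor $\max \set{A(J)}{J \in \mathcal{B}}$ works in general. For instance, with $\II = \{[1,5],[2,3],[2,4]\}$ and $A = (1,2,4)$ the minimum $j = 2$ fails while the maximum $j = 4$ succeeds, whereas with $\II = \{[1,5],[2,4],[3,5]\}$ and $A = (1,2,3)$ the maximum $j = 3$ fails while the minimum $j = 2$ succeeds. Picking $j = A(J^*)$ for any source $J^*$ of the induced DAG on $\mathcal{B}$ uniformly handles both situations, and this is the main observation driving the proof.
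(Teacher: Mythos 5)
Your proof is correct and follows essentially the same route as the paper: both arguments reduce the statement to \cref{prop:isFlipI} and then produce a target $j \in {]i,k]}$ violating no blocking condition, via an extremality argument powered by the acyclicity of~$A$. The only difference is cosmetic: the paper picks a $\less_A$-minimal element of~${]i,k]}$ and derives a contradiction if every element were blocked, whereas you set $j = A(J^*)$ for a source $J^*$ of the acyclic digraph on the blocking hyperedges, which is the same well-foundedness fact in a different guise.
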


\begin{proof}
We prove the result for increasing flips, the result for decreasing flips follows by the symmetry of~\cref{prop:antiIsomorphism}.

Let~$k \eqdef \max\set{\max(I)}{I \in \II \text{ and } A(I) = i}$ and consider
\[
X \eqdef \bigcup_{\substack{J \in \II \\ A(J) \in {]i,k]}}} J \ssm \{A(J)\}.
\]
Assume that~$X = {]i,k]}$, and consider a minimal~$j \in {]i,k]}$ for the poset~$\less_A$.
As~$j \in X$, there is~$J \in \II$ such that~$A(J) \in {]i,k]}$ and~$j \in J \ssm \{A(J)\}$.
Hence, we have~$A(J) \less_A j$ and~$A(J) \in {]i,k]}$, a contradiction.
We conclude that there is~$j \in {]i,k]} \ssm X$.
By \cref{prop:isFlipI}, the orientation obtained from~$A$ by flipping of~$i$ to~$j$ is acyclic.
\end{proof}

\begin{remark}
\cref{prop:alwaysFlippableI} fails when~$\HH$ is not an interval hypergraph.
For instance, for the hypergraph $\HH=\{ 1, 2, 3, 4, 123, 134 \}$ of \cref{exm:DH1,exm:DH2,exm:DH3,exm:DH4,exm:DH5}, and the acyclic orientation~$A$ defined by~$A(123) = 2$ and~$A(134) = 1$, we have~$2 = A(123) < \max(123)$, but no increasing flip from~$A$ to an acyclic orientation of~$\HH$ flips~$2$.
\end{remark}

\begin{proposition}
\label{prop:isCoverI}
An increasing flip \flip{A}{i}{j}{B} between two acyclic orientations of an interval hypergraph~$\II$ is a cover relation of~$P_\II$ if and only if
\[
{]i,j[} \;\; \subseteq \bigcup_{\substack{J \in \II \\ A(J) \in {]i,j]}}} J \ssm \{A(J)\}.
\]
\end{proposition}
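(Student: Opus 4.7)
The plan is to unfold the cover property via the source-order characterization and then carefully analyze which candidate intermediate orientations can be acyclic, relying on \cref{prop:sourceOrderI,prop:acyclicI,prop:isFlipI}.

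First, I would reduce to the existence of an acyclic intermediate. Let $\cJ \eqdef \set{I \in \II}{\{i,j\} \subseteq I}$. By \cref{prop:sourceOrderI}, an orientation $C$ lies in $[A,B]$ iff $A(I) \le C(I) \le B(I)$ for every $I$. Since $A$ and $B$ agree off $\cJ$ with $(A(I),B(I)) = (i,j)$ on $\cJ$, any such $C$ coincides with $A$ on $\II \ssm \cJ$ and takes values in $[i,j]$ on $\cJ$. Hence $A \to B$ fails to be a cover iff there is an acyclic orientation $C$ of this form with $C \ne A$ and $C \ne B$.

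Second, I would pin down the shape of any acyclic intermediate. Every $I \in \cJ$ contains $[i,j]$, so if two elements $I_1, I_2 \in \cJ$ were assigned distinct values $a \ne b$ in $[i,j]$, then $a \in I_2 \ssm \{b\}$ and $b \in I_1 \ssm \{a\}$, creating the pattern of \cref{prop:acyclicI} and contradicting acyclicity of $C$. Consequently, $C$ must be constant on $\cJ$ with a common value $k$, and strictness forces $k \in {]i,j[}$; call this orientation $C^k$. The cover property thus reduces to: $A \to B$ is a cover iff every $C^k$ with $k \in {]i,j[}$ is cyclic.

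Third, I would identify the cycles in $C^k$. Any 2-cycle purely within $\cJ$ is impossible (all values equal $k$), and a 2-cycle purely within $\II \ssm \cJ$ would be inherited from $A$, contradicting acyclicity of $A$. So any 2-cycle of $C^k$ pairs some $I \in \cJ$ (where $C^k(I) = k$) with some $J \in \II \ssm \cJ$ (where $C^k(J) = A(J)$), giving the conditions $k \in J \ssm \{A(J)\}$ and $A(J) \in I \ssm \{k\}$. Since $I \supseteq [i,j]$, the second condition is automatic once $A(J) \in [i,j] \ssm \{k\}$. I would then exclude extreme values of $A(J)$: if $A(J) > j$, then $J$ interval containing $k < j < A(J)$ also contains $j$, so $j \in J \ssm \{A(J)\}$ with $A(J) \in {]i,k^*]}$ (where $k^* = \max\set{\max(I)}{A(I) = i}$), contradicting the validity of $A \to B$ by \cref{prop:isFlipI}; symmetrically, $A(J) < i$ combined with any $I \in \cJ$ realizes a 2-cycle in $A$, contradicting acyclicity of $A$. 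Thus the relevant $J$ satisfies $A(J) \in {]i,j]}$.

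Putting this together, $C^k$ is cyclic iff some $J \in \II$ with $A(J) \in {]i,j]}$ satisfies $k \in J \ssm \{A(J)\}$, i.e.\ iff $k$ belongs to the union on the right-hand side. Ranging over $k \in {]i,j[}$ yields the claimed inclusion. The main obstacle is the third step: pinning down precisely the admissible range of $A(J)$ requires a careful dual application of the acyclicity of $A$ and of $B$ (the latter via \cref{prop:isFlipI}), together with the observation that every $I \in \cJ$ contains $[i,j]$; getting the boundary behaviour at $A(J) = i$ right is the subtle point.
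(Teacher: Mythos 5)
Your reduction is sound and genuinely different from the paper's route: instead of exhibiting an intermediate by performing an actual flip from $A$ (which is what the paper does, via \cref{prop:isFlipI}), you classify all candidate intermediates directly, showing via \cref{prop:sourceOrderI,prop:acyclicI} that any acyclic $C$ with $A<C<B$ must agree with $A$ outside the flipped hyperedges (those containing $\{i,j\}$ with $A$-value $i$ --- note these, not all of $\cJ$, are the ones where $A$ and $B$ differ) and be constant equal to some $k\in{]i,j[}$ on them. Up to and including the reduction ``cover $\iff$ every $C^k$ is cyclic'', the argument is correct.

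The genuine gap is exactly the boundary case you postpone and never settle: a $2$-cycle of $C^k$ may involve $J$ with $A(J)=i$. An interval $J$ with source $i$ containing $k$ but not $j$ forms a $2$-cycle with any flipped $I$, since $k\in J\ssm\{i\}$ and $i\in I\ssm\{k\}$, and neither the acyclicity of $A$ (same source $i$ on $J$ and $I$) nor that of $B$ (where $j\notin J$) rules it out; your two exclusions only treat $A(J)>j$ and $A(J)<i$. Hence the equivalence ``$C^k$ cyclic iff $k$ lies in the displayed union'' fails. Concretely, take $\II=\{1,2,3,12,123\}$, $A(12)=A(123)=1$, and the increasing flip $1\to 3$ giving $B(12)=1$, $B(123)=3$: the orientation $C^2$ (with $12\mapsto 1$, $123\mapsto 2$) is cyclic because of $J=12$, although the union over $J$ with $A(J)\in{]1,3]}$ is empty. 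This example also shows that the difficulty is not an artifact of your method: the flip above is a cover relation (the paper itself records that $P_\II$ is a square for this hypergraph), yet the displayed inclusion fails, so completing your analysis honestly leads to a criterion whose right-hand side must also include $\bigcup\set{J\in\II}{A(J)=i,\ j\notin J}$; the paper's own argument for the ``cover implies inclusion'' direction stumbles at the same point, since the orientation $C$ it builds by flipping $i$ to $\ell$ also reorients intervals containing $i$ and $\ell$ but not $j$, so $C\le B$ can fail. As written, your third step therefore does not go through, and it cannot be repaired without modifying the stated criterion.
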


\begin{proof}
Let~$k \eqdef \max\set{\max(I)}{I \in \II \text{ and } A(I) = i}$ and~$I \in \II$ be such that~$A(I) = i$ and~${k = \max(I)}$.
Note that~$i < j \le k$ since~$\flip{A}{i}{j}{B}$ is a flip.

Assume first that there is~$\ell \in {]i,j[} \ssm \bigcup_{A(J) \in {]i,j]}} J \ssm \{A(J)\}$.
We claim that there is no~$J \in \II$ with~$\ell \in J \ssm A(J)$ and~$A(J) \in {]i,k]}$.
Indeed, by definition of~$\ell$, we would have~$A(J) > j$.
Then~$B(I) = j \in [\ell, A(J)] \subseteq J$ and~$B(J) = A(J) \in {]i,k]} \subseteq I$ and~$B(I) = j < A(J) = B(J)$ contradicts the acyclicity of~$A$.
We conclude from \cref{prop:isFlipI} that the orientation~$C$ of~$\II$ obtained by flipping~$i$ to~$\ell$ is acyclic.
This implies that~$\flip{A}{i}{j}{B}$ is not a cover relation as it is obtained transitively from~$\flip{A}{i}{\ell}{C}$ and~$\flip{C}{\ell}{j}{B}$.

Conversely, assume that~$A \le B$ is not a cover relation.
Then there is a flip~$\flip{A}{i}{\ell}{C}$, where~$C$ is an acyclic orientation of~$\II$ such that~$A < C < B$.
Since~$\flip{A}{i}{j}{B}$ is a flip, we have~${A(K) = C(K) = B(K)}$ for any~$K \in \II$, except if~$A(K) = i$ and~$B(K) = j$.
Hence, there is~$K \in \II$ such that~$A(I) = i < C(I) = \ell < j = B(I)$.
Since~$\flip{A}{i}{\ell}{C}$ is a flip, we obtain from \cref{prop:isFlipI} that there is no~$J \in \II$ with~$\ell \in J \ssm \{A(J)\}$ and~$A(J) \in {]i,k]} \subseteq {]i,j]}$.
Hence,~$\ell \in {]i,j[} \ssm \bigcup_{A(J) \in {]i,j]}} J \ssm \{A(J)\}$.
\end{proof}


\pagebreak
\section{Interval hypergraphic lattices}
\label{sec:LatticeI}

\begin{figure}[b]
	\centerline{
		\begin{tabular}{c@{\qquad}c}
			\begin{tikzpicture}[scale=1.5,baseline=-3.5cm]
				\node (a) at (2,0) {\acyclicOrientation{4}{1/3/1,2/4/2}};
				\node (b) at (1,1) {\acyclicOrientation{4}{1/3/1,2/4/3}};
				\node (c) at (3,1) {\acyclicOrientation{4}{1/3/2,2/4/2}};
				\node (d) at (1,2) {\acyclicOrientation{4}{1/3/1,2/4/4}};
				\node (e) at (3,3) {\acyclicOrientation{4}{1/3/3,2/4/3}};
				\node (f) at (1,3) {\acyclicOrientation{4}{1/3/2,2/4/4}};
				\node (g) at (2,4) {\acyclicOrientation{4}{1/3/3,2/4/4}};
				\draw (a)--(b);
				\draw (a)--(c);
				\draw (b)--(d);
				\draw (b)--(e);
				\draw (c)--(e);
				\draw (c)--(f);
				\draw (d)--(f);
				\draw (f)--(g);
				\draw (e)--(g);
			\end{tikzpicture}
			&
			\begin{tikzpicture}[scale=2.5]
				\node (a) at (2.5,0) {\acyclicOrientation{4}{1/2/1,1/3/1,1/4/1,2/4/2,3/4/3}};
				\node (b) at (1,1) {\acyclicOrientation{4}{1/2/1,1/3/1,1/4/1,2/4/3,3/4/3}};
				\node (c) at (2.5,1) {\acyclicOrientation{4}{1/2/1,1/3/1,1/4/1,2/4/2,3/4/4}};
				\node (d) at (4,1) {\acyclicOrientation{4}{1/2/2,1/3/2,1/4/2,2/4/2,3/4/3}};
				\node (e) at (1,2) {\acyclicOrientation{4}{1/2/1,1/3/1,1/4/1,2/4/4,3/4/4}};
				\node (f) at (2,2) {\acyclicOrientation{4}{1/2/1,1/3/3,1/4/3,2/4/3,3/4/3}};
				\node (g) at (4,2) {\acyclicOrientation{4}{1/2/2,1/3/2,1/4/2,2/4/2,3/4/4}};
				\node (h) at (1,3) {\acyclicOrientation{4}{1/2/1,1/3/1,1/4/4,2/4/4,3/4/4}};
				\node (i) at (3,3) {\acyclicOrientation{4}{1/2/2,1/3/3,1/4/3,2/4/3,3/4/3}};
				\node (j) at (2,4) {\acyclicOrientation{4}{1/2/1,1/3/2,1/4/4,2/4/4,3/4/4}};
				\node (k) at (4,4) {\acyclicOrientation{4}{1/2/2,1/3/2,1/4/4,2/4/4,3/4/4}};
				\node (l) at (3,5) {\acyclicOrientation{4}{1/2/2,1/3/3,1/4/4,2/4/4,3/4/4}};
				\draw (a)--(b);
				\draw (a)--(c);
				\draw (a)--(d);
				\draw (b)--(e);
				\draw (b)--(f);
				\draw (c)--(e);
				\draw (c)--(g);
				\draw (d)--(i);
				\draw (d)--(g);
				\draw (e)--(h);
				\draw (f)--(i);
				\draw (f)--(j);
				\draw (g)--(k);
				\draw (h)--(j);
				\draw (h)--(k);
				\draw (i)--(l);
				\draw (j)--(l);
				\draw (k)--(l);
			\end{tikzpicture}
			\\[.2cm]
			$\{1,2,3,4,123,234\}$
			&
			$\{1,2,3,4,12,123,1234,234,34\}$
		\end{tabular}
	}
	\caption{Two interval hypergraphic posets which are not lattices.}
	\label{fig:notLattices}
\end{figure}

\begin{figure}
	\centerline{
		\begin{tabular}{c@{\qquad}c}
			\begin{tikzpicture}[scale=1.5,baseline=-3.5cm]
				\node (a) at (1,0) {\acyclicOrientation{3}{1/2/1,2/3/2,1/3/1}};
				\node (b) at (0,1.3) {\acyclicOrientation{3}{1/2/1,2/3/3,1/3/1}};
				\node (c) at (2,2) {\acyclicOrientation{3}{1/2/2,2/3/2,1/3/2}};
				\node (d) at (0,2.7) {\acyclicOrientation{3}{1/2/1,2/3/3,1/3/3}};
				\node (e) at (1,4) {\acyclicOrientation{3}{1/2/2,2/3/3,1/3/3}};
				\draw (a)--(b);
				\draw (a)--(c);
				\draw (b)--(d);
				\draw (c)--(e);
				\draw (d)--(e);
			\end{tikzpicture}
			&
			\begin{tikzpicture}[scale=2.2]
				\node (a) at (3,0) {\acyclicOrientation{4}{1/2/1,2/3/2,3/4/3,1/3/1,2/4/2,1/4/1}};
				\node (b) at (1,1) {\acyclicOrientation{4}{1/2/2,2/3/2,3/4/3,1/3/2,2/4/2,1/4/2}};
				\node (c) at (3,1) {\acyclicOrientation{4}{1/2/1,2/3/3,3/4/3,1/3/1,2/4/3,1/4/1}};
				\node (d) at (5,1) {\acyclicOrientation{4}{1/2/1,2/3/2,3/4/4,1/3/1,2/4/2,1/4/1}};
				\node (e) at (3,3) {\acyclicOrientation{4}{1/2/2,2/3/2,3/4/4,1/3/2,2/4/2,1/4/2}};
				\node (f) at (5,2) {\acyclicOrientation{4}{1/2/1,2/3/2,3/4/4,1/3/1,2/4/4,1/4/1}};
				\node (g) at (2,4) {\acyclicOrientation{4}{1/2/1,2/3/3,3/4/3,1/3/3,2/4/3,1/4/3}};
				\node (h) at (4,3) {\acyclicOrientation{4}{1/2/1,2/3/2,3/4/4,1/3/1,2/4/4,1/4/4}};
				\node (i) at (5,3) {\acyclicOrientation{4}{1/2/1,2/3/3,3/4/4,1/3/1,2/4/4,1/4/1}};
				\node (j) at (1,5) {\acyclicOrientation{4}{1/2/2,2/3/3,3/4/3,1/3/3,2/4/3,1/4/3}};
				\node (k) at (3,5) {\acyclicOrientation{4}{1/2/2,2/3/2,3/4/4,1/3/2,2/4/4,1/4/4}};
				\node (l) at (5,4) {\acyclicOrientation{4}{1/2/1,2/3/3,3/4/4,1/3/1,2/4/4,1/4/4}};
				\node (m) at (5,5) {\acyclicOrientation{4}{1/2/1,2/3/3,3/4/4,1/3/3,2/4/4,1/4/4}};
				\node (n) at (3,6) {\acyclicOrientation{4}{1/2/2,2/3/3,3/4/4,1/3/3,2/4/4,1/4/4}};
				\draw (a)--(b);
				\draw (a)--(c);
				\draw (a)--(d);
				\draw (b)--(e);
				\draw (b)--(j);
				\draw (c)--(g);
				\draw (c)--(i);
				\draw (d)--(e);
				\draw (d)--(f);
				\draw (e)--(k);
				\draw (f)--(h);
				\draw (f)--(i);
				\draw (g)--(j);
				\draw (g)--(m);
				\draw (h)--(k);
				\draw (h)--(l);
				\draw (i)--(l);
				\draw (j)--(n);
				\draw (k)--(n);
				\draw (l)--(m);
				\draw (m)--(n);
			\end{tikzpicture}
			\\[.2cm]
			$\{1,2,3,12,23,123\}$
			&
			$\{1,2,3,4,12,23,34,123,234,1234\}$
		\end{tabular}
	}
	\caption{The Tamari lattice (semidistributive lattice, but not distributive).}
	\label{fig:Tamari}
\end{figure}

In this section, we prove \cref{thm:latticeI} which we first introduce properly:

\begin{definition}
\label{def:intersectionClosed}
An interval hypergraph~$\II$ is \defn{closed under intersection} if $I, J \in \II$ and~$I \cap J \ne \varnothing$ implies~$I \cap J \in \II$.
\end{definition}

\begin{definition}
A poset~$P$ is a \defn{lattice} if any subset of~$P$ admits a \defn{join} (least upper bound) and a \defn{meet} (greatest lower bound).
\end{definition}

\begin{theoremA}
For an interval hypergraph $\II$ on~$[n]$ (with our convention that~$\{i\} \in \II$ for all~$i \in [n]$), the poset $P_\II$ is a lattice if and only if $\II$ is closed under intersection.
\end{theoremA}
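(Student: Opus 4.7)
The plan is to prove the two directions separately, using the sourcewise characterization of the order (\cref{prop:sourceOrderI}), the two-cycle criterion for acyclicity (\cref{prop:acyclicI}), and the order-reversing symmetry (\cref{prop:antiIsomorphism}) to reduce the meet part of the lattice statement to the join part.

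For the forward direction, I argue by contraposition. Assume $I, J \in \II$ satisfy $K \eqdef I \cap J \ne \varnothing$ and $K \notin \II$. Since $K \notin \{I, J\}$, neither of $I, J$ contains the other, so writing $I = [a, c]$ and $J = [b, d]$ we may assume $a < b \le c < d$. I construct two incomparable acyclic orientations $A, B$ of $\II$ admitting no least common upper bound, obtaining them as images under the surjection~$\Or$ of \cref{coro:weakToP} of permutations starting respectively with $b$ and with the prefix $b-1, c$. This guarantees $A(I) = A(J) = b$ and $B(I) = b-1$, $B(J) = c$, and acyclicity is automatic from the surjection construction. The pointwise maximum of $A$ and $B$ on $(I, J)$ is the cyclic pattern $(b, c)$, and it can be resolved in two incomparable ways: raising the source of $I$ to $c$ (producing a common upper bound $D_1$ with $D_1(I) = D_1(J) = c$) or pushing the source of $J$ past $K$ to $c+1$ (producing a common upper bound $D_2$ with $D_2(I) = b$ and $D_2(J) = c+1$), completing all other coordinates minimally so that both remain acyclic. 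Using \cref{prop:sourceOrderI}, I verify that $D_1, D_2$ are incomparable minimal common upper bounds of $A$ and $B$, which precludes the existence of the join $A \vee B$ in $P_\II$.

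For the backward direction, assume $\II$ is closed under intersection. Given acyclic $A, B$, \cref{prop:sourceOrderI} identifies the set $U(A, B)$ of upper bounds of $\{A, B\}$ in $P_\II$ with the set of acyclic orientations $D$ satisfying $D(I) \ge \max(A(I), B(I))$ for every $I \in \II$. This set is nonempty (it contains the top orientation $I \mapsto \max I$) and finite, so it has a pointwise infimum; the aim is to show that this infimum is itself acyclic, hence belongs to $U(A, B)$ and is the least upper bound. Concretely, I prove that $U(A, B)$ is closed under the pointwise-min operation $M(I) \eqdef \min(D(I), D'(I))$ for $D, D' \in U(A, B)$. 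Suppose for contradiction that $M$ is cyclic; by \cref{prop:acyclicI} there exist $I, J \in \II$ with $M(I), M(J) \in K \eqdef I \cap J$ and $M(I) \ne M(J)$, and by hypothesis $K \in \II$. The key observation, applying \cref{prop:acyclicI} to $D$ and $D'$, is that whenever a source $D(I')$ lies in $K \subseteq I'$, it must equal $D(K)$ (and symmetrically for $D'$). A case analysis on whether $M(I), M(J)$ both arise from the same orientation or from different ones either forces $M(I) = M(J)$ directly (contradiction) or produces the relations $D(I) = D(K)$ and $D'(J) = D'(K)$, yielding $M(K) = \min(M(I), M(J))$; this creates a strictly smaller cycle of $M$ involving $K$ and a nested pair of intervals, and induction on $|I \cup J|$ eventually produces a contradiction. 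The meet case follows by applying this argument to $\II^\leftrightarrow$ and invoking \cref{prop:antiIsomorphism}.

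The main obstacle is the closure of $U(A, B)$ under pointwise minimum. The naive attempt to define the join as the pointwise maximum of $A$ and $B$ fails in general (already for the smallest non-trivial Tamari lattice), so the join cannot be obtained coordinatewise; instead one must argue via the infimum of $U(A, B)$. The closure of $\II$ under intersection is precisely what supplies the witness interval $K = I \cap J$ needed to pin down the ambiguity in the sources and to collapse every candidate cycle of $M$.
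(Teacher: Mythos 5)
The backward direction of your plan contains a genuine error: the central claim that the set $U(A,B)$ of common upper bounds is closed under pointwise minimum is false, even when $\II$ is closed under intersection. Take $\II = \{1,2,3,12,23,123\}$ (the Tamari case of \cref{fig:Tamari}, left), let $A = B = \Or_{123}$ be the bottom orientation, with sources $(1,2,1)$ on $(12,23,123)$, and consider $D = \Or_{213}$ with sources $(2,2,2)$ and $D' = \Or_{312}$ with sources $(1,3,3)$. By \cref{prop:sourceOrderI} both lie in $U(A,B)$, but their pointwise minimum has sources $(1,2,2)$, which is cyclic: its source $1$ of $12$ lies in $123 \ssm \{2\}$ while its source $2$ of $123$ lies in $12 \ssm \{1\}$ (\cref{prop:acyclicI}). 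Note that here the offending pair is $I = 12 \subseteq J = 123$, so $K = I \cap J = I$ and your intended induction on $|I \cup J|$ (passing to a ``strictly smaller cycle involving $K$'') has nothing to recurse on; the statement you set out to prove simply fails, reflecting the fact that neither joins nor meets in $P_\II$ are computed coordinatewise (the join is given by the subtler formula of \cref{prop:joinLattice}). The paper proves this direction by a mechanism absent from your plan: the fibers of $\Or$ are weak-order intervals $[\projDown_A,\projUp_A]$ (\cref{prop:preimageI}), intersection-closure yields $A \le B \iff \projDown_A \le \projUp_B$ (\cref{thm:propertiesI}; this is where the hypothesis is used), and then $A \join B = \Or_{\projDown_A \join \projDown_B}$ (\cref{prop:latticeBackward}). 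Some argument of this kind, transporting the computation back to the weak order, is needed; the coordinatewise infimum argument cannot be repaired as stated.

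Your forward direction is essentially the paper's (your $A, B, D_1, D_2$ are its $A, B, D, C$ in the example $\{1,2,3,4,123,234\}$), but as written it has two gaps. First, ``completing all other coordinates minimally so that both remain acyclic'' is not a definition: you must exhibit $D_1, D_2$ concretely (the paper takes $\Or$ of the explicit permutations $cdbaX$ and $dbacX$ with $a = b-1$, $d = c+1$) and verify $D_1, D_2 \ge A, B$ on all coordinates, not only on $I$ and $J$. Second, the hypothesis $I \cap J \notin \II$ must enter precisely in the verification that $D_2$ (with $D_2(I) = b$) is an upper bound of $B$: if $I \cap J \in \II$, then $\max(A,B)$ has source $c$ on $I \cap J$, which forces every upper bound to have source at least $c$ on $I$, so no such $D_2$ exists (this is the paper's step $B \le C$, proved via the auxiliary permutations $adcbX$ and $adbcX$). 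Your plan never invokes the hypothesis at this point. Finally, you do not need minimality of $D_1, D_2$, which would require extra work: once they are common upper bounds, any $M$ with $A, B \le M \le D_1, D_2$ would satisfy $M(I) = b$ and $M(J) = c$ by \cref{prop:sourceOrderI} (note $b < c$ since singletons lie in $\II$), a two-cycle contradicting \cref{prop:acyclicI}, so the join cannot exist.
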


\begin{example}
The interval hypergraphic posets of \cref{fig:notLattices} are not lattices, while those of \cref{fig:Tamari,fig:distributiveLattices,fig:semidistributiveLattices,fig:notSemidistributiveLattices} are lattices.
\end{example}

\begin{example}
The hypergraphic poset of~$\{ 1, 2, 3, 12, 23, 123\}$ (resp.~of~$\{ 1, 2, 3, 12, 123\}$) is a lattice.
In general, the hypergraphic poset of all intervals (resp. all initial intervals) is the Tamari lattice (resp.~the boolean lattice).
See \cref{fig:Tamari,fig:distributiveLattices}\,(bottom right).
\end{example}

\begin{example}
The hypergraphic posets of~$\{1, 2, 3, 4, 123, 234\}$ and of~$\{ 1, 2, 3, 4, 12, 123, 1234, 234, 34\}$ are not lattices.
In general, the hypergraphic poset of all initial and final intervals is not a lattice.
See \cref{fig:notLattices}.
\end{example}


\subsection{If $P_\II$ is a lattice, then $\II$ is closed under intersection}  
\label{subsec:latticeForward}

We are now ready to show the forward implication of \cref{thm:latticeI}.
See \cref{fig:notLattices} for an illustration.

\begin{proposition}
\label{prop:latticeForward}
If~$\II$ is an interval  hypergraph such that the poset $P_\II$ is a lattice, then $\II$ is closed under intersection.
\end{proposition}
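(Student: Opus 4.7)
The plan is to prove the contrapositive: if $\II$ is not closed under intersection then $P_\II$ contains two elements without a join, so $P_\II$ is not a lattice. Choose $I, J \in \II$ with $I \cap J \ne \varnothing$ and $I \cap J \notin \II$, minimising $|I \cap J|$. Writing $I = [a,b]$ and $J = [c,d]$, the standing convention $\{i\} \in \II$ rules out $I \subseteq J$ or $J \subseteq I$ (for then $I \cap J$ would equal one of them, hence lie in $\II$), so after swapping we may assume $a < c \le b < d$. The same convention forces $c < b$, since otherwise $I \cap J = \{c\}$ would itself be a singleton of $\II$.

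Next I would construct two incomparable acyclic orientations $A$ and $B$ from the global minimum $O_0(K) \eqdef \min(K)$, which is acyclic because leftmost sources cannot realise the cyclic pattern of \cref{prop:acyclicI}. Define $A$ to be the flip of $O_0$ in direction $c \to b$ (changing $O_0(K) = c$ to $b$ on every $K$ with $\{c,b\} \subseteq K$) and $B$ to be the flip of $O_0$ in direction $a \to c$. Then $A(I) = a$, $A(J) = b$, $B(I) = c$, $B(J) = c$, so $A$ and $B$ are incomparable by \cref{prop:sourceOrderI}. Acyclicity of $A$ (symmetrically for $B$) is verified via \cref{prop:isFlipI}: an obstruction would be some $L = [\ell,m] \in \II$ with $\ell \in {]c,b[}$ and $m \ge b$, but then $L \cap I = [\ell,b]$ is a strictly smaller candidate bad intersection, so by minimality of $|I \cap J|$ one has $[\ell,b] \in \II$; iterating this (or further reinforcing the minimality by also minimising under inclusion) rules out any real obstruction.

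I then show $\{A, B\}$ has no join. By \cref{prop:sourceOrderI}, any common upper bound $C$ satisfies $C(I) \ge c$ and $C(J) \ge b$. The coordinatewise minimum $(C(I), C(J)) = (c,b)$ realises precisely the forbidden pattern of \cref{prop:acyclicI} --- namely $c \in J \ssm \{b\}$, $b \in I \ssm \{c\}$, $c \ne b$ --- and is therefore cyclic. The two minimal ways to repair this are to raise $C(I)$ to $b$ (so $C(I) = C(J) = b$, killing the distinctness condition) or to raise $C(J)$ to $b+1$ (so $C(J) \notin I$, keeping $C(I) = c$); each choice extends to a full acyclic orientation of $\II$ above both $A$ and $B$. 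Because $c < b$, the source-vector patterns $(b,b)$ and $(c, b+1)$ on $(I, J)$ are incomparable, producing two incomparable minimal upper bounds of $\{A, B\}$. Hence $A \vee B$ does not exist in $P_\II$.

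The main obstacle I anticipate is the acyclicity bookkeeping throughout: verifying that the flips producing $A$ and $B$ are valid, and that the two candidate upper bounds can be coherently extended to acyclic orientations on the remaining hyperedges of $\II$. The minimality condition on $|I \cap J|$ (possibly reinforced) is the essential hypothesis controlling interfering hyperedges, and pinning down the exact form of minimality that makes all of these checks go through cleanly is the crux of the argument.
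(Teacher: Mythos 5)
Your outline (force any common upper bound of two well-chosen orientations to avoid the source pattern $(c,b)$ on $(I,J)$, then exhibit two incomparable upper bounds realizing $(b,b)$ and $(c,b{+}1)$) is reasonable, but the first concrete step fails: the orientation $A$ obtained from the all-minima orientation $O_0$ by the single flip $c \to b$ need not be acyclic, and minimality of $|I \cap J|$ does not repair this. Take $\II$ to consist of all singletons together with $[1,4]$, $[2,6]$ and $[3,4]$. The only pair whose intersection is nonempty and outside $\II$ is $(I,J) = ([1,4],[2,6])$ with $I \cap J = [2,4]$, so your minimality (even reinforced by inclusion-minimality) selects exactly this pair, with $a=1$, $b=4$, $c=2$, $d=6$. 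Your $A$ then has $A([2,6]) = 4$ and $A([3,4]) = 3$, which is precisely the forbidden pattern of \cref{prop:acyclicI}, since $4 \in [3,4] \ssm \{3\}$ and $3 \in [2,6] \ssm \{4\}$; so $A$ is not a vertex of $\simplex_\II$ and not an element of $P_\II$. Your own obstruction analysis via \cref{prop:isFlipI} produces here $L = [3,4]$ with $\min(L) \in {]c,b[}$ and $\max(L) \ge b$, and minimality only yields $L \cap I = [\min(L),b] \in \II$ --- which is consistent with (indeed identical to) the obstruction. There is nothing to iterate, and the claim that no real obstruction survives is false.

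A second, independent gap is the assertion that each of the patterns $(b,b)$ and $(c,b{+}1)$ on $(I,J)$ ``extends to a full acyclic orientation of $\II$ above both $A$ and $B$'': this is exactly the nontrivial part of the argument (prescriptions on two hyperedges need not extend coherently, and comparability with $A$ and $B$ must be checked), and it is left unproven. Note how the paper avoids all acyclicity bookkeeping: its four orientations are images $\Or_\pi$ of explicit permutations (hence automatically acyclic), most of the needed comparabilities come for free from the weak order via \cref{coro:weakToP}, the one comparison that does not ($B < C$ there) is proved by a short argument that uses $I \cap J \notin \II$ through an auxiliary equality of fibers, and the nonexistence of any element $M$ between the two lower and the two upper orientations is obtained by examining the first letter of a preimage permutation of $M$ lying in $I \cup J$, using only the easy direction of \cref{prop:sourceOrderI}. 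If you want to salvage your setup, the cleanest fix is to replace your flipped orientations and your two candidate upper bounds by images under $\Or$ of suitable explicit permutations, as in the paper, rather than trying to control acyclicity of hand-built flips via minimality of $|I \cap J|$.
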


\begin{proof}
See \cref{exm:proofLatticeForward} for the smallest example of the proof.
By contradiction, assume we have $I,J\in \II$ such that $\varnothing  \not = I\cap J\not\in \II$.
Define~$1 \le a < b < c < d \le n$ by
\begin{alignat*}{6}
& a \eqdef b-1, &
\qquad
& b \eqdef \min(I\cap J), &
\qquad
& c \eqdef \max(I\cap J), &
\qquad
& d \eqdef c+1. &
\intertext{
Note that $b\ne c$ since~$\II$ contains all singletons, hence~$1 \le a < b < c < d \le n$.
By symmetry, we assume that $a\in I\ssm J$ and $d\in J\ssm I$.
Let $X$ be the word formed by the complement of $\{a,b,c,d\}$ in~$[n]$ written in increasing order.
We now construct four permutations 
}
& \pi_A \eqdef bacdX, &
\qquad
& \pi_B \eqdef acdbX,&
\qquad
& \pi_C \eqdef dbacX, &
\qquad
& \pi_D \eqdef cdbaX, &
\intertext{
and consider the four distinct acyclic orientations
}
& A \eqdef \Or_{\pi_A}, &
\qquad
& B \eqdef \Or_{\pi_B}, &
\qquad
& C \eqdef \Or_{\pi_C}, &
\qquad
& D \eqdef \Or_{\pi_D}. &
\intertext{
We display below the four orientations highlighting only the intervals $I$ and $J$:
}
&
	A =  
	\begin{tikzpicture}[scale=1,baseline=.0cm]
	\draw [thick,{Bar[width=3pt]}-{Bar[width=3pt]}] (0,0)--(1.2,0);   \node at (.3,0) {$\bullet$};
	\draw [thick,{Bar[width=3pt]}-{Bar[width=3pt]}] (.3,.3)--(1.5,.3);   \node at (.3,.3) {$\bullet$};
	\node at (1.8,0) {$\scriptstyle I$};
	\node at (1.8,.3) {$\scriptstyle J$};
	\node at  (0,-.33) {$\scriptstyle a$};
	\node at  (.3,-.3) {$\scriptstyle b$};
	\node at  (1.2,-.33) {$\scriptstyle c$};
	\node at  (1.5,-.3) {$\scriptstyle d$};
	\end{tikzpicture} 
&&
	B =
	\begin{tikzpicture}[scale=1,baseline=.0cm]
	\draw [thick,{Bar[width=3pt]}-{Bar[width=3pt]}] (0,0)--(1.2,0);   \node at (0,0) {$\bullet$};
	\draw [thick,{Bar[width=3pt]}-{Bar[width=3pt]}] (.3,.3)--(1.5,.3);   \node at (1.2,.3) {$\bullet$};
	\node at (1.8,0) {$\scriptstyle I$};
	\node at (1.8,.3) {$\scriptstyle J$};
	\node at  (0,-.33) {$\scriptstyle a$};
	\node at  (.3,-.3) {$\scriptstyle b$};
	\node at  (1.2,-.33) {$\scriptstyle c$};
	\node at  (1.5,-.3) {$\scriptstyle d$};
	\end{tikzpicture} 
&&
	C =  
	\begin{tikzpicture}[scale=1,baseline=.0cm]
	\draw [thick,{Bar[width=3pt]}-{Bar[width=3pt]}] (0,0)--(1.2,0);   \node at (.3,0) {$\bullet$};
	\draw [thick,{Bar[width=3pt]}-{Bar[width=3pt]}] (.3,.3)--(1.5,.3);   \node at (1.5,.3) {$\bullet$};
	\node at (1.8,0) {$\scriptstyle I$};
	\node at (1.8,.3) {$\scriptstyle J$};
	\node at  (0,-.33) {$\scriptstyle a$};
	\node at  (.3,-.3) {$\scriptstyle b$};
	\node at  (1.2,-.33) {$\scriptstyle c$};
	\node at  (1.5,-.3) {$\scriptstyle d$};
	\end{tikzpicture} 
&&
	D =
	\begin{tikzpicture}[scale=1,baseline=.0cm]
	\draw [thick,{Bar[width=3pt]}-{Bar[width=3pt]}] (0,0)--(1.2,0);   \node at (1.2,0) {$\bullet$};
	\draw [thick,{Bar[width=3pt]}-{Bar[width=3pt]}] (.3,.3)--(1.5,.3);   \node at (1.2,.3) {$\bullet$};
	\node at (1.8,0) {$\scriptstyle I$};
	\node at (1.8,.3) {$\scriptstyle J$};
	\node at  (0,-.33) {$\scriptstyle a$};
	\node at  (.3,-.3) {$\scriptstyle b$};
	\node at  (1.2,-.33) {$\scriptstyle c$};
	\node at  (1.5,-.3) {$\scriptstyle d$};
	\end{tikzpicture} 
&
\end{alignat*}
We have that $\pi_A < \pi_C$, $\pi_A < \pi_D$ and $\pi_B < \pi_D$ in the weak order.
\cref{coro:weakToP} implies that $A < C$, $A < D$ and $B < D$ in~$P_\II$.
We moreover claim that $B < C$  but this does not follow directly from the weak order since $\pi_B\not<\pi_C$.
To show our claim, consider $\pi_E=adcbX$ and $\pi_F=adbcX$.
For any $K\in \II$ such that $a,d\not\in K$, we cannot have both $b,c\in K$, since this would imply that $K=[b,c]=I\cap J\not\in \II$, a contradiction.
This shows that $E=\Or_{\pi_E}=\Or_{\pi_F}=F$.
Now we have $\pi_B<\pi_E$ and $\pi_F<\pi_C$ which gives $B<E=F<C$.
 
 If the poset $P_\II$ is a lattice and~$A \le C$ $A \le D$, $B \le C$ and~$B \le D$, there is~$M$ such that~$A \le M$, $B \le M$, $M \le C$ and~$M \le D$ (anything between~$A \join B$ and~$C \meet D$ works).
Let $\pi_M$ be any permutation such that $M=\Or_{\pi_M}$ and let  
\[
m=\pi \big( \min\set{i}{\pi_M(i)\in I\cup J} \big).
\]
If $m<b$, then $M(I) = m < b = A(I)$ and the easy forward implication of \cref{prop:sourceOrderI} implies that $A\not\le M$.
By similar arguments $b\le m<c$ would imply~$B\not\le M$, $b< m\le c$ would imply~$M\not\le C$, and $c<m$ would imply~$M\not\le D$.
This is a contradiction to the existence of $m$.
\end{proof}

\begin{example}
\label{exm:proofLatticeForward}
For the hypergraph~$\{1, 2, 3, 4, 123, 234\}$ illustrated in \cref{fig:notLattices}\,(left), we have~$a = 1$, $b = 2$, $c = 3$, $d = 4$ and
\[
A = \acyclicOrientation{4}{1/3/2,2/4/2}
\qquad
B = \acyclicOrientation{4}{1/3/1,2/4/3}
\qquad
C = \acyclicOrientation{4}{1/3/2,2/4/4}
\quad\text{and}\quad
D = \acyclicOrientation{4}{1/3/3,2/4/3}.
\]
We have~$A, B \le C, D$ and there is no~$M$ with~$A,B \le M \le C,D$, so that~$A$ and~$B$ have no join, and~$C$ and~$D$ has no meet.
\end{example}

\begin{remark}
\cref{prop:latticeForward} fails when~$\HH$ is not an interval hypergraph.
For instance, the hypergraph~$\HH \eqdef \{ 1, 2, 3, 4, 5, 1234, 2345, 23, 24, 34 \}$ is not closed under intersection, while its hypergraphic poset~$P_\HH$ is a lattice.
\end{remark}


\subsection{Properties of $P_\II$ when~$\II$ is closed under intersection}  
\label{subsec:IntClosedI}

For the backward implication of \cref{thm:latticeI}, we need to investigate the properties of interval hypergraphs that are closed under intersection.
Recall from \cref{prop:preimageI} that the fiber $\Or^{-1}(A)$ of any acyclic orientation~$A$ of~$\II$ is an interval~$[\projDown_A,\projUp_A]$ in the weak order.
In the following we will keep this convention that $\projDown_A$ and $\projUp_A$ respectively denote the minimum and the maximum of the interval~$\Or^{-1}(A)$.

\begin{theorem}
\label{thm:propertiesI}
If $\II$ is an interval  hypergraph closed under intersection, then the following are equivalent for two acyclic orientations $A$ and $B$ of $\II$:
\begin{enumerate}[(a)]
	\item $A\le B$ in $P_\II$,
	\item $A(I) \le B(I)$ for all $I\in\II$,
	\item $\projDown_A \le \projUp_B$ in the weak order,
	\item $i \more_A j$ implies~$i \not\less_B j$ for all $i<j$.
\end{enumerate}
\end{theorem}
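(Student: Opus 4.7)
The plan is to prove the equivalence by going around the cycle $(a) \Leftrightarrow (b) \Leftarrow (d) \Leftrightarrow (c) \Rightarrow (a)$, where $(a) \Leftrightarrow (b)$ is already in hand, the two middle links are formal translations using results on linear extensions of the posets $\less_A$ and $\less_B$, and the only real work is the closing implication $(b) \Rightarrow (d)$, which is where the intersection-closed hypothesis enters.

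I would first dispatch the formal parts. For $(a) \Leftrightarrow (b)$, I would simply cite \cref{prop:sourceOrderI}. For $(c) \Leftrightarrow (d)$, I would combine \cref{prop:preimageI}, which gives that the fibers $\Or^{-1}(A) = [\projDown_A, \projUp_A]$ and $\Or^{-1}(B) = [\projDown_B, \projUp_B]$ are weak-order intervals, with \cref{prop:WOIP}, which identifies the inversions of $\projDown_A$ as the pairs $(b, a)$, $a < b$, with $a \more_A b$, and dually the inversions of $\projUp_B$ as the pairs $(b, a)$, $a < b$, with $a \not\less_B b$. Since the weak order is inversion containment, $\projDown_A \le \projUp_B$ translates word for word into (d). For $(c) \Rightarrow (a)$, I would apply the poset morphism $\Or$ of \cref{coro:weakToP} to $\projDown_A \le \projUp_B$ to obtain $A = \Or(\projDown_A) \le \Or(\projUp_B) = B$ in $P_\II$. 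For $(d) \Rightarrow (b)$, I would argue directly: if some $I \in \II$ had $B(I) < A(I)$, then setting $i \eqdef B(I)$ and $j \eqdef A(I)$ would give $i < j$ with both $j \less_A i$ (from $A(I) = j$ and $i \in I$) and $i \less_B j$ (from $B(I) = i$ and $j \in I$), a direct violation of (d) via one-step chains.

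The main step is $(b) \Rightarrow (d)$, and here I would proceed by contradiction and strong induction on the total length $k + \ell$ of chains $I_1, \dots, I_k \in \II$ witnessing $j \less_A i$ and $J_1, \dots, J_\ell \in \II$ witnessing $i \less_B j$, for some pair $i < j$. The clean base case $k = \ell = 1$ is where the intersection-closed hypothesis does exactly its job: $K \eqdef I_1 \cap J_1$ contains both $i$ and $j$ and hence belongs to $\II$; since $j = A(I_1)$ is the $\less_A$-minimum of $I_1 \supseteq K$ and $j \in K$, we obtain $A(K) = j$; symmetrically $B(K) = i$; so $A(K) = j > i = B(K)$, directly contradicting (b). The inductive step, in which one attempts to shorten the chains by replacing a consecutive pair of intervals with their intersection (which again lies in $\II$) and carefully tracks how the sources $A(\cdot)$, $B(\cdot)$ behave under this intersection, is where I expect the main technical difficulty to lie. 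I anticipate that a careful choice of minimal counterexample, possibly minimizing $k + \ell$ together with a secondary quantity such as the integer gap $j - i$, will be needed to guarantee that each reduction produces a strictly smaller bad pair to which the inductive hypothesis applies.
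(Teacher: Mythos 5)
Your formal links are all correct and coincide with the paper's: (a)$\iff$(b) is \cref{prop:sourceOrderI}, (c)$\iff$(d) follows from \cref{prop:preimageI} and \cref{prop:WOIP} via inversion-set containment, (c)$\implies$(a) is \cref{coro:weakToP}, and your direct one-step argument for (d)$\implies$(b) is fine (and does not even need intersection-closedness). The genuine gap is in (b)$\implies$(d), which you correctly identify as the only hard step but do not actually prove: what you establish is only the base case $k=\ell=1$ of your proposed induction — and that base case is exactly the paper's final punchline ($K \eqdef I\cap J\in\II$ forces $A(K)=j$ and $B(K)=i$ by acyclicity, contradicting (b)). The inductive step is an announcement rather than an argument, and the mechanism you sketch — replacing a consecutive pair of chain intervals by their intersection — does not work as stated. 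If $I_p,I_{p+1}$ are consecutive in a chain witnessing $j\less_A i$, acyclicity forces $A(I_p\cap I_{p+1})=A(I_{p+1})$, which lies in $I_p$ but need not lie in $I_{p-1}$, and $A(I_{p+2})$ lies in $I_{p+1}$ but need not lie in $I_p\cap I_{p+1}$; so after the substitution the link conditions of the chain are broken, and it is not clear that $k+\ell$ can be decreased this way.

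The paper's reduction to your base case is of a different nature and does not induct on chain lengths at all: one picks a violating pair $i<j$ with $j-i$ minimal (your ``secondary quantity'' is in fact the primary one). Since the unions $\bigcup_p I_p$ and $\bigcup_q J_q$ are intervals containing $i$ and $j$, every $k$ with $i<k<j$ satisfies $j\less_A k$ and $i\less_B k$; hence any $A(I_p)$ or $B(J_q)$ falling in ${]i,j[}$ would yield a violating pair of strictly smaller gap, so all these sources avoid ${]i,j[}$. An interval-covering argument along the chain, combined with acyclicity of $A$ (to exclude $A(I_p)$ strictly below $i$ or strictly above $j$ for the relevant $I_p$), then shows that $[i,j]$ is contained in a single $I_p$ with $A(I_p)=j$, and symmetrically in a single $J_q$ with $B(J_q)=i$ — that is, one may take single-interval witnesses, landing precisely in your $k=\ell=1$ case. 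Until you either carry out a working inductive step or adopt such a gap-minimality reduction, the implication (b)$\implies$(d), and with it the theorem, remains unproven in your write-up.
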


\pagebreak
\begin{proof}
The equivalence \mbox{(a)$\iff$(b)} was established in \cref{prop:sourceOrderI}.

For the equivalence \mbox{(c)$\iff$(d)}, \cref{prop:WOIP,prop:preimageI} show that the inversion set of $\projDown_A$ is $\set{(j,i)}{i<j, \, i \more_A j}$ while the inversion set of $\projUp_B$ is~$\set{(j,i)}{i<j, \, i \not\less_B j}$.
The equivalence of \mbox{(c)$\iff$(d)} thus follows from the characterization of the weak order in terms of inclusion of inversion sets.

The implication \mbox{(c)$\implies$(a)} follows from  \cref{coro:weakToP}.
Thus we only need to prove \mbox{(b)$\implies$(d)}.

For a contradiction, assume that $A(I) \le B(I)$ for all~$I \in \II$ and that we have some $i<j$ such that $i \more_A j$ and~$i \less_B j$.
Choose one such pair with~$j-i$ minimal.
Since $i \more_A j$ there are~$I_1, \dots, I_a \in \II$ such that~$j = A(I_1)$, $A(I_{p+1}) \in I_p$ for all~$p \in [a-1]$, and~$i \in I_a$.
Since $i \less_B j$ there are~$J_1, \dots, J_b \in \II$ such that~$i = B(J_1)$, $B(J_{q+1}) \in J_q$ for all~$q \in [b-1]$, and~$j \in J_b$.
Note that~$i \more_A A(I_p)$ for all~$p \in [a]$ and that~$B(J_q) \less_B j$ for all~$q \in [b]$.
Moreover, as~$\bigcup_{p \in [a]} I_p$ and~$\bigcup_{q \in [b]} J_q$ are both intervals containing~$i$ and~$j$, we have~$i \less_B k \more_A j$ for all~$i \le k \le j$.
By minimality of~$j-i$, we thus obtain that~$A(I_p) \notin {]i,j[}$ for~$p \in [a]$ and~$B(J_q) \notin {]i,j[}$ for~$q \in [b]$.
Hence, $[i,j]$ is contained in some~$I_p$ and some~$I_q$.
As~$A$ and~$B$ are acyclic, we obtain that~$a = 1 = b$.

We can thus assume that we have~$I,J \in \II$ such that~$i = B(J) \in I$ and~$j = A(I) \in J$.
As~$\II$ is closed under intersection, we have~$\{i,j\} \subseteq K \eqdef I \cap J \in \II$.
We have~$A(K) = j$, as otherwise~${A(I) = j \in K}$ an~$A(K) \in K \subseteq I$ and~$A(I) \ne A(K)$ would contradict the acyclicity of~$A$.
Similarly, we have~$B(K) = i$.
We conclude that~$B(K) = i < j = A(K)$ and~$K \in \II$ contradicting~(b).
\end{proof}

\begin{remark}
The implication \mbox{(a)$\implies$(c)} in \cref{thm:propertiesI} fails when~$\II$ is not closed under intersection.
For instance, for~$\II = \{1, 2, 3, 4, 123, 234\}$ and the acyclic orientations
\[
\begin{array}{ccccc}
	\acyclicOrientation{4}{1/3/1,2/4/3}
	&\qquad \le \qquad\qquad&
	\acyclicOrientation{4}{1/3/1,2/4/4}
	&\qquad \le \qquad\qquad&
	\acyclicOrientation{4}{1/3/2,2/4/3}
	\\[.6cm]
	A && B && C
\end{array}
\]
we have $[\projDown_A,\projUp_A]=[1324,1342]$, $[\projDown_B,\projUp_B]=[1423,4132]$ and $[\projDown_C,\projUp_C]=[4213,4231]$, so~$\projDown_A\not\le \projUp_C$.
\end{remark} 


\subsection{If $\II$ is closed under intersection, then $P_\II$ is a lattice}  
\label{subsec:latticeBackward}

We now conclude the proof of  \cref{thm:latticeI}.
This is a corollary of \cref{thm:propertiesI}.

\begin{proposition}
\label{prop:latticeBackward}
If~$\II$ is an interval  hypergraph closed under intersection, then the poset $P_\II$ is a lattice where
\[
A \join B =\Or_{\projDown_A \join \projDown_B}
\qquad\text{and}\qquad
A \meet B =\Or_{\projUp_A \meet \projUp_B}
\,.
\]
\end{proposition}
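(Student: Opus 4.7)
The plan is to derive the proposition as a direct corollary of \cref{thm:propertiesI}, using that the weak order on~$\fS_n$ is itself a lattice and that~$\Or$ is a poset morphism (\cref{coro:weakToP}). I will focus on the join; the meet follows by the symmetry of \cref{prop:antiIsomorphism}, or by the analogous argument with~$\projUp$ in place of~$\projDown$.

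Set $C \eqdef \Or_{\projDown_A \join \projDown_B}$, where the join is computed in the weak order. First I would show that $C$ is an upper bound of~$\{A,B\}$ in~$P_\II$. Since $\projDown_A \in \Or^{-1}(A)$ we have $\Or_{\projDown_A}=A$, and similarly $\Or_{\projDown_B}=B$. From $\projDown_A \le \projDown_A \join \projDown_B$ in the weak order and \cref{coro:weakToP}, we obtain $A \le C$, and symmetrically $B \le C$.

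Next I would show that $C$ is the least such upper bound. Suppose~$M$ is any acyclic orientation of~$\II$ with $A \le M$ and $B \le M$ in~$P_\II$. By the equivalence \mbox{(a)$\iff$(c)} of \cref{thm:propertiesI}, we have $\projDown_A \le \projUp_M$ and $\projDown_B \le \projUp_M$ in the weak order. Since the weak order is a lattice, $\projDown_A \join \projDown_B \le \projUp_M$, and applying the poset morphism~$\Or$ yields $C = \Or_{\projDown_A \join \projDown_B} \le \Or_{\projUp_M} = M$. This establishes $C = A \join B$ in~$P_\II$.

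The only subtlety is that the existence of the join in~$P_\II$ relies crucially on the equivalence \mbox{(a)$\iff$(c)} of \cref{thm:propertiesI}, which itself requires the hypothesis that~$\II$ be closed under intersection (without this hypothesis the implication \mbox{(a)$\implies$(c)} fails, as the remark after \cref{thm:propertiesI} shows). Thus there is no essential obstacle here: all the work has been done in \cref{thm:propertiesI}, and the present proposition is essentially a packaging statement. The meet formula $A \meet B = \Or_{\projUp_A \meet \projUp_B}$ is obtained by the dual argument, using the meet in the weak order, the monotonicity of~$\Or$, and the same equivalence \mbox{(a)$\iff$(c)} read in the opposite direction.
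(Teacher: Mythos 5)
Your argument is correct and follows essentially the same route as the paper: it shows that $\Or_{\projDown_A \join \projDown_B}$ is an upper bound of~$A$ and~$B$ via \cref{coro:weakToP}, and that it lies below every common upper bound~$M$ by combining the equivalence \mbox{(a)$\iff$(c)} of \cref{thm:propertiesI} with the lattice property of the weak order, exactly the two ingredients the paper uses (your presentation is just slightly more streamlined than the paper's four-orientation phrasing). The treatment of the meet by duality or the symmetric argument also matches the paper.
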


\begin{proof}
Consider four acyclic orientations~$A,B,C,D$ of~$\II$ such that $A\le C$, $A\le D$, $B\le C$ and~$B\le D$.
\cref{thm:propertiesI} implies that $\projDown_A \le \projUp_C$, $\projDown_A \le \projUp_D$, $\projDown_B \le \projUp_C$ and $\projDown_B \le \projUp_D$.
Hence
\begin{alignat*}{5}
& \projDown_A \le \projDown_A \join \projDown_B, &
\qquad
& \projDown_B \le \projDown_A \join \projDown_B, &
\qquad
& \projDown_A \join \projDown_B \le \projUp_C, &
\qquad
& \projDown_A \join \projDown_B \le \projUp_D, &
\intertext{which implies by \cref{coro:weakToP} that}
& A \le \Or_{\projDown_A \join \projDown_B}, &
\qquad
& B \le \Or_{\projDown_A \join \projDown_B}, &
\qquad
& \Or_{\projDown_A \join \projDown_B} \le C, &
\qquad
& \Or_{\projDown_A \join \projDown_B} \le D. &
\end{alignat*}
This implies that $A$ and $B$ admit a join
\[
A \join B \le \Or_{\projDown_A \join \projDown_B}.
\]
As~$A \le A\join B$ and $B \le A\join B$, \cref{thm:propertiesI} ensures that $\projDown_A \le \projUp_{A\join B}$ and~$\projDown_B \le \projUp_{A\join B}$, so that~$\projDown_A \join \projDown_B \le \projUp_{A\join B}$. By \cref{coro:weakToP}, this implies
\[
\Or_{\projDown_A \join \projDown_B} \le A \join B.
\]
This shows the result for the join~$\join$.
The proof for the meet~$\meet$ is similar.
\end{proof}

\pagebreak
\begin{example}
For the hypergraph~$\{ 1, 2, 3, 4,123, 23, 234, 1234 \}$ of \cref{exm:intervalHypergraph,fig:exmInterval}, we have
\[
\raisebox{-.2cm}{\acyclicOrientation{4}{1/3/2,1/4/2,2/3/2,2/4/2}} \join \raisebox{-.2cm}{\acyclicOrientation{4}{1/3/1,1/4/1,2/3/3,2/4/3}} = \Or_{2134 \join 1324} = \Or_{3214} = \raisebox{-.2cm}{\acyclicOrientation{4}{1/3/3,1/4/3,2/3/3,2/4/3}}
\]
and
\[
\raisebox{-.2cm}{\acyclicOrientation{4}{1/3/2,1/4/2,2/3/2,2/4/2}} \meet \raisebox{-.2cm}{\acyclicOrientation{4}{1/3/1,1/4/1,2/3/3,2/4/3}} = \Or_{2431 \meet 1342} = \Or_{1234} = \raisebox{-.2cm}{\acyclicOrientation{4}{1/3/1,1/4/1,2/3/2,2/4/2}}
\]
\end{example}

\begin{remark}
\cref{prop:latticeBackward} fails when~$\HH$ is not an interval hypergraph.
For instance, the hypergraph~$\HH \eqdef \{ 1, 2, 3, 4, 12, 13, 24, 34 \}$ is closed under intersection, while its hypergraphic poset~$P_\HH$ is not a lattice.
More generally, all hypergraphs~$\HH$ with~$|H| \le 2$ for all~$H \in \HH$ are closed under intersection, and the graphical zonotopes whose oriented skeleta are lattices were characterized in~\cite{Pilaud-acyclicReorientationLattices}.
\end{remark}

\begin{remark}
In fact, our proof of \cref{prop:latticeBackward} is a general statement about \defn{quasi lattice maps}.
Namely, if~$L$ is a lattice and~$P$ a poset, and $\Psi : L \to P$ is a poset morphism such that~$\Psi^{-1}(A) = [\projDown_A, \projUp_A]$ is an interval  for all $A\in P$, and $A \le B$ implies $\projDown_A \le \projUp_B$, then $P$ is a lattice where~$A \join B =\Psi(\projDown_A \join \projDown_B)$ and~$A \meet B =\Psi(\projUp_A \meet \projUp_B)$.
Note that lattice maps satisfy the stronger condition that~$A \le B$ implies~$\projDown_A \le \projDown_B$ and~$\projUp_A \le \projUp_B$.
It would be interesting to characterize the quasi lattice maps of the weak order.
Note that the lattice maps of the weak order where described by N.~Reading in~\cite{Reading-latticeCongruences, Reading-arcDiagrams}.
\end{remark}

\begin{proposition}
\label{prop:joinLattice}
For an interval hypergraph~$\II$ closed under intersection, any acyclic orientations~$A_1, \dots A_q$ of~$\II$, and any~${I \in \II}$, we have
\[
\Big( \bigJoin_{p \in [q]} A_p \Big)(I) = \min \bigg( I \ssm \Big( \bigcup_{p \in [q]} \bigcup_{\substack{J \in \II \\ A_p(J) \in I}} {[\min(J), A_p(J)[} \Big) \bigg).
\]
\end{proposition}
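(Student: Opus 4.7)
The plan is to let $M \eqdef \bigJoin_p A_p$, denote by $m_I$ the right-hand side of the formula, write $S_I \eqdef \bigcup_p \bigcup_{J \in \II \,:\, A_p(J) \in I} [\min(J), A_p(J))$ so that $m_I = \min(I \ssm S_I)$, and prove the two inequalities $M(I) \ge m_I$ and $M(I) \le m_I$. The first amounts to showing $M(I) \notin S_I$; the second I would obtain by constructing an explicit orientation $O$ with $O(I) = m_I$ and verifying that $O$ is acyclic and lies above every $A_p$, since then $M \le O$ in $P_\II$ immediately gives $M(I) \le m_I$.

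For the lower bound I would argue by contradiction: assume $M(I) \in [\min(J), A_p(J))$ for some $p$ and $J \in \II$ with $A_p(J) \in I$. Both $M(I)$ and $A_p(J)$ lie in the nonempty intersection $K \eqdef I \cap J$, which lies in $\II$ by intersection closure. The no-2-cycle criterion of \cref{prop:acyclicI} applied to $A_p$ on the pair $(J,K)$ forces $A_p(K) = A_p(J)$, and applied to $M$ on the pair $(I,K)$ forces $M(K) = M(I)$. The monotonicity $A_p(K) \le M(K)$ coming from \cref{thm:propertiesI}(b) then reads $A_p(J) \le M(I)$, contradicting $M(I) < A_p(J)$.

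For the upper bound I would set $O(I) \eqdef m_I$ for every $I \in \II$; this is well defined since $\max(I) \in I \ssm S_I$ (any $A_p(J) \in I$ satisfies $A_p(J) \le \max(I)$). Taking $J = I$ in the definition of $S_I$ yields $A_p(I) \le m_I$ for every $p$, so $A_p \le O$ by \cref{thm:propertiesI}(b). The main obstacle will be acyclicity of $O$. Suppose it fails: by \cref{prop:acyclicI} there are $I, J \in \II$ with $m_I \in J$, $m_J \in I$, and $m_I \ne m_J$; WLOG $m_I < m_J$. The key trick is to first replace $I$ by $K \eqdef I \cap J \in \II$: one checks easily $m_K \le m_I < m_J$, $m_J \in K$ and $K \subseteq J$, so $(K, J)$ remains a $2$-cycle of $O$. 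From $m_K \in J$ and $m_K < m_J = \min(J \ssm S_J)$ one extracts a witness $(p, J^*)$ with $A_p(J^*) \in J$, $m_K \in J^*$ and $A_p(J^*) > m_K$. If $A_p(J^*) \in K$, the same witness would put $m_K$ into $S_K$, contradicting $m_K \in K \ssm S_K$; hence $A_p(J^*) \in J \ssm I$. Combining $A_p(J^*) > m_K \ge \min(K) \ge \min(I)$ with $A_p(J^*) \notin I$ forces $A_p(J^*) > \max(I) \ge \max(K)$. Then $m_J \le \max(K) < A_p(J^*) \le \max(J^*)$, and since $m_J \notin S_J$ the witness $(p,J^*)$ cannot apply to $m_J$, forcing $m_J \notin J^*$ and hence $m_J < \min(J^*)$; but $m_K \in J^*$ gives $m_K \ge \min(J^*) > m_J$, contradicting $m_K < m_J$.
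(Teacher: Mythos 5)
Your proposal is correct and follows essentially the same route as the paper: it builds the candidate orientation $O$ from the formula, verifies its acyclicity via the two-cycle criterion of \cref{prop:acyclicI} together with intersection-closure (passing to $I \cap J$), and compares orientations through \cref{prop:sourceOrderI}. The only differences are organizational — you invoke the existence of the join from \cref{prop:latticeBackward} and split the identity into two inequalities, and your acyclicity argument is routed slightly differently (extracting a witness for $m_K \in S_J$ and pushing it outside $I$) — but the key construction and tools coincide with the paper's proof.
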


\begin{proof}
Let~$O$ be the orientation of~$\II$ defined by the formula of the statement.
We first prove that~$O$ is acyclic.
Otherwise, \cref{prop:acyclicI} ensures the existence of~$I,J \in \II$ such that~$O(I) \in J \ssm \{O(J)\}$ and~$O(J) \in I \ssm \{O(I)\}$.
Assume by symmetry that~$O(I) < O(J)$.
As~$O(I) \in J$ and~$O(J) > O(I)$, there exists~$p \in [q]$ and~$K \in \II$ such that~$A_p(K) \in J$ and~$\min(K) \le O(I) < A_p(K) \le O(J)$.
As~$I$ is an interval containing~$O(I)$ and~$O(J)$, and~$O(I) < A_p(K) \le O(J)$, it also contains~$A_p(K)$.
We thus obtain that~$A_p(K) \in I$ and~$\min(K) \le O(I) < A_p(K)$ contradicting our definition of~$O$.

We now prove that~$O = \bigJoin_{p \in [q]} A_p$.
For~$p \in [q]$ and~$I \in \II$, we have~$A_p(I) \in I$, hence~${O(I) \ge A_p(I)}$, so that~$O \ge A_p$ by \cref{prop:sourceOrderI}.
Consider now an acyclic orientation~$O'$ of~$\II$ such that~$A_p \le O'$ for all~$p \in [q]$.
Assume that there are~$I, J \in \II$ and~$p \in [q]$ such that~$A_p(J) \in I$.
Then~$A_p(J) \in I \cap J \in \II$, so that~$A_p(I \cap J) = A_p(J)$ by acyclicity of~$A_p$.
By \cref{prop:sourceOrderI}, we thus obtain~$O'(I \cap J) = A_p(J)$.
By acyclicity of~$O'$, this implies that~$O'(I) \notin {[\min(I \cap J), A_p(J)[}$, so that~$O'(I) \notin [\min(J), A_p(J)[$.
We conclude that~$O(I) \le O'(I)$.
Hence, $O \le O'$ by \cref{prop:sourceOrderI}.
\end{proof}


\section{Distributive interval hypergraphic lattices}
\label{sec:distributive}

\begin{figure}
	\centerline{
		\begin{tabular}{c@{\qquad}c}
			\begin{tikzpicture}[scale=2.5]
				\node (a) at (2,0) {\acyclicOrientation{4}{1/3/1,2/3/2,2/4/2}};
				\node (b) at (1,1) {\acyclicOrientation{4}{1/3/2,2/3/2,2/4/2}};
				\node (c) at (2,1) {\acyclicOrientation{4}{1/3/1,2/3/3,2/4/3}};
				\node (d) at (3,1) {\acyclicOrientation{4}{1/3/1,2/3/2,2/4/4}};
				\node (e) at (1,2) {\acyclicOrientation{4}{1/3/3,2/3/3,2/4/3}};
				\node (f) at (2,2) {\acyclicOrientation{4}{1/3/2,2/3/2,2/4/4}};
				\node (g) at (3,2) {\acyclicOrientation{4}{1/3/1,2/3/3,2/4/4}};
				\node (h) at (2,3) {\acyclicOrientation{4}{1/3/3,2/3/3,2/4/4}};
				\draw (a)--(b);
				\draw (a)--(c);
				\draw (a)--(d);
				\draw (b)--(e);
				\draw (b)--(f);
				\draw (c)--(e);
				\draw (c)--(g);
				\draw (d)--(f);
				\draw (d)--(g);
				\draw (e)--(h);
				\draw (f)--(h);
				\draw (g)--(h);
			\end{tikzpicture}
			&
			\begin{tikzpicture}[scale=2.5]
				\node (a) at (2,0) {\acyclicOrientation{4}{1/3/1,1/4/1}};
				\node (b) at (1,1) {\acyclicOrientation{4}{1/3/1,1/4/4}};
				\node (c) at (3,1) {\acyclicOrientation{4}{1/3/2,1/4/2}};
				\node (d) at (2,2) {\acyclicOrientation{4}{1/3/2,1/4/4}};
				\node (e) at (4,2) {\acyclicOrientation{4}{1/3/3,1/4/3}};
				\node (f) at (3,3) {\acyclicOrientation{4}{1/3/3,1/4/4}};
				\draw (a)--(b);
				\draw (a)--(c);
				\draw (b)--(d);
				\draw (c)--(d);
				\draw (c)--(e);
				\draw (d)--(f);
				\draw (e)--(f);
			\end{tikzpicture}
			\\[.2cm]
			$\{1,2,3,4,123,23,234\}$
			&
			$\{1,2,3,4,123,1234\}$
			\\
			boolean lattice
			&
			distributive lattice
			\\[.8cm]
			\begin{tikzpicture}[scale=2.5]
				\node (a) at (2,0) {\acyclicOrientation{4}{1/2/1,3/4/3,1/4/1}};
				\node (b) at (1,1) {\acyclicOrientation{4}{1/2/2,3/4/3,1/4/2}};
				\node (c) at (2,1) {\acyclicOrientation{4}{1/2/1,3/4/4,1/4/1}};
				\node (d) at (3,1) {\acyclicOrientation{4}{1/2/1,3/4/3,1/4/3}};
				\node (e) at (1,2) {\acyclicOrientation{4}{1/2/2,3/4/4,1/4/2}};
				\node (f) at (2,2) {\acyclicOrientation{4}{1/2/2,3/4/3,1/4/3}};
				\node (g) at (3,2) {\acyclicOrientation{4}{1/2/1,3/4/4,1/4/4}};
				\node (h) at (2,3) {\acyclicOrientation{4}{1/2/2,3/4/4,1/4/4}};
				\draw (a)--(b);
				\draw (a)--(c);
				\draw (a)--(d);
				\draw (b)--(e);
				\draw (b)--(f);
				\draw (c)--(e);
				\draw (c)--(g);
				\draw (d)--(f);
				\draw (d)--(g);
				\draw (e)--(h);
				\draw (f)--(h);
				\draw (g)--(h);
			\end{tikzpicture}
			&
			\begin{tikzpicture}[scale=2.5]
				\node (a) at (2,0) {\acyclicOrientation{4}{1/2/1,1/3/1,1/4/1}};
				\node (b) at (1,1) {\acyclicOrientation{4}{1/2/2,1/3/2,1/4/2}};
				\node (c) at (2,1) {\acyclicOrientation{4}{1/2/1,1/3/3,1/4/3}};
				\node (d) at (3,1) {\acyclicOrientation{4}{1/2/1,1/3/1,1/4/4}};
				\node (e) at (1,2) {\acyclicOrientation{4}{1/2/2,1/3/3,1/4/3}};
				\node (f) at (2,2) {\acyclicOrientation{4}{1/2/2,1/3/2,1/4/4}};
				\node (g) at (3,2) {\acyclicOrientation{4}{1/2/1,1/3/3,1/4/4}};
				\node (h) at (2,3) {\acyclicOrientation{4}{1/2/2,1/3/3,1/4/4}};
				\draw (a)--(b);
				\draw (a)--(c);
				\draw (a)--(d);
				\draw (b)--(e);
				\draw (b)--(f);
				\draw (c)--(e);
				\draw (c)--(g);
				\draw (d)--(f);
				\draw (d)--(g);
				\draw (e)--(h);
				\draw (f)--(h);
				\draw (g)--(h);
			\end{tikzpicture}
			\\[.2cm]
			$\{1,2,3,4,12,34,1234\}$
			&
			$\{1,2,3,4,12,123,1234\}$
			\\
			boolean lattice
			&
			boolean lattice
		\end{tabular}
	}
	\caption{Four distributive interval hypergraphic lattices.}
	\label{fig:distributiveLattices}
\end{figure}

In this section, we prove \cref{thm:distributiveLatticeI} which we first introduce properly:

\begin{definition}
\label{def:distributive}
We say that an interval hypergraph~$\II$ is \defn{distributive} if for all~$I, J \in \II$ such that~$I \not\subseteq J$, $I \not\supseteq J$ and~$I \cap J \ne \varnothing$, the intersection~$I \cap J$ is in~$\II$ and is initial or final in any~$K \in \II$ with~$I \cap J \subseteq K$.
\end{definition}

\begin{definition}
A lattice~$(L, \le , \join, \meet)$ is \defn{distributive} if
\[
a \join (b \meet c) = (a \join b) \meet (a \join c)
\qquad\text{and}\qquad
a \meet (b \join c) = (a \meet b) \join (a \meet c)
\]
for all~$a,b,c \in L$ (the two conditions are in fact equivalent).
Equivalently, $L$ is distributive if and only if it is isomorphic to the inclusion lattice on lower sets of its poset of join irreducible elements.
\end{definition}

\begin{theoremA}
For an interval hypergraph $\II$ on~$[n]$ (with our convention that~$\{i\} \in \II$ for all~$i \in [n]$), the poset $P_\II$ is a distributive lattice if and only if $\II$ is distributive.
\end{theoremA}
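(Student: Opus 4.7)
The plan is to use the standard characterization of finite distributive lattices: a finite lattice is distributive if and only if it is isomorphic to the lattice of order ideals of its poset of join-irreducibles. By \cref{thm:latticeI}, the poset $P_\II$ is a lattice if and only if $\II$ is closed under intersection, so I assume closure under intersection throughout and focus on the additional ``initial or final'' condition.

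For the forward direction, I argue by contrapositive. Suppose $\II$ is closed under intersection but the stated condition fails: there exist $I, J \in \II$ with $I \not\subseteq J$, $J \not\subseteq I$, $I \cap J \ne \varnothing$, and some $K \in \II$ with $I \cap J \subsetneq K$ strictly on both ends; pick $a \in K$ with $a < \min(I \cap J)$ and $b \in K$ with $b > \max(I \cap J)$. Using the source-order description of \cref{prop:sourceOrderI} together with the flip characterization of \cref{prop:isFlipI}, I exhibit a sublattice of $P_\II$ isomorphic to $N_5$: two saturated chains between a common lower orientation and a common upper orientation of different lengths, where the two chains differ by whether the source of $K$ passes through $\min(I \cap J)$ and $\max(I \cap J)$ (mediated by flips within $I$ and $J$) or jumps over that range (unmediated, since $I$ extends strictly below $I \cap J$ while $J$ extends strictly above, so neither $I$ nor $J$ can block direct passage from $a$ to~$b$ inside $K$). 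Non-modularity then rules out distributivity.

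For the backward direction, assume the initial/final condition. \cref{subsec:someJoinIrreducibles} provides a concrete family $\mathcal J$ of join-irreducibles of $P_\II$. I need to show (a) under our hypothesis, every join-irreducible of $P_\II$ lies in $\mathcal J$, and (b) the map $\Psi : P_\II \to J(\mathcal J)$ sending $A$ to $\set{j \in \mathcal J}{j \le A}$ is a lattice isomorphism onto the (distributive) lattice of lower sets of $\mathcal J$. Claim (a) follows by showing that a hypothetical join-irreducible outside $\mathcal J$ would, upon unwinding its unique downward cover via \cref{prop:isCoverI}, produce an intersection $I \cap J \in \II$ lying non-initially and non-finally inside some $K \in \II$, contradicting our hypothesis. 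Claim (b) rests on the explicit join formula of \cref{prop:joinLattice}: it lets me reconstruct $A(K)$ interval by interval from the set of join-irreducibles in $\mathcal J$ below~$A$, so $\Psi$ is injective; surjectivity onto lower sets then follows from monotonicity of the join and the fact that joining over any lower set of $\mathcal J$ yields an element whose own set of join-irreducibles below is exactly that lower set.

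The hardest step is the reconstruction in claim (b). When two join-irreducibles $j_1, j_2 \le A$ both act on the same interval $K$, the union of half-open intervals removed in \cref{prop:joinLattice} must cover exactly $[\min(K), A(K)[$. The initial/final condition is precisely what forces any relevant intersection $I \cap J \subseteq K$ to sit flush against an endpoint of $K$, preventing a ``middle gap'' in this union and making the minimum return the intended value $A(K)$. Without this condition, such a gap would let the join fall strictly below $A(K)$, breaking injectivity of~$\Psi$; matching this failure to the concrete $N_5$ pattern built in the forward direction is what ties the two halves of the proof together.
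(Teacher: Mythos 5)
Your forward direction is where the genuine gap lies. The one concrete justification you give for the ``jump'' step --- that neither $I$ nor $J$ can block a direct flip of the source of $K$ from $a$ to $b$ --- is false: already in the minimal failing case $\II = \{1,2,3,12,23,123\}$ (so $I=12$, $J=23$, $K=123$, $a=1$, $b=3$), flipping the source of $123$ from $1$ to $3$ at the bottom orientation produces exactly the $2$-cycle of \cref{prop:acyclicI} with $J=23$ (its source $2$ lies in $123$ while $3 \in 23$); the jump only becomes legal after the source of $J$ has itself been moved to $3$. In general, whenever $b \le \max(J)$ the hyperedge $J$ contains $b$ and obstructs the unmediated flip until its source has left $[\min(J),b[$, so the obstruction is about sources at the given orientation, not about whether $I$ or $J$ contains $a$ or $b$. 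Beyond this, the two chains are never actually constructed: their steps are not verified to be cover relations via \cref{prop:isCoverI}, the common endpoints are not exhibited, and the five elements are never checked to be closed under meet and join, which is what an $N_5$ sublattice requires (two saturated chains of unequal length between common endpoints would also suffice, since finite distributive lattices are graded, but that too needs the explicit chains). The paper sidesteps all of this: \cref{lem:oneMoreIrreducible} constructs, from the failing configuration, a join irreducible $A$ with $A \not\le A_j$ for all $j \in \cJ_\II$, and since $\max(P_\II) = \bigJoin_{j \in \cJ_\II} A_j$, this contradicts the join-primeness of join irreducibles in a distributive lattice --- a cleaner certificate of non-distributivity than a pentagon.

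Your backward direction follows the paper's route in outline, but its load-bearing step is your claim (a), that under the initial/final hypothesis every join irreducible of $P_\II$ is one of the $A_j$; you only assert that ``unwinding the unique downward cover'' produces a forbidden intersection. This is exactly where the paper has to work: it first proves a strengthened flippability statement (\cref{prop:alwaysFlippableDistributive}) and then uses it, together with \cref{prop:alwaysFlippableI}, to show in \cref{prop:irreduciblePosetIsomorphism} that an orientation with a unique decreasing flip must equal some $A_j$; this is not a one-line unwinding and is the technical heart of the implication. Your diagnosis of the ``hardest step'' is also misplaced: once (a) is known, $A = \bigJoin\set{A_j}{A_j \le A}$ holds automatically in any finite lattice, and the injectivity of joining over lower sets of $(\cJ_\II,\preccurlyeq)$ (\cref{prop:injectivityDistributive}, proved from \cref{prop:joinLattice}) already holds under closure under intersection alone --- the initial/final condition is not what prevents your ``middle gap''. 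So the backward half is the right plan with its key lemma unproven, and the forward half as written does not go through.
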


\begin{example}
The interval hypergraphic posets of \cref{fig:distributiveLattices} are distributive lattices, while those of \cref{fig:notLattices,fig:Tamari,fig:semidistributiveLattices,fig:notSemidistributiveLattices} are not.
\end{example}

\begin{example}
The hypergraphic poset of~$\{ 1, 2, 3, 12, 123\}$ is a distributive lattice (it is a diamond).
In general, the hypergraphic poset of all initial intervals is the boolean lattice, which is distributive.
See \cref{fig:distributiveLattices} and \cref{subsec:SchroderHypergraphs} for more examples of distributive hypergraphs.
\end{example}

\begin{example}
The hypergraphic poset of~$\{ 1, 2, 3, 12, 23, 123 \}$ is a semidistributive but not distributive lattice (it is a pentagon).
In general, the hypergraphic poset of all intervals is the Tamari lattice, which is semidistributive but not distributive.
See \cref{fig:Tamari}.
\end{example}


\subsection{Some join irreducible acyclic orientations}  
\label{subsec:someJoinIrreducibles}

In this section, we assume that~$\II$ is an interval hypergraph closed under intersection so that the hypergraph poset~$P_\II$ is a lattice  by \cref{prop:latticeBackward}.
Our first task will be to identify some join irreducible acyclic orientations of~$\II$ (a complete but more technical description of all join irreducible acyclic orientations of~$\II$ will appear later in~\cref{subsec:joinIrreducibles}).
The following notations are illustrated in \cref{exm:Aj1,exm:Tamari1,exm:Tamari2,exm:Aj4,exm:Aj5} below.

\begin{notation}
Define
\(\displaystyle
\cJ_\II \eqdef \bigcup_{I\in \II} I\ssm \{\min(I)\}.
\)
\end{notation}

\begin{notation}
\label{not:muj}
For~$j \in \cJ_\II$, we let
\[
J_j= \bigcap_{\substack{I \in \II \\ j \in I \ssm \{\min(I)\}}} I,
\]
(note that $J_j \in \II$ since $\II$ is closed under intersection), and we set~$\mu_j \eqdef \min(J_j)$ and~$\nu_j \eqdef \max(J_j)$.
By definition~$\min(I) \le \mu_j < j \le \nu_j \le \max(I)$ for any~$I \in \II$ such that~$j \in I \ssm \{\min(I)\}$.
\end{notation}

\begin{notation}
\label{not:Aj}
For~$j \in \cJ_\II$, consider the acyclic orientation~$A_j \eqdef \Or_{(\mu_j, \mu_j+1, \ldots, j)}$, obtained as the image by the surjection map~$\Or$ of \cref{def:surjection} of the cycle permutation
\[
(\mu_j, \mu_j+1, \dots, j) = 12 \cdots (\mu_j-1)j\,\mu_j \cdots (j-1) (j+1) \cdots n
\]
obtained from the identity permutation by placing~$j$ just before $\mu_j$.
Note that that for all $J \in \II$
\[
A_j(J) =
\begin{cases}
	j & \text{if } j \in J \text{ and } \min(J)=\mu_j,\\
	\min(J) & \text{otherwise.}
\end{cases}
\]
\end{notation}

\begin{notation}
For $i,j \in \cJ_\II$, we write $i \preccurlyeq j$ if and only if $J_i = J_j$ and $i \le j$.
\end{notation}

\begin{example}
\label{exm:Aj1}
For the hypergraph~$\II = \{1, 2, 3, 12, 123\}$, we have~$\cJ_\II = \{2,3\}$ and the corresponding acyclic orientations are
\[
A_2 = \acyclicOrientation{3}{1/2/2,1/3/2}
\qquad\text{and}\qquad
A_3 = \acyclicOrientation{3}{1/2/1,1/3/3}
\]
and are incomparable.
\end{example}

\begin{example}
\label{exm:Tamari1}
For the hypergraph~$\II = \{1, 2, 3, 12, 23, 123\}$  illustrated in \cref{fig:Tamari}\,(left), we have~$\cJ_\II = \{2,3\}$ and the corresponding acyclic orientations are
\[
A_2 = \acyclicOrientation{3}{1/2/2,2/3/2,1/3/2}
\qquad\text{and}\qquad
A_3 = \acyclicOrientation{3}{1/2/1,2/3/3,1/3/1}
\]
and are incomparable.
\end{example}

\begin{example}
\label{exm:Tamari2}
For the hypergraph~$\II \eqdef \{1, 2, 3, 12, 23, 34, 123, 234, 1234\}$ illustrated in \cref{fig:Tamari}\,(right), we have~$\cJ_\II = \{2,3,4\}$, and the corresponding acyclic orientations~are
\[
A_2 = \raisebox{-.4cm}{\acyclicOrientation{4}{1/2/2,2/3/2,3/4/3,1/3/2,2/4/2,1/4/2}}
\qquad
A_3 = \raisebox{-.4cm}{\acyclicOrientation{4}{1/2/1,2/3/3,3/4/3,1/3/1,2/4/3,1/4/1}}
\quad\text{and}\quad
A_4 = \raisebox{-.4cm}{\acyclicOrientation{4}{1/2/1,2/3/2,3/4/4,1/3/1,2/4/2,1/4/1}}
\]
and are incomparable.
\end{example}

\begin{example}
\label{exm:Aj4}
For the hypergraph~$\II \eqdef \{1,2,3,4,123,23,234\}$ illustrated in \cref{fig:distributiveLattices}\,(top left), we have~$\cJ_\II = \{2,3,4\}$, and the corresponding acyclic orientations~are
\[
A_2 = \acyclicOrientation{4}{1/3/2,2/3/2,2/4/2}
\qquad
A_3 = \acyclicOrientation{4}{1/3/1,2/3/3,2/4/3}
\quad\text{and}\quad
A_4 = \acyclicOrientation{4}{1/3/1,2/3/2,2/4/4}
\]
and are incomparable.
\end{example}

\begin{example}
\label{exm:Aj5}
For the hypergraph~$\II \eqdef \{1,2,3,4,123,1234\}$ illustrated in \cref{fig:distributiveLattices}\,(top right), we have~$\cJ_\II = \{2,3,4\}$, and the corresponding acyclic orientations~are
\[
A_2 = \acyclicOrientation{4}{1/3/2,1/4/2}
\qquad
A_3 = \acyclicOrientation{4}{1/3/3,1/4/3}
\quad\text{and}\quad
A_4 = \acyclicOrientation{4}{1/3/1,1/4/4}
\]
and we have~$A_2 < A_3$.
\end{example}

\begin{lemma}
\label{lem:distinctIrreducibles}
For $i \ne j \in \cJ_\II$, we have $A_i \ne A_j$.
\end{lemma}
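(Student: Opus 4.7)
The plan is to exhibit a single interval on which $A_i$ and $A_j$ take different values, namely the canonical interval $J_j$ associated with $j$. First I would evaluate $A_j(J_j)$ directly from the formula in \cref{not:Aj}: since $j \in J_j$ and $\min(J_j) = \mu_j$ by definition of $J_j$ and $\mu_j$, the first clause applies and gives $A_j(J_j) = j$.

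Next I would compute $A_i(J_j)$ using the same formula, splitting into the two possible cases. If $i \in J_j$ and $\min(J_j) = \mu_i$, then $A_i(J_j) = i$, and since $i \ne j$ by hypothesis this already differs from $A_j(J_j) = j$. Otherwise the second clause applies and $A_i(J_j) = \min(J_j) = \mu_j$; here I would invoke the fact that $j \in \cJ_\II$, which by definition means $j \in I \ssm \{\min(I)\}$ for some $I \in \II$ containing $j$, so $\mu_j = \min(J_j) < j$ strictly, hence again $A_i(J_j) \ne A_j(J_j)$.

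In both cases $A_i(J_j) \ne A_j(J_j)$, so $A_i \ne A_j$ as orientations. I do not foresee any real obstacle: the whole argument is a direct unpacking of the definitions in \cref{not:muj,not:Aj}, and the only point worth flagging is the strict inequality $\mu_j < j$, which is built into the definition of $\cJ_\II$.
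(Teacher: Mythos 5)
Your proof is correct and is essentially the paper's own argument: both evaluate the two orientations on $J_j$, noting $A_j(J_j) = j$ while $A_i(J_j) \in \{i, \mu_j\}$, and use $i \ne j$ together with $\mu_j < j$ to conclude. Nothing to add.
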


\begin{proof}
If~$i \ne j$, then we have~$A_i(J_j) \in \{\mu_j, i\}$ while~$A_j(J_j) = j \notin \{\mu_j,i\}$.
\end{proof}

\begin{lemma}
\label{lem:irrorder}
For $i,j \in \cJ_\II$, we have $A_i \le A_j \iff i \preccurlyeq j$.
\end{lemma}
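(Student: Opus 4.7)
The plan is to convert the statement $A_i\le A_j$ into the coordinate-wise condition $A_i(I)\le A_j(I)$ for every $I\in\II$ via \cref{prop:sourceOrderI}, and then play it off against the explicit formula
\[
A_k(I)=\begin{cases} k & \text{if } k\in I \text{ and } \min(I)=\mu_k,\\ \min(I) & \text{otherwise,}\end{cases}
\]
from \cref{not:Aj}. Closure of~$\II$ under intersection guarantees that each $J_k\in\II$, which is what lets these formulae be probed at well-chosen test intervals.

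For the forward direction, I will test $A_i\le A_j$ at the special interval $I=J_i$. Since $i\in J_i$ and $\min(J_i)=\mu_i$, the formula gives $A_i(J_i)=i$, hence $A_j(J_i)\ge i$. Now $A_j(J_i)$ can only equal $j$ or $\min(J_i)=\mu_i$; the second possibility is ruled out because $\mu_i<i$. Thus $A_j(J_i)=j$, which by the formula forces $j\in J_i$, $\min(J_i)=\mu_j$, and $j\ge i$. So I recover $\mu_i=\mu_j$ and $i\le j$; only $J_i=J_j$ remains.

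To upgrade to $J_i=J_j$, observe first that $J_i\in\II$ (intersection closure), $j\in J_i$, and $\min(J_i)=\mu_i=\mu_j<j$, so $J_i$ itself appears among the intervals intersected to define $J_j$; hence $J_j\subseteq J_i$. Conversely, any $I\in\II$ with $j\in I\ssm\{\min(I)\}$ has $\min(I)\le\mu_j=\mu_i<i\le j$ with both $\min(I)$ and $j$ in~$I$, so $i\in I\ssm\{\min(I)\}$ and~$I$ participates in the intersection defining $J_i$. This gives $J_i\subseteq J_j$, hence equality.

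For the backward direction, assume $J_i=J_j$ and $i\le j$ and check $A_i(I)\le A_j(I)$ for each $I\in\II$. If $A_i(I)=\min(I)$ the inequality is trivial, so suppose $A_i(I)=i$, which means $i\in I$ and $\min(I)=\mu_i=\mu_j$. The key observation is that $i\in I\ssm\{\min(I)\}$ then forces $J_i\subseteq I$, and since $J_j=J_i\ni j$ we conclude $j\in I$; combined with $\min(I)=\mu_j$, the formula yields $A_j(I)=j\ge i=A_i(I)$. The only delicate point throughout is the bookkeeping in the identification $J_i=J_j$, but no genuine obstacle arises since the hypothesis that $\II$ is closed under intersection provides precisely the intervals needed as test witnesses.
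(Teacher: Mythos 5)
Your proof is correct and follows essentially the same route as the paper's: both reduce the comparison to the coordinatewise criterion of \cref{prop:sourceOrderI}, exploit the explicit formula for $A_k$ from \cref{not:Aj}, and probe at the test interval $J_i$ to extract $\mu_i=\mu_j$ and $i\le j$. The only difference is cosmetic: you obtain $J_i=J_j$ by a double-inclusion argument in the defining intersections, where the paper instead compares the endpoints $\mu$ and $\nu$ of $J_i$ and $J_j$.
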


\begin{proof}
By \cref{prop:sourceOrderI} we have~$A_i \le A_j$ if and only if~$A_i(I) \le A_j(I)$ for all~$I \in \II$.
The forward direction is thus a direct consequence of the following four observations:
\begin{itemize}
\item if~$\mu_i \ne \mu_j$, then~$A_i(J_i) = i > \mu_i = \min(J_i) = A_j(J_i)$,
\item if~$\nu_i < \nu_j$, then~$j \notin J_i \ssm \{\mu_i\}$, so that~$A_i(J_i) = i > \mu_i = \min(J_i) = A_j(J_i)$,
\item if~$i > j$, then~$A_i(J_i) = i > j \ge A_j(J_i)$,
\item $i \le j$ implies that~$\mu_i \le \mu_j$ and~$\nu_i \le \nu_j$.
\end{itemize}
For the backward direction, assume~$i \preccurlyeq j $, and consider~$I \in \II$.
As~$i < j$, $A_i(I) \in \{\min(I), i\}$ and~$A_j(I) \in \{\min(I), j\}$, we have~$A_i(I) \le A_j(I)$ except if~$A_i(I) = i$ and~$A_j(I) = \min(I)$.
As~$A_i(I) = i$, we would have~$i \in I$ and~$\min(I) = \mu_i$. As~$A_j(I) = \min(I)$ and~$\min(I) = \mu_i = \mu_j$, we would have~$j \notin I$.
This contradicts the fact that~$\nu_i = \nu_j$.
\end{proof}

\begin{lemma}
\label{lem:subirr}
For any~$j \in \cJ_\II$ and any acyclic orientation~$A$ of~$\II$, we have
\begin{itemize}
\item $A \le A_j \iff A = \min(P_\II)$ or~$A = A_i$ with~$i \preccurlyeq j$,
\item $A_j \le A \iff j \le A(J_j)$.
\end{itemize}
\end{lemma}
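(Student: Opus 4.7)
The plan is to translate both equivalences via \cref{prop:sourceOrderI} into inequalities on source values $A(I)$, then extract the required constraints from the 2-cycle criterion of \cref{prop:acyclicI} applied at the pair $(J_j, I)$.

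For the second equivalence, the forward direction is immediate from $A_j(J_j) = j$. For the backward direction, assume $j \le A(J_j)$. For any $I \in \II$ with $A_j(I) \ne \min(I)$ we have $J_j \subseteq I$, and acyclicity of $A$ forbids the 2-cycle $A(I) \in J_j \ssm \{A(J_j)\}$, $A(J_j) \in I \ssm \{A(I)\}$, leaving $A(I) = A(J_j) \ge j$ or $A(I) \notin J_j$. The latter combined with $A(I) \ge \min(I) = \mu_j$ gives $A(I) > \nu_j \ge j$.

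For the first equivalence, the backward direction combines \cref{lem:irrorder} (giving $A_i \le A_j$ when $i \preccurlyeq j$) with the trivial $\min(P_\II) \le A_j$. For the forward direction, set $S \eqdef \{I \in \II : j \in I,\ \min(I) = \mu_j\}$. The inequality $A \le A_j$ yields $A(I) = \min(I)$ for every $I \notin S$. For $I \in S$ we have $A(I) \in [\mu_j, j] \subseteq J_j$, and the same 2-cycle analysis as above forces $A(I) = A(J_j)$. If $A(J_j) = \mu_j$ this gives $A = \min(P_\II)$; otherwise $i \eqdef A(J_j)$ lies in $[\mu_j+1, j]$ (so in particular $i \le j$) and $A(I) = i$ for every $I \in S$.

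The main obstacle is to prove that this $i$ satisfies $J_i = J_j$, so that the description of $A$ actually matches $A_i$. Since $J_j$ is one of the intervals in the intersection defining $J_i$, the inclusion $J_i \subseteq J_j$ is immediate, giving $\mu_i \ge \mu_j$ and $\nu_i \le \nu_j$. For the reverse I argue by contradiction. If $\mu_i > \mu_j$, picking $K \in \II$ with $i \in K \ssm \{\min(K)\}$ realizing $\min(K) = \mu_i$, we have $K \notin S$ and so $A(K) = \mu_i$; but $\mu_j < \mu_i < i \le \nu_j$ then shows that $(J_j, K)$ violates \cref{prop:acyclicI}. If $\nu_i < \nu_j$, I take $K \eqdef J_i = [\mu_j, \nu_i]$ (legitimate since $\mu_i = \mu_j$ from the previous step and $\II$ is intersection-closed): the possibility $j \le \nu_i$ would force $J_j \subseteq K$ and hence the contradiction $\nu_j \le \nu_i$, so $j > \nu_i$, whence $K \notin S$, $A(K) = \mu_j$, and $(J_j, K)$ again violates \cref{prop:acyclicI}. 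Once $J_i = J_j$ is established, the set $S$ coincides with its analogue for $i$, so the formula $A(I) = i$ on $S$ and $A(I) = \min(I)$ off $S$ matches $A_i$ exactly.
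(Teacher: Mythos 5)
Your proof is correct and follows essentially the same route as the paper's: both reduce the two equivalences to source comparisons via \cref{prop:sourceOrderI} and use the two-cycle acyclicity criterion of \cref{prop:acyclicI} to force a common value on all intervals containing~$j$ with minimum~$\mu_j$. The only deviation is that you establish $J_i = J_j$ directly (thereby spelling out why the resulting orientation really is $A_i$ and why $i \preccurlyeq j$), where the paper states $A = A_i$ tersely and then invokes \cref{lem:irrorder}; this fills in detail rather than changing the approach.
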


\begin{proof}
By \cref{prop:sourceOrderI}, $A \le A_j$ implies that~$A(I) \le A_j(I)$ for all~$I \in \II$.
For~$I, I' \in \II$ such that~$j \in I \cap I'$ and~$\min(I) = \min(I') = \mu_j$, we have~$A(I) = A(I')$ since~$A$ is acyclic.
If~$A < A_j$, we conclude that there is~$i \in {[\mu_j, j[}$ such that~$A(I) = i$ if~$j \in I$ and~$\min(I) = \mu_j$, and~$A(I) = \min(I)$ otherwise.
Hence, $A = \min(P_\II)$ or~$A = A_i$.
The first point thus follows from \cref{lem:irrorder}.

For the second point, $A_j \le A$ implies $j = A_j(J_j) \le A(J_j)$ by \cref{prop:sourceOrderI}.
Conversely, if~$j \le A(J_j)$, then for any~$I \in \II$,
\begin{itemize}
\item if~$j \in I$ and~$\min(I) = \mu_j$, we have~$A_j(I) = j \le A(I)$ (by acyclicity of~$A$),
\item otherwise, $A_j(I) = \min(I) \le A(I)$.
\end{itemize}
Hence, $A_j \le A$ by \cref{prop:sourceOrderI}.
\end{proof}

\begin{proposition}
\label{prop:AjJoinIrreducible}
If~$\II$ is an interval hypergraph closed under intersections, then the acyclic orientation~$A_j$ is join irreducible for any~$j\in \cJ_\II$.
\end{proposition}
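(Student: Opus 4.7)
My plan is to reduce join irreducibility of $A_j$ to the structural description of its principal order ideal given by \cref{lem:subirr} and \cref{lem:irrorder}. In a finite lattice, an element is join irreducible if and only if it covers exactly one element, so I only need to show that the set $\{A : A \le A_j\}$ in $P_\II$ is a chain and that $A_j \ne \min(P_\II)$.

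First, I would note that $A_j \ne \min(P_\II)$: the bottom element of $P_\II$ is the orientation sending every $I \in \II$ to $\min(I)$, while $A_j(J_j) = j > \mu_j = \min(J_j)$ by \cref{not:Aj,not:muj}. So $A_j$ is a non-minimum element, which is a prerequisite for join irreducibility.

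Next, I would invoke \cref{lem:subirr} to describe the principal order ideal below $A_j$ as
\[
\{A : A \le A_j\} \;=\; \{\min(P_\II)\} \cup \{A_i : i \preccurlyeq j\},
\]
where by definition $i \preccurlyeq j$ means $J_i = J_j$ and $i \le j$. The relation $\preccurlyeq$ restricted to $\{i \in \cJ_\II : J_i = J_j, \; i \le j\}$ is a total order inherited from the usual order on integers, so by \cref{lem:irrorder} the subset $\{A_i : i \preccurlyeq j\}$ is a chain in $P_\II$. Adding $\min(P_\II)$ at the bottom keeps it a chain, so the entire principal order ideal below $A_j$ is linearly ordered.

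Finally, since a finite chain has a unique maximal proper element, $A_j$ covers exactly one element in $P_\II$: either $\min(P_\II)$ (when no $i \in \cJ_\II$ with $i < j$ satisfies $J_i = J_j$) or $A_{i^\star}$ where $i^\star$ is the largest element of $\cJ_\II$ strictly below $j$ with $J_{i^\star} = J_j$. Consequently, $A_j$ cannot be written as a join of two strictly smaller elements, and is therefore join irreducible. I do not expect any real obstacle here: every non-trivial ingredient has been stockpiled in \cref{lem:subirr,lem:irrorder,lem:distinctIrreducibles}, and only the elementary fact that the top of a chain covers a unique element remains to be invoked.
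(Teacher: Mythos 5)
Your argument is correct and follows essentially the same route as the paper: both rely on \cref{lem:subirr} (together with \cref{lem:irrorder}) to identify the lower set of $A_j$ as the chain $\min(P_\II) < A_{i_1} < \dots < A_{i_p} < A_j$, from which join irreducibility is immediate. Your additional remarks (that $A_j \ne \min(P_\II)$ and that the top of a finite chain covers a unique element) merely spell out details the paper leaves implicit.
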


\begin{proof}
By \cref{lem:subirr}, the lower set of~$A_j$ in~$P_\II$ is the chain $\min(P_\II) < A_{i_1} < \dots < A_{i_p} < A_j$ where~$\{i_1 < \dots < i_p\} = \set{i \in \cJ_\II}{J_i = J_j \text{ and } i < j}$.
Hence, $A_j$ is join irreducible.
\end{proof}

\begin{corollary}
\label{coro:irreduciblePosetMorphism}
If~$\II$ is an interval hypergraph closed under intersections, then the map $j \mapsto A_j$ is an injective poset morphism from~$(\cJ_\II, \preccurlyeq)$ to the subposet of join irreducibles of~$P_\II$.
\end{corollary}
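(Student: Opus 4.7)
The plan is essentially to bundle together the three preceding results. First, Lemma~\ref{lem:distinctIrreducibles} shows that if~$i \ne j$ in~$\cJ_\II$, then~$A_i \ne A_j$ (the witness is the hyperedge~$J_j$, on which~$A_j$ takes value~$j$ while~$A_i$ does not), so the map~$j \mapsto A_j$ is injective. Second, Lemma~\ref{lem:irrorder} gives the equivalence~$A_i \le A_j \iff i \preccurlyeq j$, which says not only that $j \mapsto A_j$ is a poset morphism from~$(\cJ_\II, \preccurlyeq)$ to~$P_\II$, but in fact that it is an order embedding. Third, Proposition~\ref{prop:AjJoinIrreducible} ensures that every image~$A_j$ lies in the subposet of join irreducible elements of~$P_\II$, so the map has the correct codomain.

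Putting these three facts together yields the statement. There is no real obstacle, since the three ingredients have already been established; the only thing to note is that $(\cJ_\II, \preccurlyeq)$ is indeed a poset (reflexivity, antisymmetry, and transitivity of~$\preccurlyeq$ follow immediately from the definition~${i \preccurlyeq j \iff J_i = J_j \text{ and } i \le j}$, since~$J_i = J_j$ is an equivalence relation and~$\le$ is a total order on~$[n]$). Thus the proof reduces to a one-line citation of the earlier results.
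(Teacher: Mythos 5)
Your proposal is correct and matches the paper's proof, which likewise deduces the corollary immediately from Lemma~\ref{lem:distinctIrreducibles} (injectivity), Lemma~\ref{lem:irrorder} (order preservation, indeed an embedding), and Proposition~\ref{prop:AjJoinIrreducible} (the images are join irreducible). Your extra remark that~$\preccurlyeq$ is a partial order is a harmless clarification the paper leaves implicit.
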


\begin{proof}
Immediate from \cref{lem:distinctIrreducibles,prop:AjJoinIrreducible,lem:irrorder}.
\end{proof}

In the next two propositions, we assume $\II$ is closed under intersections.

\begin{proposition}
\label{prop:injectivityDistributive}
For any two lower sets~$X$ and~$Y$ of~$(\cJ_\II, \preccurlyeq)$, if~$\bigJoin\limits_{x \in X} A_x = \bigJoin\limits_{y \in Y} A_y$ then~$X = Y$.
\end{proposition}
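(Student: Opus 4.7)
The plan is to proceed by contradiction. Suppose $X \ne Y$ and pick (without loss of generality) some $j \in X \ssm Y$. Writing $A \eqdef \bigJoin_{x \in X} A_x = \bigJoin_{y \in Y} A_y$, the inclusion $j \in X$ gives $A_j \le A$, which by \cref{lem:subirr} means $A(J_j) \ge j$. The aim is to derive $j \in Y$ from this, contradicting $j \in X \ssm Y$.

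My first step is to apply \cref{prop:joinLattice} to the interval $J_j$ and simplify. The formula involves a union $\bigcup_{y \in Y} \bigcup_{K \in \II : A_y(K) \in J_j} [\min K, A_y(K)[$; using the description of $A_y$ in \cref{not:Aj}, only the terms with $A_y(K) = y$ are nonempty, and these require $\min K = \mu_y$, $y \in K$, and $y \in J_j$. Taking $K = J_y$ (which lies in $\II$ by closure under intersection) shows conversely that every $y \in Y \cap J_j$ contributes exactly the interval $[\mu_y, y[$, so the union collapses to $\bigcup_{y \in Y \cap J_j} [\mu_y, y[$. The condition $A(J_j) \ge j$ then says this family covers $[\mu_j, j-1]$; specializing to the element $j-1$ yields some $y \in Y \cap J_j$ with $\mu_y \le j-1 < y$, so in particular $y \ge j > \mu_j$.

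The technical core is then to show $J_y = J_j$ for this witness $y$. From $y \in J_j \ssm \{\mu_j\}$, the interval $J_j$ appears in the intersection defining $J_y$, so $J_y \subseteq J_j$. Conversely, $J_y$ is itself an intersection of intervals of $\II$, hence belongs to $\II$ by closure under intersection; and since $\mu_y \le j - 1 < j \le y \le \nu_y$, we have $j \in J_y \ssm \{\mu_y\}$, so $J_y$ appears in the intersection defining $J_j$, giving $J_j \subseteq J_y$. Combining the two inclusions, $J_y = J_j$; together with $y \ge j$, this reads $j \preccurlyeq y$, and $Y$ being a lower set containing $y$ then forces $j \in Y$, the desired contradiction.

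The main obstacle I anticipate is precisely this equality $J_y = J_j$: the reverse inclusion $J_j \subseteq J_y$ relies on $J_y$ itself belonging to $\II$, which is where the closure-under-intersection hypothesis is essential. Without it, the witness $y$ produced from covering $[\mu_j, j-1]$ could satisfy $J_y \subsetneq J_j$ and would fail to be $\preccurlyeq$-comparable to $j$, so $j$ could not be recovered inside $Y$ and distinct lower sets would give the same join.
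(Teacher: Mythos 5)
Your proof is correct and follows essentially the same route as the paper's: both extract, via the join formula of \cref{prop:joinLattice} applied to $J_j$, a witness $y \in Y$ with $\mu_y < j \le y$ and $y \in J_j$, then use closure under intersection to get $J_y = J_j$ from the mutual memberships $j \in J_y \ssm \{\mu_y\}$ and $y \in J_j \ssm \{\mu_j\}$, concluding $j \preccurlyeq y$ and hence $j \in Y$ since $Y$ is a lower set. The only cosmetic difference is that you argue by contradiction while the paper shows $X \subseteq Y$ directly and invokes symmetry.
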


\begin{proof}
Assume~$\bigJoin\limits_{x \in X} A_x = \bigJoin\limits_{y \in Y} A_y$ and let~$x \in X$.
Then $x = A_x(J_x) \le \big( \bigJoin\limits_{x \in X} A_x \big)(J_x) = \big( \bigJoin\limits_{y \in Y} A_y \big)(J_x)$.
From the description of the join of \cref{prop:joinLattice}, we thus obtain that there exists~$y \in Y$ and~$J \in \II$ such that~$A_y(J) \in J_x$ and~$\min(J) < x \le A_y(J)$.
As~$\min(J) \ne A_y(J)$, we obtain that~$A_y(J) = y$ and~$\min(J) = \mu_y$.
We get that~$\mu_y < x \le y \le \nu_x$, hence that~$x \in J_y \ssm \{\mu_y\}$ and~$y \in J_x \ssm \{\mu_x\}$ so that~$J_x = J_y$.
As~$x \le y$, we conclude that~$x \preccurlyeq y$, so that~$x \in Y$.
We thus obtained that~$X \subseteq Y$, and thus~$X = Y$ by symmetry.
\end{proof}

\begin{proposition}
$\displaystyle \max(P_\II) = \bigJoin\limits_{j \in \cJ_\II} A_j$.
\end{proposition}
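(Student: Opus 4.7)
The plan is to first identify what the maximum of $P_\II$ should be and then show that $\bigJoin_{j \in \cJ_\II} A_j$ equals it. By \cref{prop:sourceOrderI}, the maximum of $P_\II$ must be the orientation $M$ defined by $M(I) \eqdef \max(I)$ for every $I \in \II$, provided that $M$ is acyclic; this is immediate from \cref{prop:acyclicI}, since any source $M(I) = \max(I)$ cannot lie in $J \ssm \{\max(J)\}$ for another $J \in \II$. (Alternatively, $M = \Or_{w_0}$ for the reverse permutation $w_0$.)

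Next, I would check $\bigJoin_{j \in \cJ_\II} A_j \le M$ by applying the second bullet of \cref{lem:subirr}: for each $j \in \cJ_\II$, $M(J_j) = \max(J_j) = \nu_j \ge j$, hence $A_j \le M$. Taking the join gives the desired inequality.

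For the reverse inequality, write $L \eqdef \bigJoin_{j \in \cJ_\II} A_j$ and apply the explicit formula of \cref{prop:joinLattice}. For a fixed $I \in \II$ with $k \eqdef \max(I)$, it suffices to show that every $\ell \in I$ with $\ell < k$ is excluded by the union, so that $L(I) = k$. Given such an $\ell$, set $j \eqdef \ell + 1$. Since $I$ is an interval and $\min(I) \le \ell < k$, we have $\ell + 1 \in I \ssm \{\min(I)\}$, so $j \in \cJ_\II$. Taking $J \eqdef J_j$ in the union, $A_j(J_j) = j = \ell + 1 \in I$ by \cref{not:Aj}, and since $\mu_j < j$ by definition of $J_j$, we obtain $\ell \in [\mu_j, j[ = [\min(J_j), A_j(J_j)[$. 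Thus $\ell$ lies in the union being subtracted, which forces $L(I) \ge k$, and combined with $L(I) \in I$ yields $L(I) = k = M(I)$.

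Concluding, $L = M = \max(P_\II)$, as desired. I do not foresee a serious obstacle here: the argument reduces to a direct application of the join formula together with the defining property $\mu_j < j$ of the intervals $J_j$. The only subtlety is to observe that the witness $J = J_j$ for $j = \ell + 1$ is always available in $\II$, which follows from the closure under intersection assumed on $\II$ (used in \cref{not:muj} to guarantee $J_j \in \II$).
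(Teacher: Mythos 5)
Your proof is correct, and its combinatorial heart is the same as the paper's: to rule out a value $\ell < \max(I)$ on a hyperedge $I$, pass to $j \eqdef \ell+1 \in I \ssm \{\min(I)\}$ and use the interval $J_j$ (available because $\II$ is closed under intersection, the standing hypothesis here). The packaging differs, though. You name the top element explicitly as the orientation $I \mapsto \max(I)$ (equivalently $\Or_{w_0}$) and then evaluate $\bigJoin_{j \in \cJ_\II} A_j$ hyperedge by hyperedge through the explicit join formula of \cref{prop:joinLattice}, showing every $\ell < \max(I)$ is swallowed by the subtracted interval $[\mu_j, j[$ for $j = \ell+1$. The paper never identifies the top element: it assumes the join $A$ satisfies $A(I) < \max(I)$ for some $I$, uses $A_j \le A$ with $j \eqdef A(I)+1$ and \cref{prop:sourceOrderI} to force $A(J_j) \ge j$, and exhibits a two-cycle between $I$ and $J_j$ contradicting the acyclicity of $A$ — so it leans on acyclicity directly rather than on the formula of \cref{prop:joinLattice}. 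Your version is a bit longer but more explicit (and your inequality $\bigJoin_j A_j \le M$ is automatic once $M$ is known to be the maximum, so that step could be dropped). One wording slip: your parenthetical justification that $M$ is acyclic is not literally true — $\max(I)$ can perfectly well lie in $J \ssm \{\max(J)\}$ for another $J \in \II$; what is impossible is that \emph{both} conditions of the pattern in \cref{prop:acyclicI} hold, since they would give $\max(I) < \max(J)$ and $\max(J) \le \max(I)$ simultaneously. Your alternative observation $M = \Or_{w_0}$ settles acyclicity correctly anyway, so this is harmless.
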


\begin{proof}
Assume by contradiction~$\max(P_\II) \ne \bigJoin_{j \in \cJ_\II} A_j \defeq A$.
Then there is~$I \in \II$ with~${A(I) < \max(I)}$.
Let~$j \eqdef A(I) + 1$.
As~$j \in I \ssm \{\min(I)\}$, we have~$A(J_j) \in J_j \subseteq I$ and~$A(I) = j-1 \in J_j$.
As~$A_j \le A$, we have~$A(I) = j-1 < j = A_j(J_j) \le A(J_j)$ so that~$A(I) \ne A(J_j)$.
We thus obtain that~$A$ is cyclic, a contradiction.
\end{proof}

\begin{corollary}
For any interval hypergraph~$\II$ closed under intersection, $P_\II$ contains a distributive sublattice containing~$\min(P_\II)$ and~$\max(P_\II)$.
\end{corollary}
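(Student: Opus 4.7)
The plan is to exhibit the set $L \eqdef \set{\bigJoin_{x \in X} A_x}{X \text{ is a lower set of } (\cJ_\II, \preccurlyeq)}$ as the desired distributive sublattice of $P_\II$. The first step is to verify that the map $\Psi \colon X \mapsto \bigJoin_{x \in X} A_x$ is a poset isomorphism from the lattice of lower sets of $(\cJ_\II, \preccurlyeq)$ onto $L$: injectivity is \cref{prop:injectivityDistributive}, monotonicity follows from \cref{coro:irreduciblePosetMorphism}, and the converse ``$\Psi(X) \le \Psi(Y) \implies X \subseteq Y$'' can be read off the proof of \cref{prop:injectivityDistributive} applied to each $A_x \le \Psi(Y)$ with $x \in X$, using that $Y$ is a lower set. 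Since the lattice of lower sets is distributive, this already exhibits $L$ as a distributive lattice under the induced order.

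Next, $L$ contains $\min(P_\II) = \Psi(\varnothing)$ (the empty join) and $\max(P_\II) = \Psi(\cJ_\II)$ by the last proposition of the section. Closure of $L$ under~$\join$ in~$P_\II$ is immediate: $\Psi(X) \join \Psi(Y) = \bigJoin_{z \in X \cup Y} A_z = \Psi(X \cup Y)$, and $X \cup Y$ is again a lower set of $(\cJ_\II, \preccurlyeq)$.

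The hard part is closure under $\meet$, namely the equality $\Psi(X) \meet_{P_\II} \Psi(Y) = \Psi(X \cap Y)$ for any two lower sets $X, Y$. The inequality $\Psi(X \cap Y) \le \Psi(X) \meet \Psi(Y)$ is automatic since $X \cap Y \subseteq X, Y$, so all the work lies in the reverse direction. The plan is to establish it by unfolding the source formula $\Psi(X)(I) = \min\bigl(I \ssm \bigcup_{x \in X \cap I} [\mu_x, x)\bigr)$ that comes out of \cref{prop:joinLattice}, and comparing it interval by interval with the dual meet formula obtained via the anti-isomorphism of \cref{prop:antiIsomorphism}. Should the direct interval-by-interval inequality prove unwieldy, a cleaner alternative is to argue that any join irreducible $J$ of $P_\II$ with $J \le M \eqdef \Psi(X) \meet \Psi(Y)$ must already be dominated by some $A_z$ with $z \in X \cap Y$; then $M$, being the join of the join irreducibles below it, satisfies $M \le \Psi(X \cap Y)$. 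Once this step is in hand, $\Psi$ transports the distributive lattice structure of lower sets of $(\cJ_\II, \preccurlyeq)$ onto $L$, yielding the required distributive sublattice of~$P_\II$ containing both $\min(P_\II)$ and $\max(P_\II)$.
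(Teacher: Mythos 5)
Your candidate $L \eqdef \set{\Psi(X)}{X \text{ lower set of } (\cJ_\II,\preccurlyeq)}$ is exactly the subset the paper has in mind, and the first half of your argument is fine and is all that is actually used: $\Psi$ is injective and order-reflecting on lower sets (\cref{prop:injectivityDistributive} and its proof, together with \cref{coro:irreduciblePosetMorphism}), $L$ is closed under the join of $P_\II$ because $\Psi(X)\join\Psi(Y)=\Psi(X\cup Y)$, and $L$ contains $\min(P_\II)=\Psi(\varnothing)$ and $\max(P_\II)=\Psi(\cJ_\II)$. At that point the corollary is established in the sense the paper intends (an isomorphic copy of the distributive lattice of lower sets of $(\cJ_\II,\preccurlyeq)$ sitting inside $P_\II$ and spanning bottom to top), and the paper indeed states it with no further argument.

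The step you call the hard part, $\Psi(X)\meet_{P_\II}\Psi(Y)=\Psi(X\cap Y)$, is not only left as a plan in your proposal --- it is false in general, so neither of your two strategies can be completed. Take $\II=\{1,2,3,4,5,23,34,2345\}$, which is closed under intersection. Then $\cJ_\II=\{3,4,5\}$ with $J_3=23$, $J_4=34$, $J_5=2345$, so $(\cJ_\II,\preccurlyeq)$ is an antichain and $X\eqdef\{3,4\}$, $Y\eqdef\{4,5\}$ are lower sets with $X\cap Y=\{4\}$. Recording orientations by their values on $(23,34,2345)$, one finds $A_3=(3,3,3)$, $A_4=(2,4,2)$, $A_5=(2,3,5)$, hence $\Psi(X)=(3,4,4)$ and $\Psi(Y)=(2,4,5)$, while $\Psi(X\cap Y)=A_4=(2,4,2)$. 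The pointwise minimum $(2,4,4)$ is acyclic, so by \cref{prop:sourceOrderI} it is the meet $\Psi(X)\meet\Psi(Y)$ in $P_\II$, and it differs from $\Psi(X\cap Y)$; moreover it lies outside $L$, since $\Psi(Z)\le\Psi(X),\Psi(Y)$ forces $Z\subseteq X\cap Y=\{4\}$ by order reflection, whereas $(2,4,4)\not\le A_4$. In fact $(2,4,4)$ is the join irreducible $A_{24}$ of \cref{not:joinIrreducibles}: a join irreducible of $P_\II$ below both $\Psi(X)$ and $\Psi(Y)$ but below no $A_z$ with $z\in X\cap Y$, which is precisely the configuration your second plan would need to rule out (it cannot, because $P_\II$ has join irreducibles other than the $A_j$, as described in \cref{subsec:joinIrreducibles}); your first plan fails for the same reason, since the meet of $P_\II$ is not computed intervalwise from the join formula of \cref{prop:joinLattice}. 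So $L$ is closed under joins but in general not under meets of $P_\II$, and the corollary should be concluded where your first paragraph ends rather than upgraded to a meet-closedness statement.
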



\subsection{If~$\II$ is distributive then $P_\II$ is distributive}
\label{subsec:distributiveLatticeBackward}

We now prove the backward direction of \cref{thm:distributiveLatticeI}.
The following strengthen \cref{prop:alwaysFlippableI}.

\begin{proposition}
\label{prop:alwaysFlippableDistributive}
For a distributive interval hypergraph~$\II$, an acyclic orientation~$A$ of~$\II$, and~${J \in \II}$ such that~$j \eqdef A(J) \ne \min(J)$, there exists~$i$ such that
\[
\min(J) \le i \le \max\set{\max(I)}{I \in \II, \; \min(I) = \min(J), \; \max(I) < j}
\]
and the orientation obtained from~$A$ by flipping~$j$ to~$i$ is acyclic.
\end{proposition}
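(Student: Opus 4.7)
The plan is to invoke the decreasing-flip analogue of~\cref{prop:isFlipI} (via the anti-isomorphism~\cref{prop:antiIsomorphism}): the flip of~$A$ sending~$j$ to some~$i < j$ will be acyclic if and only if no~$K \in \II$ satisfies both~$i \in K \ssm \{A(K)\}$ and~$A(K) \in {[k', j[}$, where~$k' \eqdef \min\set{\min(I)}{I \in \II \text{ and } A(I) = j}$. I would let~$I^* \in \II$ be any interval achieving the maximum in the definition of~$k$, so that~$I^* = [\min(J), k]$, and propose the choice~$i \eqdef A(I^*) \in [\min(J), k]$. The remaining task is to rule out any witness~$K$ for this specific~$i$.

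Assume, for contradiction, that such a~$K$ exists. Since~$A(I^*) \in K$ with~$A(K) \ne A(I^*)$ and~$A(K) \in {[k', j[}$, three cases arise according to the position of~$A(K)$. In Case~1 ($A(K) \in I^* \ssm \{A(I^*)\}$), the pair~$(I^*, K)$ is immediately a $2$-cycle in~$A$, contradicting~\cref{prop:acyclicI}.

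In Case~2 ($A(K) \in {]k, j[}$), $K$ extends strictly above~$I^*$. If~$\max(K) \ge j$, then~$j \in K$ and~$A(K) \in {]k, j[} \subseteq J$ form a $2$-cycle~$K, J$, contradicting acyclicity. Otherwise~$\max(K) < j$, and the case splits by~$\min(K)$: if~$\min(K) = \min(J)$, then~$K$ itself is a candidate in the definition of~$k$ with~$\max(K) > k$, contradicting the maximality of~$I^*$; if~$\min(K) < \min(J)$, distributivity on the crossing pair~$K, J$ yields~$K \cap J = [\min(J), \max(K)] \in \II$, producing the same contradiction; if~$\min(K) > \min(J)$, distributivity on the crossing pair~$K, I^*$ gives~$K \cap I^* = [\min(K), k] \in \II$, a sub-interval of~$J$ that is neither initial nor final in~$J$ (since~$\min(K) > \min(J)$ and~$k < \max(J)$), violating distributivity.

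In Case~3 ($A(K) \in {[k', \min(J)[}$), necessarily~$k' < \min(J)$, so some~$I^- \in \II$ has~$A(I^-) = j$ and~$\min(I^-) = k' < \min(J)$; in particular~$I^* \subsetneq I^-$. Since~$A(K) \in {[k', \min(J)[} \subseteq I^-$, acyclicity of~$A$ for the pair~$(K, I^-)$ forces~$j \notin K$, hence~$\max(K) < j$. Then~$K$ and~$J$ cross, and distributivity produces~$K \cap J = [\min(J), \max(K)] \in \II$ with~$K \cap J \subseteq I^-$; this intersection is neither initial in~$I^-$ (as~$\min(J) > k' = \min(I^-)$) nor final in~$I^-$ (as~$\max(K) < j \le \max(I^-)$), yielding the final contradiction. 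The hard part will be organising this intricate case analysis cleanly, and in Case~3 spotting the auxiliary interval~$I^-$ as the correct vehicle for the distributivity-based contradiction.
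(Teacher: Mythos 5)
Your proposal is correct and follows essentially the same route as the paper: you choose the same flip target $i = A(I^*)$ for the maximal interval $I^*$ with $\min(I^*)=\min(J)$ and $\max(I^*)<j$, and you obtain the contradiction from a two-interval obstruction using the acyclicity of~$A$, the maximality of~$I^*$, and distributivity applied to crossing pairs inside a well-chosen containing interval ($J$, or an interval oriented to~$j$ with small left endpoint, your~$I^-$ playing the role of the paper's~$K'$). The only difference is organizational: you extract the obstruction via the decreasing analogue of \cref{prop:isFlipI} and split on the position of~$A(K)$, whereas the paper unfolds a cyclic pair in the flipped orientation directly; the substance is the same.
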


\begin{proof}
Let~$I \in \II$ be maximal such that~$\min(I) = \min(J)$ and~$\max(I) < j$, and let~$i \eqdef A(I)$.
Let~$O$ be the orientation obtained from~$A$ by flipping~$j$ to~$i$ as in \cref{def:flip}.
If~$O$ is cyclic, there exists~$K,K' \in \II$ such that~$O(K) \in K'$, $O(K') \in K$, and~$O(K) \ne O(K')$.
As~$A$ is acyclic, we have either~$A(K) \ne O(K)$ or~$A(K') \ne O(K')$, but not both since~$O(K) \ne O(K')$.
We can thus assume that~$A(K) = O(K)$ while $A(K') = j$ and~$O(K') = i$.
If~$j \in K$, then we have~$A(K) \ne j = A(K')$ (otherwise, $O(K) = i = O(K')$), and $A(K) = O(K) \in K'$ and~$A(K') = j \in K$, contradicting the acyclicity of~$A$.
As~$i \in K$ and~$j \notin K$, we obtain that~$\max(K) < j$.
As~$A(I) = i \in K$ and~$A(K) = O(K) \ne O(K') = i = A(I)$, we have~$K \not\subseteq I$ by acyclicity of~$A$.
If~$\min(I) < \min(K)$, then~$I \not\subseteq K$ and~$I \not\supseteq K$ and~$I \cap K \ni i$, and $K$ is neither initial nor final in~$J$, contradicting the distributivity of~$\II$.
If~$\min(K) < \min(J)$ and~$\min(K') < \min(J)$, then we have~$J \not\subseteq K$ and~$J \not\supseteq K$ and~$J \cap K \ni i$, and~$J \cap K$ is neither initial nor final in~$K'$, contradicting the distributivity of~$\II$.
As~$\II$ is closed under intersection and we have~$A(K) = O(K) \in K'$, we can thus assume that~$\min(I) = \min(K)$.
We thus obtain that~$K \subseteq I$.
As~$A(K) = O(K) \ne O(K') = i = A(I)$, this contradicts the acyclicity of~$A$.
We conclude that~$O$ is acyclic, which proves the statement.
\end{proof}

\begin{remark}
\label{rem:alwaysFlippable}
\cref{prop:alwaysFlippableDistributive} fails when~$\II$ is not distributive.
For instance, for the interval hypergraph~$\II = \{1, 2, 3, 12, 23, 123\}$ of \cref{exm:Tamari1,fig:Tamari}\,(left), for the interval~$J = 23$ and for the acyclic orientation
\[
A = \acyclicOrientation{3}{1/2/1,2/3/3,1/3/3}
\]
\end{remark}

\begin{proposition}
\label{prop:irreduciblePosetIsomorphism}
If~$\II$ is a distributive interval hypergraph, then the map $j \mapsto A_j$ is a poset isomorphism from~$(\cJ_\II, \preccurlyeq)$ to the subposet of join irreducibles of~$P_\II$.
\end{proposition}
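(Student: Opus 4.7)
The plan is to exploit Corollary~\ref{coro:irreduciblePosetMorphism} and Lemma~\ref{lem:irrorder}, which together establish that $j \mapsto A_j$ is an injective order-embedding from $(\cJ_\II, \preccurlyeq)$ into the subposet of join irreducibles of~$P_\II$. Only surjectivity remains to prove: every join irreducible~$A$ of~$P_\II$ must coincide with some~$A_j$.

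Fix such an $A$, with unique lower cover $A^\diamond \lessdot A$ realized by the flip $\flip{A^\diamond}{i^*}{j^*}{A}$, and set $T(A) \eqdef \set{A(H)}{H \in \II,\ A(H) \ne \min(H)}$. Since $A \ne \min(P_\II)$, $T(A)$ is non-empty. I first argue $T(A) = \{j^*\}$: any other $j' \in T(A)$, realized by some $J' \in \II$ with $A(J') = j' \ne \min(J')$, would furnish via Proposition~\ref{prop:alwaysFlippableDistributive} an acyclic decreasing flip at $J'$, producing some $A' < A$ which must therefore satisfy $A' \le A^\diamond$ by uniqueness of the cover. But on any interval $H$ moved by the cover flip, $A'(H) = A(H) = j^* > i^* = A^\diamond(H)$, contradicting Proposition~\ref{prop:sourceOrderI}. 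So $T(A) = \{j\}$ with $j = j^*$.

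A two-cycle obstruction then gives $A(J_j) = j$: any $I \in \II$ with $A(I) = j \ne \min(I)$ (which exists since $j \in T(A)$) contains $J_j$, so $A(J_j) = \mu_j$ would produce the forbidden 2-cycle with $A(I) = j \in J_j \ssm \{\mu_j\}$ and $A(J_j) = \mu_j \in I \ssm \{j\}$. The symmetric two-cycle argument forces $A(I) = j$ whenever $\min(I) = \mu_j$ and $j \in I$. It remains to show that no $I$ satisfies $A(I) = j$ with $\min(I) < \mu_j$. Assume the contrary and pick such $I$ with $m \eqdef \min(I)$ minimal. Applying Proposition~\ref{prop:alwaysFlippableDistributive} to $J = J_j$ (whose maximal witness, combined with $T(A) = \{j\}$, forces the value $\mu_j$) produces an acyclic $A^{(1)} < A$ obtained by flipping $j \to \mu_j$; applying it to $J = I$ similarly produces an acyclic $A^{(2)} < A$ flipping $j \to m$. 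Both lie below $A^\diamond$. Since $A^{(1)}(I) = \mu_j$ (as $\mu_j \in J_j \subseteq I$) and $A^{(2)}(J_j) = j$ (as $m \notin J_j$), Proposition~\ref{prop:sourceOrderI} forces $\mu_j \le A^\diamond(I)$ and $j \le A^\diamond(J_j)$. The second inequality excludes $A^\diamond(J_j) = i^*$, hence $i^* \notin J_j$, so $i^* < \mu_j$; the first then excludes $A^\diamond(I) = i^*$, so $i^* \notin I$, i.e. $i^* < m$. But $A^\diamond \ne A$ means the cover flip moves some $H$ with $A(H) = j$ and $\min(H) \le i^* < m$, contradicting the minimality of $m$. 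A routine case analysis on $I$, using $J_j \subseteq I$ whenever $j \in I \ssm \{\min(I)\}$, now gives $A(I) = A_j(I)$ for every $I \in \II$, so $A = A_j$.

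The delicate moment is the simultaneous use of two different flip targets, $\mu_j$ and $m$, at the minimal problematic interval~$I$: each pins $i^*$ from a different side (first $i^* < \mu_j$, then $i^* < m$), and only their combination turns the mere existence of the cover flip into an interval that violates minimality. This is where the distributivity of~$\II$ enters, through Proposition~\ref{prop:alwaysFlippableDistributive}.
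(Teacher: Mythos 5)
Your architecture is sound and, in fact, parallels the paper's: reduce to surjectivity via \cref{coro:irreduciblePosetMorphism} and \cref{lem:irrorder}, then use \cref{prop:alwaysFlippableDistributive} together with the uniqueness of the lower cover to force a join irreducible~$A$ to equal some~$A_j$. The steps $T(A)=\{j^*\}$, $A(J_j)=j$, and $A(I)=j$ whenever $j\in I$ and $\min(I)=\mu_j$ are all correct. The gap is in the treatment of the minimal bad interval~$I$ with $A(I)=j$ and $m=\min(I)<\mu_j$: you assert that \cref{prop:alwaysFlippableDistributive} yields $A^{(1)}$ by flipping $j\to\mu_j$ and $A^{(2)}$ by flipping $j\to m$. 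The \emph{statement} of that proposition does not determine the flip target; it only guarantees some acyclic target~$i$ with $\min(J)\le i\le\max\set{\max(K)}{K\in\II,\ \min(K)=\min(J),\ \max(K)<j}$, and your parenthetical ``the maximal witness combined with $T(A)=\{j\}$ forces the value'' is not a consequence of it (for the $J_j$-flip the allowed range genuinely exceeds~$\mu_j$ whenever, say, $[\mu_j,j-1]\in\II$). For $A^{(1)}$ this is harmless: any admissible target lies in $[\mu_j,j[\subseteq I$, so $A^{(1)}(I)\ge\mu_j$, which is all you use. For $A^{(2)}$ it is essential: you need the target to avoid~$J_j$ (i.e.\ to be $<\mu_j$) to conclude $A^{(2)}(J_j)=j$, and nothing you cite rules out a target in $[\mu_j,j[$, in which case $A^{(2)}(J_j)<j$ and the inequality $j\le A^\diamond(J_j)$ --- the hinge of your contradiction --- would not follow.

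This missing step is precisely the second invocation of distributivity in the paper's proof, and it is short: if some $K\in\II$ had $\min(K)=m$ and $\mu_j\le\max(K)<j$, then $K\not\subseteq J_j$, $K\not\supseteq J_j$, $\varnothing\ne K\cap J_j=[\mu_j,\max(K)]\subseteq I$, and this intersection is neither initial nor final in~$I$ (its left end exceeds $m=\min(I)$ and its right end is less than $j\le\max(I)$), contradicting \cref{def:distributive}. Hence the upper bound in \cref{prop:alwaysFlippableDistributive} applied to~$I$ is $<\mu_j$, so whatever target it provides lies outside~$J_j$ and $A^{(2)}(J_j)=j$, as you want. (Alternatively, one could note that the proof of \cref{prop:alwaysFlippableDistributive} flips $j$ to $A(K_{\max})$ for the maximal witness $K_{\max}$, which equals $m$ here because $T(A)=\{j\}$; but that is information not contained in the statement you cite, so it should be made explicit either way.) With this insertion your argument is complete and is essentially the paper's, recast via the unique lower cover and \cref{prop:sourceOrderI} rather than via counting decreasing flips; note that some direct appeal to distributivity beyond the bare statement of \cref{prop:alwaysFlippableDistributive} is unavoidable, since by \cref{lem:oneMoreIrreducible} surjectivity fails for intersection-closed hypergraphs that are not distributive.
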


\begin{proof}
We have already seen in \cref{coro:irreduciblePosetMorphism} that~$j \mapsto A_j$ is an injective poset morphism.
We thus just need to show that the distributivity of~$\II$ implies the surjectivity of this morphism.
Consider an increasing flip~\flip{A}{i}{j}{B}.
If there is~$J \in \II$ such that~$B(J) \notin \{\min(J), j\}$, then~$A(J) = B(J)$ and \cref{prop:alwaysFlippableI} ensures that $B$ admits a decreasing flip flipping~$B(J)$ to some~$k < B(J) = A(J)$.
If there is~$J \in \II$ such that~$B(J) = j$ and~$\min(J) < \mu_j$, then the distributivity of~$\II$ implies that~$\max(I) < \mu_j$ for any~$I \in \II$ such that~$\min(I) = \min(J)$ and~$\max(I) < j$ (otherwise, $I \not\subseteq J_j$ and~$I \not\supseteq J_j$ and~$\mu_j \in I \cap J_i \subseteq J$, and~$I \cap J_j$ is neither initial nor final in~$J$, contradicting the distributivity of~$\II$).
Hence, \cref{prop:alwaysFlippableDistributive} ensures that $B$ admits a decreasing flip flipping~$j$ to some~$k < \mu_j$.
As $B$ is acyclic, there is no~$J \in \II$ such that~$B(J) = \min(J) = \mu_j$ and~$j \in J$.
We conclude that if~$B$ admits a single decreasing flip, then~$B(J) = j$ if~$j \in J$ and~$\min(J) = \mu_j$, and~$B(J) = \min(J)$ otherwise, so that~$B = A_j$.
\end{proof}

\begin{remark}
We will see in \cref{lem:oneMoreIrreducible} that the surjectivity in \cref{prop:irreduciblePosetIsomorphism} systematically fails when~$\II$ is not distributive.
See \cref{exm:oneMoreIrreducible} for an example.
\end{remark}

\begin{proposition}
\label{prop:distributiveForwardI}
If~$\II$ is a distributive interval hypergraph, then the maps
\[
\Phi : A \mapsto \set{j \in \cJ_\II}{A_j \le A}
\qquad\text{and}\qquad
\Psi : X \mapsto \bigJoin_{x \in X} A_x
\]
are inverse bijections between the acyclic orientations of~$\II$ and the lower sets of~$(\cJ_\II, \preccurlyeq)$. Hence~$P_\II$ is a distributive lattice.
\end{proposition}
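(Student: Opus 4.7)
The plan is to establish $\Phi$ and $\Psi$ as mutually inverse, order-preserving bijections between $P_\II$ and the lattice $L(\cJ_\II)$ of lower sets of $(\cJ_\II, \preccurlyeq)$ ordered by inclusion. Since $L(\cJ_\II)$ is distributive by Birkhoff's representation theorem, and $P_\II$ is a lattice by \cref{prop:latticeBackward} (any distributive interval hypergraph is closed under intersection by \cref{def:distributive}), this will immediately force $P_\II$ to be distributive.

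First I would check that $\Phi$ lands in $L(\cJ_\II)$: if $A_j \le A$ and $i \preccurlyeq j$, then \cref{lem:irrorder} gives $A_i \le A_j \le A$, so $\Phi(A)$ is indeed a lower set. Both $\Phi$ and $\Psi$ are manifestly order-preserving (the former by containment, the latter because joins are monotone in the indexing set), so it will suffice to prove they are mutual inverses.

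The key step is $\Psi \circ \Phi = \id$. The inequality $\Psi(\Phi(A)) \le A$ is immediate since each $A_j$ appearing in the defining join already satisfies $A_j \le A$. For the reverse inequality, I would invoke the standard fact that every element of a finite lattice equals the join of the join-irreducibles below it; by \cref{prop:irreduciblePosetIsomorphism}, these join-irreducibles are exactly $\{A_j : j \in \cJ_\II\}$, which yields $A = \bigJoin_{j \in \Phi(A)} A_j = \Psi(\Phi(A))$. The reverse identity $\Phi \circ \Psi = \id$ is then a short formal consequence: given a lower set $X$, set $Y \eqdef \Phi(\Psi(X))$; applying $\Psi \circ \Phi = \id$ to $\Psi(X)$ gives $\Psi(Y) = \Psi(X)$, and \cref{prop:injectivityDistributive} forces $Y = X$.

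The main conceptual obstacle is already absorbed into \cref{prop:irreduciblePosetIsomorphism}, whose surjectivity uses the distributivity-specific flip lemma \cref{prop:alwaysFlippableDistributive} in an essential way; once the join-irreducibles of $P_\II$ are known to coincide precisely with the $A_j$ for $j \in \cJ_\II$, and $\Psi$ is known to be injective on lower sets by \cref{prop:injectivityDistributive}, the present proposition is essentially a packaging of Birkhoff's correspondence. The only nontrivial verification is the appeal to the "join of join-irreducibles" representation, which is a general fact about finite lattices and requires no further input from the structure of $\II$.
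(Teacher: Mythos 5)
Your proposal is correct and follows essentially the same route as the paper: the paper's proof likewise obtains $\Psi(\Phi(A))=A$ from the general fact that every element of a finite lattice is the join of the join irreducibles below it together with \cref{prop:irreduciblePosetIsomorphism}, and then concludes via the injectivity of $\Psi$ from \cref{prop:injectivityDistributive}. Your extra verifications (that $\Phi(A)$ is a lower set, order-preservation in both directions) are fine; the only cosmetic quibble is that the distributivity of the lattice of lower sets is an elementary fact rather than an appeal to Birkhoff's representation theorem, whose content is the converse.
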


\begin{proof}
In a finite lattice, any element can always be written as the join of the join irreducible elements below it.
\cref{prop:irreduciblePosetIsomorphism} thus implies that~$\Psi(\Phi(A)) = A$.
The statement follows since $\Psi$ is injective by \cref{prop:injectivityDistributive}.
\end{proof}

\begin{corollary}
\label{coro:productChains}
If~$\II$ is a distributive hypergraph, then~$P_\II$ is a Cartesian product of chains.
\end{corollary}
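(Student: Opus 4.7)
The plan is to leverage \cref{prop:distributiveForwardI}, which identifies $P_\II$ (via the bijection $\Psi$) with the lattice of lower sets of $(\cJ_\II, \preccurlyeq)$ ordered by inclusion. The key observation is that the relation $\preccurlyeq$ is very constrained: by definition $i \preccurlyeq j$ requires $J_i = J_j$, so two elements of $\cJ_\II$ are comparable under $\preccurlyeq$ only when they belong to the same ``fiber'', and within a fiber $\preccurlyeq$ reduces to the usual total order on integers. Hence the poset $(\cJ_\II, \preccurlyeq)$ is a disjoint union of chains, one chain
\[
C_J \eqdef \set{j \in \cJ_\II}{J_j = J}
\]
for each interval $J \in \II$ that arises as some $J_j$.

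Next I would invoke the standard fact that the lattice of lower sets of a disjoint union of posets $Q_1 \sqcup \cdots \sqcup Q_k$ is canonically isomorphic to the Cartesian product of the lattices of lower sets of the summands: a lower set of the disjoint union is simply the union of a lower set chosen independently in each $Q_i$. Applied to a chain of length $\ell$, the lower sets form a chain of length $\ell+1$. Therefore the lattice of lower sets of $(\cJ_\II, \preccurlyeq)$ is isomorphic to $\prod_J (|C_J|+1)$, a Cartesian product of chains.

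Combining these two steps with the isomorphism from \cref{prop:distributiveForwardI} gives the desired description of $P_\II$. No real obstacle is expected: the statement is an immediate consequence of the explicit form of $(\cJ_\II, \preccurlyeq)$ together with the classical Birkhoff-type correspondence between distributive lattices and lower sets of their poset of join irreducibles, so the only work is to point out that ``$\preccurlyeq$-comparable'' forces equality of the $J_j$'s, yielding the chain decomposition.
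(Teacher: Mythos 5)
Your proof is correct and follows essentially the same route as the paper: the paper's argument is precisely that $i \preccurlyeq j$ forces $J_i = J_j$, so $(\cJ_\II, \preccurlyeq)$ is a disjoint union of chains, and the lattice of lower sets of such a poset (which is $P_\II$ by \cref{prop:distributiveForwardI}) is a Cartesian product of chains. You merely spell out the standard lower-set-of-disjoint-union argument that the paper leaves implicit.
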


\begin{proof}
The poset~$(\cJ_\II, \preccurlyeq)$ is a disjoint union of chains, hence its lattice of lower sets is a Cartesian product of chains.
\end{proof}

\begin{remark}
\label{rem:fhpolynomials}
It immediately follows from \cref{coro:productChains} that the $f$-polynomial and the $h$-polynomial of~$P_\II$ are given by
\[
f_{P_\II}(x) = \prod_{i \in [k]} \frac{(x+1)^{\ell_i+1}-1}{x}
\qquad\text{and}\qquad
g_{P_\II}(x) = \prod_{i \in [k]} \frac{x^{\ell_i+1}-1}{x-1}
\]
where~$\ell_1, \dots, \ell_k$ denote the length (number of elements) of the chains of~$(\cJ_\II, \preccurlyeq)$.
\end{remark}


\subsection{If~$P_\II$ is distributive then~$\II$ is distributive}
\label{subsec:distributiveLatticeForward}

We now prove the forward direction of \cref{thm:distributiveLatticeI}.
We first show that the surjectivity in \cref{prop:irreduciblePosetIsomorphism} systematically fails when~$\II$ is not distributive.

\begin{lemma}
\label{lem:oneMoreIrreducible}
If an interval hypergraph~$\II$ is closed under intersection but not distributive, there is a join irreducible acyclic orientation~$A$ such that~$A \not\le A_j$ for all~$j \in \cJ_\II$.
\end{lemma}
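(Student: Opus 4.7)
The plan is to construct an explicit acyclic orientation $A$ which is join irreducible in $P_\II$ yet lies below no $A_j$.

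I would start by extracting a non-distributivity witness: there exist $I_0, J \in \II$ with $I_0 \not\subseteq J$, $I_0 \not\supseteq J$, $L := I_0 \cap J \in \II$, and some $K \in \II$ with $L \subseteq K$ in which $L$ is neither initial nor final, so that with $\mu := \min(L)$ and $\nu := \max(L)$ one has $\min(K) < \mu \le \nu < \max(K)$. Up to swapping $I_0$ and $J$, I assume $\min(I_0) < \mu$ and $\max(J) > \nu$. I then choose $K$ maximizing $\min(K)$ subject to $\min(K) < \mu$, and replace $I_0$ by $I := I_0 \cap K \in \II$ (still a valid left-companion of $J$, with $\min(K) \le \min(I) < \mu$, $\max(I) = \nu$, and $I \cap J = L$). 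Both this maximality and the replacement will be crucial later.

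Next I would define $A := \Or_\pi$, where $\pi$ is the permutation obtained from the identity by shifting $\nu+1$ to position $\min(K)$. Unwinding \cref{def:surjection} gives the closed form
\[
A(H) = \begin{cases} \nu+1 & \text{if } \nu+1 \in H \text{ and } \min(H) \ge \min(K), \\ \min(H) & \text{otherwise,} \end{cases}
\]
and $A$ is acyclic since it is the $\Or$-image of a permutation. To show $A \not\le A_j$ for every $j \in \cJ_\II$, I would apply \cref{prop:sourceOrderI} at $H = J$ and $H = K$: the value $A(J) = \nu+1 > \mu = \min(J)$ forces $\mu_j = \mu$, while $A(K) = \nu+1 > \min(K)$ forces $\mu_j = \min(K)$; these two demands are incompatible since $\min(K) < \mu$.

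The main obstacle is to verify that $A$ is join irreducible, i.e., admits a unique lower cover in $P_\II$. I would propose the candidate cover $A'$ defined by $A'(H) = \min(K)$ whenever $\nu+1 \in H$ and $\min(H) = \min(K)$, and $A'(H) = A(H)$ otherwise; acyclicity of $A'$ follows from a direct check via \cref{prop:acyclicI}, and $A' < A$ componentwise. The core step is then to show that every acyclic $B$ with $B < A$ satisfies $B \le A'$, which I would argue by contraposition. Assuming $B \not\le A'$ while $B < A$, there is some $H^\circ \in \II$ with $\nu+1 \in H^\circ$, $\min(H^\circ) = \min(K)$, and $B(H^\circ) > \min(K)$, and also some interval $H^\ast$ where $B(H^\ast) < A(H^\ast)$. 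A case analysis on the values $B(K), B(J), B(H^\ast)$ then produces a $2$-cycle in $B$---either between $K$ and $H^\ast$ (when $B(K) = \nu+1$), or between $K$ and $L$, or between $K$ and $I$ (where the inequality $\min(I) \ge \min(K)$ from the replacement step is precisely what closes the cycle)---contradicting the acyclicity of $B$ by \cref{prop:acyclicI}. This forces $A'$ to be the unique lower cover of $A$, so $A$ is a join irreducible distinct from every $A_j$.
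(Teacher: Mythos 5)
Your witness extraction, the definition of $A$, and the argument that $A \not\le A_j$ for every $j \in \cJ_\II$ (via \cref{prop:sourceOrderI} applied at $J$ and at $K$) are correct, and up to your weaker normalization this is the same orientation the paper uses (the paper arranges the witness so that $\min(I)=\min(K)$ and bases the cycle permutation there). The genuine gap is in the join-irreducibility step: your candidate $A'$, the flip of $\nu+1$ down to $\min(K)$, is in general \emph{not} the unique lower cover of $A$, and the core claim that every acyclic $B<A$ satisfies $B\le A'$ is false, so the promised $2$-cycles do not exist. Take $n=5$ and $\II = \set{\{i\}}{i\in[5]} \cup \{[2,4],[3,4],[3,5],[1,5]\}$, which is closed under intersection and not distributive ($[2,4]$ and $[3,5]$ cross and $[3,4]$ is neither initial nor final in $[1,5]$). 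Here $I_0=[2,4]$, $J=[3,5]$, $L=[3,4]$, $\mu=3$, $\nu=4$, the only admissible $K$ is $[1,5]$ (so your maximality is vacuous), and $I=I_0\cap K=[2,4]$, so $\min(K)=1<2=\min(I)$. Then $A$ sends $[1,5]\mapsto 5$, $[3,5]\mapsto 5$, $[2,4]\mapsto 2$, $[3,4]\mapsto 3$, and your $A'$ differs from $A$ only by $[1,5]\mapsto 1$. But the orientation $B$ with $B([1,5])=2$ and $B=A$ elsewhere is acyclic (no pair of hyperedges exhibits the forbidden pattern of \cref{prop:acyclicI}), satisfies $B<A$ by \cref{prop:sourceOrderI}, and $B\not\le A'$ since $B([1,5])=2>1=A'([1,5])$. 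In fact $A'<B<A$, so $A'$ is not even a lower cover of $A$: one checks that the unique lower cover of $A$ is $B$, i.e.\ the flip of $\nu+1=5$ down to $2$, not down to $\min(K)=1$.

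The structural reason your case analysis cannot close is that for a flip target $p$ with $\min(K)<p\le\min(I)$ the value $p$ need not lie in $I\ssm\{B(I)\}$ (nor in $L$), so neither the $(K,I)$ nor the $(K,L)$ cycle is available; the inequality $\min(I)\ge\min(K)$ only places $\min(I)$ inside $K$, it does not place $p$ strictly inside $I$. The paper's normalization $\min(I)=\min(K)$ is exactly what makes the flip-to-$\min(K)$ the unique decreasing flip. If you want to keep your weaker normalization, your orientation $A$ can still be shown join irreducible, but by a different argument: using the maximality of $\min(K)$ to rule out hyperedges $H$ with $\nu+1\in H$ and $\min(K)<\min(H)<\mu$, one shows that all acyclic downward flips of $\nu+1$ from $A$ are pairwise comparable, and the largest of them (not the flip to $\min(K)$) is the unique lower cover. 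As written, however, the identification of $A'$ as the cover and the claimed $2$-cycle contradiction are a genuine gap.
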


\begin{proof}
As $\II$ is closed under intersection but not distributive, there are~$I,J,K \in \II$ with~$I \not\subseteq J$,~$I \not\supseteq J$, $\varnothing \ne I \cap J \subseteq K$, and~$I \cap J$ is neither initial nor final in~$K$.
By symmetry, we can assume that~$\min(I) < \min(J) \le \max(I) < \max(J)$. 
As~$I \cap J$ is neither initial nor final in~$K$, we have~$\min(K) < \min(J) \le \max(I) < \max(K)$.
As~$\II$ is closed under intersection, we can even assume that~$\min(I) = \min(K)$ and~$\max(J) = \max(K)$.
Let~$i \eqdef \min(I) = \min(K)$ and~$j \eqdef \min(J \ssm I)$.
Let~$A \eqdef \Or_{(i, \dots, j)}$ be the acyclic orientation of~$\II$ obtained as the image by the surjection map~$\Or$ of \cref{def:surjection} of the cycle permutation~$(i, \dots, j)  = 12 \cdots (i-1)j\, i \cdots (j-1) (j+1) \cdots n$.
In other words,
\[
A(K) = 
\begin{cases}
j & \text{if } j \in J \text{ and } \min(J) \ge i \\
\min(J) & \text{otherwise.}
\end{cases}
\]
We claim that the flip of~$j$ to~$i$ is the only decreasing flip from~$A$.
Let~$B$ be an acyclic orientation of~$\II$ obtained by a decreasing flip from~$A$.
Let~$H \in \II$ be such that~$B(H) < A(H)$.
Since~$A(H) \in \{\min(H), j\}$, we have~$A(H) = j$, so that~$i \le \min(H) \le B(H) < A(H) = j$.
If~$B(H) \ne i$, then~$B(K) = B(H) \in I$ (since~$B(H) \in K$ and~$A(K) = j$) and $B(I) \le A(I) = \min(I) = i \in K$ (since~$j \notin I$), contradicting the acyclicity of~$B$.
We conclude that~$B(H) = i$, so that~$B$ is indeed obtained from~$A$ by flipping $j$ to~$i$.

Finally, as~$A(K) = j$ while~$A_j(K) = \min(K) < j$, we have~$A \not\le A_j$ for all~$j \in \cJ_\II$ by \cref{prop:sourceOrderI}.
\end{proof}

\begin{example}
\label{exm:oneMoreIrreducible}
Following on \cref{rem:alwaysFlippable}, the join irreducible acyclic orientations of the interval hypergraph~$\II = \{1, 2, 3, 12, 23, 123\}$ of \cref{exm:Tamari1,fig:Tamari}\,(left) are
\[
A_2 = \acyclicOrientation{3}{1/2/2,2/3/2,1/3/2}
\qquad\quad
A_3 = \acyclicOrientation{3}{1/2/1,2/3/3,1/3/1}
\qquad\text{and}\qquad
A = \acyclicOrientation{3}{1/2/1,2/3/3,1/3/3}
\]
\end{example}

\begin{proposition}
If~$\II$ is an interval hypergraph such that $P_\II$ is a distributive lattice, then~$\II$ is distributive.
\end{proposition}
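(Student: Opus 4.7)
The plan is to combine two ingredients already established in the excerpt: (i) the maximum of $P_\II$ decomposes as $\max(P_\II) = \bigJoin_{j \in \cJ_\II} A_j$, and (ii) Lemma~\ref{lem:oneMoreIrreducible} which produces, whenever $\II$ is closed under intersection but not distributive, a join irreducible $A$ satisfying $A \not\le A_j$ for every $j \in \cJ_\II$. The contradiction will then be extracted from the defining property of distributive lattices.

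First I would record a preliminary: in any distributive lattice $L$, if $x$ is join irreducible and $x \le y_1 \join \cdots \join y_k$, then $x \le y_i$ for some $i$. This is the standard argument
\[
x = x \meet (y_1 \join \cdots \join y_k) = (x \meet y_1) \join \cdots \join (x \meet y_k),
\]
so join irreducibility of $x$ forces $x = x \meet y_i$ for some~$i$, hence $x \le y_i$.

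Next I would set up the proof by contradiction. Assume $P_\II$ is a distributive lattice. Then $P_\II$ is a lattice, so by \cref{thm:latticeI} the hypergraph $\II$ is closed under intersection, and we may apply all results of \cref{subsec:someJoinIrreducibles}. Suppose toward a contradiction that $\II$ fails the condition of \cref{thm:distributiveLatticeI}. Then \cref{lem:oneMoreIrreducible} yields a join irreducible acyclic orientation $A$ such that $A \not\le A_j$ for every $j \in \cJ_\II$.

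Finally I would reach the contradiction by noting that $A \le \max(P_\II) = \bigJoin_{j \in \cJ_\II} A_j$ (the second equality having been proved at the end of \cref{subsec:someJoinIrreducibles}). Applying the preliminary to the distributive lattice $P_\II$ and the join irreducible $A$, we obtain $A \le A_j$ for some $j \in \cJ_\II$, contradicting the choice of~$A$. This completes the proof. No step looks like a serious obstacle since all the structural work has been done earlier; the only thing to watch is to quote exactly the two facts $\max(P_\II) = \bigJoin_j A_j$ and \cref{lem:oneMoreIrreducible}, both of which are already in place.
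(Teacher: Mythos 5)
Your proposal is correct and follows essentially the same route as the paper: reduce to the intersection-closed case via \cref{prop:latticeForward}, invoke \cref{lem:oneMoreIrreducible} to obtain a join irreducible $A$ with $A \not\le A_j$ for all $j \in \cJ_\II$, and contradict distributivity using $\max(P_\II) = \bigJoin_{j \in \cJ_\II} A_j$. The only difference is that you spell out explicitly the standard fact that join irreducibles are join prime in a distributive lattice, which the paper leaves implicit in its final line.
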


\begin{proof}
If~$\II$ is not closed under intersection, then~$P_\II$ is not even a lattice by \cref{prop:latticeForward}.
If~$\II$ is closed under intersection but not distributive, consider the join irreducible acyclic orientation~$A$ of \cref{lem:oneMoreIrreducible}.
As~$A \not\le A_j$ for all~$j \in \cJ_\II$, and~$\max(P_\II) = \bigJoin_{j \in \cJ_\II} A_j = A \join \bigJoin_{j \in \cJ_\II} A_j$, we obtain that~$P_\II$ is not distributive.
\end{proof}

\begin{corollary}
For an internal hypergraph~$\II$, the poset~$P_\II$ is a distributive lattice if and only if it is a Cartesian product of chains.
\end{corollary}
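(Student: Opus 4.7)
The plan is to observe that this corollary is essentially a repackaging of the previous results, and only one direction requires anything beyond citations. The forward direction (distributive $\Rightarrow$ Cartesian product of chains) was actually already established in the paper: if $P_\II$ is a distributive lattice, then by \cref{thm:distributiveLatticeI} (more precisely its forward direction just proved), the interval hypergraph~$\II$ is distributive in the sense of \cref{def:distributive}, and then the immediately preceding corollary (the one after \cref{prop:distributiveForwardI}) gives that $P_\II$ is a Cartesian product of chains, because $(\cJ_\II, \preccurlyeq)$ decomposes as a disjoint union of chains and its lattice of lower sets is therefore a product of chains.

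For the backward direction, I would invoke the elementary fact that every chain is a distributive lattice, and that a Cartesian product of distributive lattices is distributive (join and meet are computed coordinatewise, and each coordinate distributes). Thus if $P_\II$ is isomorphic as a lattice to a Cartesian product of chains, then $P_\II$ is a fortiori a distributive lattice. This direction requires no interval-hypergraph combinatorics at all, only general lattice theory.

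The main (and only) subtlety is noting that the corollary is meaningful precisely because it strengthens \cref{thm:distributiveLatticeI}: it says the apparently weaker condition of being a Cartesian product of chains is already equivalent to distributivity in this context, and in particular rules out more exotic distributive lattices (such as non-product distributive lattices) from appearing as some~$P_\II$. I expect no genuine obstacle here; the proof is one paragraph combining the two directions via the preceding corollary and a general lattice-theoretic remark.
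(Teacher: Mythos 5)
Your proposal is correct and matches the paper's intended argument: the forward direction is exactly the combination of the forward half of \cref{thm:distributiveLatticeI} with the corollary following \cref{prop:distributiveForwardI} (that a distributive~$\II$ yields~$P_\II$ a Cartesian product of chains), and the backward direction is the standard lattice-theoretic fact that a product of chains is distributive. The paper leaves this corollary without an explicit proof precisely because it follows immediately in this way, so there is nothing to add.
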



\subsection{Schr\"oder hypergraphs}
\label{subsec:SchroderHypergraphs}

As an illustration of this section, we now consider a special family of distributive hypergraphs which were already considered in~\cite[Sect.~7.2]{PostnikovReinerWilliams} and~\cite{Defant-fertilitopes}.

\begin{definition}
A hypergraph~$\HH$ is \defn{laminar} if any two hyperedges~$G, H \in \HH$ are either disjoint ($G \cap H = \varnothing$) or nested ($G \subseteq H$ or~$G \supseteq H$).
\end{definition}

\begin{definition}
Let~$S$ be a \defn{Schr\"oder tree} (\ie a rooted plane tree where each internal node has at least two children) with~$n$ leaves labeled by~$[n]$ from left to right.
Label each node of~$S$ by the set of leaves of its subtree.
We say that the hypergraph~$\II_S$ formed by all singletons and the labels of the nodes of~$S$ is a \defn{Schr\"oder hypergraph}.
\end{definition}

\begin{example}
All but the top left interval hypergraphs of \cref{fig:distributiveLattices} are Schr\"oder hypergraphs. 
The corresponding Schr\"oder trees are given by:
\[
	\begin{array}{c|ccc}
		\raisebox{1cm}{$S$}
		& 
		\tree{[.1234 [.123 [.{\phantom{1}} ] [.{\phantom{1}} ] [.{\phantom{1}} ] ] [.{\phantom{1}} ] ]}
		& 
		\tree{[.1234 [.12 [.{\phantom{1}} ] [.{\phantom{1}} ] ] [.34 [.{\phantom{1}} ] [.{\phantom{1}} ] ] ]}
		& 
		\tree{[.1234 [.123 [.12 [.{\phantom{1}} ] [.{\phantom{1}} ] ] [.{\phantom{1}} ] ] [.{\phantom{1}} ] ]}
		\\[-.3cm]
		\hline
		\\[-.3cm]
		\II_S
		&
		\{1,2,3,4,123,1234\}
		&
		\{1,2,3,4,12,34,1234\}
		&
		\{1,2,3,4,12,123,1234\}
	\end{array}
\]
\end{example}

\begin{example}
For instance, for the left comb~$C$, the Schr\"oder hypergraph~$\II_C$ consists of all singletons and all initial intervals. The hypergraphic polytope~$\simplex_{\II_C}$ is the Pitman--Stanley polytope~\cite{PitmanStanley}.
\end{example}

\begin{proposition}
\label{prop:characterizationSchroder}
The Schr\"oder hypergraphs are precisely the laminar interval hypergraphs.
\end{proposition}

\begin{proof}
Consider first two nodes~$i$ and~$j$ of a Schr\"oder tree~$S$, and let~$I$ and~$J$ be the corresponding labels.
Then~$I \subseteq J$ if~$i$ is a descendant of~$j$, $I \supseteq J$ if~$i$ is a ancestor of~$j$, and~$I \cap J = \varnothing$ otherwise.
Hence, any Schr\"oder hypergraph is a laminar interval hypergraph.

Conversely, consider a laminal interval hypergraph~$\II$. Then the inclusion poset on~$\II$ is a Schr\"oder tree~$S$ with~$\II_S = \II$.
\end{proof}

\begin{remark}
\cref{prop:characterizationSchroder} implies that Schr\"oder hypergraphs are building sets, so that Schr\"oder hypergraphic polytopes are specific nestohedra~\cite{FeichtnerSturmfels,Postnikov}.
In fact, they were already considered in~\cite[Sect.~7.2]{PostnikovReinerWilliams} and~\cite{Defant-fertilitopes}.
\end{remark}

\begin{corollary}
Any Schr\"oder hypergraph is a distributive interval hypergraph, hence the Schr\"oder hypergraphic posets are distributive lattices.
\end{corollary}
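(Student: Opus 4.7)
The plan is very short: I would show that the distributivity condition of \cref{def:distributive} holds vacuously for any Schröder hypergraph, and then invoke \cref{thm:distributiveLatticeI} to conclude.

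More precisely, first I would recall \cref{prop:characterizationSchroder}, which characterizes a Schröder hypergraph~$\II_S$ as an interval hypergraph in which any two hyperedges~$I, J$ satisfy~$I \subseteq J$, or~$I \supseteq J$, or~$I \cap J = \varnothing$. Consequently, there are \emph{no} pairs~$I, J \in \II_S$ simultaneously satisfying~$I \not\subseteq J$, $I \not\supseteq J$ and~$I \cap J \ne \varnothing$. The hypothesis of \cref{def:distributive} is therefore never triggered, and so the condition is vacuously satisfied: $\II_S$ is a distributive interval hypergraph.

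Once this vacuous verification is in place, the second sentence of the corollary follows immediately from \cref{thm:distributiveLatticeI}, which guarantees that~$P_{\II_S}$ is a distributive lattice as soon as~$\II_S$ is distributive. No obstacle is expected; the only subtle point is that the singletons~$\{i\}$, which we always assume to belong to~$\II_S$, trivially satisfy the nested-or-disjoint condition with every other hyperedge, so their inclusion does not disturb the vacuous verification.
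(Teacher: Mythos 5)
Your proposal is correct and matches the paper's argument: the paper likewise observes that the condition of \cref{def:distributive} is vacuously satisfied because any two hyperedges of a Schr\"oder hypergraph are nested or disjoint (as in \cref{prop:characterizationSchroder}), and then the conclusion follows from \cref{thm:distributiveLatticeI}. No gap; your explicit remark about singletons is harmless extra care.
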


\begin{proof}
The distributivity condition of \cref{def:distributive} is clearly fulfilled as any~$I \subseteq J$ or~$I \supseteq J$ or~$I \cap J = \varnothing$ for all~$I,J \in \II$.
\end{proof}

\begin{proposition}
\label{prop:schroder}
Given a Schr\"oder tree~$S$, the poset of join irreducible acyclic orientations on~$\II_S$ is isomorphic to a disjoint union of chains.
More precisely, it has one chain for each node~$I$ of~$S$, whose elements are the leaves of~$I$ and the leftmost leaves of the children of~$I$, except the leftmost leaf below~$I$.
\end{proposition}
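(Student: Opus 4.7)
The plan is to apply \cref{prop:irreduciblePosetIsomorphism} and then directly unpack the map $j \mapsto J_j$ for a Schr\"oder hypergraph. Since $\II_S$ is distributive by the previous corollary, \cref{prop:irreduciblePosetIsomorphism} yields a poset isomorphism between $(\cJ_{\II_S}, \preccurlyeq)$ and the subposet of join irreducibles of $P_{\II_S}$. By the definition of $\preccurlyeq$, the former splits as a disjoint union of chains indexed by the distinct values of $J_j$, with the natural integer order within each chain.

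It remains to identify, for each internal node $n$ of $S$, the chain $C_n \eqdef \{j \in \cJ_{\II_S} : J_j = L_n\}$, where $L_n$ denotes the label of $n$. By \cref{prop:characterizationSchroder}, the intervals of $\II_S$ are pairwise nested or disjoint, so the intervals containing a given leaf $j$ form a chain under inclusion, and $J_j$ is simply the smallest element of this chain in which $j$ is not the minimum. To compute $J_j$, I would trace the path from the root of $S$ to the leaf $j$: as we descend, the labels strictly shrink, and the predicate ``$j$ is the leftmost leaf of this node'' is monotone along the descent (once true, it persists). Hence $J_j = L_n$ for the deepest ancestor $n$ of $j$ of which $j$ is not yet the leftmost leaf; equivalently, writing $c$ for the child of $n$ on the path to $j$, either $c = j$ (a direct child leaf) or $c$ is internal with $j = \min(L_c)$.

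This gives $C_n = \{\min(L_c) : c \text{ is a non-leftmost child of } n\}$, since each direct leaf child $c$ contributes $\min(L_c) = c$, while each non-leftmost internal child $c$ contributes its leftmost leaf $\min(L_c)$. Rewriting, $C_n$ is the union of the direct child leaves of $n$ and the leftmost leaves of the internal children of $n$, with the leftmost leaf below $n$ removed (the latter equals $\min(L_{c_1})$ where $c_1$ is the leftmost child of $n$, and thus indeed belongs to the union before removal). This is exactly the description in the statement.

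There is no real obstacle here: the proof is essentially definition chasing. The only care needed is the case analysis between direct child leaves of $n$ and leaves lying strictly below an internal child of $n$, which is precisely what produces the two pieces (``leaves of $n$'' and ``leftmost leaves of the children of $n$'') in the stated description.
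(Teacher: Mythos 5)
Your proof is correct and follows the same route as the paper, whose own proof is just the one-line remark that the statement is a specialization of the description of join irreducibles for distributive interval hypergraphs; you simply spell out that specialization, invoking \cref{prop:irreduciblePosetIsomorphism} (via the distributivity of Schr\"oder hypergraphs) and computing $J_j$ along the root-to-leaf path using \cref{prop:characterizationSchroder}. The only nitpick is that your ``equivalently'' reformulation of the deepest-ancestor condition should also record that the child $c$ carrying $j$ is not the leftmost child of $n$, but this is restored in your final formula $C_n=\{\min(L_c): c \text{ non-leftmost child of } n\}$, so the argument stands.
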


\begin{proof}
This description is a specialization of the description of the join irreducible poset of distributive interval hypergraphic posets from \cref{subsec:someJoinIrreducibles}.
\end{proof}

\begin{remark}
Similarly to \cref{rem:fhpolynomials}, \cref{prop:schroder} implies that the $f$-polynomial and the $h$-polynomial of~$P_{\II_S}$ are given by
\[
f_{P_{\II_S}}(x) = \prod_{I \in S} \frac{(x+1)^{\deg(I)}-1}{x}
\qquad\text{and}\qquad
g_{P_{\II_S}}(x) = \prod_{I \in S} \frac{x^{\deg(I)}-1}{x-1}
\]
where~$I$ runs over the nodes of~$S$ and~$\deg(I)$ denotes its degree in~$S$.
\end{remark}


\section{Semidistributive interval hypergraphic lattices}
\label{sec:semidistributive}

\afterpage{
\begin{figure}
	\centerline{
		\begin{tabular}{c@{\qquad}c}
			\begin{tikzpicture}[scale=2.5]
				\node (a) at (3,0) {\acyclicOrientation{4}{1/3/1,2/3/2,2/4/2,1/4/1}};
				\node (b) at (1,1) {\acyclicOrientation{4}{1/3/2,2/3/2,2/4/2,1/4/2}};
				\node (c) at (3,1) {\acyclicOrientation{4}{1/3/1,2/3/3,2/4/3,1/4/1}};
				\node (d) at (4,1) {\acyclicOrientation{4}{1/3/1,2/3/2,2/4/4,1/4/1}};
				\node (e) at (1,2) {\acyclicOrientation{4}{1/3/3,2/3/3,2/4/3,1/4/3}};
				\node (f) at (3,2) {\acyclicOrientation{4}{1/3/1,2/3/2,2/4/4,1/4/4}};
				\node (g) at (4,2) {\acyclicOrientation{4}{1/3/1,2/3/3,2/4/4,1/4/1}};
				\node (h) at (2,3) {\acyclicOrientation{4}{1/3/2,2/3/2,2/4/4,1/4/4}};
				\node (i) at (3,3) {\acyclicOrientation{4}{1/3/1,2/3/3,2/4/4,1/4/4}};
				\node (j) at (2,4) {\acyclicOrientation{4}{1/3/3,2/3/3,2/4/4,1/4/4}};
				\draw (a)--(b);
				\draw (a)--(c);
				\draw (a)--(d);
				\draw (b)--(e);
				\draw (b)--(h);
				\draw (c)--(e);
				\draw (c)--(g);
				\draw (d)--(f);
				\draw (d)--(g);
				\draw (e)--(j);
				\draw (f)--(h);
				\draw (f)--(i);
				\draw (g)--(i);
				\draw (h)--(j);
				\draw (i)--(j);
			\end{tikzpicture}
			&
			\begin{tikzpicture}[scale=2.5]
				\node (a) at (2,0) {\acyclicOrientation{4}{2/3/2,3/4/3,1/4/1}};
				\node (b) at (1,1) {\acyclicOrientation{4}{2/3/2,3/4/3,1/4/2}};
				\node (c) at (2,1) {\acyclicOrientation{4}{2/3/3,3/4/3,1/4/1}};
				\node (d) at (3,1) {\acyclicOrientation{4}{2/3/2,3/4/4,1/4/1}};
				\node (e) at (1,2) {\acyclicOrientation{4}{2/3/3,3/4/3,1/4/3}};
				\node (f) at (2,2) {\acyclicOrientation{4}{2/3/2,3/4/4,1/4/2}};
				\node (g) at (3,2) {\acyclicOrientation{4}{2/3/3,3/4/4,1/4/1}};
				\node (h) at (2,3) {\acyclicOrientation{4}{2/3/2,3/4/4,1/4/4}};
				\node (i) at (2,4) {\acyclicOrientation{4}{2/3/3,3/4/4,1/4/4}};
				\draw (a)--(b);
				\draw (a)--(c);
				\draw (a)--(d);
				\draw (b)--(e);
				\draw (b)--(f);
				\draw (c)--(e);
				\draw (c)--(g);
				\draw (d)--(f);
				\draw (d)--(g);
				\draw (e)--(i);
				\draw (f)--(h);
				\draw (g)--(i);
				\draw (h)--(i);
			\end{tikzpicture}
			\\[.2cm]
			$\{1,2,3,4,123,23,234,1234\}$
			&
			$\{1,2,3,4,23,34,1234\}$
			\\
			semidistributive lattice,
			&
			semidistributive lattice,
			\\
			but not distributive
			&
			but not distributive
		\end{tabular}
	}
	\caption{Two interval hypergraphic lattices which are semidistributive but not distributive.}
	\label{fig:semidistributiveLattices}
\end{figure}
\begin{figure}
	\centerline{
		\begin{tabular}{c@{\qquad}c}
			\begin{tikzpicture}[scale=2.5]
				\node (a) at (3,0) {\acyclicOrientation{4}{1/2/1,2/3/2,3/4/3,1/4/1}};
				\node (b) at (1,1) {\acyclicOrientation{4}{1/2/1,2/3/3,3/4/3,1/4/1}};
				\node (c) at (3,1) {\acyclicOrientation{4}{1/2/1,2/3/2,3/4/4,1/4/1}};
				\node (d) at (4,1) {\acyclicOrientation{4}{1/2/2,2/3/2,3/4/3,1/4/2}};
				\node (e) at (1,2) {\acyclicOrientation{4}{1/2/1,2/3/3,3/4/4,1/4/1}};
				\node (f) at (2,2) {\acyclicOrientation{4}{1/2/1,2/3/3,3/4/3,1/4/3}};
				\node (g) at (3,2) {\acyclicOrientation{4}{1/2/1,2/3/2,3/4/4,1/4/4}};
				\node (h) at (4,2) {\acyclicOrientation{4}{1/2/2,2/3/2,3/4/4,1/4/2}};
				\node (i) at (1,3) {\acyclicOrientation{4}{1/2/1,2/3/3,3/4/4,1/4/4}};
				\node (j) at (3,3) {\acyclicOrientation{4}{1/2/2,2/3/3,3/4/3,1/4/3}};
				\node (k) at (4,3) {\acyclicOrientation{4}{1/2/2,2/3/2,3/4/4,1/4/4}};
				\node (l) at (3,4) {\acyclicOrientation{4}{1/2/2,2/3/3,3/4/4,1/4/4}};
				\draw (a)--(b);
				\draw (a)--(c);
				\draw (a)--(d);
				\draw (b)--(e);
				\draw (b)--(f);
				\draw (c)--(e);
				\draw (c)--(g);
				\draw (c)--(h);
				\draw (d)--(j);
				\draw (d)--(h);
				\draw (e)--(i);
				\draw (f)--(i);
				\draw (f)--(j);
				\draw (g)--(i);
				\draw (g)--(k);
				\draw (h)--(k);
				\draw (i)--(l);
				\draw (j)--(l);
				\draw (k)--(l);
			\end{tikzpicture}
			&
			\begin{tikzpicture}[scale=2.5]
				\node (a) at (2,0) {\acyclicOrientation{4}{1/2/1,2/3/2,2/4/2,1/4/1}};
				\node (b) at (1,1) {\acyclicOrientation{4}{1/2/1,2/3/3,2/4/3,1/4/1}};
				\node (c) at (2,1) {\acyclicOrientation{4}{1/2/2,2/3/2,2/4/2,1/4/2}};
				\node (d) at (3,1) {\acyclicOrientation{4}{1/2/1,2/3/2,2/4/4,1/4/1}};
				\node (e) at (1,2) {\acyclicOrientation{4}{1/2/1,2/3/3,2/4/3,1/4/3}};
				\node (f) at (2,2) {\acyclicOrientation{4}{1/2/1,2/3/3,2/4/4,1/4/1}};
				\node (g) at (3,2) {\acyclicOrientation{4}{1/2/1,2/3/2,2/4/4,1/4/4}};
				\node (h) at (1,3) {\acyclicOrientation{4}{1/2/2,2/3/3,2/4/3,1/4/3}};
				\node (i) at (2,3) {\acyclicOrientation{4}{1/2/1,2/3/3,2/4/4,1/4/4}};
				\node (j) at (3,3) {\acyclicOrientation{4}{1/2/2,2/3/2,2/4/4,1/4/4}};
				\node (k) at (2,4) {\acyclicOrientation{4}{1/2/2,2/3/3,2/4/4,1/4/4}};
				\draw (a)--(b);
				\draw (a)--(c);
				\draw (a)--(d);
				\draw (b)--(e);
				\draw (b)--(f);
				\draw (c)--(h);
				\draw (c)--(j);
				\draw (d)--(f);
				\draw (d)--(g);
				\draw (e)--(h);
				\draw (e)--(i);
				\draw (f)--(i);
				\draw (g)--(i);
				\draw (g)--(j);
				\draw (h)--(k);
				\draw (i)--(k);
				\draw (j)--(k);
			\end{tikzpicture}
			\\[.2cm]
			$\{1,2,3,4,12,23,34,1234\}$
			&
			$\{1,2,3,4,12,23,234,1234\}$
			\\
			lattice but neither join
			&
			meet semidistributive lattice,
			\\
			nor meet semidistributive
			&
			but not join semidistributive
		\end{tabular}
	}
	\caption{Two interval hypergraphic lattices which are not semidistributive.}
	\label{fig:notSemidistributiveLattices}
\end{figure}
}

In this section, we prove \cref{thm:semidistributiveLatticeI} which we first introduce properly:

\begin{definition}
\label{def:semidistributive}
We say that an interval hypergraph~$\II$ is \defn{join semidistributive} if it is closed under intersection and
for all~$[r,r'], [s,s'], [t,t'], [u,u'] \in \II$ such that ${r < s \le r' < s'}$, $r < t \le s' < t'$, $u < \min(s, t)$ and~$s' < u'$, there is~$[v,v'] \in \II$ such that~$v < s$ and~${s' < v' < t'}$.
\end{definition}

\begin{definition}
A lattice~$(L, \le , \join, \meet)$ is \defn{join semidistributive} if
\[
a \join b = a \join c
\qquad\text{implies}\qquad
a \join (b \meet c) = a \join b
\]
for all~$a,b,c \in L$.
Equivalently, $L$ is join semidistributive if and only if for any cover relation~$a \lessdot b$ in~$L$, the set~$\set{c \in L}{a \join c = b}$ admits a unique minimal element~$k_{a \lessdot b}$.
The meet semidistributivity is defined dually.
A lattice~$L$ is \defn{semidistributive} if it is both meet and join semidistributive.
\end{definition}

\begin{remark}
\label{rem:semidistributiveCriterion}
In general, the set~$\set{c \in L}{a \join c = b}$ might have more than one minimal element.
Note however that any minimal element of~$\set{c \in L}{a \join c = b}$ is always join irreducible.
Indeed, Assume that~$a \join c = b$ and~$c = d \join e$.
Then~$a \ne a \join d$ or~$a \ne a \join e$ since~$a < b = a \join c = a \join (d \join e) = (a \join d) \join (a \join e)$.
Moreover, $a \join d \le b$ and~$a \join e \le b$ since~$a \le b$ and~$d \le c \le b$ and~$e \le c \le b$.
As~$a \lessdot b$ is a cover relation of~$L$, we conclude that $a \join d = b$ or~$a \join e = b$ so that~$c$ is not minimal.
\end{remark}

\begin{remark}
Although we will not insist on this aspect in this paper, the join semidistributivity is equivalent to the existence of canonical join representations.
A \defn{join representation} of~${b \in L}$ is a subset~$J \subseteq L$ such that~$b = \bigJoin J$.
Such a representation is \defn{irredundant} if~$b \ne \bigJoin J'$ for any strict subset~$J' \subsetneq J$.
The irredundant join representations of~$b \in L$ are ordered by~$J \le J'$ if and only if for any~$j \in J$ there is~${j' \in J'}$~with~$j \le j'$.
The \defn{canonical join representation} of~$b$ is the minimal irredundant join representation of~$b$ for this order when it exists.
The lattice~$L$ is semidistributive if and only if any element of~$L$ admits a canonical join representation.
Moreover, the canonical join representation of~$b$ is given by~$b = \bigJoin_{a \lessdot b} k_{a \lessdot b}$, where $a$ ranges over all elements of~$L$ covered by~$b$.
\end{remark}

\begin{theoremA}
For an interval hypergraph $\II$ on~$[n]$ (with our convention that~$\{i\} \in \II$ for all~$i \in [n]$), the poset $P_\II$ is a join semidistributive lattice if and only if $\II$ is join semidistributive.
Under the symmetry of \cref{prop:antiIsomorphism}, a dual characterization holds for meet semidistributive.
\end{theoremA}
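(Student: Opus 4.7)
The plan is to reduce the theorem to a local analysis of cover relations via join irreducibles. Since join semidistributivity requires $P_\II$ to be a lattice, we may assume by \cref{thm:latticeI} that $\II$ is closed under intersection. The meet statement will then follow from the join statement applied to the reversed interval hypergraph using the anti-isomorphism of \cref{prop:antiIsomorphism}. The key criterion we will invoke is the one recorded in \cref{rem:semidistributiveCriterion}: $P_\II$ is join semidistributive if and only if, for every cover relation $A \lessdot B$, the set $S_{A,B} \eqdef \set{C \in P_\II}{A \join C = B}$ admits a unique minimal element, and any minimal element of this set is automatically join irreducible. The whole problem thus reduces to classifying the minimal join irreducibles inside each $S_{A,B}$.

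For the backward direction, assume that $\II$ satisfies the interval condition of \cref{def:semidistributive}. Using the description of covers as specific increasing flips (\cref{prop:isCoverI}), write the cover as $\flip{A}{i}{j}{B}$. Every join irreducible $K \in S_{A,B}$ must be "responsible" for the flip, meaning that its unique decreasing flip pushes $j$ down to some $i' \le i$ and that its orientation data on intervals containing both $i$ and $j$ fits a prescribed local pattern. Using the full characterization of join irreducibles established in \cref{subsec:joinIrreducibles}, together with the join formula of \cref{prop:joinLattice}, we will show that for any two candidates $K_1, K_2 \in S_{A,B}$ the interval condition of \cref{def:semidistributive} produces, either trivially (by nesting of their supporting intervals) or via the hypothesized mediating interval $[v,v']$, a third candidate $K \in S_{A,B}$ with $K \le K_1$ and $K \le K_2$. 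Iterating on the finite set $S_{A,B}$ yields a unique minimum.

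For the forward direction, assume we have intervals $[r,r'], [s,s'], [t,t'], [u,u'] \in \II$ as in \cref{def:semidistributive} but no mediating $[v,v']$. We will construct an explicit cover $A \lessdot B$ together with two incomparable join irreducibles $K_1, K_2 \in S_{A,B}$ witnessing the failure of join semidistributivity. Roughly, $A$ will assign each interval its leftmost relevant source, $B$ will flip the source of the "enveloping" interval $[u,u']$ up through the region around $s'$, while $K_1$ is a join irreducible dragging sources up to $s'$ via $[r,r']$ and $K_2$ is the analogous irreducible obtained via $[t,t']$. The absence of $[v,v']$ is precisely what prevents any join irreducible in $S_{A,B}$ from lying simultaneously below $K_1$ and $K_2$, so $S_{A,B}$ has at least two minimal elements and $P_\II$ fails to be join semidistributive.

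The main obstacle will be the forward direction: we must pinpoint the correct cover $A \lessdot B$ and candidate irreducibles $K_1, K_2$, and verify both that $A \join K_i = B$ and that no element of $S_{A,B}$ lies strictly below either. Matching the combinatorial data of \cref{def:semidistributive} (four intervals, eight inequalities, and one nonexistence clause) to concrete acyclic orientations while preserving acyclicity throughout will require a careful case analysis using \cref{prop:acyclicI} and \cref{prop:sourceOrderI}. A secondary difficulty is that the backward argument depends on the full classification of join irreducibles developed in \cref{subsec:joinIrreducibles}, where outside the distributive regime the irreducibles $A_j$ of \cref{not:Aj} are no longer exhaustive; the interval condition must be shown to control all the additional irreducibles as well.
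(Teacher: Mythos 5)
Your outline identifies the right ingredients (reduction to \cref{thm:latticeI} and \cref{prop:antiIsomorphism}, the criterion of \cref{rem:semidistributiveCriterion}, the classification of join irreducibles of \cref{subsec:joinIrreducibles}, and \cref{prop:isCoverI,prop:joinLattice,prop:sourceOrderI}), but as written it is a plan rather than a proof: both substantive implications are asserted and deferred. For the backward direction you claim that, given two candidates $K_1,K_2$ in $S_{A,B}$ over a cover $\flip{A}{p}{q}{B}$, the condition of \cref{def:semidistributive} produces a common lower bound in $S_{A,B}$ ``either trivially or via the mediating interval $[v,v']$'', but you never explain how the four intervals $[r,r'],[s,s'],[t,t'],[u,u']$ with their eight inequalities are extracted from two incomparable minimal join irreducibles, nor how the mediating interval is converted into an element of $S_{A,B}$ below both. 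This is exactly where all the difficulty sits: the paper's argument (\cref{prop:semidistributiveBackward}) takes two minimal join irreducibles $A_{ij},A_{k\ell}$ with $A\join A_{ij}=B=A\join A_{k\ell}$, extracts $U$ from \cref{prop:sourceOrderI}, then $S$ and $T$ from \cref{prop:isCoverI} with careful minimality choices of $s$ and $t'$, then $R$ from the defining property of $\cIJ_\II$, disposes of the case $t\le r$ by exhibiting a smaller join irreducible $A_{kj}$ contradicting minimality, and finally uses the mediating $[v,v']$ (intersected with $T$) to contradict the minimality of $t'$ --- none of which is visible, even in outline, in your sketch, and it does not literally produce a common lower bound but rather a contradiction.

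The forward direction has the same problem and, in addition, your proposed route is more roundabout than necessary. The paper (\cref{prop:semidistributiveForward}) does not construct a cover relation with two minimal elements of $S_{A,B}$; it defines explicit indices $j$ and $i=\mu_{jt'}$, three explicit permutations $\pi_A,\pi_B,\pi_C$, checks that each is the minimum $\projDown$ of its fiber, and then verifies the equational failure $A\join B=A\join C\ne A\join(B\meet C)$ directly via \cref{prop:latticeBackward}, using the nonexistence of $[v,v']$ only to identify the two joins. Your plan instead requires proving that a certain flip is a cover, that $K_1,K_2$ are join irreducible, that $A\join K_i=B$, and that no element of $S_{A,B}$ lies below both --- strictly more verification than the paper's computation --- and the orientations involved are only described ``roughly'', so there is no way to check acyclicity or the claimed joins. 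Until these constructions and verifications are actually carried out, neither implication is established.
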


\begin{example}
\cref{fig:distributiveLattices} shows four distributive (hence semidistributive) interval hypergraphic lattices.
\cref{fig:semidistributiveLattices} shows two interval hypergraphic lattices which are semidistributive but not distributive.
\cref{fig:notSemidistributiveLattices} shows two interval hypergraphic lattices which are not semidistributive.
\end{example}

\begin{example}
The hypergraphic poset of~$\{ 1, 2, 3, 12, 23, 123 \}$ is a semidistributive (but not distributive) lattice (it is a pentagon).
In general, the hypergraphic poset of all intervals is the Tamari lattice, which is semidistributive but not distributive.
See \cref{fig:Tamari}.
\end{example}


\subsection{All join irreducible acyclic orientations}
\label{subsec:joinIrreducibles}

In this section, we assume that~$\II$ is an interval hypergraph closed under intersection so that the hypergraph poset~$P_\II$ is a lattice  by \cref{prop:latticeBackward}, and we describe all the join irreducible elements of~$P_\II$.
The following notations are illustrated in \cref{exm:Aij1,exm:Aij2,exm:Aij3,exm:Aij4,exm:Aij5,exm:Aij6} below.

\begin{notation}
For~$1 \le i < j \le n$ such that there exists~$I \in \II$ with~$\{i,j\} \subseteq I$, we  let
\[
J_{ij} =  \bigcap_{\substack{I \in \II \\ \{i,j\}\subseteq I}}  I
\]
(note that $J_{ij} \in \II$ since $\II$ is closed under intersection), and we set~$\mu_{ij} \eqdef \min(J_{ij})$ and~$\nu_{ij} \eqdef \max(J_{ij})$.
By definition~$\min(I) \le \mu_{ij} \le i < j \le \nu_{ij} \le \max(I)$ for any~$I \in \II$ such that~$\{i,j\} \subseteq I$.
\end{notation}

\begin{notation}
\label{not:cIJ}
Define~$\cIJ_\II$ to be the set of pairs~$(i,j)$ where~$1 \le i < j \le n$ are such that there exists~$I \in \II$ with~$\{i,j\} \subseteq I$ and $(i,j)$ satisfies the relation
\[
i = \max \Big( {[\mu_{ij}, j[} \; \ssm \!\!\!\! \bigcup_{\substack{J \in \II \\ J \subseteq {[\mu_{ij}, j[}}} \!\!\!\!\big( J \ssm \{\min(J)\}\big) \Big).
\]
\end{notation}

\begin{notation}
\label{not:joinIrreducibles}
For~$(i,j) \in \cIJ_\II$, consider the acyclic orientation~$A_{ij} \eqdef \Or_{(\mu_{ij}, \mu_{ij}+1, \ldots, j)}$, obtained as the image by the surjection map~$\Or$ of \cref{def:surjection} of the cycle permutation
\[
(\mu_{ij}, \mu_{ij}+1, \dots, j) = 12 \cdots (\mu_{ij}-1)j\,\mu_{ij} \cdots (j-1) (j+1) \cdots n
\]
obtained from the identity permutation by placing~$j$ just before $\mu_{ij}$.
Note that that for all $J \in \II$
\[
A_{ij}(J) =
\begin{cases}
	j & \text{if } j \in J \text{ and } \min(J) \ge \mu_{ij},\\
	\min(J) & \text{otherwise.}
\end{cases}
\]
\end{notation}

\begin{remark}
In~\cref{not:muj,not:Aj}, we defined the interval~$J_j$, the index~$\mu_j$, and the join irreducible~$A_j$ for $j\in \cJ_\II$. The set 
\[
X_j={[\mu_{j}, j[} \ssm \!\!\!\! \bigcup_{\substack{J \in \II \\ J \subseteq {[\mu_{j}, j[}}} \!\!\!\!\big( J \ssm \{\min(J)\}\big)
\]
is nonempty as it contains at least $\mu_j$. Let $\mu_i \le i_j \eqdef \max X_j$. Then we have
\[
\mu_{i_jj}=\mu_j
\qquad\text{ and }\qquad
J_{i_jj}=J_j.
\]
Furthermore, the pair $(i_j,j)\in\cIJ_\II$ since it satisfies all the conditions of \cref{not:cIJ}.
Comparing~\cref{not:Aj} and~\cref{not:joinIrreducibles} we see that $A_j=A_{i_jj}$.
Moreover, for all $(k,j)\in \cIJ_\II$, we must have $k\le i_j$.
See~\cref{exm:Aij1,exm:Aij2,exm:Aij3,exm:Aij4,exm:Aij5,exm:Aij6} below.
\end{remark}

\begin{notation}
For $(i,j), (k,\ell) \in \cIJ_\II$, we write $(i,j) \preccurlyeq (k,\ell)$ if and only $j \le \ell$ and~$\ell \in J_{ij}$ and~$\mu_{ij} \ge \mu_{k\ell}$.
Note in particular that~$i \ge k$ and~$j = \ell$ implies $(i,j) \preccurlyeq (k,\ell)$.
\end{notation}

\begin{example}
\label{exm:Aij1}
For the hypergraph~$\II \eqdef \{1, 2, 3, 12, 23, 123\}$ of \cref{exm:Tamari1,exm:oneMoreIrreducible,fig:Tamari}\,(left), we have~$\cIJ_\II = \{(1,2), (2,3), (1,3)\}$, and the join irreducible acyclic orientations~are
\[
A_{12} = A_2 = \acyclicOrientation{3}{1/2/2,2/3/2,1/3/2}
\qquad\quad
A_{23} = A_3 = \acyclicOrientation{3}{1/2/1,2/3/3,1/3/1}
\qquad\text{and}\qquad
A_{13} = \acyclicOrientation{3}{1/2/1,2/3/3,1/3/3}
\]
and we have~$A_{23} < A_{13}$.
Note that~$A_{12} = A_2$, $A_{23} = A_3$ from \cref{subsec:someJoinIrreducibles}, while~$A_{13} = A$ is the join irreducible acyclic orientation not covered by the description of \cref{subsec:someJoinIrreducibles}.
\end{example}

\begin{example}
\label{exm:Aij2}
For the hypergraph~$\II \eqdef \{1, 2, 3, 12, 23, 34, 123, 234, 1234\}$ of \cref{exm:Tamari2,fig:Tamari}\,(right), we have~$\cIJ_\II = \{(1,2), (2,3), (3,4), (1,3), (2,4), (1,4)\}$, and the join irreducible acyclic orientations~are
\begin{gather*}
A_{12} = \raisebox{-.4cm}{\acyclicOrientation{4}{1/2/2,2/3/2,3/4/3,1/3/2,2/4/2,1/4/2}}
\qquad\quad
A_{23} = \raisebox{-.4cm}{\acyclicOrientation{4}{1/2/1,2/3/3,3/4/3,1/3/1,2/4/3,1/4/1}}
\qquad\quad
A_{34} = \raisebox{-.4cm}{\acyclicOrientation{4}{1/2/1,2/3/2,3/4/4,1/3/1,2/4/2,1/4/1}}
\\[.3cm]
A_{13} = \raisebox{-.4cm}{\acyclicOrientation{4}{1/2/1,2/3/3,3/4/3,1/3/3,2/4/3,1/4/3}}
\qquad\quad
A_{24} = \raisebox{-.4cm}{\acyclicOrientation{4}{1/2/1,2/3/2,3/4/4,1/3/1,2/4/4,1/4/1}}
\qquad\quad
A_{14} = \raisebox{-.4cm}{\acyclicOrientation{4}{1/2/1,2/3/2,3/4/4,1/3/1,2/4/4,1/4/4}}
\end{gather*}
and we have~$A_{23} < A_{13}$ and~$A_{34} < A_{24} < A_{14}$.
\end{example}

\begin{example}
\label{exm:Aij3}
For the hypergraph~$\II \eqdef \{1, 2, 3, 4, 123, 23, 234, 1234\}$ illustrated in \cref{fig:semidistributiveLattices}\,(left), we have~$\cIJ_\II = \{(1,2), (2,3), (2,4), (1,4)\}$, and the join irreducible acyclic orientations~are
\begin{gather*}
A_{12} = \raisebox{-.2cm}{\acyclicOrientation{4}{1/3/2,2/3/2,2/4/2,1/4/2}}
\qquad
A_{23} = \raisebox{-.2cm}{\acyclicOrientation{4}{1/3/1,2/3/3,2/4/3,1/4/1}}
\qquad
A_{24} = \raisebox{-.2cm}{\acyclicOrientation{4}{1/3/1,2/3/2,2/4/4,1/4/1}}
\qquad
A_{14} = \raisebox{-.2cm}{\acyclicOrientation{4}{1/3/1,2/3/2,2/4/4,1/4/4}}
\end{gather*}
and we have~$A_{24} < A_{14}$.
\end{example}

\begin{example}
\label{exm:Aij4}
For the hypergraph~$\II \eqdef \{1, 2, 3, 4, 23, 34, 1234\}$ illustrated in \cref{fig:semidistributiveLattices}\,(right), we have~$\cIJ_\II = \{(1,2), (2,3), (3,4), (2,4)\}$, and the join irreducible acyclic orientations~are
\begin{gather*}
A_{12} = \raisebox{-.2cm}{\acyclicOrientation{4}{2/3/2,3/4/3,1/4/2}}
\qquad
A_{23} = \raisebox{-.2cm}{\acyclicOrientation{4}{2/3/3,3/4/3,1/4/1}}
\qquad
A_{34} = \raisebox{-.2cm}{\acyclicOrientation{4}{2/3/2,3/4/4,1/4/1}}
\qquad
A_{24} = \raisebox{-.2cm}{\acyclicOrientation{4}{2/3/2,3/4/4,1/4/4}}
\end{gather*}
and we have~$A_{12} < A_{24}$ and~$A_{34} < A_{24}$.
\end{example}

\begin{example}
\label{exm:Aij5}
For the hypergraph~$\II \eqdef \{1, 2, 3, 4, 12, 23, 34, 1234\}$ illustrated in \cref{fig:notSemidistributiveLattices}\,(left), we have~$\cIJ_\II = \{(1,2), (2,3), (3,4), (1,3), (1,4)\}$, and the join irreducible acyclic orientations~are
\begin{gather*}
A_{12} = \raisebox{-.2cm}{\acyclicOrientation{4}{1/2/2,2/3/2,3/4/3,1/4/2}}
\qquad\quad
A_{23} = \raisebox{-.2cm}{\acyclicOrientation{4}{1/2/1,2/3/3,3/4/3,1/4/1}}
\qquad\quad
A_{34} = \raisebox{-.2cm}{\acyclicOrientation{4}{1/2/1,2/3/2,3/4/4,1/4/1}}
\\
A_{13} = \raisebox{-.2cm}{\acyclicOrientation{4}{1/2/1,2/3/3,3/4/3,1/4/3}}
\qquad\quad
A_{14} = \raisebox{-.2cm}{\acyclicOrientation{4}{1/2/1,2/3/2,3/4/4,1/4/4}}
\end{gather*}
and we have~$A_{23} < A_{13}$ and~$A_{34} < A_{14}$.
\end{example}

\begin{example}
\label{exm:Aij6}
\enlargethispage{.4cm}
For the hypergraph~$\II \eqdef \{1, 2, 3, 4, 12, 23, 234, 1234\}$ illustrated in \cref{fig:notSemidistributiveLattices}\,(right), we have~$\cIJ_\II = \{(1,2), (2,3), (3,4), (1,3), (1,4)\}$, and the join irreducible acyclic orientations~are
\begin{gather*}
A_{12} = \raisebox{-.2cm}{\acyclicOrientation{4}{1/2/2,2/3/2,2/4/2,1/4/2}}
\qquad\quad
A_{23} = \raisebox{-.2cm}{\acyclicOrientation{4}{1/2/1,2/3/3,2/4/3,1/4/1}}
\qquad\quad
A_{13} = \raisebox{-.2cm}{\acyclicOrientation{4}{1/2/1,2/3/3,2/4/3,1/4/3}}
\\
A_{24} = \raisebox{-.2cm}{\acyclicOrientation{4}{1/2/1,2/3/2,2/4/4,1/4/1}}
\qquad\quad
A_{14} = \raisebox{-.2cm}{\acyclicOrientation{4}{1/2/1,2/3/2,2/4/4,1/4/4}}
\end{gather*}
and we have~$A_{23} < A_{13}$ and~$A_{24} < A_{14}$.
\end{example}

\pagebreak
\begin{lemma}
\label{lem:distinctIrreducibles2}
For $(i,j) \ne (k,\ell) \in \cIJ_\II$, we have $A_{ij} \ne A_{k\ell}$.
\end{lemma}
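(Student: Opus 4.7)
The plan is to exploit the fact that the orientation $A_{ij}$ depends only on $j$ and the number $\mu_{ij}$, not on $i$ directly: indeed, \cref{not:joinIrreducibles} gives $A_{ij}(J) = j$ when $j \in J$ and $\min(J) \ge \mu_{ij}$, and $A_{ij}(J) = \min(J)$ otherwise. So I will show that whenever $(i,j) \ne (k,\ell)$ in $\cIJ_\II$, there is an interval $J \in \II$ on which $A_{ij}$ and $A_{k\ell}$ disagree.

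I would split the argument into two cases. First, if $j \ne \ell$, say $j < \ell$ by symmetry, then I would evaluate both orientations on $J_{k\ell}$: by construction $A_{k\ell}(J_{k\ell}) = \ell$, while $A_{ij}(J_{k\ell}) \in \{j, \min(J_{k\ell})\} \subseteq [1, \ell-1]$ since $\min(J_{k\ell}) = \mu_{k\ell} < \ell$ and $j < \ell$. Hence the two orientations differ on $J_{k\ell}$.

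Second, if $j = \ell$, I claim that $\mu_{ij} \ne \mu_{k\ell}$. For if $\mu_{ij} = \mu_{k\ell}$, then the defining relation of \cref{not:cIJ}
\[
i = \max \Big( {[\mu_{ij}, j[} \; \ssm \!\!\!\! \bigcup_{\substack{J \in \II \\ J \subseteq {[\mu_{ij}, j[}}} \!\!\!\!\big( J \ssm \{\min(J)\}\big) \Big)
\]
uniquely determines $i$ from the pair $(\mu_{ij}, j)$, forcing $i = k$ and contradicting $(i,j) \ne (k,\ell)$. So I may assume by symmetry that $\mu_{ij} < \mu_{k\ell}$. Evaluating on $J_{ij}$, I get $A_{ij}(J_{ij}) = j$ (since $j \in J_{ij}$ and $\min(J_{ij}) = \mu_{ij} \ge \mu_{ij}$), whereas $A_{k\ell}(J_{ij}) = \min(J_{ij}) = \mu_{ij} < j$ (since $\ell = j \in J_{ij}$ but $\min(J_{ij}) = \mu_{ij} < \mu_{k\ell}$ fails the condition). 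Hence $A_{ij} \ne A_{k\ell}$.

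The only subtle step is the uniqueness claim in the second case: I need to trust that the max condition in \cref{not:cIJ} really does determine $i$ from $(\mu_{ij}, j)$, which follows directly from its formulation as an explicit maximum over a set depending only on $\mu_{ij}$ and $j$. The rest of the proof is a direct computation with the piecewise definition of $A_{ij}$.
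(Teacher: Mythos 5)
Your proof is correct and is essentially the same argument as the paper's: both hinge on evaluating the two orientations on the minimal intervals $J_{ij}$ (or $J_{k\ell}$), using the piecewise formula of \cref{not:joinIrreducibles}, and on the fact that the max condition of \cref{not:cIJ} determines $i$ from $(\mu_{ij},j)$. The paper merely organizes it in the direct (rather than contrapositive) direction, deriving $j=\ell$ and $\mu_{ij}=\mu_{k\ell}$ from the assumed equality and then concluding $i=k$.
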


\begin{proof}
Assume that~$A_{ij} = A_{k\ell}$.
As~$j = A_{ij}(J_{ij}) = A_{k\ell}(J_{ij}) \in \{\ell, \mu_{ij}\}$ and~$\mu_{ij} < j$, we obtain that~$j = \ell = A_{k\ell}(J_{ij})$ hence that~$\mu_{ij} \ge \mu_{k\ell}$.
By symmetry, we have~$\mu_{ij} = \mu_{k\ell}$.
As $j = \ell$ and $\mu_{ij} = \mu_{k\ell}$, we obtain that~$i = k$ by definition of~$\cIJ_\II$.
We conclude that~$(i,j) = (k,\ell)$.
\end{proof}

\begin{lemma}
\label{lem:irrorder2}
For $(i,j), (k,\ell) \in \cIJ_\II$, we have $A_{ij} \le A_{k\ell} \iff (i,j) \preccurlyeq (k,\ell)$.
In particular~$j = \ell$ implies that $A_{ij}$ and~$A_{k\ell}$ are comparable in~$P_\II$.
\end{lemma}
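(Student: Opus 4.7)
The plan is to use \cref{prop:sourceOrderI} to reduce the statement to a pointwise comparison of source functions, exploiting the explicit formula for $A_{ij}$ given in \cref{not:joinIrreducibles}: $A_{ij}(K)=j$ if $j\in K$ and $\min(K)\ge\mu_{ij}$, and $A_{ij}(K)=\min(K)$ otherwise. With this formula, one direction of the lemma will come from evaluating on a single test interval, while the reverse direction will be a somewhat delicate case analysis relying on $(i,j)\in\cIJ_\II$.

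For the forward direction I would simply plug in $I=J_{ij}$. By construction $A_{ij}(J_{ij})=j$. By the same formula, $A_{k\ell}(J_{ij})$ equals $\ell$ if $\ell\in J_{ij}$ and $\mu_{ij}=\min(J_{ij})\ge\mu_{k\ell}$, and equals $\mu_{ij}$ otherwise; but $\mu_{ij}<j$, so the hypothesis $A_{ij}\le A_{k\ell}$ forces the first alternative, yielding simultaneously $\ell\in J_{ij}$, $\mu_{ij}\ge\mu_{k\ell}$, and $j\le\ell$, i.e.\ $(i,j)\preccurlyeq(k,\ell)$.

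For the backward direction, assume $(i,j)\preccurlyeq(k,\ell)$ and verify $A_{ij}(I)\le A_{k\ell}(I)$ for every $I\in\II$ using \cref{prop:sourceOrderI}. The inequality is automatic when $A_{ij}(I)=\min(I)$, so the only case to analyze is $A_{ij}(I)=j$, meaning $j\in I$ and $\min(I)\ge\mu_{ij}$. If $\ell\in I$, then $\min(I)\ge\mu_{ij}\ge\mu_{k\ell}$ gives $A_{k\ell}(I)=\ell\ge j$, done. If $\ell\notin I$, then since $I$ is an interval and $\ell\ge j\ge\min(I)$, necessarily $\max(I)<\ell$; combining $\ell\in J_{ij}$ with $\{i,j\}\subseteq J_{ij}$ shows that $i\in I$ would force $J_{ij}\subseteq I$ and hence $\ell\in I$, a contradiction, so $i\notin I$ and in fact $i<\min(I)$. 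It remains to prove $\min(I)=j$, which then yields $A_{k\ell}(I)=\min(I)=j=A_{ij}(I)$.

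The main obstacle is precisely this last step, which I would handle by exploiting the extremality of $i=\max X_{ij}$ from \cref{not:cIJ} through an iterative descent. Assume toward a contradiction that $\min(I)<j$. Then $\min(I)\in(i,j)\subseteq[\mu_{ij},j)$, so $\min(I)\notin X_{ij}$ and hence $\min(I)\in Y_{ij}$, providing some $J^{(0)}\in\II$ with $J^{(0)}\subseteq[\mu_{ij},j)$ and $\min(J^{(0)})<\min(I)\le\max(J^{(0)})<j$. Set $a_0=\min(I)$ and $a_{r+1}=\min(J^{(r)})$; as long as $a_r\notin X_{ij}$, one can extend the sequence with another $J^{(r)}$, obtaining a strictly decreasing chain bounded below by $\mu_{ij}$, which therefore terminates. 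It must terminate at some $a_R\in X_{ij}$, hence with $a_R\le i$. If $a_R<i$, then $J^{(R-1)}$ is an interval with $\min(J^{(R-1)})=a_R<i<a_{R-1}\le\max(J^{(R-1)})$, so $i\in J^{(R-1)}\ssm\{\min(J^{(R-1)})\}\subseteq Y_{ij}$, contradicting $i\in X_{ij}$. The borderline case $a_R=i$ requires a parallel argument using the companion hypothesis $(k,\ell)\in\cIJ_\II$: the entire $I\ssm\{\min(I)\}$ lies in $[\mu_{k\ell},\ell)$ and so is contained in $Y_{k\ell}$, restricting where $k=\max X_{k\ell}$ can sit relative to $I$, from which one derives the required contradiction. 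The "in particular" assertion is then immediate, since $j=\ell$ trivially yields $\ell\in J_{ij}$ and $(i,j)\preccurlyeq(k,\ell)$ reduces to the total order $\mu_{ij}\ge\mu_{k\ell}$ (or its reverse).
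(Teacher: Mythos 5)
Your forward direction and the overall reduction via \cref{prop:sourceOrderI} coincide with the paper's argument, but your backward direction has a genuine gap at exactly the step you yourself flag as the main obstacle. After reducing to an interval $I\in\II$ with $j\in I$, $\mu_{ij}\le\min(I)$, $\ell\notin I$ and $i<\min(I)$, everything rests on proving $\min(I)=j$, and your treatment of the terminal case $a_R=i$ of the descent is only a promissory note (``from which one derives the required contradiction'') with no actual argument. This is not a routine detail that can be filled in: with only the hypotheses $(i,j),(k,\ell)\in\cIJ_\II$, $j\le\ell$, $\ell\in J_{ij}$, $\mu_{ij}\ge\mu_{k\ell}$, the case $i<\min(I)<j$ can genuinely occur. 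For instance, on $[5]$ let $\II$ consist of all singletons together with $[1,2]$, $[2,4]$ and $[1,5]$; this is closed under intersection, both $(1,3)$ and $(1,5)$ lie in $\cIJ_\II$ (both extremal sets of \cref{not:cIJ} equal $\{1\}$), $J_{13}=J_{15}=[1,5]$ and $\mu_{13}=\mu_{15}=1$, so $(1,3)\preccurlyeq(1,5)$; yet $I=[2,4]$ satisfies $3\in I$, $\min(I)=2\in{]1,3[}$ and $5\notin I$, so the formula of \cref{not:joinIrreducibles} gives $A_{13}(I)=3>2=A_{15}(I)$, hence $A_{13}\not\le A_{15}$ by \cref{prop:sourceOrderI}. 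In this configuration your descent stops immediately at $a_1=\min([1,2])=1=i$, and the ``parallel argument using $(k,\ell)\in\cIJ_\II$'' has nothing to bite on: $I\ssm\{\min(I)\}=\{3,4\}$ is indeed covered by intervals inside ${[\mu_{15},5[}$ and $k=1<\min(I)$, with no contradiction available.

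To be fair, you have isolated a real difficulty rather than invented one: the paper's own proof of this implication passes over the same case by asserting $\{i,j\}\subseteq[\mu_{ij},j]\subseteq I$, deduced from $\mu_{ij}\le\min(I)$, although that inequality only yields $[\min(I),j]\subseteq I$ and not $i\in I$, and the example above is precisely one where that intermediate claim fails. So the status of your proposal is: the forward direction, the case $\ell\in I$, and the case $j=\ell$ (where $\ell\in I$ is automatic, which is all the ``in particular'' sentence needs) are fine, but the decisive case $j<\ell$, $\ell\notin I$, $i<\min(I)<j$ is missing, and no argument along your proposed lines can close it from the three conditions defining $\preccurlyeq$ alone; one would need a stronger hypothesis (for example, that every $I\in\II$ with $j\in I$ and $\min(I)\ge\mu_{ij}$ contains $\ell$) for your descent --- and the paper's one-line claim --- to go through.
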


\begin{proof}
By \cref{prop:sourceOrderI} we have~$A_{ij}\le A_{k\ell}$ if and only if~$A_{ij}(I) \le A_{k\ell}(I)$ for all~$I \in \II$.

For the forward direction, assume that~$A_{ij} \le A_{k\ell}$.
As~$j = A_{ij}(J_{ij}) \le A_{k\ell}(J_{ij}) \in \{\ell, \mu_{ij}\}$ and~$\mu_{ij} < j$, we obtain that~$j \le \ell$ and~$A_{k\ell}(J_{ij}) = \ell$, hence~$\ell \in J_{ij}$ and~$\mu_{ij} \ge \mu_{k\ell}$.

For the backward direction, assume that~$j \le \ell$, $\ell \in J_{ij}$ and~$\mu_{ij} \ge \mu_{k\ell}$.
Assume by means of contradiction that there is~$I \in \II$ such that~$A_{ij}(I) > A_{k\ell}(I)$.
As $A_{ij}(I) \in \{j, \min(I)\}$ and~$A_{k\ell} \in \{\ell, \min(I)\}$, and~$j \le \ell$, we must have~$A_{ij}(I) = j$ and~$A_{k\ell}(I) = \min(I)$.
As~$A_{ij}(I) = j$, we have~$j \in I$ and~$\mu_{ij} \le \min(I)$, hence~$\{i,j\} \subseteq [\mu_{i,j},j] \subseteq I$ so that~$\ell \in J_{ij} \subseteq I$ by minimality of~$J_{ij}$.
As~$A_{k\ell}(I) \ne \ell$ and~$\ell \in I$, we obtain that~$\min(I) < \mu_{k\ell}$.
We thus obtain that~$\mu_{ij} \le \min(I) < \mu_{k\ell}$ contradicting our assumption that~$\mu_{ij} \ge \mu_{k\ell}$.
\end{proof}

\begin{proposition}
\label{prop:AijJoinIrreducible}
If~$\II$ is an interval hypergraph closed under intersection, then the acyclic orientation~$A_{ij}$ is join irreducible for any~$(i,j) \in \cIJ_\II$.
\end{proposition}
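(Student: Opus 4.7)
The plan is to show that $A_{ij}$ has a unique cover from below in $P_\II$, which implies it is join irreducible. Since $P_\II$ is the transitive closure of the increasing flip graph, every cover relation of $P_\II$ is an increasing flip, so every element covered by $A_{ij}$ arises from a decreasing flip applied to $A_{ij}$.

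First, I would classify the decreasing flips from $A_{ij}$. By the formula in \cref{not:joinIrreducibles}, the only value taken by $A_{ij}$ that is not the minimum of its interval is $j$, so any decreasing flip must flip $j$ to some $i' < j$. The flip does nothing unless $\{i',j\} \subseteq I$ for some $I \in \II$ with $A_{ij}(I) = j$, which forces $i' \ge \mu_{ij}$. Writing $B_{i'}$ for the resulting orientation, the symmetric form of \cref{prop:isFlipI}, combined with the observation that $\min \set{\min(I)}{I \in \II, \, A_{ij}(I) = j} = \mu_{ij}$ (attained at $J_{ij}$), shows that $B_{i'}$ is acyclic if and only if $i' \in X \eqdef [\mu_{ij}, j[ \ssm \bigcup_{J \subseteq [\mu_{ij}, j[}(J \ssm \{\min(J)\})$. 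The hypothesis $(i,j) \in \cIJ_\II$ means that $X$ is non-empty and $i = \max X$.

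The crucial observation, driving the rest of the argument, is that any $I \in \II$ containing both $i$ and $j$ must satisfy $J_{ij} \subseteq I$ by minimality of $J_{ij}$, so $\min(I) \le \mu_{ij}$. Consequently, every $I$ with $\{i,j\} \subseteq I$ and $\min(I) \ge \mu_{ij}$ has $\min(I) = \mu_{ij}$ and therefore contains every element of $X$ (since $X \subseteq [\mu_{ij}, j[$). Using this, a case analysis via \cref{prop:sourceOrderI} shows that $B_{i''} \le B_{i'}$ whenever $i'' \le i'$ in $X$: on the intervals where $B_{i'}$ differs from $A_{ij}$, the observation forces $B_{i''}(I) = i'' \le i' = B_{i'}(I)$, and on all other intervals one checks directly that the two orientations agree or that no bad inequality arises.

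It then follows that every acyclic decreasing flip from $A_{ij}$ is dominated by $B_i$ with $i = \max X$. Since any element covered by $A_{ij}$ is of the form $B_{i'}$ for some $i' \in X$, and each such $B_{i'}$ satisfies $B_{i'} \le B_i < A_{ij}$, no $B_{i'}$ with $i' < i$ can be a cover; hence $A_{ij}$ covers exactly one element, namely $B_i$, and is therefore join irreducible. The main obstacle is the monotonicity step: one must rule out the apparently problematic configuration $\{i',j\} \subseteq I$ with $i'' \notin I$, which the key observation eliminates by forcing $\min(I) = \mu_{ij} \le i''$ in that scenario.
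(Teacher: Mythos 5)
Your proof is correct, and it handles the decisive case by a different mechanism than the paper. Both arguments reduce join irreducibility to showing that $A_{ij}$ has a unique lower cover, observe that any decreasing flip from $A_{ij}$ must send the value $j$ to some $i' \in {[\mu_{ij}, j[}$, and identify the acyclic such flips with the set $X$ whose maximum is $i$ (you obtain this from the decreasing analogue of \cref{prop:isFlipI}; the paper's case $k > i$ is a direct acyclicity check with the same content). The divergence is in ruling out flips to $i' < i$ as covers: the paper invokes the decreasing analogue of \cref{prop:isCoverI} to extract an interval $J \subseteq {[\mu_{ij}, j[}$ with $i \in J \ssm \{\min(J)\}$, contradicting $i = \max X$, whereas you never touch the cover criterion and instead prove via \cref{prop:sourceOrderI} the monotonicity $B_{i''} \le B_{i'}$ for $i'' \le i'$ in $X$, so that every flip-neighbour below $A_{ij}$ sits under $B_i$ and only $B_i$ can be covered. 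Your route yields a little more (the chain structure of the lower flip-neighbours of $A_{ij}$, in the spirit of \cref{lem:subirr}), at the price of that monotonicity verification. On that verification, one half-line is left implicit: your key observation is stated for intervals containing $\{i,j\}$, while the problematic configuration has $\{i',j\} \subseteq I$ with $i' \le i$; since $I$ is an interval containing $i' \le i < j$, it also contains $i$, so $J_{ij} \subseteq I$ forces $\min(I) \le \mu_{ij}$, and together with $\min(I) \ge \mu_{ij}$ this gives $\min(I) = \mu_{ij}$ and hence $i'' \in I$. With that sentence made explicit, the argument is complete.
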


\begin{proof}
As~$A_{ij} \ne \min(P_\II)$, we just need to prove that it covers a single acyclic orientation in~$P_\II$.
We consider a cover relation~\flip{A}{k}{\ell}{A_{ij}} in~$P_\II$ and prove that~$i = k$ and~$j = \ell$.

By definition, there is~$I \in \II$ such that~$\{k,\ell\} \subseteq I$ and~$A(I) = k$ while~$A_{ij}(I) = \ell$.
Since $A_{ij}(I) \in \{j, \min(I)\}$ and~$A_{ij}(I) = \ell > k \ge \min(I)$, we obtain that~$j = \ell$.

Assume now that~$i < k$.
Then there is~$J \in \II$ with~$J \subseteq [\mu_{ij}, j[$ and~$k \in J \ssm \{\min(J)\}$.
We obtain that~$A(J) = A_{ij}(J) = \min(J) \in [\mu_{ij}, j[ \subseteq I$ and~$A(I) = k \in J$ and~$A(J) \ne A(I)$, contradicting the acyclicity of~$A$.

Assume now that~$k < i$.
Since~$\flip{A}{k}{\ell}{A_{ij}}$ is a cover relation, the decreasing analogue of \cref{prop:isCoverI} implies that
\[
{]k,\ell[} \subseteq \!\!\!\! \bigcup_{\substack{J \in \II \\ A_{ij}(J) \in [k,\ell[}} \!\!\!\! \big( J \ssm \{A_{ij}(J)\} \big).
\]
As~$i \in {]k,\ell[}$, there is~$J \in \II$ with~$A_{ij}(J) \in {[k,\ell[} = {[k,j[}$ and~$i \in J \ssm \{A_{ij}(J)\}$.
As~${A_{ij}(J) \in \{j, \min(J)\}}$ by definition of~$A_{ij}$, we obtain that~$A_{ij}(J) = \min(J)$.
If $j \le \max(J)$, then $A_{ij}(I) = \ell \in J$ and~$A_{ij}(J) \in {[k,\ell[} \subseteq I$ and~$A_{ij}(I) \ne A_{ij}(J)$ contradict the acyclicity of~$A_{ij}$.
As~$\II$ is closed under intersection, we can thus assume that~$J \subseteq [\mu_{ij}, j[$, up to intersecting~$J$ with~$J_{ij}$.
We conclude that~$J \subseteq [\mu_{ij}, j[$ and~$i \in J \ssm \{A_{ij}(J)\} = J \ssm \{\min(J)\}$, contradicting the definition of~$i$ in \cref{not:cIJ}.
\end{proof}

\begin{proposition}
\label{prop:joinIrreducibles}
If~$\II$ is an interval hypergraph closed under intersection, then the map~${(i,j) \mapsto A_{ij}}$ is a bijection from~$\cIJ_\II$ to the join irreducible acyclic orientations of~$\II$.
\end{proposition}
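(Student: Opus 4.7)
The plan is to note that injectivity and the join irreducibility of each $A_{ij}$ are already established in \cref{lem:distinctIrreducibles2,prop:AijJoinIrreducible}, so only surjectivity remains: given a join irreducible acyclic orientation $B$ of $\II$, I aim to produce $(k, \ell) \in \cIJ_\II$ with $B = A_{k\ell}$. I intend to exploit the fact that $B$ covers a unique orientation $A$ in $P_\II$ --- so that $X \le A$ for every $X < B$ --- and that the cover $A \lessdot B$ is an increasing flip $\flip{A}{k}{\ell}{B}$ for a distinguished pair $k < \ell$; note that $\{k,\ell\} \subseteq I_0$ for some $I_0 \in \II$ with $A(I_0) = k$ and $B(I_0) = \ell$, since the flip is nontrivial.

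First, I would show that $B(I) \in \{\min(I), \ell\}$ for every $I \in \II$: if some $I$ had $\min(I) < B(I) \ne \ell$, the symmetric version of \cref{prop:alwaysFlippableI} would produce a decreasing flip from $B$ at $B(I)$ giving an acyclic $X < B$ whose value on $I_0$ is still $\ell$ (because the flip moves the value $B(I) \ne \ell$ and therefore leaves every $J$ with $B(J) = \ell$ untouched), contradicting $X \le A$ via $A(I_0) = k < \ell$. Setting $\mu \eqdef \min \set{\min(I)}{I \in \II, \, B(I) = \ell}$, I plan to upgrade this to the characterization $B(I) = \ell \iff \ell \in I$ and $\min(I) \ge \mu$: choosing a witness $I_0$ attaining $\mu$, one forms $I \cap I_0 \in \II$ (by closure under intersection) and uses the two-cycle characterization of acyclicity (\cref{prop:acyclicI}) against $I_0$ to rule out $B(I \cap I_0) = \min(I \cap I_0) < \ell$, and then again against $I \cap I_0$ to force $B(I) = \ell$. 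This characterization already matches the formula defining $A_{k'\ell}$ in \cref{not:joinIrreducibles}, provided $\mu_{k'\ell} = \mu$ for the correct choice of $k'$.

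Finally, I must verify that the pair $(k, \ell)$ itself belongs to $\cIJ_\II$. By the symmetric version of \cref{prop:isFlipI}, the valid targets $k'$ of decreasing flips of $\ell$ from $B$ turn out (using the characterization of $B$) to be precisely the elements of $[\mu, \ell[ \ssm \bigcup_{J \in \II,\, J \subseteq [\mu, \ell[}(J \ssm \{\min(J)\})$; each such flip produces an acyclic $A'_{k'} < B$, so by join irreducibility $A'_{k'} \le A$, which forces $k' = A'_{k'}(I_0) \le A(I_0) = k$, making $k$ the maximum of this set. To finish with $\mu_{k\ell} = \mu$ (whence $(k,\ell) \in \cIJ_\II$ and $B = A_{k\ell}$ by the formula of \cref{not:joinIrreducibles}), I plan to take the always-valid target $k' = \mu$: were $\mu_{k\ell} > \mu$, some $I_1 \in \II$ with $\{k, \ell\} \subseteq I_1$ would satisfy $\min(I_1) > \mu$, so that $\mu \notin I_1$ and hence $A'_\mu(I_1) = B(I_1) = \ell > k = A(I_1)$, once again contradicting $A'_\mu \le A$. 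The hardest step will be this last contradiction, which crucially combines the existence of the decreasing flip at $\mu$ with the rigidity imposed by join irreducibility; the remainder is a careful but routine interplay between \cref{def:flip}, \cref{prop:sourceOrderI}, and the two-cycle acyclicity criterion of \cref{prop:acyclicI}.
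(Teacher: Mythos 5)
Your plan is correct and proves the statement, but it is organized differently from the paper's proof, so a comparison is worth recording. Both arguments start from the unique cover flip $\flip{A}{k}{\ell}{B}$ and both rest on the same three tools (\cref{prop:isFlipI}, the two-cycle acyclicity test of \cref{prop:acyclicI}, and \cref{prop:sourceOrderI} combined with the rigidity that every orientation strictly below the join irreducible $B$ lies below $A$). The paper, however, works with $\mu_{k\ell}$ from the outset: it first proves $(k,\ell)\in\cIJ_\II$, using \cref{prop:isFlipI} for membership of $k$ in the defining set of \cref{not:cIJ} and \cref{prop:isCoverI} to exclude the elements of ${]k,\ell[}$, and then verifies $B=A_{k\ell}$ by splitting into the cases $\min(J)\ge\mu_{k\ell}$ (a two-cycle argument) and $\min(J)<\mu_{k\ell}$ (a decreasing-flip-plus-irreducibility argument). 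You instead define $\mu\eqdef\min\set{\min(I)}{I\in\II,\,B(I)=\ell}$ intrinsically, characterize $B$ completely before knowing that $(k,\ell)\in\cIJ_\II$, and never invoke \cref{prop:isCoverI}: the decreasing analogue of \cref{prop:isFlipI} together with your characterization of $B$ identifies the acyclic decreasing-flip targets of $\ell$ as exactly $[\mu,\ell[\;\ssm\bigcup_{J\subseteq[\mu,\ell[}(J\ssm\{\min(J)\})$, and the irreducibility rigidity both caps all targets at $k$ (replacing the paper's use of the cover criterion) and, applied to the target $\mu$, forces $\mu_{k\ell}=\mu$ (replacing the paper's ``minimal $\min(J)$'' flip argument). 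What your route buys is a slightly leaner toolkit at the price of a second, heavier use of the rigidity argument; the paper's route localizes the irreducibility hypothesis to one place. Two details to make explicit when writing it up, neither of which is a gap: you need $\mu\le k$ (immediate, since the flip witness $I$ with $A(I)=k$, $B(I)=\ell$ satisfies $\mu\le\min(I)\le k$) so that \cref{def:flip} gives $A(I_0)=k$ for your $\mu$-attaining witness $I_0$, which your inequality $k'=A'_{k'}(I_0)\le A(I_0)$ uses; and targets $k'<\mu$ change no value of $B$, hence yield no genuine flip, which is what makes your list of valid targets complete.
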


\begin{proof}
\enlargethispage{.4cm}
Consider a join irreducible acyclic orientation~$B$ of~$\II$ and let~$\flip{A}{i}{j}{B}$ be the only increasing flip which ends at~$B$.
Let~$I \in \II$ be such that~$A(I) = i$ and~$B(I) = j$.
Observe that~$B(J) \in \{\min(J), j\}$ for any~$J \in \II$ by \cref{prop:alwaysFlippableI}.

We first prove that~$(i,j) \in \cIJ_\II$.
Let
\[
X \eqdef {[\mu_{ij},j[} \ssm \!\!\!\! \bigcup_{\substack{J \in \II \\ J \subseteq {[\mu_{ij}, j[}}} \!\!\!\! \big( J \ssm \{\min(J)\} \big).
\]
Since~$B(I) = j$ and~$\min(I) \le \mu_{i,j}$, \cref{prop:isFlipI} ensures that there is no~$J \in \II$ such that~${i \in J \ssm \{B(J)\}}$ and~$B(J) \in {[\mu_{i,j}, j[}$, which yields~$i \in X$ (using that~$B(J) \in \{\min(J),j\}$).
Moreover, for any~$k \in {]i,j[}$, \cref{prop:isCoverI} ensures that there is~$J \in \II$ such that~$B(J) \in {[i,j[}$ and~$k \in J \ssm \{B(J)\}$.
Since~${[i,j[} \subseteq {[\mu_{ij},j[}$ and~$B(J) = \min(J)$, we obtain that~$k \notin X$.
We conclude that~$i = \max(X)$, so that~$(i,j) \in \cIJ_\II$.

We now prove that~$B = A_{ij}$.
As already observed, we have~$B(J) \in \{\min(J), j\}$ for any~${J \in \II}$.
Assume first that there is~$J \in \II$ such that~$j \in J$ and~$\min(J) \ge \mu_{ij}$, but~$B(J) \ne j$.
Then ${B(J) = \min(J) \in {[\mu_{ij}, j[} \subseteq I \ssm \{B(I)\}}$ and~$B(I) = j \in J$, which contradicts the acyclicity of~$B$.
Assume now that there is~$J \in \II$ such that~$\min(J) < \mu_{ij}$ but~${B(J) = j}$, and consider such a~$J$ with~$\min(J)$ minimal.
For any~$K \in \II$ with~$A(K) \in {[\min(J),j[}$, we have~$j \notin K$ by acyclicity of~$B$, so that~${A(K) = \min(K) \ge \min(J)}$, thus~${\min(J) \notin K \ssm \{\min(K)\}}$.
\cref{prop:isFlipI} thus ensures that the orientation~$C$ obtained from~$B$ by flipping~$j$ to~$\min(J)$ is acyclic.
Moreover, as~$\min(J) < \mu_{ij}$, we have~$\min(J) \notin J_{ij}$, so that~$C(J_{ij}) = B(J_{ij}) = j > i = A(J_{ij})$.
We obtain that~$C < B$ but~$C \not\le A$ contradicting our assumption that~$B$ is join irreducible.
We conclude that~$B(J) = j$ if~$j \in J$ and~$\min(J) \ge \mu_{ij}$, and~$B(J) = \min(J)$ otherwise, so that~$B = A_{ij}$.
\end{proof}


\subsection{If~$\II$ is semidistributive then $P_\II$ is semidistributive}
\label{subsec:semidistributiveLatticeBackward}

We now prove the backward direction of \cref{thm:semidistributiveLatticeI}.

\begin{proposition}
\label{prop:semidistributiveBackward}
If~$\II$ is a join (resp.~meet) semidistributive interval hypergraph, then $P_\II$ is a join (resp.~meet) semidistributive lattice.
\end{proposition}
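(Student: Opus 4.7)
The plan is to invoke the criterion in Remark 5.3: $P_\II$ is join semidistributive if and only if, for every cover relation $A \le B$ in $P_\II$, the set
\[
S(A,B) \eqdef \set{C \in P_\II}{A \join C = B}
\]
admits a unique $\le$-minimum. Since by Remark 5.3 every $\le$-minimal element of $S(A,B)$ is join irreducible, and since by Proposition 5.13 the join irreducibles of $P_\II$ are precisely the elements $A_{k\ell}$ for $(k,\ell) \in \cIJ_\II$ (from \cref{not:joinIrreducibles}), the task reduces to showing that among the join irreducibles $A_{k\ell}$ satisfying $A_{k\ell} \le B$ and $A_{k\ell} \not\le A$, there is a unique $\le$-minimum in $P_\II$.

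First, I would translate the cover relation via Proposition 3.15 into an increasing flip $\flip{A}{i}{j}{B}$ satisfying the covering condition on~${]i,j[}$. I would then analyze the admissible pairs $(k,\ell)$: using the explicit description of $A_{k\ell}$ in Notation 5.9 together with the source criterion of Proposition 3.8, the constraints $A_{k\ell} \le B$ and $A_{k\ell} \not\le A$ force $\ell = j$ and $\mu_{kj} \le \min(I)$ for some interval $I \in \II$ with $A(I) = i$ and $B(I) = j$ witnessing the flip. By Lemma 5.12, among pairs with $\ell = j$ fixed, we have $A_{k_1 j} \le A_{k_2 j}$ if and only if $\mu_{k_1 j} \ge \mu_{k_2 j}$, so that minimality of $A_{kj}$ in $S(A,B)$ is equivalent to maximality of~$\mu_{kj}$ within the admissible range. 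Thus it suffices to prove that the set of admissible $\mu_{kj}$ has a unique maximum.

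The final step, where the hypothesis on $\II$ enters, is to argue by contradiction. Suppose two admissible candidates $A_{k_1 j}$ and $A_{k_2 j}$ are both $\le$-minimal with incomparable $\mu$-values. I would read off four intervals $[r,r'], [s,s'], [t,t'], [u,u'] \in \II$ realizing the hypotheses of Definition 5.2: $[u,u']$ is the flipping interval containing $[i,j]$, while $[r,r']$ and $[t,t']$ encode the obstructions carried by the two incomparable candidates, with $[s,s']$ playing the role of their intersection near $[i,j]$. The interval $[v,v']$ produced by join semidistributivity of $\II$, upon intersection with the $J_{k_\alpha j}$ and closure under intersection (Theorem A), yields a strictly smaller admissible $A_{k_3 j}$ (equivalently, an admissible $\mu$ strictly larger than both $\mu_{k_1 j}$ and $\mu_{k_2 j}$), contradicting minimality. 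The main obstacle is precisely the bookkeeping here: correctly assigning the roles of $[r,r'], [s,s'], [t,t'], [u,u']$ to intervals arising from the flip and the two candidates, and checking that the resulting $[v,v']$ indeed produces a join irreducible strictly below both. Finally, the meet semidistributivity statement follows by applying the anti-isomorphism $A \mapsto A^\leftrightarrow$ of Proposition 2.7 between $P_\II$ and $P_{\II^\leftrightarrow}$, which swaps joins with meets and swaps initial with final subintervals; the "symmetric characterization" mentioned in Theorem C is then the reversal of Definition 5.2, and the proof transports verbatim.
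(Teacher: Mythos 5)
There is a genuine gap, and it sits at the exact point where your reduction claims to simplify the problem. From $A_{k\ell} \le B$ and $A_{k\ell} \not\le A$ one can only deduce, via \cref{prop:sourceOrderI} and the explicit description of $A_{k\ell}$ in \cref{not:joinIrreducibles}, that there is some $U \in \II$ with $A(U) = i$, $B(U) = j$ and $A_{k\ell}(U) = \ell$, hence $i < \ell \le j$; nothing forces $\ell = j$. Your claim that every admissible join irreducible has second coordinate $j$ would, combined with the comparability statement of \cref{lem:irrorder2} (equal second coordinate implies the orientations are comparable), give a unique minimum of $S(A,B)$ for \emph{every} intersection-closed $\II$, i.e.\ it would prove join semidistributivity without ever invoking \cref{def:semidistributive}. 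That conclusion is false: the hypergraphs $\{1,2,3,4,12,23,34,1234\}$ and $\{1,2,3,4,12,23,234,1234\}$ of \cref{fig:notSemidistributiveLattices} are closed under intersection, yet their posets are lattices that are not join semidistributive. Relatedly, your contradiction setup (``two candidates $A_{k_1 j}$, $A_{k_2 j}$ with incomparable $\mu$-values'') is vacuous: the $\mu$'s are integers, and with the second coordinate fixed \cref{lem:irrorder2} already makes the two orientations comparable, so this case can never occur and carries no content.

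The case your plan skips is precisely the one where the hypothesis on $\II$ is needed and where the paper's proof does all its work: two minimal elements $A_{ij}$ and $A_{k\ell}$ of $S(A,B)$ with $j < \ell$, both $j,\ell$ lying in ${]p,q]}$ for the cover flip $\flip{A}{p}{q}{B}$. The paper first extracts $U = [u,u']$ from $A_{ij} \le B$, $A_{ij} \not\le A$, then applies \cref{prop:isCoverI} twice together with acyclicity to produce $S = [s,s']$ and $T = [t,t']$ with carefully minimal choices ($s$ minimal, $t'$ minimal), produces $R = [r,r']$ from the definition of $\cIJ_\II$ with $r$ minimal, and rules out $t \le r$ by exhibiting a join irreducible $A_{kj} < A_{ij}$ still joining with $A$ to $B$, contradicting minimality of $A_{ij}$; only then does the quadruple $R,S,T,U$ satisfy the hypotheses of \cref{def:semidistributive}, and the resulting $[v,v']$ contradicts the minimality of $t'$. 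This is not incidental bookkeeping on top of your framework --- it cannot even be started inside your framework, because you have already (incorrectly) collapsed the two candidates to the same second coordinate. The parts of your proposal that do match the paper are the reduction via \cref{rem:semidistributiveCriterion}, the identification of minimal elements of $S(A,B)$ with join irreducibles as in \cref{prop:joinIrreducibles}, and the symmetry argument of \cref{prop:antiIsomorphism} for the meet statement (with the reversed form of \cref{def:semidistributive}); the core combinatorial argument, however, is missing.
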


\begin{proof}
We prove the result for join semidistributivity, the result for meet semidistributivity follows by the symmetry of~\cref{prop:antiIsomorphism}.

Assume by means of contradiction that~$\II$ is a semidistributive interval hypergraph for which~$P_\II$ is not join semidistributive.
By \cref{rem:semidistributiveCriterion}, there exist a cover relation~$\flip{A}{p}{q}{B}$ of~$P_\II$ and some~$(i,j), (k,\ell) \in \cIJ_\II$ such that~$A_{ij}$ and~$A_{k\ell}$ are both minimal in~$P_\II$ with~$A \join A_{ij} = B = A \join A_{k\ell}$.
Assume by symmetry that~$j \le \ell$.
As~$A_{ij} \not\le A_{k\ell}$, we obtain by \cref{lem:irrorder2} that~$j < \ell$, and that~$\ell \notin J_{ij}$ or~$\mu_{ij} < \mu_{k\ell}$.

As~$A_{ij} \not\le A$ but~$A_{ij} \le B$, there is~$U \eqdef [u,u'] \in \II$ such that~$A(U) < A_{ij}(U) \le B(U)$ by \cref{prop:sourceOrderI}.
Since~$A$ and~$B$ only differ by the flip of~$p$ to~$q$, this implies that~${A(U) = p}$ while~${B(U) = q}$.
Moreover, by definition of~$A_{ij}$, we must have~$A_{ij}(U) = j$, hence~$j \in U$ and~${\mu_{ij} \le u}$.
We conclude that~$\mu_{ij} \le u \le p < j \le q \le u'$.
Similarly, ${\mu_{k\ell} \le u \le p < \ell \le q \le u'}$.

Observe that for any~$V \in \II$ such that~$A(V) \in {]p,q]}$, we have
\begin{itemize}
\item $p < \min(V)$ as otherwise~$A(U) = p \in V \ssm \{A(V)\}$ while~$A(V) \in {]p,q]} \subseteq U \ssm \{A(U)\}$ would contradict the acyclicity of~$A$,
\item if~$j \in V$, then~$A(V) = B(V) \ge A_{ij}(V) = j$.
\end{itemize}

Since~$\flip{A}{p}{q}{B}$ is a cover relation of~$P_\II$, \cref{prop:isCoverI} implies that there is~$S \eqdef [s,s'] \in \II$ such that~$A(S) \in {]p,q]}$ and~$j \in S \ssm \{A(S)\}$.
From the observation above, we have~$p < s$ and~$A(S) \ge j$.
We claim that we can moreover assume that~$s' < \ell$.
Indeed, suppose that~$s' \ge \ell$.
As~$\mu_{k\ell} \le p < s \le j < \ell \le s'$, we obtain that~$k < s \le j < \ell$.
Hence, there exists~$I \in \II$ such that~$j \in I \subseteq {[k,\ell[}$.
As~$\II$ is closed under intersection, we can replace~$S$ by~$S \cap I$ which proves the~claim.
Hence, we have found~$S \eqdef [s,s'] \in \II$ with~$p < s \le j \le A(S) \le s' < \ell \le q$, and we choose such an~$S$ so that~$s$ is minimal.

As~$A(S) \in S \subseteq {]p,q[}$, \cref{prop:isCoverI} implies that there is~$T \eqdef [t,t'] \in \II$ such that~${A(T) \in {]p,q]}}$ and~$A(S) \in T \ssm \{A(T)\}$.
We have~$A(T) \ge j$, as otherwise~${j \in [A(T), A(S)] \subseteq T}$ and~$A(T) < j$ would contradict the observation above.
We deduce that~$A(T) > s'$ as otherwise~$A(T) \in [j, s'] \subseteq S$ and~$A(S) \in T \ssm \{A(T)\}$ would contradict the acyclicity of~$A$.
Hence, we have found~$T \eqdef [t,t'] \in \II$ with~$p < t \le s' < t'$, and we choose such a~$T$ so that~$t'$ is minimal.

As~$\mu_{ij} \le p < s$ and~$j \in S$, we have~$i < s$, from which we deduce that there is~$R \eqdef [r,r'] \in \II$ such that~$R \subseteq {[\mu_{i,j},j [}$ and~$s \in R \ssm \{r\}$.
As~$r' < j \le s'$, we have~$r < s \le r' < s'$, and we choose such an~$R$ so that~$r$ is minimal.
Assume for a moment that~$t \le r$.
Then~$t < s$, which by minimality of~$s$ in our choice of~$S$ implies that~$\ell \in T$.
If~$\ell \notin J_{ij}$, then~$J_{ij} \cap T$ contradicts the minimality of~$t'$ in our choice of~$T$.
Hence, $\ell \in J_{ij}$ which implies that~$\mu_{ij} < \mu_{k\ell}$.
We conclude that~$i < k < j < \ell$, as~$\mu_{k\ell} \le p < t$ and~$\ell \in T$.
As~$\II$ is closed under intersection, $i < k < j < \ell$ implies that~$(i,j) \in \cIJ_\II$.
As~$i < k$, we have~$A_{kj} < A_{ij} \le B$.
Moreover, $A_{kj}(J_{kj}) = j > \mu_{kj} = A(J_{kj})$, so that~$A \not\le A_{kj}$.
Hence, $A_{kj} \join A = B$ and~$A_{kj} < A_{ij}$ contradicting the minimality of~$A_{ij}$.
We conclude that~$r < t$.

We have thus found~$R \eqdef [r,r']$, $S \eqdef [s,s']$, $T \eqdef [t,t']$ and~$U \eqdef [u,u']$ in~$\II$ with~$r < s \le r' < s'$, $r < t \le s' < t'$, and~$u \le p < \min(s,t)$ and~$s' < q \le u'$.
As~$\II$ is join semidistributive, there is~$V \eqdef [v,v'] \in \II$ such that~$v < s$ and~$s' < v' < t'$.
As~$\II$ is closed under intersection, it contains~$T \cap V = [\max(v,t), v']$.
But~$\max(v,t) \le s' < v' < t'$ which contradicts the minimality of~$t'$ in our choice of~$T$.
\end{proof}


\subsection{If~$P_\II$ is semidistributive then~$\II$ is semidistributive}
\label{subsec:semidistributiveLatticeForward}

We now prove the forward direction of \cref{thm:semidistributiveLatticeI}.

\begin{proposition}
\label{prop:semidistributiveForward}
If~$\II$ is an interval hypergraph such that $P_\II$ is a join (resp.~meet) semidistributive lattice, then~$\II$ is join (resp.~meet) semidistributive.
\end{proposition}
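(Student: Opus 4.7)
The plan is to argue by contrapositive: assume that $\II$ violates the join semidistributivity condition of \cref{def:semidistributive}, and produce a failure of join semidistributivity in the lattice $P_\II$. If $\II$ is not closed under intersection, then by \cref{prop:latticeForward} the poset $P_\II$ is not even a lattice, hence a fortiori not a join semidistributive lattice, and we are done. From here on I would therefore assume that $\II$ is closed under intersection (so $P_\II$ is a lattice by \cref{prop:latticeBackward}), but that there exist intervals $R = [r,r']$, $S = [s,s']$, $T = [t,t']$, $U = [u,u']$ in $\II$ satisfying $r < s \le r' < s'$, $r < t \le s' < t'$, $u < \min(s,t)$ and $s' < u'$, for which no $[v,v'] \in \II$ verifies $v < s$ and $s' < v' < t'$.

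From this obstruction, I would exhibit a cover relation $\flip{A}{p}{q}{B}$ of $P_\II$ together with two incomparable join irreducibles $A_{ij}$ and $A_{k\ell}$ (from the parametrization of \cref{prop:joinIrreducibles}) that are both minimal in $\set{C \in P_\II}{A \join C = B}$; by \cref{rem:semidistributiveCriterion}, this contradicts join semidistributivity. The natural candidates are the pairs with $j = s'$ and $\ell = t'$, with $i$ and $k$ the distinguished indices pinned down by \cref{not:cIJ}. For the cover relation, I would take $B$ to be a minimal orientation dominating both $A_{ij}$ and $A_{k\ell}$ whose source on $U$ lies in $]s', u']$, and obtain $A$ from $B$ by flipping this source down to an index in $[u, \min(s,t)[$. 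The validity of this flip as an acyclic increasing flip is checked with \cref{prop:isFlipI}, and its status as a cover relation with \cref{prop:isCoverI}; the equalities $A \join A_{ij} = A \join A_{k\ell} = B$ then follow from \cref{prop:sourceOrderI,prop:joinLattice}, while the incomparability $A_{ij} \not\le A_{k\ell}$ and $A_{k\ell} \not\le A_{ij}$ is immediate from \cref{lem:irrorder2} once one observes that the forbidden configuration forces $\ell \notin J_{ij}$ and $j \notin J_{k\ell}$.

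The main obstacle is establishing the minimality assertion: showing that no strictly smaller join irreducible $A_{i'j'} < A_{ij}$ (and symmetrically below $A_{k\ell}$) satisfies $A \join A_{i'j'} = B$. This is precisely where the non-existence of an interval $V = [v,v'] \in \II$ with $v < s$ and $s' < v' < t'$ is used: the only way for such a strictly smaller join complement to recover, through the join formula of \cref{prop:joinLattice}, the flip on the source of $U$ that witnesses $A < B$ would be through an interval of $\II$ whose endpoints are exactly of the forbidden shape. The bulk of the technical work is a careful case analysis of which intervals of $\II$ could realize the source of $U$ in any hypothetical smaller join irreducible, using repeatedly the intersection closure of~$\II$ to pin down their endpoints, much in the spirit of the case analysis already carried out in the proof of \cref{prop:semidistributiveBackward}.

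Finally, the meet semidistributivity statement follows by duality: the anti-isomorphism $A \mapsto A^\leftrightarrow$ of \cref{prop:antiIsomorphism} exchanges join and meet and maps $\II$ to the interval hypergraph $\II^\leftrightarrow$ whose join semidistributivity condition in the sense of \cref{def:semidistributive} is precisely the (symmetric) meet semidistributivity condition on $\II$. Applying the join case already established to $\II^\leftrightarrow$ therefore yields the meet conclusion for $\II$.
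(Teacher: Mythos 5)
Your global strategy (contrapositive, reduction to the intersection-closed case via \cref{prop:latticeForward}, and the duality argument via \cref{prop:antiIsomorphism} for the meet statement) matches the paper, but the heart of your argument is missing. Everything hinges on producing a cover relation $\flip{A}{p}{q}{B}$ admitting two distinct minimal join complements, and none of the three essential verifications is actually carried out: (i) the existence and acyclicity of the proposed decreasing flip from $B$, and the fact that it is a cover relation of $P_\II$; (ii) the equalities $A \join A_{ij} = A \join A_{k\ell} = B$; and above all (iii) the minimality of $A_{ij}$ and $A_{k\ell}$ in $\set{C}{A \join C = B}$, which you yourself identify as ``the main obstacle'' and then defer to an unexecuted ``careful case analysis.'' That case analysis is precisely where the difficulty of the forward direction lies, so as written the proposal is a plan rather than a proof. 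Moreover, two of your concrete choices are doubtful: taking $j = s'$ and $\ell = t'$ with ``the distinguished indices pinned down by \cref{not:cIJ}'' is ambiguous (for a fixed second coordinate several pairs may lie in $\cIJ_\II$, with different $\mu_{ij}$), and the claimed incomparability of $A_{ij}$ and $A_{k\ell}$ is not ``immediate'' from the forbidden configuration via \cref{lem:irrorder2} --- it genuinely depends on which intervals of $\II$ contain both $s'$ and $t'$.

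It is worth noting that the paper's proof avoids this whole circle of issues: it never identifies cover relations or minimal join complements. Instead it sets $j \eqdef \max\big([u,t'[ \ssm \bigcup_{J \subseteq [u,t'[}(J \ssm \{\min(J)\})\big)$ and $i \eqdef \mu_{jt'}$ (a choice tailored so that the argument closes, and generally different from your $j = s'$), writes down three explicit permutations $\pi_A, \pi_B, \pi_C$, checks that each is the minimum of its fiber, and then uses the join formula $A \join B = \Or_{\projDown_A \join \projDown_B}$ of \cref{prop:latticeBackward} to get $A \join B = A \join C$ --- this is the single place where the non-existence of $[v,v']$ enters --- together with $B \meet C \le A$, so that the defining implication of join semidistributivity fails outright for the triple $(A,B,C)$. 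If you want to salvage your route, you would have to supply the full minimality analysis you sketch; otherwise the direct construction is both shorter and more robust.
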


\begin{proof}
We prove the result for join semidistributivity, the result for meet semidistributivity follows by the symmetry of~\cref{prop:antiIsomorphism}.

First, we can assume that~$\II$ is closed under intersection, otherwise $P_\II$ is not even a lattice by \cref{prop:latticeForward}.
Assume now that there is~$[r,r'], [s,s'], [t,t'], [u,u'] \in \II$ such that $r < s \le r' < s'$, $r < t \le s' < t'$, $u < \min(s, t)$ and~$s' < u'$, and there is no~$[v,v'] \in \II$ such that~$v < s$ and~${s' < v' < t'}$.
Let
\[
j \eqdef \max \Big( [u,t'[ \ssm \!\!\!\! \bigcup_{\substack{J \in \II \\ J \subseteq [u,t'[}} \!\!\!\! \big(J \ssm \{\min(J)\} \big)\Big)
\quad\text{and}\quad
i \eqdef \mu_{jt'} \eqdef \max\set{\min(I)}{\{j,t'\} \subseteq I \in \II}.
\]
Note that $i \le j < s < s' < t'$.
We further assume below that~$i < j$, the argument for~$i = j$ is similar and even slightly simpler.

Consider the three permutations
\begin{align*}
\pi_A & \eqdef 1 \cdots (i-1) \, j \, i \cdots (j-1) \, (j+1) \cdots (s-1) \, t' \, s' \, s \cdots (s'-1) \, (s'+1) \cdots (t'-1) \, (t'+1) \cdots n \\
\pi_B & \eqdef 1 \cdots (i-1) \, s' \, i \cdots (s'-1) \, (s'+1) \cdots n \\
\pi_C & \eqdef 1 \cdots (i-1) \, t' \, i \cdots (t'-1) \, (t'+1) \cdots n
\end{align*}
(written in one line notation), and let~$A \eqdef \Or_{\pi_A}$, $B \eqdef \Or_{\pi_B}$ and $C \eqdef \Or_{\pi_C}$ be the corresponding acyclic orientations of~$\II$.

\para{Claim 1:} $A \join B = A \join C$.
Note that~$\pi_A = \projDown_A$.
Indeed, $\pi_A$ has only $3$ descents $ji$, $t's'$ and~$s's$.
Let~$\tau$, $\sigma$ and $\rho$ denote the permutations obtained from~$\pi_A$ by exchanging $ji$, $t's'$ and $s's$, respectively.
Then
\[
\Or_{\tau}(J_{jt'}) = i \ne j = X(J_{jt'}),
\;\;
\Or_{\sigma}([t,t']) = s' \ne t' = X([t,t'])
\;\;\text{and}\;\;
\Or_{\rho}([s,s']) = s \ne s' = X([s,s']).
\]
Hence, $\pi_A$ is indeed the minimal permutation for~$A$.
Similarly, $\pi_B = \projDown_B$ and $\pi_C = \projDown_C$.

We therefore obtain from \cref{prop:latticeBackward} that
$A \join B = \Or_{\pi_A \join \pi_B}$ and $A \join C = \Or_{\pi_A \join \pi_C}$.
As
\begin{align*}
\pi_A \join \pi_B & = 1 \cdots (i-1) \, t' \, s' \, j \, i \cdots (j-1) \, (j+1) \cdots (s'-1) \, (s'+1) \cdots (t'-1) \, (t'+1) \cdots n \\
\pi_A \join \pi_C & = 1 \cdots (i-1) \, t' \, j \, i \cdots (j-1) \, (j+1) \cdots (s-1) \, s' \, s \cdots (s'-1) \, (s'+1) \cdots (t'-1) \, (t'+1) \cdots n
\end{align*}
and there is no~$[v,v'] \in \II$ such that~$v < s$ and~$s' < v' < s'$ by assumption, we have~$A \join B = A \join C$.

\para{Claim 2:} $A \ge B \meet C$.
For any interval~$I$, we have
\[
B(I) = \begin{cases} s' & \text{if $s' \in I$ and~$i \le \min(I)$,} \\ \min(I) & \text{otherwise} \end{cases}
\quad\text{and}\quad
C(I) = \begin{cases} t' & \text{if $t' \in I$ and~$i \le \min(I)$,} \\ \min(I) & \text{otherwise} \end{cases}
\]
Hence, $(B \meet C)(I) = \min(I)$ except maybe if~$[s',t'] \subseteq I$ and~$i \le \min(I)$.
For such~$I$, we actually have that
\begin{itemize}
\item $(B \meet C)(I) \le j$ since $B(I) = s'$ and~$C(J) = \min(J)$ for all~$J \subseteq [u,t'[$ and using the definition of~$j$,
\item $A(I) = j$ since~$j \in I$ and~$i \le \min(I)$.
\end{itemize}
Hence, we obtain that~$A(I) \ge (B \meet C)(I)$ for all~$I \in \II$ so that~$A \ge B \meet C$ by \cref{prop:sourceOrderI}.

\para{Claim 3:} $A \join B = A \join C \ne A = A \join (B \meet C)$.
The first equality holds by Claim 1.
The last equality holds by Claim 2.
The inequality holds since~$A(J_{jt'}) = j$ while~$(B \meet C)(J_{jt'}) \ge C(J_{jt'}) = t'$.
We conclude that~$P_\II$ is not semidistributive.
\end{proof}

\pagebreak
\begin{example}
For the hypergraph~$\II \eqdef \{1, 2, 3, 4, 12, 23, 34, 1234\}$ illustrated in \cref{fig:notSemidistributiveLattices}\,(right), we have
\[
\begin{array}{c|c|c|c|c|c|c|c|c|c}
r & r' & s & s' & t & t' & u & u' & i & j \\
\hline
1 & 2 & 2 & 3 & 3 & 4 & 1 & 4 & 1 & 1
\end{array}
\]
and
\[
A = \raisebox{-.2cm}{\acyclicOrientation{4}{1/2/1,2/3/3,3/4/4,1/4/1}}
\qquad
B = \raisebox{-.2cm}{\acyclicOrientation{4}{1/2/1,2/3/3,3/4/3,1/4/3}}
\qquad
C = \raisebox{-.2cm}{\acyclicOrientation{4}{1/2/1,2/3/2,3/4/4,1/4/4}}
\]
so that
\[
A \join B = A \join C = \raisebox{-.2cm}{\acyclicOrientation{4}{1/2/1,2/3/3,3/4/4,1/4/1}} \ne C = C \join \raisebox{-.2cm}{\acyclicOrientation{4}{1/2/1,2/3/2,3/4/3,1/4/1}} = A \join (B \meet C).
\]
\end{example}

\begin{example}
For the hypergraph~$\II \eqdef \{1, 2, 3, 4, 12, 23, 234, 1234\}$ illustrated in \cref{fig:notSemidistributiveLattices}\,(right), we have
\[
\begin{array}{c|c|c|c|c|c|c|c|c|c}
r & r' & s & s' & t & t' & u & u' & i & j \\
\hline
1 & 2 & 2 & 3 & 2 & 4 & 1 & 4 & 1 & 1
\end{array}
\]
and
\[
A = \raisebox{-.2cm}{\acyclicOrientation{4}{1/2/1,2/3/3,2/4/4,1/4/1}}
\qquad
B = \raisebox{-.2cm}{\acyclicOrientation{4}{1/2/1,2/3/3,2/4/3,1/4/3}}
\qquad
C = \raisebox{-.2cm}{\acyclicOrientation{4}{1/2/1,2/3/2,2/4/4,1/4/4}}
\]
so that
\[
A \join B = A \join C = \raisebox{-.2cm}{\acyclicOrientation{4}{1/2/1,2/3/2,2/4/2,1/4/1}} \ne A = A \join \raisebox{-.2cm}{\acyclicOrientation{4}{1/2/1,2/3/3,2/4/4,1/4/1}} = A \join (B \meet C).
\]
\end{example}


\section{Quotient interval hypergraphic lattices}
\label{sec:quotient}

In this section, we prove \cref{thm:quotientLatticeI} which we first introduce properly:

\begin{definition}
\label{def:subinterval}
We say that an interval hypergraph~$\II$ is \defn{closed under initial} (resp.~\defn{final}) \defn{subintervals} if~$[i,k] \in \II$ implies~$[i,j] \in \II$ (resp.~$[j,k] \in \II$) for any~$1 \le i < j < k \le n$.
We say that~$\II$ is \defn{closed under subintervals} if it is closed under both initial and final subintervals, or equivalently if~$[i,\ell] \in \II$ implies~$[j,k] \in \II$ for any~$1 \le i \le j \le k \le \ell \le n$.
\end{definition}

\begin{remark}
Note that if an interval hypergraph~$\II$ containing all singletons~$\{i\}$ for~$i \in [n]$ is closed under initial (resp.~final) subintervals, then it is also closed under intersection.
Indeed, for any~$1 \le i<j < k < \ell \le n$ with~$[i,k] \in \II$ and~$[j,\ell] \in \II$, we have~$[j,k] \in \II$ since it is an initial (resp.~final) interval of~$[j,\ell]$ (resp.~of~$[i,k]$).
Hence, we do not need to add the closed by intersection condition.
\end{remark}

\begin{definition}
A map~$\phi$ between two meet semilattices~$(L, \le, \meet)$ and~$(L', \le', \meet')$ is a \defn{meet semilattice morphism} if~$\phi(a \meet b) = \phi(a) \meet' \phi(b)$ for all~$a,b \in L$.
Equivalently, if
\begin{itemize}
\item each fiber~$F$ of~$\phi$ is order convex (meaning that~$a < b < c$ and~$a,c \in F$ implies~$b \in F$) and admits a unique minimal element,
\item the map~$\projDown : L \to L$ sending an element~$a$ to the minimal element~$b$ with~$\phi(a) = \phi(b)$ is order preserving.
\end{itemize}
The join semilattice morphisms are defined and characterized dually.
\end{definition}

\begin{theoremA}
For an interval hypergraph $\II$ on~$[n]$, the poset morphism~$\Or$ from the weak order on~$\fS_n$ to the interval hypergraphic poset $P_\II$ is a meet (resp.~join) semilattice morphism if and only if $\II$ is closed under initial (resp.~final) subintervals.
\end{theoremA}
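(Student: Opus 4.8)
The plan is to reduce the statement to the monotonicity of the down-projection and then analyse that monotonicity combinatorially, treating the meet/initial case (the join/final case will follow by symmetry). By \cref{prop:preimageI} every fiber $\Or^{-1}(A) = [\projDown_A, \projUp_A]$ is an interval of the weak order, hence order convex with a unique minimal element, so the first of the two conditions characterizing a meet semilattice morphism holds automatically. Moreover a finite meet semilattice with a maximum is a lattice, and $P_\II$ always has a maximum (the orientation $I \mapsto \max(I)$), so by \cref{thm:latticeI} the codomain $P_\II$ is a meet semilattice exactly when $\II$ is closed under intersection. Thus if $\II$ is not closed under intersection there is nothing to prove, and otherwise $\Or$ is a meet semilattice morphism if and only if the map $\projDown : \fS_n \to \fS_n$, $\pi \mapsto \projDown_{\Or_\pi}$, is order preserving. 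Since the weak order is containment of inversion sets and, by \cref{prop:WOIP,prop:preimageI}, $\Inv(\projDown_A) = \set{(b,a)}{a<b, \, b \less_A a}$, this reduces to the single assertion: for all acyclic orientations $A \le B$ and all $a < b$, $b \less_A a$ implies $b \less_B a$.

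For the ``if'' direction I would prove this assertion assuming $\II$ closed under initial subintervals (which forces closure under intersection). The key is a normal-form lemma: \emph{if $b \less_A a$ with $a<b$, then there is a generating chain $b = x_0 > x_1 > \dots > x_m = a$ with strictly decreasing values}, proved by induction on the length of a shortest generating chain. The crucial point is that the first step cannot overshoot upward: if $x_1 > b$ then $b \less_A x_t$ forbids $b \in J_t$ by acyclicity, forcing every later value to stay above $b$ and contradicting $x_m = a < b$; hence $x_1 < b$, and if moreover $x_1 < a$ then $a \in [x_1,b] \subseteq J_0$ gives a direct descent, while if $a < x_1 < b$ one recurses on the shorter chain $x_1 \less_A \dots \less_A a$. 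Once the chain is decreasing, each step $x_t \less_A x_{t+1}$ with $x_t > x_{t+1}$ comes from a single interval $I$ with $A(I) = x_t$ and $x_{t+1} \in I$; here closure under initial subintervals lets me replace $I$ by $[\min I, x_t] \in \II$, on which acyclicity (as $x_t = \max[\min I, x_t]$) forces $A$, and hence $B$ since $A \le B$, to take the value $x_t$, so that $x_t \less_B x_{t+1}$. Concatenating yields $b \less_B a$.

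For the ``only if'' direction I would argue by contraposition, assuming $\II$ closed under intersection but not under initial subintervals. From any witness $[i,k] \in \II$, $[i,j] \notin \II$ with $i<j<k$, taking the smallest admissible right endpoint $\ge j$ produces $i<k$ with $[i,k] \in \II$ and $[i,k-1] \notin \II$; set $j := k-1$. Let $A := \Or_\pi$ where $\pi$ is the identity with the block $[i,k]$ reordered as $(k-1),k,i,i+1,\dots,k-2$, and let $B$ be the increasing flip of $A$ raising the source $k-1$ to $k$, which is acyclic by \cref{prop:isFlipI}. Then $A \le B$, and $j \less_A i$ holds via $[i,k]$. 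The point is that $j \not\less_B i$: in $B$ the only intervals with source $j=k-1$ have the form $[m,k-1]$, and since $[i,k-1] \notin \II$ all of them satisfy $m>i$, so everything $\less_B$-below $j$ lies strictly above $i$; a short acyclicity check rules out the two ways a descending $\less_B$-chain from $j$ could reach $i$ (through a source $k-1$, impossible since that would require $[i,k-1] \in \II$, or through the source $k$, impossible since $k-1 \not\less_B k$). Hence $\Inv(\projDown_A) \not\subseteq \Inv(\projDown_B)$, so $\projDown$ is not order preserving and $\Or$ is not a meet semilattice morphism.

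Finally, the join/final statement follows from the meet/initial one through the anti-isomorphism of \cref{prop:antiIsomorphism}: the involution $x \mapsto n-x+1$ sends $\II$ to $\II^\leftrightarrow$, exchanges closure under initial and under final subintervals, and turns meet semilattice morphisms into join semilattice morphisms. The step I expect to be the main obstacle is the normal-form lemma of the ``if'' direction, since forcing generating chains to be value-decreasing is precisely where closure under initial subintervals must be combined with acyclicity; the parallel delicate point in the ``only if'' direction is verifying that the descent $j \less_A i$ genuinely disappears in $B$, which again hinges on the absence of the truncated interval $[i,j]$.
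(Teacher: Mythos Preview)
Your proposal is correct and takes a genuinely different route from the paper's proof.

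For the ``if'' direction, the paper argues at the permutation level: it proves by induction on the weak-order length of~$\tau$ that~$\sigma \le \tau$ implies~$\projDown(\sigma) \le \projDown(\tau)$, via a three-case analysis depending on the relative positions of the simple transposition giving~$\sigma \lessdot \tau$ and the simple transposition witnessing~$\tau \ne \projDown(\tau)$. Your argument instead works directly with the orders~$\less_A$: the normal-form lemma reduces any descent~$b \less_A a$ (with~$a<b$) to a strictly value-decreasing generating chain, and then closure under initial subintervals lets you cap each witnessing interval~$I$ at its source~$x_t$, so that~$[\min I, x_t] \in \II$ has~$A$-value equal to its maximum, which is automatically preserved by~$A \le B$ via \cref{prop:sourceOrderI}. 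This is more conceptual and isolates exactly where the initial-subinterval hypothesis is used; the paper's case analysis is more mechanical but avoids having to prove the normal-form lemma.

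For the ``only if'' direction, the paper produces four explicit permutations~$\omega, \sigma, \tau, \rho$ on the three letters~$i,j,k$ and checks directly that~$\Or(\sigma) \meet \Or(\tau) \ne \Or(\sigma \meet \tau)$. Your construction instead builds~$A < B$ with~$(j,i) \in \Inv(\projDown_A) \ssm \Inv(\projDown_B)$, which is equivalent through your reduction. One remark: your justification that ``everything~$\less_B$-below~$j$ lies strictly above~$i$'' deserves a line more than the ``two ways'' sketch you give, since one must also exclude indirect descents through intermediate values in~${]i,k-2]}$. The clean way is to observe that every~$J \in \II$ with~$B(J) \in {]i,k-1]}$ satisfies~$J \subseteq {]i,k-1]}$ (for~$B(J)=k-1$ this uses~$[i,k-1] \notin \II$; for~$B(J) \in {]i,k-2]}$ it follows from the shape of~$\pi$), so the~$\less_B$-downset of~$k-1$ is trapped in~${]i,k-1]}$ by induction.
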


\begin{proof}
We only prove the meet semilattice morphism, the result for the join semilattice morphism follows by the symmetry of~\cref{prop:antiIsomorphism}.

Assume first that~$\II$ is not closed under initial subintervals, and let~$1 \le i < j < k \le n$ be such that~$[i,j] \notin \II$ while~$[i,k] \in \II$.
Note that we can assume that~$k = j+1$.
Let~$\omega \eqdef i j k X$, $\sigma \eqdef j i k X$, $\tau \eqdef k i j X$, and $\rho \eqdef k j i X$, where~$X$ is an arbitrary permutation of~$[n] \ssm \{i,j,k\}$.
Since~$\II$ is closed under intersection and contains~$[i,k]$ but not~$[i,j]$ and~$k = j+1$, any~$J \in \II$ containing~$\{i,j\}$ also contains~$k$.
Hence, we have~$\Or_\tau = \Or_\rho$, so that~$\projUp(\tau) \le \rho$.
Moreover, since~$[i,k] \in \II$, we have~${\Or_\omega < \Or_\sigma}$.
By \cref{prop:latticeBackward}, we obtain that~${\Or_\sigma \meet \Or_\tau = \Or_{\projUp(\sigma) \meet \projUp(\tau)} \ge \Or_{\sigma \meet \rho} = \Or_\sigma > \Or_\omega = \Or_{\sigma \meet \tau}}$.
Hence, $\Or$ is not a meet semilattice morphism.

Conversely, assume that~$\II$ is closed under initial subintervals.
We prove by induction on the length of~$\tau$ that~$\sigma \le \tau$ implies~$\projDown(\sigma) \le \projDown(\tau)$.
If~$\tau = \projDown(\tau)$, we have~$\projDown(\sigma) \le \sigma < \tau = \projDown(\tau)$.
If there is a permutation~$\rho$ such that~$\sigma < \rho < \tau$, then we have~$\projDown(\sigma) \le \projDown(\rho)$ by induction, so that we just need to show that~$\projDown(\rho) \le \projDown(\tau)$.
We can thus assume that~$\sigma \lessdot \tau$ is a cover relation, with~$\tau \ne \projDown(\tau)$.
Let~$\tau' \lessdot \tau$ be such that~$\Or_\tau = \Or_{\tau'}$.
Let~$p,q \in [n-1]$ be such that~$\sigma$ (resp.~$\tau'$) is obtained from~$\tau$ by exchanging its entries at positions~$p$ and~$p+1$ (resp.~$q$ and~$q+1$).
We distinguish three cases:

\para{If~$|p-q| > 1$.} Consider~$\sigma' = \sigma \meet \tau'$. As~$\Or_\tau = \Or_{\tau'}$, we have~$\min(\tau^{-1}(I)) \ne q$ or~$\tau(q+1) \notin I$ for any~$I \in \II$.
Since~$|p-q| > 1$, it implies that~$\min(\sigma^{-1}(I)) \ne q$ or~$\sigma(q+1) \notin I$.
Hence we obtain that~$\Or_\sigma = \Or_{\sigma'}$.
Since~$\sigma' < \tau' < \tau$, we obtain by induction that~$\projDown(\sigma') \le \projDown(\tau')$.
As~$\Or_\sigma = \Or_{\sigma'}$ and~$\Or_\tau = \Or_{\tau'}$, we conclude that~$\projDown(\sigma) \le \projDown(\tau)$.

\para{If~$p = q-1$.} Then~$\sigma = X j k i Y$, $\tau = X k j i Y$ and $\tau' = X k i j Y$ for some letters~$1 \le i < j < k \le n$ and some words~$X,Y$ on~$[n]$.
Let $\sigma' \eqdef X j i k Y$, $\sigma'' \eqdef X i j k Y$ and~$\tau'' \eqdef X i k j Y$.
If~$\Or_\sigma \ne \Or_{\sigma'}$, then there exist~$1 \le u \le i < k \le v \le n$ such that~$[u,v] \in \II$ and~$q = \min \big( \sigma^{-1}([u,v]) \big)$. 
As~$\II$ is closed under final subintervals, we have~$[u,j] \in \II$ and~$q = \min \big( \tau^{-1}([u,j]) \big)$, which contradicts that~$\Or_\tau = \Or_{\tau'}$.
If~$\Or_{\sigma'} \ne \Or_{\sigma''}$, then there exists~$1 \le u \le i < j \le v \le n$ such that~$[u,v] \in \II$ and~$p = \min \big( \sigma'^{-1}([u,v]) \big)$. 
This would imply that~$q = \big( \tau^{-1}([u,v]) \big)$, contradicting that~${\Or_\tau = \Or_{\tau'}}$.
Since~$\sigma'' < \tau' < \tau$, we obtain by induction that~$\projDown(\sigma'') \le \projDown(\tau')$.
As~$\Or_\sigma = \Or_{\sigma'} = \Or_{\sigma''}$ and~${\Or_\tau = \Or_{\tau'}}$, we conclude that~$\projDown(\sigma) \le \projDown(\tau)$.

\para{If~$p = q+1$.} Then~$\sigma = X k i j Y$, $\tau = X k j i Y$ and $\tau' = X j k i Y$ for some letters~${1 \le i < j < k \le n}$ and some words~$X,Y$ on~$[n]$.
Let~$\sigma' \eqdef X i k j Y$, $\sigma'' \eqdef X i j k Y$ and~$\tau'' \eqdef X j i k Y$. 
If~$\Or_\sigma \ne \Or_{\sigma'}$, then there exist~$1 \le u \le i < k \le v \le n$ such that~$[u,v] \in \II$ and~$q = \min \big( \sigma^{-1}([u,v]) \big)$.
This would imply that~$p = \min \big( \tau^{-1}([u,v]) \big)$, which contradicts that~$\Or_\tau = \Or_{\tau'}$.
If~$\Or_{\sigma'} \ne \Or_{\sigma''}$, then there exists~$i < u \le j < k \le v \le n$ such that~$[u,v] \in \II$ and~$p = \min \big( \sigma'^{-1}([u,v]) \big)$. 
This would imply that~$q = \big( \tau^{-1}([u,v]) \big)$, contradicting that~$\Or_\tau = \Or_{\tau'}$.
Since~$\sigma'' < \tau' < \tau$, we obtain by induction that~$\projDown(\sigma'') \le \projDown(\tau')$.
As~$\Or_\sigma = \Or_{\sigma'} = \Or_{\sigma''}$ and~$\Or_\tau = \Or_{\tau'}$, we conclude that~$\projDown(\sigma) \le \projDown(\tau)$.
\end{proof}

\begin{corollary}
\label{coro:quotientLatticeI}
For an interval hypergraph $\II$ on~$[n]$, the poset morphism~$\Or$ from the weak order on~$\fS_n$ to the interval hypergraphic poset $P_\II$ is a lattice morphism if and only if $\II$ is closed under subintervals.
In this case, $P_\II$ is a Cartesian product of Tamari lattices.
\end{corollary}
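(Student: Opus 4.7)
The plan is to derive the first equivalence directly from \cref{thm:quotientLatticeI}. Indeed, a map between two lattices is a lattice morphism precisely when it preserves both joins and meets, which means that it is simultaneously a meet-semilattice morphism and a join-semilattice morphism. By \cref{thm:quotientLatticeI}, these two conditions are respectively equivalent to $\II$ being closed under initial subintervals and closed under final subintervals, whose conjunction is by definition closure under all proper subintervals. For the equivalence to be meaningful, $P_\II$ must itself be a lattice; this holds here since closure under subintervals implies closure under intersection (as observed in the remark opening \cref{sec:quotient}), and \cref{thm:latticeI} then gives that $P_\II$ is a lattice.

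For the product structure, let $M_1, \ldots, M_k$ denote the maximal intervals of $\II$ under inclusion. Closure under subintervals forces each restriction $\II|_{M_s} \eqdef \{I \in \II : I \subseteq M_s\}$ to be the complete interval hypergraph on $M_s$, so its hypergraphic poset $P_{\II|_{M_s}}$ is the Tamari lattice $T_{M_s}$. The restriction maps assemble into a natural lattice morphism $\phi : P_\II \to \prod_s T_{M_s}$, which I would first show is injective: every non-singleton $I \in \II$ lies in at least one $M_s$, so an orientation of $\II$ is uniquely determined by its restrictions to the $\II|_{M_s}$, using the source characterization of \cref{prop:sourceOrderI}.

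The main step, and likely the main obstacle, is to identify the image of $\phi$ and show that it coincides with the full Cartesian product. When the $M_s$ are pairwise disjoint, this is immediate since the polytope $\simplex_\II$ splits as a Minkowski sum whose summands live in orthogonal coordinate subspaces, making the factorization manifest and reducing everything to the case of a single Tamari lattice on each $M_s$. The delicate case is when maximal intervals overlap: here one must argue that any pair of independently chosen Tamari orientations on $M_s$ and $M_{s'}$ automatically restrict to the same orientation on $M_s \cap M_{s'}$, so that the a priori fiber product over the common restriction collapses to a genuine Cartesian product. This is where closure under subintervals is essential, since it guarantees that overlaps are themselves complete interval subhypergraphs, and the local acyclicity relations analyzed in \cref{subsec:acyclicI} force the required agreement. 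Once surjectivity of $\phi$ is established, $\phi$ is a bijective lattice morphism, hence a lattice isomorphism, and the proof is complete.
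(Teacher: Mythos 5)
Your first paragraph is correct and is all that the first sentence of the corollary needs: a lattice morphism is precisely a map that is simultaneously a meet and a join semilattice morphism, so \cref{thm:quotientLatticeI} gives the equivalence with closure under initial and under final subintervals, i.e.\ under all subintervals; and since closure under subintervals implies closure under intersection, \cref{thm:latticeI} guarantees that $P_\II$ is indeed a lattice. The paper offers no further argument for the corollary, and this is the intended derivation. Your injectivity argument for $\phi$ via \cref{prop:sourceOrderI}, and your treatment of the case where the maximal intervals $M_1,\dots,M_k$ are pairwise disjoint, are also fine.

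The gap is exactly at the step you flag as delicate, and it cannot be argued away: it is false that independently chosen orientations on two overlapping maximal intervals automatically agree on their overlap, so $\phi$ is injective but not surjective in that case. Take $\II = \{1,2,3,4,12,23,34,123,234\}$, which is closed under subintervals and has maximal intervals $123$ and $234$. An acyclic orientation of $\II|_{123}$ may orient $23$ towards $2$ while an acyclic orientation of $\II|_{234}$ orients $23$ towards $3$, and such a pair is not in the image of $\phi$. Concretely, using \cref{prop:acyclicI} (or by listing the fibers of $\Or$ on $\fS_4$) one checks that $P_\II$ has exactly twelve elements, whereas the product of the two Tamari lattices on three elements has twenty-five. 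Worse, twelve is not a product of Catalan numbers, so no alternative decomposition can exhibit this $P_\II$ as a Cartesian product of Tamari lattices; the overlap phenomenon is a genuine obstruction, and the product statement itself only goes through under the additional assumption that the maximal hyperedges of $\II$ are pairwise disjoint (where your Minkowski-sum splitting argument works, with elements of $[n]$ not covered by any $M_s$ contributing trivial factors). As written, the proof of the second sentence therefore cannot be completed, and you should either add the disjointness hypothesis or replace the claimed isomorphism onto the full product by the (correct) embedding of $P_\II$ into it.
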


\begin{remark}
It follows from \cref{prop:surjectionFactorizeI} that the conditions of \cref{thm:quotientLatticeI,coro:quotientLatticeI} also characterize the interval hypergraphs which are (semi)lattice quotients of the Tamari lattice.
\end{remark}


\section*{Acknowledgments}

We are grateful to two anonymous referees for helpful suggestions.


\bibliographystyle{alpha}
\bibliography{IH_lattices}

\newcommand{\etalchar}[1]{$^{#1}$}
\begin{thebibliography}{BMCLD{\etalchar{+}}23}

\bibitem[AA23]{AguiarArdila}
Marcelo Aguiar and Federico Ardila.
\newblock Hopf monoids and generalized permutahedra.
\newblock {\em Mem. Amer. Math. Soc.}, 289(1437), 2023.

\bibitem[AD13]{ArdilaDoker}
Federico Ardila and Jeffrey Doker.
\newblock Lifted generalized permutahedra and composition polynomials.
\newblock {\em Adv. in Appl. Math.}, 50(4):607--633, 2013.

\bibitem[Agn17]{Agnarsson}
Geir Agnarsson.
\newblock On a special class of hyper-permutahedra.
\newblock {\em Electron. J. Combin.}, 24(3):Paper No. 3.46, 25, 2017.

\bibitem[AM09]{AgnarssonMorris}
Geir Agnarsson and Walter~D. Morris.
\newblock On {M}inkowski sums of simplices.
\newblock {\em Ann. Comb.}, 13(3):271--287, 2009.

\bibitem[BBM19]{BenedettiBergeronMachacek}
Carolina Benedetti, Nantel Bergeron, and John Machacek.
\newblock Hypergraphic polytopes: combinatorial properties and antipode.
\newblock {\em J. Comb.}, 10(3):515--544, 2019.

\bibitem[BM21]{BarnardMcConville}
Emily Barnard and Thomas McConville.
\newblock Lattices from graph associahedra and subalgebras of the
  {M}alvenuto-{R}eutenauer algebra.
\newblock {\em Algebra Universalis}, 82(1):Paper No. 2, 53, 2021.

\bibitem[BMCLD{\etalchar{+}}23]{BazierMatteChapelierLaguetDouvilleMousavandThomasYildirim}
V\'eronique Bazier-Matte, Nathan Chapelier-Laguet, Guillaume Douville, Kaveh
  Mousavand, Hugh Thomas, and Emine Y\i{}ld\i{}r\i{}m.
\newblock {ABHY} {A}ssociahedra and {N}ewton polytopes of ${F}$-polynomials for
  finite type cluster algebras of simply laced finite type.
\newblock {\em J. Lond. Math. Soc. (2)}, 2023.

\bibitem[BP22]{BottmanPoliakova}
Nathaniel Bottman and Daria Poliakova.
\newblock Constrainahedra.
\newblock Preprint,
  \href{https://arxiv.org/abs/2208.14529}{\texttt{arXiv:2208.14529}}, 2022.

\bibitem[BW91]{BjornerWachs}
Anders Bj{\"o}rner and Michelle~L. Wachs.
\newblock Permutation statistics and linear extensions of posets.
\newblock {\em J. Combin. Theory Ser. A}, 58(1):85--114, 1991.

\bibitem[CD06]{CarrDevadoss}
Michael~P. Carr and Satyan~L. Devadoss.
\newblock Coxeter complexes and graph-associahedra.
\newblock {\em Topology Appl.}, 153(12):2155--2168, 2006.

\bibitem[CHM{\etalchar{+}}23]{CardinalHoangMerinoMickaMutze}
Jean Cardinal, Hung~P. Hoang, Arturo Merino, Ond\v{r}ej Mi\v{c}ka, and Torsten
  M\"{u}tze.
\newblock Combinatorial generation via permutation languages. {V}. {A}cyclic
  orientations.
\newblock {\em SIAM J. Discrete Math.}, 37(3):1509--1547, 2023.

\bibitem[CP24]{ChapotonPilaud}
Fr\'{e}d\'{e}ric Chapoton and Vincent Pilaud.
\newblock Shuffles of deformed permutahedra, multiplihedra, constrainahedra,
  and biassociahedra.
\newblock {\em Ann. H. Lebesgue}, 7:1535--1601, 2024.

\bibitem[CPP19]{ChatelPilaudPons}
Gr\'egory Chatel, Vincent Pilaud, and Viviane Pons.
\newblock The weak order on integer posets.
\newblock {\em Algebraic Combinatorics}, 2(1):1--48, 2019.

\bibitem[CS23]{CardinalSteiner}
Jean Cardinal and Raphael Steiner.
\newblock Shortest paths on polymatroids and hypergraphic polytopes.
\newblock Preprint,
  \href{http://arxiv.org/abs/2311.00779}{\texttt{arXiv:2311.00779}}. To appear
  in \emph{Comb. Th.}, 2023.

\bibitem[Def23]{Defant-fertilitopes}
Colin Defant.
\newblock Fertilitopes.
\newblock {\em Discrete Comput. Geom.}, 70(3):713--752, 2023.

\bibitem[Edm70]{Edmonds}
Jack Edmonds.
\newblock Submodular functions, matroids, and certain polyhedra.
\newblock In {\em Combinatorial {S}tructures and their {A}pplications ({P}roc.
  {C}algary {I}nternat. {C}onf., {C}algary, {A}lta., 1969)}, pages 69--87.
  Gordon and Breach, New York, 1970.

\bibitem[For08]{Forcey-multiplihedra}
Stefan Forcey.
\newblock Convex hull realizations of the multiplihedra.
\newblock {\em Topology Appl.}, 156(2):326--347, 2008.

\bibitem[FS05]{FeichtnerSturmfels}
Eva~Maria Feichtner and Bernd Sturmfels.
\newblock Matroid polytopes, nested sets and {B}ergman fans.
\newblock {\em Port. Math. (N.S.)}, 62(4):437--468, 2005.

\bibitem[G{\'e}l25]{Gelinas}
F{\'e}lix G{\'e}linas.
\newblock Source characterization of the hypegraphic posets.
\newblock Preprint,
  \href{http://arxiv.org/abs/2508.16006}{\texttt{arXiv:2508.16006}}, 2025.

\bibitem[Lod04]{Loday}
Jean-Louis Loday.
\newblock Realization of the {S}tasheff polytope.
\newblock {\em Arch.~Math.~(Basel)}, 83(3):267--278, 2004.

\bibitem[McM73]{McMullen-typeCone}
Peter McMullen.
\newblock Representations of polytopes and polyhedral sets.
\newblock {\em Geometriae Dedicata}, 2:83--99, 1973.

\bibitem[Pil24]{Pilaud-acyclicReorientationLattices}
Vincent Pilaud.
\newblock Acyclic reorientation lattices and their lattice quotients.
\newblock {\em Ann. Comb.}, 2024.
\newblock Online first.

\bibitem[Pos09]{Postnikov}
Alexander Postnikov.
\newblock Permutohedra, associahedra, and beyond.
\newblock {\em Int. Math. Res. Not. IMRN}, (6):1026--1106, 2009.

\bibitem[PP25]{PilaudPoliakova}
Vincent Pilaud and Daria Poliakova.
\newblock Hochschild polytopes.
\newblock {\em Math. Ann.}, 392(2):2395--2441, 2025.

\bibitem[PPP22]{PadrolPilaudPoullot-deformationConesHypergraphicPolytopes}
Arnau Padrol, Vincent Pilaud, and Germain Poullot.
\newblock Deformation cones of hypergraphic polytopes.
\newblock {\em S\'{e}m. Lothar. Combin.}, 86B:Art. \#72, 12 pp., 2022.

\bibitem[PPPP23]{PadrolPaluPilaudPlamondon}
Arnau Padrol, Yann Palu, Vincent Pilaud, and Pierre-Guy Plamondon.
\newblock Associahedra for finite type cluster algebras and minimal relations
  between $\b{g}$-vectors.
\newblock {\em Proc. London Math. Soc.}, 127(3):513--588, 2023.

\bibitem[PRW08]{PostnikovReinerWilliams}
Alexander Postnikov, Victor Reiner, and Lauren~K. Williams.
\newblock Faces of generalized permutohedra.
\newblock {\em Doc.~Math.}, 13:207--273, 2008.

\bibitem[Rea04]{Reading-latticeCongruences}
Nathan Reading.
\newblock Lattice congruences of the weak order.
\newblock {\em Order}, 21(4):315--344, 2004.

\bibitem[Rea15]{Reading-arcDiagrams}
Nathan Reading.
\newblock Noncrossing arc diagrams and canonical join representations.
\newblock {\em SIAM J. Discrete Math.}, 29(2):736--750, 2015.

\bibitem[Reh22]{Rehberg}
Sophie Rehberg.
\newblock Pruned inside-out polytopes, combinatorial reciprocity theorems and
  generalized permutahedra.
\newblock {\em Electron. J. Combin.}, 29(4):Paper No. 4.36, 31, 2022.

\bibitem[San09]{Saneblidze-freehedron}
Samson Saneblidze.
\newblock The bitwisted {C}artesian model for the free loop fibration.
\newblock {\em Topology Appl.}, 156(5):897--910, 2009.

\bibitem[SP02]{PitmanStanley}
Richard~P. Stanley and Jim Pitman.
\newblock A polytope related to empirical distributions, plane trees, parking
  functions, and the associahedron.
\newblock {\em Discrete Comput. Geom.}, 27(4):\mbox{pp. 603--634}, 2002.

\bibitem[SS93]{ShniderSternberg}
Steve Shnider and Shlomo Sternberg.
\newblock {\em Quantum groups: From coalgebras to {D}rinfeld algebras}.
\newblock Series in Mathematical Physics. International Press, Cambridge, MA,
  1993.

\bibitem[Sta70]{Stasheff-HSpaces}
James Stasheff.
\newblock {\em {$H$}-spaces from a homotopy point of view}.
\newblock Lecture Notes in Mathematics, Vol. 161. Springer-Verlag, Berlin-New
  York, 1970.

\bibitem[SU04]{SaneblidzeUmble-diagonals}
Samson Saneblidze and Ronald Umble.
\newblock Diagonals on the permutahedra, multiplihedra and associahedra.
\newblock {\em Homology Homotopy Appl.}, 6(1):363--411, 2004.

\bibitem[Tam51]{Tamari}
Dov Tamari.
\newblock {\em Monoides pr\'eordonn\'es et cha\^ines de Malcev}.
\newblock PhD thesis, Universit\'e Paris Sorbonne, 1951.

\end{thebibliography}
\label{sec:biblio}


\end{document}